\documentclass{article}

 \usepackage[preprint]{neurips_2026}

\usepackage[utf8]{inputenc} % allow utf-8 input
\usepackage[T1]{fontenc}    % use 8-bit T1 fonts
\usepackage{hyperref}       % hyperlinks
\usepackage{url}            % simple URL typesetting
\usepackage{booktabs}       % professional-quality tables
\usepackage{amsfonts}       % blackboard math symbols
\usepackage{nicefrac}       % compact symbols for 1/2, etc.
\usepackage{microtype}      % microtypography
\usepackage{xcolor}         % colors

\usepackage{amsthm}
\usepackage{amsmath}
\usepackage{amssymb}
\usepackage{algorithm}
\usepackage{algpseudocode}
\usepackage{graphicx}
\usepackage{enumitem}
\usepackage{tabularx}

\newtheorem{theorem}{Theorem}
\newtheorem{proposition}{Proposition}
\newtheorem{assumption}{Assumption}
\newtheorem{corollary}{Corollary}
\newtheorem{definition}{Definition}
\newtheorem{remark}{Remark}
\newtheorem{lemma}{Lemma}

\newcommand{\proofstep}[1]{\par\smallskip\noindent\textbf{#1}\ }

\newcounter{Eitem}
\renewcommand{\theEitem}{E\arabic{Eitem}}
\newcommand{\Etag}[2]{%
  \refstepcounter{Eitem}%
  \medskip\noindent\textbf{(\theEitem)~#1}\label{#2}%
}

\title{A Barrier-Metric First-Order Method for Linearly Constrained Bilevel Optimization}

\author{%
  Tenglong Hong\thanks{Department of Civil and Environmental Engineering,
    University of California, Los Angeles. \texttt{tlhong0802@ucla.edu}.}
  \And
  Paul Grigas\thanks{Department of Industrial Engineering and Operations Research,
    University of California, Berkeley. \texttt{pgrigas@berkeley.edu}.}
}

\begin{document}

\maketitle

\begin{abstract}
We study bilevel optimization with a fixed polyhedral lower feasible set. Such problems are challenging for two reasons: active-set changes can make
the upper objective nonsmooth, and existing hypergradient methods typically
require lower-Hessian inversions or equivalent linear solves, which are
computationally expensive. To address these issues, we adopt a logarithmic barrier smoothing of the lower problem to obtain a
differentiable approximation of the constrained bilevel objective, and develop a proxy-gradient algorithm
for the resulting barrier-smoothed surrogate. The algorithm uses only gradients of the upper and lower objectives; its only second-order object is
the explicit logarithmic barrier Hessian determined by the fixed polyhedral constraints. Barrier smoothing restores differentiability, but Euclidean smoothness constants are not uniformly bounded near the boundary. We therefore develop a local Dikin-geometry analysis in which the barrier-metric provides
an oracle-free curvature scale near the moving lower centers. This leads to \textit{barrier-aware}
schedules that keep the iterates inside locally well-behaved regions. For the barrier-smoothed objective, we prove stationarity rates of \(\widetilde O(K^{-2/3})\) in the deterministic setting and
\(\widetilde O(K^{-2/5})\) under upper-level-only bounded stochastic noise after \(K\) outer iterations, together with quantitative bias
control as the barrier parameter decreases.
\end{abstract}

\section{Introduction}

Constrained bilevel optimization arises in a wide range of learning, control, and decision-making
problems in which an upper-level decision must be chosen while accounting for the optimal response
of a lower-level constrained optimization problem. We consider
\begin{equation}\label{eq:bi-level_constrained}
\min_{x\in X} \; F(x) := f\bigl(x,y^\star(x)\bigr),
\qquad
y^\star(x)\in\arg\min_{y\in Y} g(x,y),
\end{equation}
where \(X\) and \(Y\) are the upper- and lower-level feasible regions. The upper objective depends on
the lower solution both directly and implicitly through the solution map \(x\mapsto y^\star(x)\). In
many applications, the lower variable represents a constrained allocation, flow, or policy, while the
upper variable specifies design, pricing, or calibration decisions. 
Such hierarchical structures arise naturally in meta-learning~\citep{finn2017model},
data reweighting~\citep{ren2018learning}, hyperparameter tuning~\citep{franceschi2018bilevel},
traffic control and toll design~\citep{brotcorne2001bilevel}, and data-driven
decision-making~\citep{donti2017task,elmachtoub2022smart}.

Despite its modeling flexibility, constrained bilevel optimization remains computationally
challenging. Even in smooth settings, direct gradient-based optimization of the upper objective
requires access to the hypergradient of \(F\), which depends on the sensitivity of the lower solution
\(y^\star(x)\) with respect to \(x\). For unconstrained lower problems, this sensitivity can often be
characterized by implicit differentiation, at the cost of evaluating mixed second derivatives and
solving linear systems involving the lower Hessian~\citep{ghadimi2018approximation,ji2021bilevel}.
In constrained problems, however, the situation is more delicate: the lower solution map is shaped
not only by the objective \(g\), but also by the active-set geometry of the feasible region \(Y\). As a
result, the mapping \(x\mapsto y^\star(x)\) may fail to be differentiable, and the corresponding upper
objective \(F(x)=f(x,y^\star(x))\) may inherit this nonsmoothness.

To address this differentiability obstruction, we adopt a barrier-smoothing approach for the lower-level
constrained problem. The logarithmic barrier moves the lower minimizer to the interior of the feasible
region and yields a differentiable barrier-smoothed outer objective \(F_\mu\). However, this smoothing
step introduces a boundary-induced geometric difficulty: near \(\partial Y\), the barrier Hessian blows
up in boundary-normal directions, so Euclidean curvature is not uniformly controlled. Moreover,
Euclidean length no longer reveals the natural local curvature scale: a direction with moderate
\(L_2\) length can still have very large boundary-normal curvature near an active face.
This motivates a local Dikin-geometric analysis. The Dikin metric induced by the barrier Hessian
identifies barrier-adapted neighborhoods in which the barrierized lower problem recovers controlled
curvature and smoothness. It also provides an oracle-free proxy for the curvature at the unknown
lower center: within these neighborhoods, the current barrier geometry is spectrally comparable to the
local Hessian scale at the nearby center. We then design schedules that keep the iterates inside these
well-behaved regions, so that the local curvature and smoothness estimates carry the convergence
analysis through. All stationarity guarantees below use the squared-gradient convention: an
\(\epsilon\)-stationary point of \(F_\mu\) is a point \(x\) satisfying
\(\|\nabla F_\mu(x)\|_2^2\le\epsilon\).

\subsection{Our Contributions}

Our main contributions are as follows.
\begin{itemize}[leftmargin=*]
    \item \textbf{Barrier smoothing for constrained bilevel optimization.}
    We introduce a barrier-smoothed surrogate for constrained bilevel problems with polyhedral lower feasible sets, prove differentiability of the resulting lower solution map and outer objective, and establish quantitative bias-control bounds as the barrier parameter decreases.
    \item \textbf{A barrier-metric first-order method with explicit rates.}
    We develop a proxy-gradient first-order method for the barrier-smoothed problem. The method uses
    only first-order derivatives of upper and lower objectives, while the lower-level problems are preconditioned by the
    explicit logarithmic barrier-metric determined by the polyhedral feasible set. For the barrier-smoothed objective, we prove stationarity rates of \(\widetilde O(K^{-2/3})\) in the deterministic setting and
    \(\widetilde O(K^{-2/5})\) under upper-level-only bounded stochastic noise after \(K\) outer iterations.
    \item \textbf{Dikin local geometry analysis.}
    Barrier smoothing restores differentiability but destroys global Euclidean regularity near the boundary, so the unconstrained \(L_2\)-based first-order analysis no longer applies. We replace this with a local Dikin-geometric analysis: inside suitable barrier-induced neighborhoods, the barrier-metric provides an oracle-free proxy for the unknown curvature at the moving lower centers, and restores local regularity conditions. We further prove explicit \textit{barrier-aware} schedule conditions that keep the iterates inside these well-behaved regions.
    \item \textbf{Numerical validation.}
    We evaluate our method on congestion-toll design and constrained MNIST hypercleaning
    benchmarks. The experiments show that the barrier-metric updates improve stability near active constraints and achieve competitive solution quality with substantially lower per-update cost than exact or implicit-gradient baselines.
\end{itemize}
\subsection{Related Work}

Bilevel optimization has a long history in mathematical programming; see
\citet{colson2007overview} for a broad overview. Classical approaches include approximation
schemes, penalty and value-function reformulations, and implicit differentiation through lower-level
optimality conditions. Modern non-asymptotic analysis for smooth bilevel optimization with a
strongly convex lower problem was initiated by \citet{ghadimi2018approximation}, and was
subsequently refined through approximate implicit differentiation, iterative differentiation,
single-loop schemes, and variance-reduction or momentum techniques
\citep{ji2021bilevel,hong2023two,chen2021closing,chen2022single,dagreou2022framework,
khanduri2021near,yang2021provably,yang2023achieving}. These methods provide important
stationarity guarantees, but typically rely on Hessian-vector products, inverse-Hessian
approximations, or differentiating through the lower-level solution procedure.
A recent line of work seeks to avoid second-order information altogether. In the unconstrained
setting, \citet{liu2022bome} propose a simple first-order penalty method, and
\citet{kwon2023fully} develop a fully first-order stochastic bilevel method with explicit rates in
deterministic, upper-level-stochastic, and fully stochastic regimes. 

Bilevel optimization with lower-level constraints has also received substantial recent attention.
For linearly constrained lower problems, \citet{khanduri2023linearly} propose a smoothed implicit
gradient method, while \citet{kornowski2024first} develop first-order methods with finite-time
Goldstein-stationarity guarantees. Related stochastic first-order developments include
\citet{phan2025bridging} and \citet{shen2026a}, and other constrained bilevel works use
proximal, penalty, or Lagrangian-value-function reformulations to target KKT-type or nonsmooth
stationarity notions \citep{yao2024constrained,jiang2024primal}. Among existing approaches,
barrier and smoothing ideas are especially close to ours: \citet{tsaknakis2023implicit} and
\citet{jiang2024barrier} show that barrier reformulations can restore differentiability in constrained
bilevel problems, but rely on implicit-gradient/KKT-style sensitivity information or adaptive barrier
frameworks. We instead combine barrier smoothing with the proxy-gradient philosophy of
\citet{kwon2023fully}. Their analysis relies on global Euclidean smoothness in the unconstrained lower problem; after barrier smoothing,
those Euclidean constants are not uniformly controlled near the boundary, therefore we develop a local Dikin-geometry analysis that provides
near-boundary curvature control and certifies \textit{barrier-aware} schedules for our first-order method.

\section{Problem Formulation and Algorithm}\label{sec:barrier smoothing}
Consider the constrained bilevel problem~\eqref{eq:bi-level_constrained}. Throughout the paper, we
focus on lower-level constraints and take the upper variable to be unconstrained:
\(X=\mathbb R^{d_x}\). The lower feasible set is a fixed polytope \( Y=\{y\in\mathbb R^{d_y}:Ay\le b\}, \)
independent of \(x\), and we assume that \(Y\) is compact, full-dimensional, and has nonempty
interior. We also assume that the original outer objective is bounded below:
\(F^{\star}:=\inf_{x\in X}F(x)>-\infty . \)
We impose the following standing regularity assumptions on \(f\) and \(g\).

\begin{assumption}[Objective regularity]
\label{ass:euclidean}
\leavevmode
\begin{enumerate}[leftmargin=*, label=\textup{(A\arabic*)}, ref=\textup{(A\arabic*)}]
    \item\label{ass:A1} \(f\) and \(g\) are jointly smooth in \((x,y)\), with constants
    \(\ell_{f,1}\) and \(\ell_{g,1}\).
    \item\label{ass:A2} For every \(x\), the map \(y\mapsto g(x,y)\) is \(\rho_g\)-strongly convex for some \(\rho_g>0\).
    \item\label{ass:A3} \(f,g\in C^2\), and \(\nabla^2 f,\nabla^2 g\) are jointly Lipschitz in
    \((x,y)\), with constants \(\ell_{f,2}\) and \(\ell_{g,2}\).
    \item\label{ass:A4} The first derivatives used in lower-level and outer analysis are bounded: for all
    \((x,y)\),
    \(\|\nabla_x f(x,y)\|_2\vee \|\nabla_y f(x,y)\|_2\le \ell_{f,0}\),
    \(\|\nabla_x g(x,y)\|_2\le \ell_{g,0}\).
\end{enumerate}
\end{assumption}
Under the strong convexity condition in \ref{ass:A2}, the lower minimizer \(y^\star(x)\) is unique
for every \(x\), so the lower solution map and the induced upper objective are well defined.

\subsection{Barrier Smoothing}
The main difficulty of the constrained formulation is that the lower solution map \(x\mapsto y^\star(x)\) may fail to be differentiable because of active-set changes in \(Y\). To recover a differentiable surrogate problem, we replace the constrained lower problem by an interior barrier approximation. Since \(Y\) is polyhedral with nonempty interior, we introduce the logarithmic barrier
\(\phi(y):=-\sum_{i=1}^m \log\bigl(b_i-a_i^\top y\bigr)\)
and define the barrierized lower objective \(\psi_\mu(x,y):=g(x,y)+\mu\phi(y), y\in\operatorname{int}(Y).\)
This gives the barrier-smoothed lower minimizer \(y_\mu^\star(x)\in\arg\min_{y\in\operatorname{int}(Y)}\psi_\mu(x,y),\)
and the corresponding barrier-smoothed outer objective \(F_\mu(x):=f(x,y_\mu^\star(x)).\)
The barrier surrogate restores differentiability of the lower solution map and therefore of the induced outer objective.

\begin{theorem}
\label{thm:barrier_smoothing}
Suppose Assumption~\ref{ass:euclidean} holds. Then, for every \(x\),
\begin{enumerate}[leftmargin=*]
    \item [\textup{(i)}] the barrierized lower problem has a unique minimizer
    \(y_\mu^\star(x)\in\operatorname{int}(Y)\);

    \item [\textup{(ii)}] the map \(x\mapsto y_\mu^\star(x)\) is differentiable;

    \item [\textup{(iii)}] the barrier-smoothed outer objective \(F_\mu(x)=f(x,y_\mu^\star(x))\) is differentiable,
    with
    \[
    \nabla F_\mu(x)
    =
    \nabla_x f(x,y_\mu^\star(x))
    -
    \nabla_{xy}^2\psi_\mu(x,y_\mu^\star(x))^\top
    \bigl(\nabla_{yy}^2\psi_\mu(x,y_\mu^\star(x))\bigr)^{-1}
    \nabla_y f(x,y_\mu^\star(x));
    \]

    \item [\textup{(iv)}] the barrier-smoothed model is a consistent surrogate for the original constrained lower
    problem. Namely, if \(y^\star(x)\) denotes the minimizer of the original constrained lower
    problem, then there exist finite constants \(C_g,C_y,C_F>0\) such that
    \[
    0 \le g(x,y_\mu^\star(x))-g(x,y^\star(x))\le C_g\mu,
    \]
    \[
    \|y_\mu^\star(x)-y^\star(x)\|_2\le C_y\sqrt{\mu}, \qquad |F_\mu(x)-F(x)|\le C_F\sqrt{\mu}
    \]
\end{enumerate}
\end{theorem}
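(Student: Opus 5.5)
The plan is to treat (i)--(iii) by standard convex-analytic and implicit-function arguments, and (iv) by a barrier-gap estimate combined with strong convexity.

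For (i), fix \(x\) and consider \(\psi_\mu(x,\cdot)\) on \(\operatorname{int}(Y)\). Since \(Y\) is compact, \(g(x,\cdot)\) is bounded below on \(Y\). The barrier \(\phi\) tends to \(+\infty\) as \(y\to\partial Y\), so every sublevel set of \(\psi_\mu(x,\cdot)\) is a closed subset of \(\operatorname{int}(Y)\) and is bounded, hence compact. A minimizer therefore exists in \(\operatorname{int}(Y)\). It is unique because \(\psi_\mu(x,\cdot)\) is \(\rho_g\)-strongly convex by \ref{ass:A2} and the convexity of \(\phi\).

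For (ii)--(iii), the minimizer is characterized by the stationarity equation \(G(x,y):=\nabla_y g(x,y)+\mu\nabla\phi(y)=0\). The map \(G\) is \(C^1\) on \(\mathbb R^{d_x}\times\operatorname{int}(Y)\), since \(g\in C^2\) by \ref{ass:A3} and \(\phi\) is smooth on the interior. Its \(y\)-Jacobian is
\[
\nabla^2_{yy}\psi_\mu=\nabla^2_{yy}g+\mu\nabla^2\phi\succeq\rho_g I,
\]
which is invertible. The implicit function theorem then gives a \(C^1\) local solution map. By the uniqueness in (i), this local map coincides with \(y_\mu^\star\), so \(y_\mu^\star\) is differentiable with
\[
Dy_\mu^\star(x)=-(\nabla^2_{yy}\psi_\mu)^{-1}\nabla^2_{yx}\psi_\mu .
\]
Here \(\nabla^2_{yx}\psi_\mu=\nabla^2_{yx}g\), because \(\phi\) does not depend on \(x\). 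The chain rule applied to \(f(x,y_\mu^\star(x))\) yields the formula in (iii). One uses the symmetry of \(\nabla^2_{yy}\psi_\mu\) and writes \((\nabla^2_{yx}\psi_\mu)^\top=\nabla^2_{xy}\psi_\mu^\top\) in the paper's convention.

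For (iv), write \(y_\mu=y^\star_\mu(x)\) and \(y^\star=y^\star(x)\). The lower bound \(g(x,y_\mu)\ge g(x,y^\star)\) holds because \(y_\mu\in Y\) and \(y^\star\) minimizes \(g(x,\cdot)\) over \(Y\). For the upper bound I would use the central-path duality argument. Set \(\lambda_i=\mu/(b_i-a_i^\top y_\mu)>0\). The stationarity equation becomes \(\nabla_y g(x,y_\mu)+A^\top\lambda=0\). By convexity of \(g(x,\cdot)\),
\[
g(x,y^\star)\ge g(x,y_\mu)+\nabla_y g(x,y_\mu)^\top(y^\star-y_\mu)=g(x,y_\mu)+\lambda^\top A(y_\mu-y^\star).
\]
Since \(Ay^\star\le b\) and \(\lambda\ge 0\),
\[
\lambda^\top A(y_\mu-y^\star)\ge\lambda^\top(Ay_\mu-b)=-m\mu .
\]
This gives \(g(x,y_\mu)-g(x,y^\star)\le m\mu\), so \(C_g=m\).

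Next, strong convexity together with the first-order optimality of \(y^\star\) over \(Y\) gives
\[
g(x,y_\mu)-g(x,y^\star)\ge\tfrac{\rho_g}{2}\|y_\mu-y^\star\|^2 .
\]
Hence \(\|y_\mu-y^\star\|\le\sqrt{2m\mu/\rho_g}\), and \(C_y=\sqrt{2m/\rho_g}\). Finally, \(f(x,\cdot)\) is \(\ell_{f,0}\)-Lipschitz in \(y\) by \ref{ass:A4}, so \(|F_\mu(x)-F(x)|\le\ell_{f,0}C_y\sqrt\mu\). All three constants are uniform in \(x\), as required.

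The main obstacle is the uniform-in-\(x\) gap bound in (iv). The duality argument above avoids any dependence on distance to the boundary and is the step I would verify most carefully, in particular the sign conventions in the KKT identity. The remaining steps are routine.
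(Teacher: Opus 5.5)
Your proposal is correct and follows essentially the same route as the paper: existence from barrier blow-up plus compactness of $Y$, uniqueness from strong convexity, the implicit function theorem and chain rule for (ii)--(iii), and for (iv) the gap bound $g(x,y_\mu^\star(x))-g(x,y^\star(x))\le m\mu$, then strong convexity with the variational inequality, then the $\ell_{f,0}$-Lipschitz bound, giving the same constants $C_g=m$, $C_y=\sqrt{2m/\rho_g}$, $C_F=\ell_{f,0}C_y$. Your multiplier form $\lambda_i=\mu/s_i(y_\mu^\star(x))$ is a rephrasing of the paper's identity $\langle\nabla\phi(z),y-z\rangle=m-\sum_i s_i(y)/s_i(z)\le m$, and your remark that the local implicit-function solution coincides with $y_\mu^\star$ by uniqueness supplies a detail the paper leaves implicit.
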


Theorem~\ref{thm:barrier_smoothing} shows that barrier smoothing restores the differentiability
needed for gradient-based optimization while remaining faithful to the original constrained bilevel
problem: as \(\mu\downarrow0\), the barrierized lower solution and the induced outer objective
approach their constrained counterparts.

\subsection{A Barrier-Metric First-Order Algorithm}

Although Theorem~\ref{thm:barrier_smoothing} makes \(F_\mu\) differentiable, directly using
\(\nabla F_\mu(x)\) is still computationally unattractive. Computing the hypergradient requires an
accurate solution of the barrierized lower problem and the sensitivity formula involves mixed
derivatives together with an inverse barrierized lower Hessian. We therefore avoid direct
hypergradient computation and instead introduce a first-order proxy-gradient construction, following
the proxy-tracking philosophy of \citet{kwon2023fully}.

We introduce the barrierized lower value function $\psi_\mu^\star(x)
:=
\min_{y\in\operatorname{int}(Y)} \psi_\mu(x,y),$
and define the penalized first-order surrogate
\begin{equation}\label{eq:L_lam_mu}
L_{\lambda,\mu}(x,y)
:=
f(x,y)+\lambda\bigl(\psi_\mu(x,y)-\psi_\mu^\star(x)\bigr),
\qquad y\in\operatorname{int}(Y), \notag
\end{equation}
together with its minimizer:
\(y_{\lambda,\mu}^\star(x)
\in
\arg\min_{y\in\operatorname{int}(Y)} L_{\lambda,\mu}(x,y).\)

The point \(y_{\lambda,\mu}^\star(x)\) serves as a first-order surrogate for the exact barrierized
response \(y_\mu^\star(x)\). The key point is that \(L_{\lambda,\mu}\) admits simple first-order derivatives: $\nabla_y L_{\lambda,\mu}(x,y)
=
\nabla_y f(x,y)+\lambda \nabla_y \psi_\mu(x,y),$
since \(\psi_\mu^\star(x)\) does not depend on \(y\). Likewise, if \(z\) tracks the exact lower
minimizer and \(y\) tracks the proxy minimizer, a natural first-order outer direction is
\begin{equation}\label{eq:qx_def_intro}
q^x(x;y,z,\lambda)
:=
\nabla_x f(x,y)+\lambda\bigl(\nabla_x\psi_\mu(x,y)-\nabla_x\psi_\mu(x,z)\bigr). \notag
\end{equation}
Because the barrier \(\phi\) is independent of \(x\), we have $\nabla_x\psi_\mu(x,y)=\nabla_x g(x,y),$ and therefore $q^x(x;y,z,\lambda)
=
\nabla_x f(x,y)+\lambda\bigl(\nabla_x g(x,y)-\nabla_x g(x,z)\bigr).$
Thus both the inner and outer updates can be formed using only first-order information of
\(f\) and \(g\).

Algorithm~\ref{alg:dikin_sbo} implements this construction with two lower trackers. At outer
iteration \(k\), the variable \(z_k\) tracks the exact barrierized lower minimizer \(y_\mu^\star(x_k)\)
of \(\psi_\mu(x_k,\cdot)\), while \(y_k\) tracks the surrogate minimizer
\(y_{\lambda_k,\mu}^\star(x_k)\) of \(L_{\lambda_k,\mu}(x_k,\cdot)\). The multiplier
\(\lambda_k\) reduces the bias between the first-order proxy direction \(q_k^x\) and the true
hypergradient \(\nabla F_\mu(x_k)\). The inner-loop length \(T\) determines the contraction budget
of the two trackers, while the ratio \(\xi\) controls how far the outer iterate moves in one outer
round. In the analysis we also write \(\beta_k:=\alpha_k\lambda_k\) for the effective step size of the
\(y\)-tracker. Relative to the unconstrained proxy-gradient method of \citet{kwon2023fully}, the main difference is the lower-tracking geometry. Instead
of Euclidean gradient steps, the two inner trackers use logarithmic barrier-metrics
\(\nabla^2\phi(z_k)\) and \(\nabla^2\phi(y_k)\), held fixed during the \(T\) inner steps of outer
iteration \(k\). These metrics are explicit from the polyhedral constraints. For
\(Y=\{y:Ay\le b\}\), with slacks \(s(y):=b-Ay\), we have
\(\nabla\phi(y)=A^\top s(y)^{-1},
\nabla^2\phi(y)=A^\top\operatorname{Diag}(s(y)^{-2})A .\)
Thus the lower-tracker preconditioners use only the current slacks and the known constraint matrix
\(A\), rather than Hessians of \(f\) or \(g\), lower-Hessian inversions, or implicit differentiation.
For box constraints this metric is diagonal; for a general polytope, the preconditioned step can be
computed efficiently by solving a linear system with
\(A^\top\operatorname{Diag}(s(y)^{-2})A\), or by applying this matrix through \(A\) and \(A^\top\).
This explicit preconditioner makes the lower updates implementable without problem Hessians, but it
also changes the scale on which step sizes must be certified. In the unconstrained setting, global
Euclidean smoothness constants certify the lower, proxy, and outer step sizes. After barrier
smoothing, these global Euclidean scales are no longer reliable near the boundary.
Section~\ref{sec:dikin_geometry} develops the local Dikin geometry and \textit{barrier-aware} schedules
needed to keep the two trackers in well-behaved regions.

\begin{algorithm}[h]
\caption{Barrier-Metric First-Order Algorithm}
\label{alg:dikin_sbo}
\begin{algorithmic}[1]
\Require iteration budget \(K\), inner-loop length \(T\), outer ratio \(\xi>0\),
initial point \(x_0\in\mathbb R^{d_x}\), interior initializations \(y_0,z_0\in\operatorname{int}(Y)\),
initial multiplier \(\lambda_0>0\), schedules \(\{\lambda_k\}_{k\ge 0}\), \(\{\alpha_k\}_{k\ge 0}\), \(\{\gamma_k\}_{k\ge 0}\),
\(\{\delta_k\}_{k\ge 0}\)
\For{$k=0,1,\dots,K-1$}
    \State \(z_k^{(0)} \gets z_k,\quad y_k^{(0)} \gets y_k\)
    \For{$t=0,1,\dots,T-1$}
        \State \(z_k^{(t+1)} \gets z_k^{(t)} - \gamma_k  \nabla^2\phi(z_k)^{-1} \nabla_y \psi_\mu\!\left(x_k,z_k^{(t)}\right)\)
        \State \(y_k^{(t+1)} \gets y_k^{(t)} - \alpha_k \nabla^2\phi(y_k)^{-1} \nabla_y L_{\lambda_k,\mu}\!\left(x_k,y_k^{(t)}\right)\)
        % \Comment{$\nabla_y L_{\lambda_k,\mu}(x_k,y)=\nabla_y f(x_k,y)+\lambda_k \nabla_y\psi_\mu(x_k,y)$}
    \EndFor
    \State \(z_{k+1} \gets z_k^{(T)},\qquad y_{k+1} \gets y_k^{(T)}\)
    \State \(q_k^x \gets \nabla_x f(x_k,y_{k+1}) + \lambda_k\!\left(\nabla_x\psi_\mu(x_k,y_{k+1})-\nabla_x\psi_\mu(x_k,z_{k+1})\right)\)
    \State \(x_{k+1} \gets x_k - \xi\alpha_k\, q_k^x\)
    \State \(\lambda_{k+1} \gets \lambda_k + \delta_k\)
\EndFor
\State \Return \(\{x_k,y_k,z_k,\lambda_k\}_{k=0}^{K}\)
\end{algorithmic}
\end{algorithm}

\section{Convergence Analysis under Dikin Geometry}
\label{sec:dikin_geometry}

Section~\ref{sec:barrier smoothing} introduced the barrier-smoothed surrogate and
Algorithm~\ref{alg:dikin_sbo}. The remaining difficulty is geometric. Barrier smoothing restores
differentiability, but the logarithmic barrier creates boundary-induced curvature. Indeed, for
\(\phi(y)=-\sum_{i=1}^m\log s_i(y)\), \(s_i(y):=b_i-a_i^\top y\), we have
\(\nabla^2\phi(y)=\sum_{i=1}^m a_i a_i^\top/s_i(y)^2\). Hence, as \(y\) approaches an active face,
the curvature in the corresponding boundary-normal direction can become arbitrarily large. As a
result, the barrierized lower objective, and the proxy objective built from it, need not satisfy the
global Euclidean regularity used in unconstrained first-order bilevel analysis; in particular, global
Euclidean estimates for minimizer stability, outer smoothness, and proxy-gradient bias are no longer
available. We replace these global Euclidean estimates by local Dikin regularity. The Dikin metric induced by
the barrier Hessian gives the natural local scale near the boundary: directions that look moderate in
Euclidean norm can have large boundary-normal curvature when some slack is small, and the Dikin
norm records this curvature. If the exact or proxy lower center were known, the barrier Hessian at that center would set the natural local step-size scale. In our setting these centers are unknown and move with \(x\) and \(\lambda\); however, inside suitable Dikin neighborhoods the barrier geometry at the current iterate is spectrally comparable to the geometry at the center. Thus the barrier-metric provides an oracle-free local
curvature proxy for certifying first-order schedules.  Figure~\ref{fig:geometric_illustration} illustrates our approach's mechanism. The analysis in the remainder of this section follows this geometry. We first establish local Dikin estimates for curvature,
smoothness, minimizer stability, and proxy-gradient bias. We then use these estimates to design
\textit{barrier-aware} schedules under which the tracker contraction dominates center drift, keeping the
iterates inside the Dikin neighborhoods where the local estimates are valid. This closes the local
analysis and yields the deterministic convergence guarantee.

\begin{figure}[h]
    \centering
    \includegraphics[width=.8\linewidth]{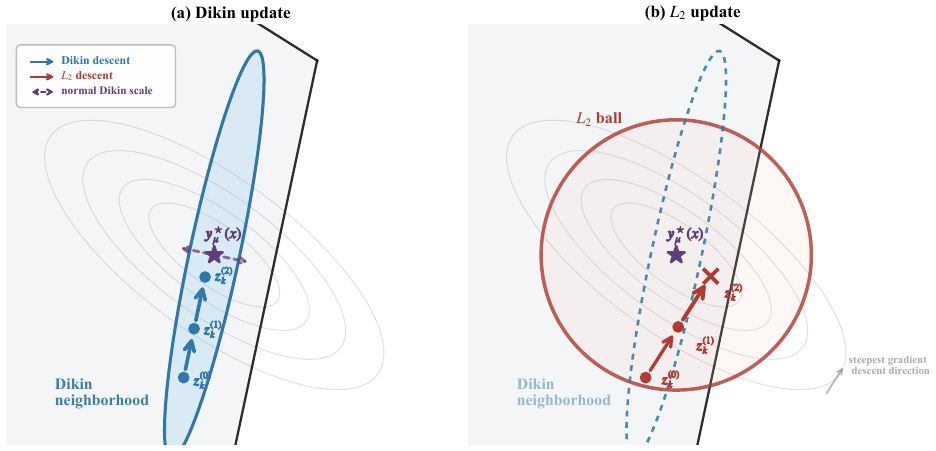}
    \caption{
    Geometric illustration.
    (a) The Dikin update is scaled by the local barrier-metric, which contracts motion in the
    boundary-normal direction as the center approaches an active face.
    (b) The Euclidean \(L_2\) update is isotropic and can move too far in the boundary-normal direction,
    causing the iterate to leave the Dikin neighborhood and approach the boundary, where feasibility and
    step-size control can break down.
    }
\label{fig:geometric_illustration}
\end{figure}

\subsection{Dikin Geometry}
\label{subsec:dikin_local}

\paragraph{Notations.} For any \(x\in\mathbb R^{d_x}\), we attach two barrier-metrics:
\(H_\mu^\star(x):=\nabla^2\phi(y_\mu^\star(x))\) at the exact barrierized minimizer and
\(H_{\lambda,\mu}^\star(x):=\nabla^2\phi(y_{\lambda,\mu}^\star(x))\) at the proxy minimizer. For an anchor \(c\in\{y_\mu^\star(x),y_{\lambda,\mu}^\star(x)\}\), let \(H_c=\nabla^2\phi(c)\), and define
\(\|u\|_c^2:=\langle H_cu,u\rangle\) and
\(\|w\|_{c,*}^2:=\langle H_c^{-1}w,w\rangle\). For a linear map \(M\), we use the induced norms
\(\|M\|_{2\to c,*}:=\sup_{\|u\|_2=1}\|Mu\|_{c,*}\) and
\(\|M\|_{c\to c,*}:=\sup_{\|u\|_c=1}\|Mu\|_{c,*}\), and analogously for other norm pairs. Fix a radius parameter \(\eta\in(0,1/2)\), at a fixed outer point \(x\), we define the exact and proxy Dikin neighborhoods
\(T_{\eta,\mu}(x):=\{y\in\operatorname{int}(Y):\|y-y_\mu^\star(x)\|_{y_\mu^\star(x)}\le \eta\}\) and
\(T_{\eta,\mu}^{\lambda}(x):=\{y\in\operatorname{int}(Y):\|y-y_{\lambda,\mu}^\star(x)\|_{y_{\lambda,\mu}^\star(x)}\le \eta\}\).
We will also use the enlarged buffer neighborhoods \(T_{2\eta,\mu}(x)\) and
\(T_{2\eta,\mu}^{\lambda}(x)\), obtained by replacing \(\eta\) with \(2\eta\).
The exact neighborhood \(T_{\eta,\mu}(x)\) is the local region used for the \(z\)-tracker analysis, while the
proxy neighborhood \(T_{\eta,\mu}^{\lambda}(x)\) is used for the \(y\)-tracker analysis. The target neighborhoods are the regions in which the trackers will be maintained; the \(2\eta\)-buffer neighborhoods provide room for anchor changes when the centers
move between outer iterations. Along the algorithmic trajectory \(\{x_k,\lambda_k\}\), the moving families \(\{T_{\eta,\mu}(x_k)\}_k\) and \(\{T_{\eta,\mu}^{\lambda_k}(x_k)\}_k\) form the exact and proxy Dikin tubes. The results in this subsection are static local-geometry
statements: for each fixed \(x\), we derive Dikin regularity around the exact barrierized center and
transfer it to the proxy center when \(\lambda\) is sufficiently large. These estimates do not by
themselves imply that the algorithmic iterates stay in the corresponding neighborhoods. That closure is proved in Section~\ref{subsec:tube_and_conv}, where a \textit{barrier-aware} schedule is shown to
keep the trackers inside the target neighborhoods. With this closure deferred, we now derive the local Dikin regularity needed for the analysis.

\begin{proposition}[Local Dikin regularity]
\label{prop:derived_dikin_regularity}
Suppose Assumption~\ref{ass:euclidean} holds. Fix a neighborhood radius
\(\eta\in(0,1/2)\). Then there exist finite constants
\(\rho_{\psi}^{\eta},\rho_{\psi,\lambda}^{\eta}>0\) and
\(\ell_{\psi,1}^{\eta},\ell_{\psi,2}^{\eta},
\ell_{\psi,\lambda,1}^{\eta},\ell_{\psi,\lambda,2}^{\eta},
\ell_{f,0}^{\eta},\ell_{f,1}^{\eta},\ell_{f,2}^{\eta}<\infty\),
such that, for every \(x\in\mathbb R^{d_x}\) the following hold.
\begin{enumerate}[leftmargin=*]
\item[\textup{(i)}]
On the neighborhood \(T_{\eta,\mu}(x)\), the barrierized lower objective is locally well conditioned in
the anchored Dikin metric:
\(\rho_{\psi}^{\eta}H_\mu^\star(x)\preceq
\nabla_{yy}^2\psi_\mu(x,y)\preceq
\ell_{\psi,1}^{\eta}H_\mu^\star(x)\) for every \(y\in T_{\eta,\mu}(x)\).
Moreover, \(\nabla_{xy}^2\psi_\mu\) is locally bounded, and
\(\nabla_{yy}^2\psi_\mu,\nabla_{xy}^2\psi_\mu\) are locally Lipschitz on
\(T_{\eta,\mu}(x)\) in the anchored Dikin norms, with constants
\(\ell_{\psi,1}^{\eta},\ell_{\psi,2}^{\eta}\).

\item[\textup{(ii)}]
The upper-level derivatives of \(f\) are locally controlled on \(T_{\eta,\mu}(x)\):
\(\nabla_y f\), \(\nabla_{xy}^2 f\), and \(\nabla_{yy}^2 f\) are locally bounded, and the
corresponding Hessian terms are locally Lipschitz, in the anchored Dikin norms, with constants
\(\ell_{f,0}^{\eta},\ell_{f,1}^{\eta},\ell_{f,2}^{\eta}\).

\item[\textup{(iii)}]
On the proxy neighborhood \(T_{\eta,\mu}^{\lambda}(x)\), the barrierized lower objective inherits the same
type of local Dikin regularity with respect to the proxy anchor \(H_{\lambda,\mu}^\star(x)\):
\(\rho_{\psi,\lambda}^{\eta}H_{\lambda,\mu}^\star(x)\preceq
\nabla_{yy}^2\psi_\mu(x,y)\preceq
\ell_{\psi,\lambda,1}^{\eta}H_{\lambda,\mu}^\star(x)\) for every
\(y\in T_{\eta,\mu}^{\lambda}(x)\), together with the analogous mixed-Hessian and
Lipschitz-Hessian bounds in the proxy anchored Dikin norm, with constants
\(\ell_{\psi,\lambda,1}^{\eta},\ell_{\psi,\lambda,2}^{\eta}\).
\end{enumerate}
\end{proposition}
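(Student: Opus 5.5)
The whole argument rests on the self-concordance of the logarithmic barrier \(\phi\), which is standard for the polyhedral barrier and needs no oracle information. For any anchor \(c\in\operatorname{int}(Y)\) and any \(y\) with \(r:=\|y-c\|_c<1\), every slack satisfies \(|s_i(y)-s_i(c)|/s_i(c)\le r\). Consequently
\[
(1-r)^2\nabla^2\phi(c)\preceq\nabla^2\phi(y)\preceq (1-r)^{-2}\nabla^2\phi(c).
\]
The Dikin ellipsoid of radius \(<1\) also lies in \(\operatorname{int}(Y)\). With \(\eta<1/2\) the comparison factors lie in \([1/4,4]\). The plan is to write \(\nabla^2_{yy}\psi_\mu=\nabla^2_{yy}g+\mu\nabla^2\phi\) and bound each piece against \(H_c\) separately.

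\textbf{Step 1 (curvature sandwich, part (i)).} For the lower bound I use \(\nabla^2_{yy}\psi_\mu\succeq \mu\nabla^2\phi(y)\succeq \mu(1-\eta)^2H_\mu^\star(x)\), so that \(\rho_\psi^\eta=\mu(1-\eta)^2\) works. The strong convexity term \(\rho_g I\) can be added if useful. For the upper bound I need \(\nabla^2_{yy}g\preceq \ell_{g,1}I\preceq C H_c\). Compactness of \(Y\) gives this uniformly in the anchor. Indeed, \(s_i(c)\le D:=\max_{y\in Y}\max_i s_i(y)<\infty\), so \(H_c\succeq D^{-2}A^\top A\). Since \(Y\) is a compact full-dimensional polytope, \(A\) has full column rank, so \(A^\top A\succeq\sigma_{\min}(A)^2 I\). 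Hence \(I\preceq \kappa H_c\) with \(\kappa:=D^2/\sigma_{\min}(A)^2\), independent of \(x\). This gives \(\ell^\eta_{\psi,1}=\ell_{g,1}\kappa+\mu(1-\eta)^{-2}\). The same embedding \(\|u\|_2\le\sqrt\kappa\|u\|_c\) and \(\|w\|_{c,*}\le\sqrt\kappa\|w\|_2\) converts Euclidean operator bounds into anchored ones. For example, \(\|\nabla^2_{xy}\psi_\mu\|_{2\to c,*}=\|\nabla^2_{xy}g\|_{2\to c,*}\le\sqrt\kappa\,\ell_{g,1}\), because \(\phi\) is independent of \(x\).

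\textbf{Step 2 (Lipschitz bounds and part (ii)).} For the \(g\) and \(f\) contributions, Assumption (A3) gives Euclidean Lipschitzness of the Hessians. Measuring displacements in \(\|\cdot\|_c\) and outputs in \(\|\cdot\|_{c,*}\) costs a factor \(\kappa^{3/2}\) at most. Boundedness of \(\nabla_y f\), \(\nabla^2_{xy}f\) and \(\nabla^2_{yy}f\) in anchored norms follows from (A1), (A4) and the same embedding. This yields \(\ell^\eta_{f,0},\ell^\eta_{f,1},\ell^\eta_{f,2}\). The barrier term \(\mu\nabla^2\phi\) is the only genuinely non-Euclidean piece. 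For it I use the self-concordance third-derivative bound \(|D^3\phi(y)[u,u,h]|\le 2\|u\|_y^2\|h\|_y\) together with the comparison \(\|\cdot\|_y\le(1-\eta)^{-1}\|\cdot\|_c\). Integrating along the segment from \(y\) to \(y'\), which stays in the Dikin ball, gives
\[
\|\mu(\nabla^2\phi(y)-\nabla^2\phi(y'))\|_{c\to c,*}\le 2\mu(1-2\eta)^{-3}\|y-y'\|_c
\]
or a similar constant. The mixed Hessian \(\nabla^2_{xy}\psi_\mu\) has no barrier part, so its Lipschitz bound is purely Euclidean.

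\textbf{Step 3 (proxy neighborhood, part (iii)).} None of the above used that \(c\) is the exact minimizer. The only facts needed are that \(c\in\operatorname{int}(Y)\) and that \(y\) lies in the \(\eta\)-Dikin ball at \(c\). The bound on \(\kappa\) is uniform over all interior anchors. Repeating Steps 1–2 with \(c=y^\star_{\lambda,\mu}(x)\) therefore gives (iii) with the same constants. This requires only that \(y^\star_{\lambda,\mu}(x)\) exists in \(\operatorname{int}(Y)\). Existence should follow for \(\lambda\) large enough that \(\lambda\psi_\mu+f\) is strongly convex, for instance \(\lambda\rho_g>\ell_{f,1}\), because the barrier forces coercivity toward \(\partial Y\). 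This is the step where "\(\lambda\) sufficiently large" enters.

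\textbf{Main obstacle.} The main difficulty is keeping every constant uniform in \(x\), and in \(\lambda\) for (iii), despite unbounded barrier curvature. The key device is the uniform lower bound \(H_c\succeq\kappa^{-1}I\) from compactness and full column rank of \(A\). Without it, the Euclidean constants of \(g\) and \(f\) could not be dominated by the anchored metric. The barrier's own growth is handled entirely by self-concordance, which is affine invariant and needs no uniformity argument. The resulting constants may depend on \(\mu\) and on \(D,\sigma_{\min}(A)\), which the statement permits.
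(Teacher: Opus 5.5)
Your proposal is correct and follows essentially the same route as the paper. You split \(\nabla_{yy}^2\psi_\mu=\nabla_{yy}^2g+\mu\nabla^2\phi\), control the barrier part by the self-concordant metric change of Lemma~\ref{lem:anchor_switch}, control the \(g\)- and \(f\)-terms through the uniform Euclidean--Dikin conversion from compactness of \(Y\) and full column rank of \(A\) (Lemma~\ref{lem:euc_dikin}), and get \textup{(iii)} by rerunning the argument at the proxy anchor, exactly as the paper's step \textup{(iii(b))} does.

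The paper additionally works on the \(2\eta\) buffer neighborhoods and proves the proxy-center proximity bound \(\|y_{\lambda,\mu}^\star(x)-y_\mu^\star(x)\|_{y_\mu^\star(x)}\le C_f^\eta/\lambda\) for \(\lambda\ge\lambda_\eta\); the stated proposition needs neither. One small correction: existence of \(y_{\lambda,\mu}^\star(x)\in\operatorname{int}(Y)\) holds for every \(\lambda>0\), by the same coercivity argument as Theorem~\ref{thm:barrier_smoothing}\textup{(i)}, since \(f(x,\cdot)\) is bounded on compact \(Y\). A condition like \(\lambda\rho_g>\ell_{f,1}\) is needed only for uniqueness.
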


The fully expanded statement, including all mixed-Hessian and Lipschitz-Hessian bounds in both the
exact and proxy anchored Dikin norms, is given in Appendix~\ref{app:dikin_regularity}.
Proposition~\ref{prop:derived_dikin_regularity} is the local barrier-geometry substitute for the
global Euclidean regularity used in unconstrained first-order bilevel analysis. By self-concordant
metric change, if \(y\in T_{\eta,\mu}(x)\), then
\((1-\eta)^2H_\mu^\star(x)\preceq\nabla^2\phi(y)\preceq(1-\eta)^{-2}H_\mu^\star(x)\), and an
analogous comparison holds on the proxy neighborhood with anchor \(H_{\lambda,\mu}^\star(x)\). Thus,
after normalization by the local barrier-metric, the barrierized lower problem recovers controlled
curvature and smoothness inside the exact and proxy neighborhoods. In this sense, the Dikin metric provides
an oracle-free local curvature scale for the moving lower centers, replacing the global Euclidean
smoothness constants available in the unconstrained setting. These local estimates are the ingredients needed for the first-order analysis. In particular,
Proposition~\ref{prop:local_consequences} below uses them to establish stability of the exact and
proxy minimizer maps, local smoothness of the barrier-smoothed objective \(F_\mu\), and the
\(O(1/\lambda)\) proxy-gradient bias bound:

\begin{proposition}[Local consequences of Dikin regularity]
\label{prop:local_consequences}
Suppose Assumption~\ref{ass:euclidean} holds and let \(\eta\) be as in
Proposition~\ref{prop:derived_dikin_regularity}. Then there exist finite constants
\(\ell_{\ast,0}^\eta,\ell_{\lambda,0}^\eta,L_F^\eta,c_x^\eta\) such that the following hold.

\begin{enumerate}[leftmargin=*]
    \item[\textup{(i)}] \textbf{Local stability of the proxy minimizer map.}
    For any \(x_1,x_2\in X\) and  \(\lambda_1,\lambda_2\ge \lambda_0\),
    \begin{equation}\label{eq:prop_local_proxy}
    \|y_{\lambda_2,\mu}^\star(x_2)-y_{\lambda_1,\mu}^\star(x_1)\|_{y_{\lambda_1,\mu}^\star(x_1)}
    \le
    \frac{2\ell_{f,0}^\eta}{\rho_{\psi}^\eta}
    \left|\frac{1}{\lambda_2}-\frac{1}{\lambda_1}\right|
    +
    \ell_{\lambda,0}^\eta \|x_2-x_1\|_2. \notag
    \end{equation}
    \item[\textup{(ii)}] \textbf{Local stability of the exact minimizer map.}
    For any \(x_1,x_2\in X\),
    \begin{equation}\label{eq:prop_local_exact}
    \|y_\mu^\star(x_2)-y_\mu^\star(x_1)\|_{y_\mu^\star(x_1)}
    \le
    \ell_{\ast,0}^\eta \|x_2-x_1\|_2. \notag
    \end{equation}

    \item[\textup{(iii)}] \textbf{Local smoothness of the barrier-smoothed outer objective.}
    The barrier-smoothed outer objective \(F_\mu(x)=f(x,y_\mu^\star(x))\) is differentiable on \(X\), and for any \(x_1,x_2\in X\),
    \begin{equation}\label{eq:prop_local_smooth}
    \|\nabla F_\mu(x_2)-\nabla F_\mu(x_1)\|_2
    \le
    L_F^\eta \|x_2-x_1\|_2. \notag
    \end{equation}

    \item[\textup{(iv)}] \textbf{Local proxy-gradient bias.}
    Let \(C_{\lambda,\mu}^\star(x):=\min_{y\in\operatorname{int}(Y)} L_{\lambda,\mu}(x,y)\).
    Then for every \(x\in X\) and every \(\lambda\ge \lambda_0\),
    \begin{equation}\label{eq:prop_local_bias}
    \|\nabla F_\mu(x)-\nabla C_{\lambda,\mu}^\star(x)\|_2
    \le
    \frac{c_x^\eta}{\lambda}. \notag
    \end{equation}
\end{enumerate}
\end{proposition}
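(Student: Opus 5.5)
The plan is to derive all four items from implicit differentiation of the first-order optimality conditions, with Proposition~\ref{prop:derived_dikin_regularity} supplying every curvature and smoothness bound in the anchored Dikin norms. For (ii), I would use the stationarity condition \(\nabla_y\psi_\mu(x,y_\mu^\star(x))=0\). Theorem~\ref{thm:barrier_smoothing}(ii) makes \(y_\mu^\star\) differentiable, and its derivative is
\[
Dy_\mu^\star(x)=-\nabla_{yy}^2\psi_\mu(x,y_\mu^\star(x))^{-1}\nabla_{yx}^2\psi_\mu(x,y_\mu^\star(x)).
\]
Proposition~\ref{prop:derived_dikin_regularity}(i) gives \(\nabla_{yy}^2\psi_\mu\succeq\rho_\psi^\eta H_\mu^\star(x)\). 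Hence \(\|Dy_\mu^\star(x)u\|_{y_\mu^\star(x)}\le(\rho_\psi^\eta)^{-1}\|\nabla_{yx}^2\psi_\mu u\|_{y_\mu^\star(x),*}\), and the last factor is bounded by the local mixed-Hessian bound times \(\|u\|_2\). Note that \(\nabla_{yx}^2\psi_\mu=\nabla_{yx}^2 g\), since \(\phi\) does not depend on \(x\).

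To pass from this infinitesimal bound to the two-point statement, I would integrate along the segment \(x_t=x_1+t(x_2-x_1)\). The derivative bound is stated in the norm anchored at the moving center \(y_\mu^\star(x_t)\), and I would convert it to the norm anchored at \(y_\mu^\star(x_1)\) using the self-concordant comparison \((1-\eta)^2H\preceq\nabla^2\phi\preceq(1-\eta)^{-2}H\). This comparison is valid only while the path stays inside the Dikin ball. So I would run a continuation argument: let \(t^\ast\) be the first exit time from \(T_{\eta,\mu}(x_1)\), and show the bound holds up to \(t^\ast\). When \(\|x_2-x_1\|\) is large enough that the path exits, the bound is handled separately: either the claim is read as local (only segments short enough to stay inside the ball), or \(\ell_{\ast,0}^\eta\) is enlarged.

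I expect this to be the main obstacle. Far from the ball, the Dikin distance between two interior points is not Lipschitz in the Euclidean displacement. Because \(Y\) is compact, however, \(y_\mu^\star(x)\) stays at slack distance at least \(c(\mu)>0\) from the boundary, by a uniform argument using bounded \(\nabla_x g\) and the strong convexity in \ref{ass:A2}. This lets me absorb the far regime into the constants, at the cost of \(\mu\)-dependence. I would first try to make the continuation argument work and fall back on this compactness argument.

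Item (i) follows the same route applied to \(\nabla_y f+\lambda\nabla_y\psi_\mu=0\). Dividing by \(\lambda\) gives \(\nabla_y\psi_\mu(x,y)+\lambda^{-1}\nabla_y f(x,y)=0\), which I would treat as a smooth system in \((x,1/\lambda)\). Its Jacobian \(\nabla_{yy}^2\psi_\mu+\lambda^{-1}\nabla_{yy}^2f\) is at least \(\tfrac12\rho_\psi^\eta H\) once \(\lambda_0\ge 2\ell_{f,1}^\eta/\rho_\psi^\eta\). The derivative with respect to \(1/\lambda\) is \(-J^{-1}\nabla_y f\), whose Dikin norm is at most \(2\ell_{f,0}^\eta/\rho_\psi^\eta\). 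This yields the first term of (i), and the \(x\)-derivative gives \(\ell_{\lambda,0}^\eta\). Large \(\lambda\) also places the proxy center within \(O(1/\lambda)\) Dikin distance of the exact center, which is what allows Proposition~\ref{prop:derived_dikin_regularity}(iii) to be used.

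For (iii), I would differentiate the formula in Theorem~\ref{thm:barrier_smoothing}(iii). Writing \(v(x)=\nabla_{yy}^2\psi_\mu^{-1}\nabla_y f\), its dual-to-primal norm is at most \(\ell_{f,0}^\eta/\rho_\psi^\eta\). The Lipschitz dependence of each factor (Hessians Lipschitz in Dikin norms, \(y_\mu^\star\) Lipschitz by (ii), \(\nabla_x f\) smooth) combines by the product rule into \(L_F^\eta\).

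For (iv), I would use the envelope formula \(\nabla C_{\lambda,\mu}^\star(x)=\nabla_x f(x,y_\lambda)+\lambda(\nabla_x g(x,y_\lambda)-\nabla_x g(x,y_\mu^\star))\), where \(y_\lambda=y_{\lambda,\mu}^\star(x)\). I would then Taylor-expand \(\nabla_x g(x,\cdot)\) around \(y_\mu^\star\) to second order. The expansion of the optimality condition gives \(y_\lambda-y_\mu^\star=-\lambda^{-1}\nabla_{yy}^2\psi_\mu^{-1}\nabla_y f+O(\lambda^{-2})\) in the Dikin norm. Substituting, the first-order term reproduces exactly the implicit-gradient term of \(\nabla F_\mu\), and the remainders are \(\lambda\cdot O(\|y_\lambda-y_\mu^\star\|^2)\) and \(\ell_{f,1}\|y_\lambda-y_\mu^\star\|\). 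Both are \(O(1/\lambda)\), which gives \(c_x^\eta\).
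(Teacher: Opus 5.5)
For (iii) and (iv) you follow the paper. Your (iii) is the same product-rule splitting of the implicit hypergradient formula. Your (iv) expansion is exactly Lemma~\ref{lem:proxy_direction_linearization} evaluated at $y=y_{\lambda,\mu}^\star(x)$: envelope identity, second-order remainders of $\nabla_x g$ and $\nabla_y\psi_\mu$ about $y_\mu^\star(x)$, cancellation of the linear term against the implicit-gradient term, then $\|y_{\lambda,\mu}^\star(x)-y_\mu^\star(x)\|_{y_\mu^\star(x)}\le \ell_{f,0}^\eta/(\lambda\rho_\psi^\eta)$.

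For (i) and (ii) your route is different. The paper never differentiates the proxy map. It assumes $y_{\lambda_2,\mu}^\star(x_2)\in T_{2\eta,\mu}^{\lambda_1}(x_1)$ and uses strong convexity of $L_{\lambda_1,\mu}(x_1,\cdot)$ in the metric at $y_{\lambda_1,\mu}^\star(x_1)$. This bounds the distance by $2/(\lambda_1\rho_\psi^\eta)$ times the dual norm of $\nabla_y L_{\lambda_1,\mu}(x_1,y_{\lambda_2,\mu}^\star(x_2))$. It evaluates that residual by subtracting the optimality condition at $(x_2,\lambda_2)$, and gets (ii) as the limit $\lambda_1=\lambda_2=\lambda\to\infty$. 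This one-shot argument gives the stated coefficient $2\ell_{f,0}^\eta/\rho_\psi^\eta$ directly. However, the in-ball condition is a hypothesis, and the paper delegates its verification to the tube-maintenance argument, which itself invokes (i).

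Your implicit-function and continuation argument loses factors $(1-\eta)^{-1}$, which can be absorbed into the constants. In exchange, it derives the in-ball condition from smallness of $\|x_2-x_1\|_2$ and $|1/\lambda_2-1/\lambda_1|$, and it gives (ii) without a limiting step. The speed bounds you integrate actually hold at every point, since $\nabla_{yy}^2\psi_\mu(x,y)\succeq\mu\nabla^2\phi(y)$ for all $y\in\operatorname{int}(Y)$. The tube enters only through the anchor conversion.

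Your far-regime fallback, however, rests on a false premise. How close $y_\mu^\star(x)$ comes to $\partial Y$ is governed by stationarity, $\mu A^\top s(y_\mu^\star(x))^{-1}=-\nabla_y g(x,y_\mu^\star(x))$. So it is controlled by $\nabla_y g$, not $\nabla_x g$. Assumption~\ref{ass:euclidean} does not bound $\nabla_y g$ uniformly in $x$: smoothness lets it grow with $\|x\|_2$, and strong convexity does not prevent this.

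For example, take $Y=[-1,1]$ and $g(x,y)=\tfrac12 y^2+xy$, which satisfies \ref{ass:A1}--\ref{ass:A4} on $\mathbb R\times Y$. Stationarity forces $1+y_\mu^\star(x)\approx\mu/x\to0$ as $x\to\infty$. If \ref{ass:A4} is read on all of $\mathbb R^2$, use instead $g=\tfrac{a}{2}y^2+\sqrt{1+x^2}\,\tanh\bigl(y(1+x^2)^{-1/4}\bigr)$ with $a>1$. It has bounded $\nabla_x g$ and a bounded, Lipschitz Hessian, yet $\partial_y g$ grows like $|x|^{1/2}$ on $Y$.

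So there is no uniform slack bound $c(\mu)>0$, and the literal ``for any $x_1,x_2\in X$'' cannot be reached this way; it would need a genuinely different argument. Drop the fallback and keep the local reading. That is also all the paper proves: its proof of (i) is explicitly restricted to pairs with $y_{\lambda_2,\mu}^\star(x_2)\in T_{2\eta,\mu}^{\lambda_1}(x_1)$, and (ii) and (iii) inherit that restriction.
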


\subsection{Barrier-aware Schedules and Convergence Analysis}
\label{subsec:tube_and_conv}

\newcommand{\Cd}{(\ell_{f,0}/\lambda_0+2\ell_{g,0})}

The local estimates from Section~\ref{sec:dikin_geometry} are conditional: they hold only while the
exact and proxy trackers remain inside the corresponding Dikin neighborhoods. The remaining task is
therefore algorithmic closure. We need schedules under which the local contraction generated by the
two inner trackers is strong enough to preserve these neighborhoods throughout the run. This is the
role of the \textit{barrier-aware} schedule introduced below.

\paragraph{Schedule design principle.}
The local Dikin regularity from Proposition~\ref{prop:derived_dikin_regularity} turns curvature into
fixed-center contraction: the exact tracker contracts on the scale \(\rho_{\psi}^\eta\gamma_k\),
while the proxy tracker contracts on the effective scale \(\rho_{\psi}^\eta\beta_k\), where
\(\beta_k:=\alpha_k\lambda_k\). These contractions must compensate for the fact that the exact
center changes with \(x_k\), while the proxy center changes with both \(x_k\) and \(\lambda_k\).
Thus the schedule must simultaneously enforce inner contraction, slow multiplier growth, and
controlled outer motion so that center drift and self-concordant anchor-switch effects remain
dominated. The following \textit{barrier-aware} step-size conditions are explicit sufficient conditions for this
closure:
\begin{definition}[\textit{Barrier-aware} schedule]
\label{def:barrier_aware}
Let \(\beta_k:=\alpha_k\lambda_k\). We say that
\(\{\alpha_k,\gamma_k,\lambda_k,\delta_k\}_{k\ge0}\) is \textbf{barrier-aware} if, for all \(k\),
\begin{align}
\lambda_0 &\ge
\max\left\{
\frac{2\ell_{f,1}^{\eta}}{\rho_{\psi}^{\eta}},
\frac{8\ell_{f,0}^{\eta}}{\eta\rho_{\psi}^{\eta}}
\right\},
\qquad
\beta_k\le \gamma_k\le
\min\left\{
\frac{1}{4\ell_{\psi,1}^{\eta}},
\frac{1}{4T\rho_{\psi}^{\eta}}
\right\},
\qquad
\alpha_k\le \frac{1}{2\xi L_F^{\eta}},
\tag{S1}\label{eq:main_sched_s1}\\
\frac{\delta_k}{\lambda_k}
&\le
\frac{T\rho_{\psi}^{\eta}}{16}\beta_k,
\qquad
\delta_k:=\lambda_{k+1}-\lambda_k\ge0,
\tag{S2}\label{eq:main_sched_s2}\\
\tfrac{\xi}{T}
&\le
\min\Biggl\{
\tfrac{\eta\rho_{\psi}^{\eta}}{16\ell_{*,0}^{\eta}\Cd},\,
\tfrac{\eta\rho_{\psi}^{\eta}}{64\ell_{\lambda,0}^{\eta}\Cd},\,
\,\tfrac{c_\xi\rho_\psi^\eta}
{\max\{\ell_{\psi,1}^\eta(\ell_{*,0}^\eta)^2,\,\ell_{*,1}^\eta\sqrt{\max(\ell_{g,0}^2,\ell_{f,0}^2)}\}}
\Biggr\}.
\tag{S3}\label{eq:main_sched_s3}
\end{align}
Here \(c_\xi>0\) is a sufficiently small absolute numerical constant determined by the
outer-descent and tracker-recursion analyses (cf.\ Proposition~\ref{prop:outer_descent} and
Appendix~\ref{app:tracker}).
\end{definition}
Condition~\eqref{eq:main_sched_s1} combines proxy initialization, fixed-center contraction, and
outer descent compatibility. The lower bound on \(\lambda_0\) makes the proxy objective locally
well conditioned and keeps the proxy center close enough to the exact center for proxy neighborhood transfer;
\(\gamma_k\) controls the exact tracker, \(\beta_k=\alpha_k\lambda_k\) controls the proxy tracker, and
\(\alpha_k\) controls the outer descent step. Condition~\eqref{eq:main_sched_s2} controls multiplier
growth, ensuring that \(\lambda_k\) does not change faster than the proxy tracker can adapt.
Condition~\eqref{eq:main_sched_s3} is the tube-closure condition: its first two terms control the
motion of the exact and proxy centers, while its last term controls self-concordant anchor-switch
effects. Here \(\ell_{*,1}^{\eta}\) denotes the local Dikin Lipschitz constant of \(x\mapsto y_\mu^\star(x)\), used in the anchor-switch term. Together, \eqref{eq:main_sched_s1}-\eqref{eq:main_sched_s3} ensure that inner-loop contraction dominates center drift and
keeps the iterates inside the local Dikin neighborhoods. Appendix~\ref{app:tube_invariance} derives
these sufficient conditions from the one-step tracker recursions in Appendix~\ref{app:tracker}. The following theorem formalizes this closure step: once the warm starts are initialized inside the
target neighborhoods, a \textit{barrier-aware} schedule keeps all exact and proxy tracker iterates inside their
corresponding neighborhoods, so the local estimates from Proposition~\ref{prop:derived_dikin_regularity}
remain valid along the full trajectory.

\begin{theorem}[Tube invariance]
\label{thm:tube_invariance}
Suppose Assumption~\ref{ass:euclidean} holds, fix an integer \(T\ge 1\), and let
\(\{\alpha_k,\gamma_k,\lambda_k,\delta_k\}_{k\ge 0}\) be \textit{barrier-aware} in the sense of
Definition~\ref{def:barrier_aware}. If the initial warm starts satisfy
\(z_0\in T_{\eta,\mu}(x_0)\) and \(y_0\in T_{\eta,\mu}^{\lambda_0}(x_0)\),
then for every outer iteration \(k\), all inner iterates of the exact tracker remain in
\(T_{\eta,\mu}(x_k)\), all inner iterates of the proxy tracker remain in
\(T_{\eta,\mu}^{\lambda_k}(x_k)\), and the next warm starts satisfy
\(z_{k+1}\in T_{\eta,\mu}(x_{k+1})\) and \(y_{k+1}\in T_{\eta,\mu}^{\lambda_{k+1}}(x_{k+1})\).
In particular, the moving exact and proxy Dikin neighborhoods are forward invariant.
\end{theorem}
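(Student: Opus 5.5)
The argument is an induction on the outer index \(k\), with an inner induction on \(t\). The hypothesis at stage \(k\) is \(z_k\in T_{\eta,\mu}(x_k)\) and \(y_k\in T_{\eta,\mu}^{\lambda_k}(x_k)\); the base case is the assumed warm start. Fix \(k\) and write \(c=y_\mu^\star(x_k)\) and \(c'=y_{\lambda_k,\mu}^\star(x_k)\).

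\textbf{Inner loop.} Because the preconditioner \(\nabla^2\phi(z_k)\) is frozen, we compare it with \(H_c\). From \(\|z_k-c\|_c\le\eta\), the self-concordant metric change gives \((1-\eta)^2H_c\preceq\nabla^2\phi(z_k)\preceq(1-\eta)^{-2}H_c\). Write the step as
\[
z^{(t+1)}-c=\bigl(I-\gamma_k\nabla^2\phi(z_k)^{-1}\bar H_t\bigr)(z^{(t)}-c),
\]
where \(\bar H_t=\int_0^1\nabla_{yy}^2\psi_\mu(x_k,c+s(z^{(t)}-c))\,ds\). The segment from \(c\) to \(z^{(t)}\) lies in \(T_{\eta,\mu}(x_k)\) by convexity of the Dikin ellipsoid. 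By Proposition~\ref{prop:derived_dikin_regularity}(i), \(\rho_\psi^\eta H_c\preceq\bar H_t\preceq\ell_{\psi,1}^\eta H_c\).

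To get a contraction, measure the error in the norm induced by \(P=\nabla^2\phi(z_k)\), which is \((1-\eta)^{\pm1}\)-equivalent to \(\|\cdot\|_c\). The operator \(P^{-1/2}\bar H_tP^{-1/2}\) has spectrum in \([(1-\eta)^2\rho_\psi^\eta,(1-\eta)^{-2}\ell_{\psi,1}^\eta]\). Since \(\gamma_k\le1/(4\ell_{\psi,1}^\eta)\) and \(\eta<1/2\), the factor \(\gamma_k(1-\eta)^{-2}\ell_{\psi,1}^\eta\) is at most \(1\), which gives
\[
\|z^{(t+1)}-c\|_P\le(1-\gamma_k\rho_\psi^\eta/4)\|z^{(t)}-c\|_P .
\]
The distortion between \(\|\cdot\|_P\) and \(\|\cdot\|_c\) is a fixed constant, not compounding over \(t\), but it is not automatically absorbed. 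I would handle it by proving a slightly sharper invariant, e.g.\ \(\|z_k-c\|_c\le\eta/2\), and using \(T_{2\eta,\mu}\) as the buffer in which the metric comparison is valid. This is what the buffer neighborhoods are for.

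The \(y\)-tracker is identical, with Hessian of \(L_{\lambda_k,\mu}\) equal to \(\nabla^2_{yy}f+\lambda_k\nabla_{yy}^2\psi_\mu\) and effective step \(\beta_k\). The bound \(\lambda_0\ge2\ell_{f,1}^\eta/\rho_\psi^\eta\) keeps \(\nabla^2_{yy}f\) from destroying curvature, so the effective strong-convexity modulus is at least \(\lambda_k\rho_{\psi,\lambda}^\eta/2\). Proposition~\ref{prop:derived_dikin_regularity}(iii) supplies the proxy-anchored bounds. After \(T\) steps the contraction factor is \((1-\gamma_k\rho_\psi^\eta/4)^T\le1-T\gamma_k\rho_\psi^\eta/8\); the cap \(\gamma_k\le1/(4T\rho_\psi^\eta)\) in \eqref{eq:main_sched_s1} keeps this linearization valid.

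\textbf{Outer step and anchor switch.} Next we bound \(\|z_{k+1}-y_\mu^\star(x_{k+1})\|_{y_\mu^\star(x_{k+1})}\). Split it as
\[
\|z_{k+1}-c\|_c+\|y_\mu^\star(x_{k+1})-c\|_c ,
\]
then change the anchor from \(c\) to \(y_\mu^\star(x_{k+1})\) by self-concordance, paying a factor \((1-r)^{-1}\) with \(r=\|y_\mu^\star(x_{k+1})-c\|_c\).

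The drift term is at most \(\ell_{*,0}^\eta\xi\alpha_k\|q_k^x\|_2\) by Proposition~\ref{prop:local_consequences}(ii). By Assumption~\ref{ass:euclidean}\ref{ass:A4}, \(\|q_k^x\|_2\le\ell_{f,0}+2\lambda_k\ell_{g,0}\), so \(\alpha_k\|q_k^x\|_2\le\beta_k(\ell_{f,0}/\lambda_0+2\ell_{g,0})\). Using \(\beta_k\le\gamma_k\), the drift is at most
\[
\ell_{*,0}^\eta\,\xi\,\gamma_k\,(\ell_{f,0}/\lambda_0+2\ell_{g,0}),
\]
and the first term of \eqref{eq:main_sched_s3} makes this at most \(\eta\,T\gamma_k\rho_\psi^\eta/16\), which is below half the contraction gain. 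The anchor-switch distortion is of second order in the drift; the third term of \eqref{eq:main_sched_s3} dominates it through \(\ell_{*,1}^\eta\).

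For the proxy center, Proposition~\ref{prop:local_consequences}(i) adds the multiplier drift \((2\ell_{f,0}^\eta/\rho_\psi^\eta)\,\delta_k/\lambda_k^2\). By \eqref{eq:main_sched_s2} together with \(\lambda_k\ge\lambda_0\ge8\ell_{f,0}^\eta/(\eta\rho_\psi^\eta)\), this is at most \(\eta T\rho_\psi^\eta\beta_k/64\). The \(x\)-drift of the proxy center is handled by the second term of \eqref{eq:main_sched_s3}. Altogether this gives
\[
\text{dist}_{k+1}\le(1-T\gamma\rho/8)\,\eta+(\text{drift})\le\eta,
\]
which closes the induction.

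\textbf{Main obstacle.} The hardest part is the anchor-switch bookkeeping: the constant \((1-\eta)^{-2}\) distortion from frozen preconditioners and from re-centering must not accumulate across outer iterations. I would first try to run the whole recursion in the moving center norm and show that each switch costs a factor \(1+O(r)\) with \(r=O(\xi\gamma_k)\), so that it is absorbed into the \(O(T\gamma_k\rho)\) contraction margin via \eqref{eq:main_sched_s3}.
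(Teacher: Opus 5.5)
There is a genuine gap in the inner loop, exactly at the point you flag as the main obstacle, and neither of your proposed remedies closes it.

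Your estimate $\|z^{(t+1)}-c\|_P\le(1-\gamma_k\rho_\psi^\eta/4)\|z^{(t)}-c\|_P$ is a contraction in the frozen-metric norm $P=\nabla^2\phi(z_k)$. The tube, however, is defined in $\|\cdot\|_c$. Converting $c\to P\to c$ costs a fixed factor, $(1-\eta)^{-2}$ with your constants. This factor is paid again at every outer iteration, because the preconditioner is re-frozen at each new warm start. The margin available to absorb it is only $T\gamma_k\rho_\psi^\eta/8\le 1/32$ by \eqref{eq:main_sched_s1}, and this margin tends to zero under the stochastic schedule.

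So from $\|z_k-c\|_c\le\eta$ you obtain neither of the two things you need:
\begin{itemize}
\item that the inner iterates stay in $T_{\eta,\mu}(x_k)$: you only get radius $(1-\eta)^{-2}\eta$, which for $\eta$ near $1/2$ is not even inside the buffer $T_{2\eta,\mu}(x_k)$ where Proposition~\ref{prop:derived_dikin_regularity} applies;
\item the bound $\|z_{k+1}-c\|_c\le(1-T\gamma_k\rho_\psi^\eta/8)\eta$ that your closing display uses.
\end{itemize}
The $\eta/2$ invariant does not help. It is not implied by $z_0\in T_{\eta,\mu}(x_0)$, and the handoff from $T_{\eta/2,\mu}(x_k)$ to $T_{\eta/2,\mu}(x_{k+1})$ meets the same constant loss against the same small margin.

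This is not just lossy bookkeeping. In $c$-coordinates the step matrix is $I-\gamma_kS\tilde H_t$, with $S=H_c^{1/2}P^{-1}H_c^{1/2}$ and $\tilde H_t=H_c^{-1/2}\bar H_tH_c^{-1/2}$. This matrix is non-symmetric and can have norm above $1$ when $\tilde H_t$ is ill-conditioned. A tracker starting on the tube boundary can therefore leave $T_{\eta,\mu}(x_k)$ at its first step. The proxy tracker has the same problem.

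The missing ingredient is a one-step contraction in the center-anchored norm itself, with the frozen metric treated as a perturbation of $H_c$. Write $p=H_c^{1/2}(z-c)$ and $r=H_c^{-1/2}\nabla_y\psi_\mu(x_k,z)$. Then $\|S-I\|_2\le\theta_\eta:=(1-\eta)^{-2}-1$ and $\|r\|_2\le\ell_{\psi,1}^\eta\|p\|_2$, so $\langle Sr,p\rangle\ge\langle r,p\rangle-\theta_\eta\ell_{\psi,1}^\eta\|p\|_2^2$. The paper imposes a standing choice of $\eta$ with $\theta_\eta\ell_{\psi,1}^\eta\le\rho_\psi^\eta/4$. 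Under it, $\langle Sr,p\rangle\ge\frac34\langle r,p\rangle$, and then $\|z^+-c\|_c^2\le(1-\gamma_k\rho_\psi^\eta)\|z-c\|_c^2$. This is Lemma~\ref{lem:frozen_metric_anchored_contraction}, on which the fixed-anchor contraction Lemmas~\ref{lem:z_fixed_anchor_contraction} and~\ref{lem:y_fixed_anchor_contraction} rest.

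With monotone contraction in $\|\cdot\|_c$, the inner iterates stay in $T_{\eta,\mu}(x_k)$ automatically, and $\|z_{k+1}-c\|_c\le\eta\sqrt{1-\Gamma_{\psi,k}}$. The handoff is then a single anchor switch (Lemma~\ref{lem:contraction_plus_drift_tube}): if the center drift satisfies $d\le\eta\Gamma/8$, then $(1-d)^{-1}(\eta\sqrt{1-\Gamma}+d)\le\eta$.

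Your drift estimates are correct and coincide with Corollary~\ref{cor:tube_drift_closure}. Note, though, that the anchor-switch distortion is first order in $d$ and is absorbed by the drift bound alone. The $\ell_{*,1}^\eta$ term of \eqref{eq:main_sched_s3} therefore plays no role in tube invariance; it is used only in the Lyapunov recursion.
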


Theorem~\ref{thm:tube_invariance} upgrades the local Dikin estimates to trajectory-wise guarantees:
under a \textit{barrier-aware} schedule, the exact and proxy trackers remain inside the target neighborhoods, so
Propositions~\ref{prop:derived_dikin_regularity}--\ref{prop:local_consequences} apply throughout
the run. With this invariant in place, a Lyapunov descent argument yields the deterministic
convergence guarantee below; the proof is deferred to Appendix~\ref{app:deterministic}.

\begin{theorem}[Deterministic convergence]
\label{thm:deterministic_poly}
Suppose Assumption~\ref{ass:euclidean} holds, \(\inf_{x\in\mathbb R^{d_x}}F_\mu(x)>-\infty\),
the inner-loop length \(T\ge 1\) is a fixed integer, the initial warm starts satisfy
\(z_0\in T_{\eta,\mu}(x_0)\) and \(y_0\in T_{\eta,\mu}^{\lambda_0}(x_0)\), and the schedule
\(\{\alpha_k,\gamma_k,\lambda_k,\delta_k\}_{k\ge 0}\) is \textit{barrier-aware}
(Definition~\ref{def:barrier_aware}) with the polynomial profile
\begin{equation}
\label{eq:main_poly_schedule}
\alpha_k=\frac{\alpha_0}{(k+k_0)^{1/3}},
\qquad
\gamma_k=\gamma_0,
\qquad
\lambda_k=\lambda_0\Bigl(\frac{k+k_0}{k_0}\Bigr)^{1/3},
\qquad
\delta_k=\lambda_{k+1}-\lambda_k, \notag
\end{equation}
where the constants \(\alpha_0,\gamma_0,\lambda_0,k_0, \xi, T\) are chosen so that
\eqref{eq:main_sched_s1}--\eqref{eq:main_sched_s3} hold. Then the exact and proxy target neighborhoods are
forward invariant, and the iterates generated by Algorithm~\ref{alg:dikin_sbo} satisfy
\(\min_{0\le k\le K-1}\|\nabla F_\mu(x_k)\|_2^2=\widetilde O(K^{-2/3})\).
Equivalently, Algorithm~\ref{alg:dikin_sbo} reaches an \(\epsilon\)-stationary point of the barrier-smoothed objective
\(F_\mu\) in \(\widetilde O(\epsilon^{-3/2})\) outer iterations.
\end{theorem}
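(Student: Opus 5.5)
Forward invariance comes directly from Theorem~\ref{thm:tube_invariance}, since the polynomial profile is assumed barrier-aware. So along the whole trajectory, Propositions~\ref{prop:derived_dikin_regularity}--\ref{prop:local_consequences} hold at every \((x_k,\lambda_k)\). What remains is a Lyapunov descent argument in the style of \citet{kwon2023fully}, carried out with Dikin-anchored tracker errors instead of Euclidean ones.

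\textbf{Tracking errors and the Lyapunov function.} I would use
\[
\mathcal V_k:=F_\mu(x_k)+c_y\lambda_k\|y_k-y_{\lambda_k,\mu}^\star(x_k)\|_{y_{\lambda_k,\mu}^\star(x_k)}^2+c_z\lambda_k\|z_k-y_\mu^\star(x_k)\|_{y_\mu^\star(x_k)}^2,
\]
with constants \(c_y,c_z\) to be fixed later.

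\textbf{Step 1: outer descent.} By local smoothness, Proposition~\ref{prop:local_consequences}(iii), and \(\xi\alpha_k L_F^\eta\le 1/2\) from (S1),
\[
F_\mu(x_{k+1})\le F_\mu(x_k)-\tfrac{\xi\alpha_k}{2}\|\nabla F_\mu(x_k)\|^2-\tfrac{\xi\alpha_k}{4}\|q_k^x\|^2+\tfrac{\xi\alpha_k}{2}\|q_k^x-\nabla F_\mu(x_k)\|^2 .
\]
I split the bias \(q_k^x-\nabla F_\mu(x_k)\) into three pieces:
\begin{itemize}
\item[(a)] the proxy bias \(\nabla C^\star_{\lambda_k,\mu}-\nabla F_\mu\), which is at most \(c_x^\eta/\lambda_k\) by Proposition~\ref{prop:local_consequences}(iv);
\item[(b)] the difference \(q^x(x_k;y_{k+1},z_{k+1},\lambda_k)-q^x(x_k;y^\star_{\lambda_k,\mu},y^\star_\mu,\lambda_k)\), where \(\nabla C^\star_{\lambda,\mu}(x)=q^x(x;y^\star_{\lambda,\mu}(x),y^\star_\mu(x),\lambda)\) by Danskin's theorem;
\item[(c)] nothing further, since (a) and (b) cover the whole bias.
\end{itemize}
Piece (b) is bounded using the mixed-Hessian bounds of Proposition~\ref{prop:derived_dikin_regularity}, which convert Euclidean \(x\)-gradient differences into Dikin distances inside the tubes. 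This gives \(O(\lambda_k^2(\|y_{k+1}-y^\star_{\lambda_k,\mu}\|^2+\|z_{k+1}-y^\star_\mu\|^2))\), with both norms anchored Dikin norms.

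\textbf{Step 2: tracker recursions.} The preconditioned steps with frozen metric \(\nabla^2\phi(z_k)\) are spectrally comparable to the anchor metric \(H^\star_\mu(x_k)\), with ratio \((1-2\eta)^{\pm2}\). Together with (S1), this gives \(T\)-step contraction factors \((1-\rho_\psi^\eta\gamma_k/2)^T\) for \(z\) and \((1-\rho_\psi^\eta\beta_k/2)^T\) for \(y\); the \(y\) factor uses \(\lambda_0\ge 2\ell_{f,1}^\eta/\rho_\psi^\eta\), so that \(L_{\lambda,\mu}/\lambda\) is strongly convex in the Dikin metric. I then account for moving the anchor from \((x_k,\lambda_k)\) to \((x_{k+1},\lambda_{k+1})\):
\begin{itemize}
\item center drift, bounded by Proposition~\ref{prop:local_consequences}(i)--(ii) as \(\ell^\eta\xi\alpha_k\|q_k^x\|+(2\ell_{f,0}^\eta/\rho_\psi^\eta)\delta_k/\lambda_k^2\);
\item the anchor-switch factor from self-concordance, which is \(1+O(\ell_{*,1}^\eta\xi\alpha_k\|q_k^x\|)\).
\end{itemize}
Young's inequality then yields
\[
e^y_{k+1}\le(1-\tfrac{T\rho_\psi^\eta\beta_k}{4})e^y_k+O\!\Bigl(\tfrac{\xi^2\alpha_k^2}{\beta_k}\|q_k^x\|^2+\tfrac{\delta_k^2}{\lambda_k^4\beta_k}\Bigr),
\]
and analogously for \(e^z\). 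Conditions (S2) and (S3) are exactly what make the \(\lambda_{k+1}/\lambda_k\) weighting and the \(\|q_k^x\|^2\) drift terms get absorbed by the contraction and by the \(-\tfrac{\xi\alpha_k}{4}\|q^x_k\|^2\) term from Step 1.

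\textbf{Step 3: telescoping with the rates.} Combining Steps 1 and 2, with \(c_y,c_z\) chosen so the tracker terms from (b) are dominated by the contraction (using \(\beta_k\le\gamma_k\)), gives
\[
\mathcal V_{k+1}\le\mathcal V_k-\tfrac{\xi\alpha_k}{4}\|\nabla F_\mu(x_k)\|^2+O\!\Bigl(\tfrac{\alpha_k}{\lambda_k^2}+\tfrac{\delta_k^2}{\lambda_k^3\beta_k}\Bigr).
\]
With \(\alpha_k\sim k^{-1/3}\) and \(\lambda_k\sim k^{1/3}\), the error terms are \(\alpha_k/\lambda_k^2\sim k^{-1}\) and \(\delta_k^2/(\lambda_k^3\beta_k)\sim k^{-4/3}\cdot k^{-1}/1\), so both sum to \(O(\log K)\). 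Note \(\beta_k=\alpha_0\lambda_0 k_0^{-1/3}\) is constant, consistent with (S1)--(S2) for large \(k_0\). Since \(\mathcal V\) is bounded below by \(\inf F_\mu\), and \(\sum_k\alpha_k\sim K^{2/3}\), I get \(\min_k\|\nabla F_\mu(x_k)\|^2=\widetilde O(K^{-2/3})\).

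\textbf{Main obstacle.} The hard part is Step 2's anchor change. The error at iteration \(k+1\) must be measured in the norm at the new center, and the \(\lambda\)-weighting in \(\mathcal V\) introduces a factor \(\lambda_{k+1}/\lambda_k\). I would handle this by invoking the self-concordant comparison \(\|\cdot\|_{c'}\le(1-\|c'-c\|_c)^{-1}\|\cdot\|_c\), with \(\|c'-c\|_c\) small by (S3). The resulting multiplicative \(1+O(\alpha_k\|q_k^x\|)\) cross term is the one that needs the \(c_\xi\) part of (S3) to be absorbed.
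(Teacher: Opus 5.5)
Your proposal follows the paper's proof closely. The shared steps are:
\begin{itemize}
\item forward invariance from Theorem~\ref{thm:tube_invariance};
\item a Lyapunov function adding \(\lambda_k\)-weighted Dikin-anchored tracker errors to \(F_\mu\) (the paper takes \(c_y=\ell_{\psi,1}^\eta\), \(c_z=\ell_{\psi,1}^\eta/2\));
\item the same outer-descent inequality, with the bias split via the envelope identity and Proposition~\ref{prop:local_consequences}(iv);
\item fixed-center contraction followed by anchor switch and center drift;
\item the same telescoping, using \(\sum_k\alpha_k/\lambda_k^2=O(\log K)\) and \(\sum_k\alpha_k=\Omega(K^{2/3})\).
\end{itemize}
The one real difference is the center drift. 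You apply a single Young inequality to the whole displacement. This yields a \(\delta_k^2/(\lambda_k^3\beta_k)\sim k^{-7/3}\) remainder and never needs the Jacobian-Lipschitz constants. The paper instead Taylor-expands the center maps (Lemmas~\ref{lem:exact_center_cross_bound}--\ref{lem:proxy_center_cross_bound}). It obtains a \(\delta_k/\lambda_k^2\) remainder, summed by telescoping \(1/\lambda_k-1/\lambda_{k+1}\). Both remainders are summable, so the rate is unchanged, and your version is slightly leaner in the deterministic case.

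There is a genuine gap in Step~2. Spectral equivalence of the frozen metric \(B=\nabla^2\phi(z_k)\) to the anchor \(H_c=H_\mu^\star(x_k)\), together with (S1), does not give contraction in the anchored norm.
\begin{itemize}
\item In anchor coordinates the frozen step is \(p^+=p-\gamma_k S r\), with \(S=H_c^{1/2}B^{-1}H_c^{1/2}\). This iteration is non-normal.
\item In general \(\langle Sr,p\rangle\) can be negative once \(\theta_\eta^2\ell_{\psi,1}^\eta\gtrsim\rho_\psi^\eta\), where \(\theta_\eta=(1-\eta)^{-2}-1\). With the paper's constants, \(\rho_\psi^\eta/\ell_{\psi,1}^\eta=O(\mu)\), so this failure threshold is reached even for fairly small fixed \(\eta\).
\item Measuring the error in the frozen norm does not rescue the argument. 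Converting back to the anchor norm costs a factor \((1-\eta)^{-4}\) every outer round. The per-round contraction \(1-\Theta(T\rho_\psi^\eta\gamma_k)\), with \(T\rho_\psi^\eta\gamma_k\le 1/4\), cannot absorb that for fixed \(\eta\).
\end{itemize}
The paper closes this with Lemma~\ref{lem:frozen_metric_anchored_contraction}. That lemma requires \(\theta_\eta\ell_{\psi,1}^\eta\le\rho_\psi^\eta/4\), i.e.\ \(\eta\) small relative to the Dikin condition number. This condition is imposed by convention and is not part of (S1)--(S3). You need that lemma, or an equivalent smallness condition, for your contraction factors and hence for the negative tracker terms in \(\mathcal V\).

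A minor correction: the anchor-switch factor is \(1+O(\ell_{*,0}^\eta\xi\alpha_k\|q_k^x\|_2+\delta_k/\lambda_k^2)\), so it is governed by \(\ell_{*,0}^\eta\), not \(\ell_{*,1}^\eta\). It is absorbed by the first two terms of (S3) via Corollary~\ref{cor:tube_drift_closure}, using \(\|q_k^x\|_2\le\lambda_k(\ell_{f,0}/\lambda_0+2\ell_{g,0})\). It is not absorbed by the \(c_\xi\) term, as your ``main obstacle'' paragraph suggests.
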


% These are fixed-\(\mu\) rates for \(F_\mu\); since the local Dikin constants and admissible radius \(\eta\) may depend on \(\mu\), converting them to stationarity rates for the original nonsmoothed objective \(F\) requires additional analysis.
\begin{remark}[Practical schedule and initialization]
The \textit{barrier-aware} conditions \eqref{eq:main_sched_s1}--\eqref{eq:main_sched_s3} should be
interpreted as schedule-certification conditions rather than plug-in tuning rules. The polynomial
schedules in Theorem~\ref{thm:deterministic_poly} identify the scaling law for
\((\alpha_k,\gamma_k,\lambda_k,\delta_k)\), while
\eqref{eq:main_sched_s1}--\eqref{eq:main_sched_s3} specify the structural inequalities needed for
contraction, controlled center drift, and tube closure. Some quantities entering these conditions,
such as the geometry of \(Y\), the barrier parameter \(\mu\), and coarse Euclidean bounds on \(f\)
and \(g\), are explicit or can be estimated. The local Dikin constants around the moving exact and
proxy centers, such as \(\rho_{\psi}^{\eta},\ell_{\psi,1}^{\eta},L_F^{\eta},
\ell_{*,0}^{\eta}\), and \(\ell_{\lambda,0}^{\eta}\), are primarily analysis constants and are not directly
available online. Accordingly, in practice we use the theorem as a geometry-aware design principle:
we keep the polynomial schedule shape, tune the leading constants empirically, and preserve the
structural relations \(\beta_k=\alpha_k\lambda_k\le\gamma_k\), slow multiplier growth, and a small
outer-to-inner ratio \(\xi/T\). The warm-start requirement can be enforced offline by fixing
\(x=x_0\) and solving the exact and proxy barrierized lower problems to the target Dikin accuracy;
standard first-order methods for logarithmically homogeneous barriers, such as generalized
Frank--Wolfe schemes \citep{zhao2023analysis}, provide one such initialization route.
\end{remark}

\subsection{Stochastic Extension}
\label{sec:stoch_upper}

We also consider an upper-level stochastic variant in which \(f\) is accessed through unbiased
stochastic first-order oracles with bounded variance, while \(g\) and the barrierized lower objective
\(\psi_\mu\) remain deterministic. The algorithm is identical to Algorithm~\ref{alg:dikin_sbo}, except that
the \(f\)-gradients in the proxy tracker and outer direction are replaced by stochastic estimates.
The precise oracle assumptions and pathwise tube-maintenance conditions are stated in
Appendix~\ref{app:stochastic_proofs}.

\begin{theorem}[Stochastic convergence]
\label{thm:stoch_upper}
Suppose Assumption~\ref{ass:euclidean}, the lower-boundedness condition
\(\inf_{x\in\mathbb R^{d_x}}F_\mu(x)>-\infty\), and the stochastic oracle assumptions in
Appendix~\ref{app:stochastic_proofs} hold, the inner-loop length \(T\ge 1\) is a fixed integer,
the initial warm starts satisfy
\(z_0\in T_{\eta,\mu}(x_0)\) and \(y_0\in T_{\eta,\mu}^{\lambda_0}(x_0)\), and the schedule
\(\alpha_k=\alpha_0/(k+k_0)^{3/5}\), \(\gamma_k=\gamma_0/(k+k_0)^{2/5}\),
\(\lambda_k=\lambda_0\bigl((k+k_0)/k_0\bigr)^{1/5}\), \(\delta_k=\lambda_{k+1}-\lambda_k\),
is \textit{barrier-aware} in the deterministic sense and additionally satisfies the stochastic proxy tube-invariance condition stated in Appendix~\ref{app:stochastic_proofs}. Then the exact and proxy target neighborhoods remain forward invariant, and
\(\min_{0\le k\le K-1}\mathbb E\|\nabla F_\mu(x_k)\|_2^2=\widetilde O(K^{-2/5})\).
Equivalently, Algorithm~\ref{alg:dikin_sbo} reaches an \(\epsilon\)-stationary point of the
barrier-smoothed objective \(F_\mu\) in \(\widetilde O(\epsilon^{-5/2})\) outer iterations.
\end{theorem}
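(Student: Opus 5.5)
The proof will follow the deterministic Lyapunov argument behind Theorem~\ref{thm:deterministic_poly}, with the $f$-gradient noise carried through in expectation. The new difficulty is that tube invariance must now hold pathwise.

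\textbf{Step 1: tube invariance.} I would show that the extra stochastic proxy tube-invariance condition makes the noise-perturbed proxy step still satisfy the one-step tracker recursion of Appendix~\ref{app:tracker}, up to an additive term of order $\alpha_k\sigma$ measured in the $y_{\lambda_k,\mu}^\star(x_k)$-Dikin norm. The outer step $\xi\alpha_k q_k^x$ also picks up noise, which moves both centers. Since the noise is bounded, Proposition~\ref{prop:local_consequences}(i)--(ii) keeps this extra drift bounded by $\xi\alpha_k\sigma\,\ell_{\ast,0}^\eta$ and $\xi\alpha_k\sigma\,\ell_{\lambda,0}^\eta$. The stated condition absorbs these terms into the $\eta/2$ slack used in the proof of Theorem~\ref{thm:tube_invariance}. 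Induction over $k$, reusing that proof, then gives forward invariance almost surely. After this, Propositions~\ref{prop:derived_dikin_regularity}--\ref{prop:local_consequences} apply on every sample path.

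\textbf{Step 2: Lyapunov potential.} With the tubes in place, I would take
\[
V_k=F_\mu(x_k)+c_z\lambda_k\|z_k-y_\mu^\star(x_k)\|_{y_\mu^\star(x_k)}^2+c_y\lambda_k\|y_k-y_{\lambda_k,\mu}^\star(x_k)\|_{y_{\lambda_k,\mu}^\star(x_k)}^2 .
\]

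\textbf{Step 3: outer descent.} Using $L_F^\eta$-smoothness and $\alpha_k\le 1/(2\xi L_F^\eta)$, I would write $q_k^x=\nabla F_\mu(x_k)+b_k+\epsilon_k$. Here $\epsilon_k$ is the conditionally mean-zero noise, and the bias $b_k$ is bounded, via Proposition~\ref{prop:local_consequences}(iv) and the local Lipschitz bounds, by $c_x^\eta/\lambda_k$ plus $\lambda_k$ times the two tracking errors. Taking conditional expectations, the cross term with $\epsilon_k$ vanishes and leaves a variance term $\xi^2\alpha_k^2\sigma^2$.

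\textbf{Step 4: tracker recursions.} The $z$-tracker is deterministic and contracts at rate $1-\rho_\psi^\eta\gamma_k T$. The $y$-tracker contracts at rate $1-\rho_\psi^\eta\beta_k T$ and picks up a variance term $\alpha_k^2\sigma^2$ in the Dikin norm, using the comparability $(1-\eta)^{\pm2}$ of barrier metrics. Center drift is controlled by (S2)--(S3) exactly as before.

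\textbf{Step 5: summing.} Combining these gives
\[
\mathbb E V_{k+1}\le \mathbb E V_k-\tfrac{\xi\alpha_k}{4}\mathbb E\|\nabla F_\mu(x_k)\|^2+O\bigl(\alpha_k/\lambda_k^2+\alpha_k^2\lambda_k\sigma^2+\text{drift}\bigr).
\]
The $\lambda_k$ factor on the variance term comes from the $\lambda_k$-weighting of the $y$-error, which is needed to cancel the $\lambda_k^2\|y_k-y^\star\|^2$ bias. With $\alpha_k\sim k^{-3/5}$ and $\lambda_k\sim k^{1/5}$, each error term is $O(k^{-1})$:
\[
\alpha_k/\lambda_k^2\sim k^{-1},\qquad \alpha_k^2\lambda_k\sim k^{-1}.
\]
The choice $\gamma_k\sim k^{-2/5}$ keeps $\beta_k\le\gamma_k$. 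Summing, using $\inf F_\mu>-\infty$ and $V\ge F_\mu$, gives
\[
\sum_k\alpha_k\,\mathbb E\|\nabla F_\mu(x_k)\|^2\le O(\log K).
\]
Dividing by $\sum_k\alpha_k\asymp K^{2/5}$ yields $\widetilde O(K^{-2/5})$.

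\textbf{Main obstacle.} I expect the delicate part to be Step 1, namely making pathwise invariance and the conditional-expectation descent coexist. The local constants are only valid inside the tubes, so the expectation argument must be conditioned on an almost-sure event. This is why I would prove invariance first, deterministically given the bounded noise, before taking any expectations. A secondary check is that the $\lambda_k$-weighted variance term indeed scales as $\alpha_k^2\lambda_k$ and not as $\alpha_k\lambda_k^2\sigma^2$. Confirming this requires that the proxy step's noise enters with step $\alpha_k$ and not $\beta_k$.
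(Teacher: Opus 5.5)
Your proposal is correct and follows essentially the same route as the paper. The paper likewise proves pathwise forward invariance from the almost-sure oracle bounds first, then an expected Lyapunov descent for $V_k=F_\mu(x_k)+\lambda_k\ell_{\psi,1}^\eta I_k+\tfrac{1}{2}\lambda_k\ell_{\psi,1}^\eta J_k$ with residuals $\alpha_k/\lambda_k^2$, $\delta_k/\lambda_k^2$ and $\alpha_k^2\lambda_k$, summed against $\sum_k\alpha_k=\Theta(K^{2/5})$. One precision: each noise contribution ($T\alpha_k\bar\sigma_y^\eta$ in the proxy inner loop, and the noisy outer-step center drifts) must be at most $\tfrac{\eta}{8}\Gamma_{L,k}$ or $\tfrac{\eta}{8}\Gamma_{\psi,k}$, i.e.\ dominated by the per-iteration contraction gap rather than a fixed $\eta/2$ slack, and your unspecified ``drift'' residual is $\delta_k/\lambda_k^2$, which telescopes to $O(1)$.
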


\section{Numerical Experiment}
\label{sec:experiments}
\paragraph{Motivating example.}
We first illustrate the local geometry behind the analysis on the exact barrierized lower problem.
Consider a synthetic two-dimensional polytope
\(Y=\{y\in\mathbb R^2:Ay\le b\}\), equipped with the logarithmic barrier
\(\phi(y)=-\sum_{i=1}^m\log s_i(y)\), where \(s_i(y)=b_i-a_i^\top y\). For each
outer parameter \(x\in[0,1]\), let
\(g(x,y)=\frac12(y-c(x))^\top Q(y-c(x))\), where \(Q\succ0\) and the path \(c(x)\) is chosen so
that the exact barrierized minimizer \(y_\mu^\star(x)\) moves toward an active face of \(Y\). We define \(\psi_\mu(x,y)=g(x,y)+\mu\phi(y)\), and consider the exact tracker update 
\(z_k^{(t+1)}
=
z_k^{(t)}
-
\gamma_k \nabla^2\phi(z_k)^{-1}\nabla_y\psi_\mu(x_k,z_k^{(t)}).\)
Figure~\ref{fig:motivating_example} illustrates the resulting Dikin tube invariance mechanism.
The Dikin neighborhood
\(T_{\eta,\mu}(x)=\{y\in\operatorname{int}(Y):\|y-y_\mu^\star(x)\|_{y_\mu^\star(x)}\le\eta\}\)
is shaped by the barrier Hessian at the exact center. As \(y_\mu^\star(x)\) approaches an active
face, the neighborhood becomes anisotropic and shrinks in the boundary-normal direction. The barrier
metric used by the tracker is locally comparable to this center metric inside the neighborhood, while the
\textit{barrier-aware} schedule keeps the tracker \(z_k\) inside these moving local regions.

\begin{figure}[h]
    \centering
    \includegraphics[width=\linewidth]{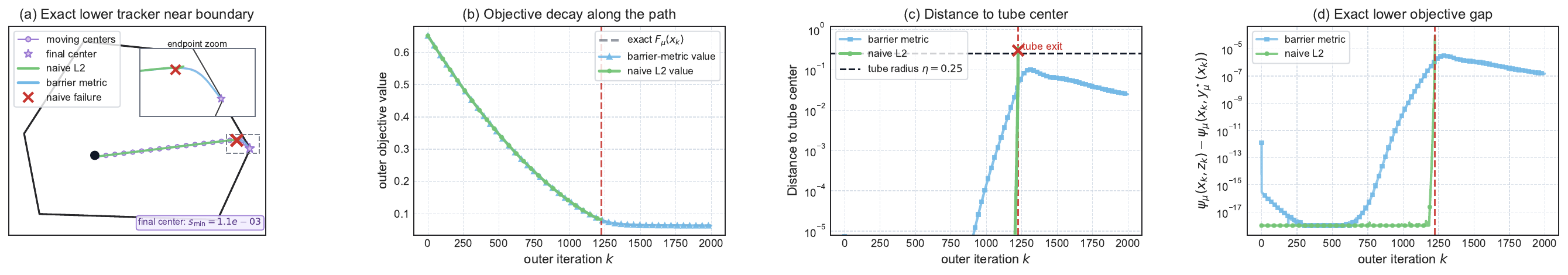}
    \caption{Boundary stability experiment for the exact lower tracker. 
(a) The exact barrierized center \(y_\mu^\star(x_k)\) moves toward an active face; the barrier-metric
tracker remains feasible while the Euclidean tracker exits near the boundary.
(b) Both methods decrease the exact barrier-smoothed objective before the Euclidean tracker exits.
(c) The anchored Dikin error shows that the barrier-metric tracker stays inside the tube, while the
Euclidean tracker crosses the tube radius.
(d) The exact lower objective gap remains controlled for the barrier-metric tracker after the boundary
event.
}
\label{fig:motivating_example}
\end{figure}

\paragraph{Congestion toll design.}
We consider a congestion-toll benchmark inspired by Wardrop equilibrium and Beckmann's
convex traffic-assignment formulation~\citep{wardrop1952road,beckmann1956studies}. The upper variable \(x\in\mathbb R^n\) parameterizes tolls on \(n\) corridors, and the lower variable \(y\in\mathbb R^n\)
denotes the induced corridor-flow response. The lower feasible region is
\(Y(\tau):=\{y\in\mathbb R^n:0\le y\le u,\ Cy\le \tau d,\ \mathbf 1^\top y\le D\}\), where \(u\)
gives corridor capacities, \(Cy\le \tau d\) encodes shared bottleneck capacities, and
\(\tau\in(0,1]\) controls tightness. Each row of \(C\) represents a shared bottleneck, \(C_{r,i}\) indicates whether corridor \(i\) uses
bottleneck \(r\), and \(d_r\) is the corresponding capacity. For fixed tolls \(x\), the regularized traffic-response model is
\(y^\star(x)\in\arg\min_{y\in Y(\tau)} g(x,y)\), with
\(g(x,y):=\ell^\top y+\frac12 y^\top Qy+x^\top y+\frac{\kappa}{2}(D-\mathbf 1^\top y)^2\).
Here \(\ell^\top y\) is baseline travel cost, \(\frac12y^\top Qy\) is a regularized congestion cost,
\(x^\top y\) is the toll payment, and the final term penalizes unmet demand. The toll authority
evaluates the induced response through
\(f(x,y):=\ell^\top y+\frac12y^\top Qy+\beta(D-\mathbf 1^\top y)^2
+\rho_{\mathrm{rev}}(x^\top y-R_{\mathrm{tar}})^2+\frac{\rho_x}{2}\|x\|_2^2\),
which balances system cost, demand coverage, target revenue, and toll regularization. We compare Algorithm~\ref{alg:dikin_sbo} (Barrier-Metric First-Order, BMFO) with four
baselines: an implicit-gradient barrier method (Exact-HG-Barrier), a differentiable convex-layer
hypergradient method based on \citet{agrawal2019differentiable} (CVXPYLayer), the smoothed
implicit-gradient method of \citet{khanduri2023linearly} (DSIGD), and the first-order constrained
bilevel method of \citet{kornowski2024first} (F2CBA). Figure~\ref{fig:toll_design} reports both
runtime and solution quality under a \(2\)-second optimization wall-clock budget. Panel~(a) shows
seconds per completed outer update. Panel~(b) reports the normalized
original-objective gap achieved within the same budget. BMFO has substantially lower update cost
at larger dimensions and remains competitive in objective gap, while several exact or implicit
baselines become too expensive to complete many outer updates under the fixed budget.
\begin{figure}[h]
    \centering
    \includegraphics[width=1\linewidth]{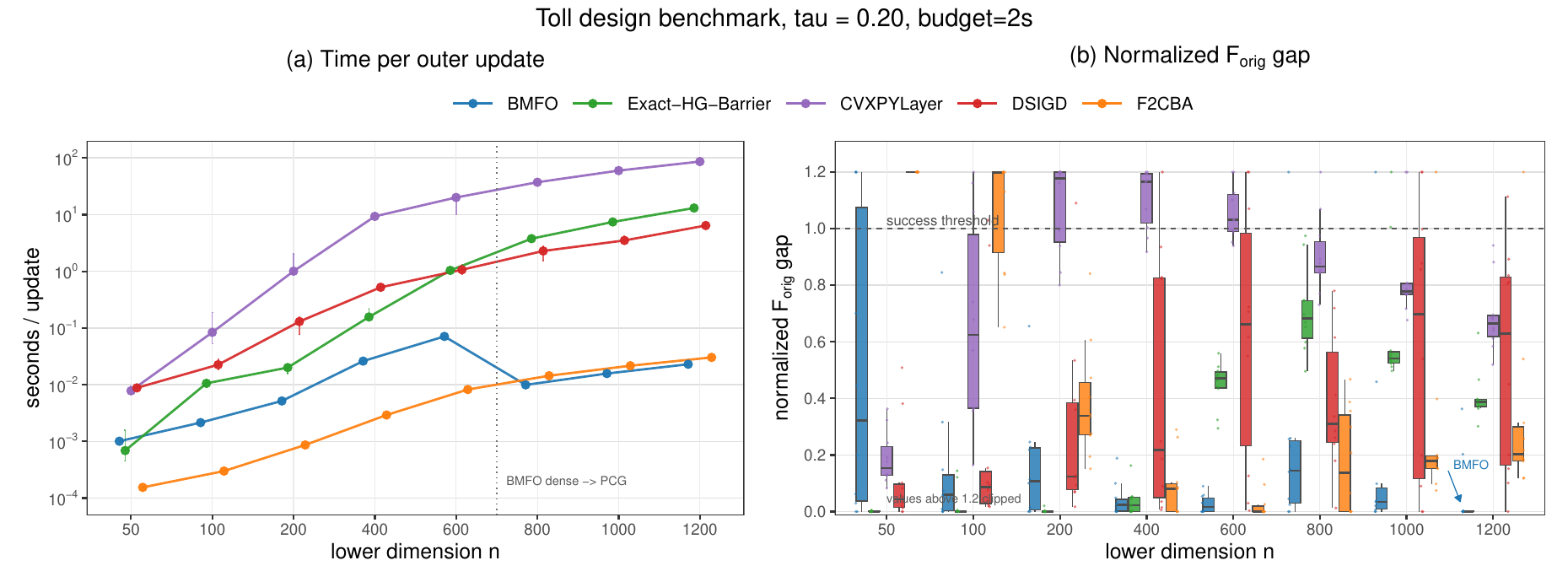}
\caption{
Congestion-toll benchmark with bottleneck tightness \(\tau=0.20\) and a \(2\)-second optimization
wall-clock budget per method. 
(a) Seconds per completed outer update, computed as total optimization time divided by the number
of completed outer updates. If one atomic
outer update exceeds the budget, the method may still appear with seconds/update \(>2\) because
the runner checks the budget only between completed outer updates.
(b) Normalized original-objective gap under the same wall-clock budget. Lower is better; values
above \(1.2\) are clipped for visualization, and budget-censored or one-update runs are included.
}
\label{fig:toll_design}
\end{figure}

\begin{figure}[h]
    \centering
    \includegraphics[width=.8\linewidth]{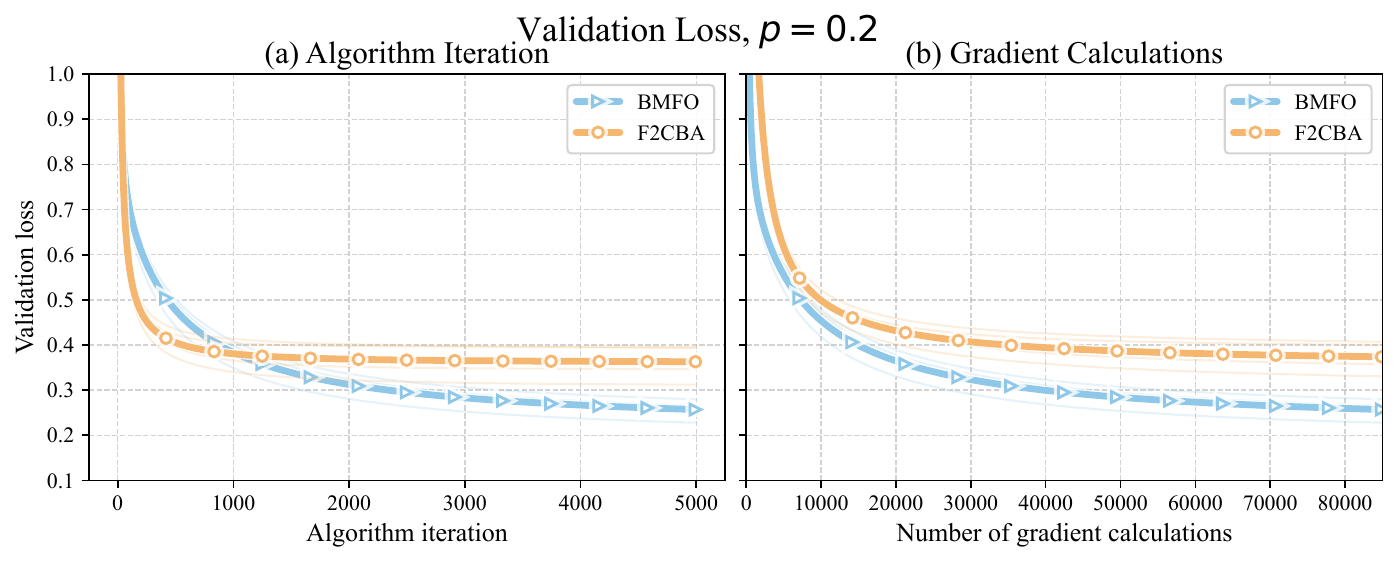}
    \caption{
Constrained MNIST hypercleaning with label corruption probability \(p=0.2\). Panel~(a) reports validation cross-entropy versus algorithm iterations, and panel~(b)
reports validation cross-entropy versus the number of gradient evaluations. BMFO decreases the
validation loss faster and attains a lower final loss than the first-order constrained bilevel baseline
F2CBA.
}
\label{fig:mnist_hypercleaning}
\end{figure}

\paragraph{Constrained MNIST hypercleaning.}
We consider a constrained variant of the MNIST hypercleaning task~\citep{deng2012mnist}. The
training set \(\mathcal D_{\rm train}=\{(\tilde\xi_i,\tilde y_i)\}_{i=1}^n\) has labels corrupted with
probability \(p\), while the validation set
\(\mathcal D_{\rm val}=\{(\xi_j,y_j)\}_{j=1}^m\) is clean. The upper variable
\(\lambda\in\mathbb R^n\) assigns weight \(\sigma(\lambda_i)\in(0,1)\) to training example \(i\),
and the lower variable \(w\) is the softmax-regression parameter. The constrained bilevel problem is
\(
\min_{\lambda\in\mathbb R^n} F(\lambda):=f(\lambda,w^\star(\lambda)),
\quad
w^\star(\lambda)\in\arg\min_{w\in\mathcal W_\tau} g(\lambda,w),\)
where \( f(\lambda,w):=\frac1m\sum_{j=1}^m \ell_{\rm CE}(\xi_j,y_j;w),
g(\lambda,w):=\frac1n\sum_{i=1}^n \sigma(\lambda_i)\ell_{\rm CE}(\tilde\xi_i,\tilde y_i;w)
+\frac{\rho}{2}\|w\|_2^2 .\)
Unlike standard hypercleaning, the lower parameter is constrained. We impose box constraints and
patchwise signed-mass constraints:
\(\mathcal W_\tau
:=
\left\{
w:\ |w_{q,c}|\le R,\ 
\left|\sum_{q\in\mathcal P_r}w_{q,c}\right|\le \tau b_{r,c}
\ \text{for all }q,c,r
\right\},\)
where \(\mathcal P_r\) denotes a pixel patch and \(\tau\in(0,1]\) controls constraint tightness. Further
implementation details are given in Appendix~\ref{app:experiments}. We compare
Algorithm~\ref{alg:dikin_sbo} (BMFO) with the first-order constrained bilevel method of
\citet{kornowski2024first} (F2CBA). Figure~\ref{fig:mnist_hypercleaning} shows that BMFO reduces
validation loss faster both per iteration and per gradient evaluation, and reaches a lower final loss
under the same constrained hypercleaning setup.

\section{Discussion}
\label{sec:discussion}

We develop a barrier-metric first-order method for constrained bilevel optimization with polyhedral
lower feasible sets. The present theory is limited to fixed compact polyhedral lower feasible sets, strongly convex lower
objectives, and stochasticity only in the upper-level objective. Moreover, the guarantees are for
\(F_\mu\), with approximation to the original constrained problem controlled by the barrier-bias
bounds. Future work includes \(x\)-dependent or coupled constraints, weaker lower curvature and
fully stochastic lower-level oracles.

\section*{Acknowledgements}
This research was supported, in part, by the NSF AI Institute for Advances in Optimization Award 2112533.

\bibliographystyle{plainnat}
\bibliography{references}

%%%%%%%%%%%%%%%%%%%%%%%%%%%%%%%%%%%%%%%%%%%%%%%%%%%%%%%%%%%%

\clearpage

\appendix

\section{Notation and Constants}\label{app:notation}

This appendix collects the constants used throughout the analysis. The constants
fall into three groups: \emph{Euclidean constants} (introduced in
Assumption~\ref{ass:euclidean}), \emph{local Dikin constants} from the
local Dikin regularity (Proposition~\ref{prop:derived_dikin_regularity} and its
expanded form in Appendix~\ref{app:dikin_regularity}), and \emph{stochastic
constants} from the upper-level stochastic oracle assumptions
(Assumptions~\ref{ass:stoch_oracle}--\ref{ass:stoch_local_bounds}). Entries
in the third column are explicit admissible values derived in the corresponding
proofs (upper bounds for smoothness/Lipschitz constants, lower bound for the
curvature constant $\rho_\psi^\eta$).

\begin{table}[h]
\centering
\small
\renewcommand{\arraystretch}{1.25}
\caption{Constants used throughout the analysis.}
\begin{tabularx}{\textwidth}{|l|X|l|}
\hline
\textbf{Symbol} & \textbf{Meaning} & \textbf{Explicit value} \\
\hline
$\ell_{f,0}$ & Bound of $\|\nabla_x f\|, \|\nabla_y f\|$ & $\cdot$ \\
\hline
$\ell_{f,1}$ & Smoothness of $f$ & $\cdot$ \\
\hline
$\ell_{f,2}$ & Hessian-continuity of $f$  & $\cdot$ \\
\hline
$\ell_{g,0}$ & Bound of $\|\nabla_x g\|$ & $\cdot$ \\
\hline
$\ell_{g,1}$ & Smoothness of $g$ & $\cdot$ \\
\hline
$\ell_{g,2}$ & Hessian-continuity of $g$ & $\cdot$ \\
\hline
$\rho_g$ & Strong convexity of $g$ in $y$ & $\cdot$ \\
\hline
$\rho_\psi^\eta$ & Lower curvature of $\psi_\mu$ in anchored Dikin metric on $T_{2\eta,\mu}$ & $\mu(1-2\eta)^2$ \\
\hline
$\ell_{\psi,1}^\eta$ & Smoothness of $\psi_\mu$ in anchored Dikin metric on $T_{2\eta,\mu}$ & $\kappa_\phi^2\ell_{g,1}+\mu(1-2\eta)^{-2}$ \\
\hline
$\ell_{\psi,2}^\eta$ & Hessian-continuity of $\psi_\mu$ in anchored Dikin metric & $\cdot$ \\
\hline
$\ell_{f,0}^\eta$ & Local Dikin dual-norm bound on $\nabla_y f$ & $\kappa_\phi\,\ell_{f,0}$ \\
\hline
$\ell_{f,1}^\eta$ & Local Dikin smoothness of $f$ & $\kappa_\phi^2\ell_{f,1}$ \\
\hline
$\ell_{f,2}^\eta$ & Local Dikin Hessian-continuity of $f$ & $\cdot$ \\
\hline
$C_f^\eta$ & Proxy-center transfer radius in Dikin metric & $2\,\ell_{f,0}^\eta/\rho_\psi^\eta$ \\
\hline
$\lambda_\eta$ & Multiplier threshold for proxy regularity & $\cdot$ \\
\hline
$\ell_{\lambda,0}^\eta$ & Lipschitzness of $y_{\lambda,\mu}^\star(x)$ in $x$ (for $\lambda\ge\lambda_\eta$) & $3\ell_{\psi,1}^\eta/\rho_\psi^\eta$ \\
\hline
$\ell_{\lambda,1}^\eta$ & Local Dikin Lipschitz constant of $\nabla y_{\lambda,\mu}^\star(x)$ & $\cdot$ \\
\hline
$\ell_{*,0}^\eta$ & Lipschitzness of $y_\mu^\star(x)$ in $x$, enlarged to be at least \(1\) & $1+3\ell_{\psi,1}^\eta/\rho_\psi^\eta$ \\
\hline
$\ell_{*,1}^\eta$ & Local Dikin Lipschitz constant of $\nabla y_\mu^\star(x)$ & $\cdot$ \\
\hline
$L_F^\eta$ & Local smoothness of $F_\mu$ in Euclidean norm &
$\Bigl(\ell_{f,1}+\tfrac{\ell_{\psi,1}^\eta\ell_{f,1}^\eta}{\rho_\psi^\eta}+\tfrac{2\ell_{\psi,1}^\eta\ell_{\psi,2}^\eta\ell_{f,0}^\eta}{(\rho_\psi^\eta)^2}\Bigr)\ell_{*,0}^\eta$ \\
\hline
$c_x^\eta$ & Local proxy-gradient bias constant &
$\tfrac{2\ell_{\psi,1}^\eta\ell_{f,0}^\eta}{(\rho_\psi^\eta)^2}\bigl(\ell_{f,1}^\eta+\tfrac{\ell_{\psi,2}^\eta\ell_{f,0}^\eta}{\rho_\psi^\eta}\bigr)$ \\
\hline
$\sigma_{f,x}^2$ & Variance of stochastic $\nabla_x f$ oracle (Euclidean) & $\cdot$ \\
\hline
$\sigma_{f,y}^2$ & Variance of stochastic $\nabla_y f$ oracle (Euclidean) & $\cdot$ \\
\hline
$\sigma_{f,y,\eta}^2$ & Variance of stochastic $\nabla_y f$ oracle in anchored Dikin dual norm & $\kappa_\phi^2\,\sigma_{f,y}^2$ \\
\hline
$\sigma_\eta^2$ & Combined variance & $\sigma_{f,x}^2+\sigma_{f,y,\eta}^2$ \\
\hline
$\bar\ell_{f,x,0}^\eta$ & A.s.\ Euclidean bound on sampled $\nabla_x f$ & $\cdot$ \\
\hline
$\bar\ell_{f,y,0}^\eta$ & A.s.\ Dikin dual-norm bound on sampled $\nabla_y f$ & $\cdot$ \\
\hline
$\bar\sigma_y^\eta$ & Pathwise inner-noise budget in Dikin dual norm & $\ell_{f,0}^\eta+\bar\ell_{f,y,0}^\eta$ \\
\hline
$\kappa_\phi$ & Euclidean--Dikin conversion factor (depends only on $Y$) & $\cdot$ \\
\hline
\end{tabularx}
\label{tab:constants}
\end{table}

\noindent
\textbf{Conventions.}
Local Dikin constants depend on $\eta$ and $\mu$; the $\mu$-dependence is
suppressed in notation throughout. After
Proposition~\ref{prop:derived_dikin_regularity}, the exact-anchor and
proxy-anchor versions of $\rho_\psi^\eta$, $\ell_{\psi,1}^\eta$, and
$\ell_{\psi,2}^\eta$ are unified by decreasing $\rho_\psi^\eta$ and increasing
$\ell_{\psi,1}^\eta$, $\ell_{\psi,2}^\eta$ as needed, so that the same symbols
hold on both buffer neighborhoods $T_{2\eta,\mu}(x)$ and
$T_{2\eta,\mu}^\lambda(x)$. Similarly,
$\ell_{f,0}^\eta$, $\ell_{f,1}^\eta$, $\ell_{f,2}^\eta$ are enlarged to dominate
both anchors.

\section{Proofs for Section~\ref{sec:barrier smoothing}}
\label{app:barrier_setup}

\subsection{Proof of Theorem~\ref{thm:barrier_smoothing}}
\label{app:proof_barrier_smoothing}

\begin{proof}
\proofstep{(i) Existence and uniqueness of \(y_\mu^\star(x)\).}
Fix \(x\). By Assumption~\ref{ass:euclidean}, the map \(y\mapsto g(x,y)\) is
\(\rho_g\)-strongly convex. Since the logarithmic barrier \(\phi\) is convex on
\(\operatorname{int}(Y)\), the barrierized objective
\(\psi_\mu(x,y)=g(x,y)+\mu\phi(y)\) is also \(\rho_g\)-strongly convex on
\(\operatorname{int}(Y)\). Hence it has at most one minimizer.
It remains to show existence. Since \(Y\) is a compact full-dimensional polytope, there exists
\(\bar y\in\operatorname{int}(Y)\). Consider a minimizing sequence
\(\{y_j\}_{j\ge1}\subset\operatorname{int}(Y)\) for \(\psi_\mu(x,\cdot)\). Because \(Y\) is compact,
a subsequence converges to some \(\bar y_\infty\in Y\). The barrier satisfies
\(\phi(y)\to+\infty\) whenever \(y\to\partial Y\) from the interior, while
\(\psi_\mu(x,\bar y)<\infty\). Therefore the limit point \(\bar y_\infty\) cannot lie on
\(\partial Y\). Hence \(\bar y_\infty\in\operatorname{int}(Y)\), and by continuity of
\(\psi_\mu(x,\cdot)\) on \(\operatorname{int}(Y)\), this point attains the minimum. Thus the
barrierized lower problem has a minimizer \(y_\mu^\star(x)\in\operatorname{int}(Y)\). By strong
convexity, this minimizer is unique.

\proofstep{(ii) Differentiability of \(x\mapsto y_\mu^\star(x)\).}
Since \(g\) is \(C^2\) and \(\phi\) is \(C^\infty\) on \(\operatorname{int}(Y)\), the mapping
\((x,y)\mapsto \nabla_y\psi_\mu(x,y)\) is \(C^1\) on \(X\times\operatorname{int}(Y)\). The
first-order optimality condition for the barrierized lower problem is
\begin{equation*}
\nabla_y\psi_\mu\bigl(x,y_\mu^\star(x)\bigr)=0.
\end{equation*}
Moreover,
\begin{equation*}
\nabla_{yy}^2\psi_\mu\bigl(x,y_\mu^\star(x)\bigr)
=
\nabla_{yy}^2g\bigl(x,y_\mu^\star(x)\bigr)
+
\mu\nabla^2\phi\bigl(y_\mu^\star(x)\bigr).
\end{equation*}
By Assumption~\ref{ass:euclidean}, \(\nabla_{yy}^2g(x,\cdot)\succeq \rho_g I\), and since
\(\phi\) is convex, \(\nabla^2\phi(y)\succeq0\) on \(\operatorname{int}(Y)\). Therefore
\begin{equation*}
\nabla_{yy}^2\psi_\mu\bigl(x,y_\mu^\star(x)\bigr)\succeq \rho_g I,
\end{equation*}
so the Jacobian with respect to \(y\) is invertible. The implicit function theorem implies that
\(x\mapsto y_\mu^\star(x)\) is differentiable, with
\begin{equation*}
D y_\mu^\star(x)
=
-
\Bigl(\nabla_{yy}^2\psi_\mu\bigl(x,y_\mu^\star(x)\bigr)\Bigr)^{-1}
\nabla_{yx}^2\psi_\mu\bigl(x,y_\mu^\star(x)\bigr).
\end{equation*}

\proofstep{(iii) Differentiability of \(F_\mu\) and the hypergradient formula.}
By definition, \(F_\mu(x)=f\bigl(x,y_\mu^\star(x)\bigr)\). Applying the chain rule gives
\begin{equation*}
\nabla F_\mu(x)
=
\nabla_x f\bigl(x,y_\mu^\star(x)\bigr)
+
\bigl(Dy_\mu^\star(x)\bigr)^\top
\nabla_y f\bigl(x,y_\mu^\star(x)\bigr).
\end{equation*}
Substituting the expression for \(D y_\mu^\star(x)\) yields
\begin{align*}
\nabla F_\mu(x)
&=
\nabla_x f\bigl(x,y_\mu^\star(x)\bigr)
-
\nabla_{xy}^2\psi_\mu\bigl(x,y_\mu^\star(x)\bigr)^\top
\Bigl(\nabla_{yy}^2\psi_\mu\bigl(x,y_\mu^\star(x)\bigr)\Bigr)^{-1}
\nabla_y f\bigl(x,y_\mu^\star(x)\bigr),
\end{align*}
which is the claimed formula.

\proofstep{(iv(a)) Lower-objective bias.}
Let \(y^\star(x)\) denote the minimizer of the original constrained lower problem. Since
\(y^\star(x)\) minimizes \(g(x,\cdot)\) over \(Y\), and
\(y_\mu^\star(x)\in\operatorname{int}(Y)\subseteq Y\), we have
\begin{equation*}
0\le g\bigl(x,y_\mu^\star(x)\bigr)-g\bigl(x,y^\star(x)\bigr).
\end{equation*}
For the upper bound, the first-order optimality condition for the barrierized lower problem gives
\begin{equation*}
\nabla_y g\bigl(x,y_\mu^\star(x)\bigr)
+
\mu\nabla\phi\bigl(y_\mu^\star(x)\bigr)
=
0.
\end{equation*}
By convexity of \(g(x,\cdot)\), for any \(y\in Y\),
\begin{align*}
g(x,y)
&\ge
g\bigl(x,y_\mu^\star(x)\bigr)
+
\left\langle
\nabla_y g\bigl(x,y_\mu^\star(x)\bigr),
y-y_\mu^\star(x)
\right\rangle  \\
&=
g\bigl(x,y_\mu^\star(x)\bigr)
-
\mu
\left\langle
\nabla\phi\bigl(y_\mu^\star(x)\bigr),
y-y_\mu^\star(x)
\right\rangle .
\end{align*}
For any \(z\in\operatorname{int}(Y)\) and any \(y\in Y\), using
\(s_i(y)=b_i-a_i^\top y\), we have
\begin{align*}
\langle\nabla\phi(z),y-z\rangle
&=
\sum_{i=1}^m \frac{a_i^\top(y-z)}{s_i(z)}
=
\sum_{i=1}^m \frac{s_i(z)-s_i(y)}{s_i(z)}
=
m-\sum_{i=1}^m \frac{s_i(y)}{s_i(z)}
\le m,
\end{align*}
because \(s_i(y)\ge0\) for every \(y\in Y\). Taking \(z=y_\mu^\star(x)\) and
\(y=y^\star(x)\) gives
\begin{equation*}
g\bigl(x,y^\star(x)\bigr)
\ge
g\bigl(x,y_\mu^\star(x)\bigr)-m\mu.
\end{equation*}
Therefore
\begin{equation*}
0
\le
g\bigl(x,y_\mu^\star(x)\bigr)-g\bigl(x,y^\star(x)\bigr)
\le
m\mu.
\end{equation*}
Thus the first bias bound holds with \(C_g:=m\).

\proofstep{(iv(b)) Distance between \(y_\mu^\star(x)\) and \(y^\star(x)\).}
By strong convexity of \(g(x,\cdot)\),
\begin{align*}
g\bigl(x,y_\mu^\star(x)\bigr)
&\ge
g\bigl(x,y^\star(x)\bigr)
+
\left\langle
\nabla_y g\bigl(x,y^\star(x)\bigr),
y_\mu^\star(x)-y^\star(x)
\right\rangle  \\
&\qquad
+
\frac{\rho_g}{2}
\|y_\mu^\star(x)-y^\star(x)\|_2^2.
\end{align*}
Since \(y^\star(x)\) minimizes \(g(x,\cdot)\) over the convex set \(Y\), and
\(y_\mu^\star(x)\in Y\), the variational inequality gives
\begin{equation*}
\left\langle
\nabla_y g\bigl(x,y^\star(x)\bigr),
y_\mu^\star(x)-y^\star(x)
\right\rangle
\ge 0.
\end{equation*}
Hence
\begin{equation*}
g\bigl(x,y_\mu^\star(x)\bigr)-g\bigl(x,y^\star(x)\bigr)
\ge
\frac{\rho_g}{2}\|y_\mu^\star(x)-y^\star(x)\|_2^2.
\end{equation*}
Combining this with the lower-objective bias bound gives
\begin{equation*}
\frac{\rho_g}{2}\|y_\mu^\star(x)-y^\star(x)\|_2^2
\le
m\mu,
\end{equation*}
and therefore
\begin{equation*}
\|y_\mu^\star(x)-y^\star(x)\|_2
\le
\sqrt{\frac{2m}{\rho_g}}\,\sqrt{\mu}.
\end{equation*}
Thus the distance bound holds with \(C_y:=\sqrt{2m/\rho_g}\).

\proofstep{(iv(c)) Outer-objective bias.}
By definition,
\begin{equation*}
F_\mu(x)-F(x)
=
f\bigl(x,y_\mu^\star(x)\bigr)-f\bigl(x,y^\star(x)\bigr).
\end{equation*}
Using the gradient bound \(\|\nabla_y f(x,y)\|_2\le \ell_{f,0}\) from
Assumption~\ref{ass:euclidean} and the mean-value theorem in the \(y\)-variable,
\begin{equation*}
|F_\mu(x)-F(x)|
\le
\ell_{f,0}
\|y_\mu^\star(x)-y^\star(x)\|_2.
\end{equation*}
Applying the distance bound above yields
\begin{equation*}
|F_\mu(x)-F(x)|
\le
\ell_{f,0}\sqrt{\frac{2m}{\rho_g}}\sqrt{\mu}.
\end{equation*}
Thus the outer-objective bias bound holds with
\(C_F:=\ell_{f,0}\sqrt{2m/\rho_g}\). The proof is complete.
\end{proof}

\section{Proofs for Local Dikin Regularity}
\label{app:dikin_regularity}

This section proves Proposition~\ref{prop:derived_dikin_regularity}. The main text states the
local Dikin regularity result in a compressed form. For the proofs below, we first record the
fully expanded version of the exact- and proxy-neighborhood bounds. They are derived from Assumption~\ref{ass:euclidean}, compact polyhedral geometry,
and self-concordance of the logarithmic barrier.

\subsection{Expanded Form of Proposition~\ref{prop:derived_dikin_regularity}}\label{app:expanded_form}
Fix \(\eta\in(0,\frac{1}{2})\), up to changing \(\eta\)-dependent constants, the following bounds hold
uniformly for \(x\in X\).

\Etag{Exact neighborhood regularity of \(\psi_\mu\).}{ex:E1}
For every \(y\in T_{2\eta,\mu}(x)\),
\[
\rho_\psi^\eta H_\mu^\star(x)
\preceq
\nabla_{yy}^2\psi_\mu(x,y)
\preceq
\ell_{\psi,1}^\eta H_\mu^\star(x),
\qquad
\|\nabla_{xy}^2\psi_\mu(x,y)\|_{2\to y_\mu^\star(x),*}
\le
\ell_{\psi,1}^\eta .
\]
Moreover, for every \(y_1,y_2\in T_{2\eta,\mu}(x)\),
\[
\|\nabla_{yy}^2\psi_\mu(x,y_1)-\nabla_{yy}^2\psi_\mu(x,y_2)\|_{y_\mu^\star(x)\to y_\mu^\star(x),*}
\le
\ell_{\psi,2}^\eta
\|y_1-y_2\|_{y_\mu^\star(x)},
\]
and
\[
\|\nabla_{xy}^2\psi_\mu(x,y_1)-\nabla_{xy}^2\psi_\mu(x,y_2)\|_{2\to y_\mu^\star(x),*}
\le
\ell_{\psi,2}^\eta
\|y_1-y_2\|_{y_\mu^\star(x)}.
\]

\Etag{Exact neighborhood regularity of \(f\).}{ex:E2}
For every \(y\in T_{2\eta,\mu}(x)\),
\[
\|\nabla_y f(x,y)\|_{y_\mu^\star(x),*}\le \ell_{f,0}^\eta,\qquad
\|\nabla_{xy}^2 f(x,y)\|_{2\to y_\mu^\star(x),*}\le \ell_{f,1}^\eta,
\]
and
\[
\|\nabla_{yy}^2 f(x,y)\|_{y_\mu^\star(x)\to y_\mu^\star(x),*}\le \ell_{f,1}^\eta.
\]
The corresponding Hessian-difference bounds hold with constant \(\ell_{f,2}^\eta\).

\Etag{Proxy-center transfer.}{ex:E3}
There exists \(\lambda_\eta<\infty\) and finite constant \(C_f^\eta\) such that, for every \(\lambda\ge\lambda_\eta\),
\[
\|y_{\lambda,\mu}^\star(x)-y_\mu^\star(x)\|_{y_\mu^\star(x)}
\le
\frac{C_f^\eta}{\lambda}
\le
\eta .
\]
Consequently, \(H_{\lambda,\mu}^\star(x)\) and \(H_\mu^\star(x)\) are spectrally comparable, and the
proxy neighborhood is contained in a slightly enlarged exact buffer neighborhood.

\Etag{Proxy neighborhood  regularity.}{ex:E4}
For every \(\lambda\ge\lambda_\eta\), the proxy neighborhood
\(T_{2\eta,\mu}^{\lambda}(x)\) is contained in an enlarged exact buffer neighborhood, and the proxy anchor
\(H_{\lambda,\mu}^\star(x):=\nabla^2\phi(y_{\lambda,\mu}^\star(x))\) is spectrally comparable to
\(H_\mu^\star(x)\). Consequently, after changing only \(\eta\)-dependent constants, the exact neighborhood
bounds transfer to the proxy neighborhood. In particular, there exist finite constants
\(\rho_{\psi,\lambda}^{\eta}>0\) and
\(\ell_{\psi,\lambda,1}^{\eta},\ell_{\psi,\lambda,2}^{\eta}<\infty\) such that, for every
\(y\in T_{2\eta,\mu}^{\lambda}(x)\),
\[
\rho_{\psi,\lambda}^{\eta}H_{\lambda,\mu}^\star(x)
\preceq
\nabla_{yy}^2\psi_\mu(x,y)
\preceq
\ell_{\psi,\lambda,1}^{\eta}H_{\lambda,\mu}^\star(x),
\qquad
\|\nabla_{xy}^2\psi_\mu(x,y)\|_{2\to y_{\lambda,\mu}^\star(x),*}
\le
\ell_{\psi,\lambda,1}^{\eta}.
\]
Moreover, for all \(y_1,y_2\in T_{2\eta,\mu}^{\lambda}(x)\),
\[
\|\nabla_{yy}^2\psi_\mu(x,y_1)-\nabla_{yy}^2\psi_\mu(x,y_2)\|_{y_{\lambda,\mu}^\star(x)\to y_{\lambda,\mu}^\star(x),*}
\le
\ell_{\psi,\lambda,2}^{\eta}
\|y_1-y_2\|_{y_{\lambda,\mu}^\star(x)},
\]
and
\[
\|\nabla_{xy}^2\psi_\mu(x,y_1)-\nabla_{xy}^2\psi_\mu(x,y_2)\|_{2\to y_{\lambda,\mu}^\star(x),*}
\le
\ell_{\psi,\lambda,2}^{\eta}
\|y_1-y_2\|_{y_{\lambda,\mu}^\star(x)}.
\]
The corresponding \(f\)-bounds also hold on \(T_{2\eta,\mu}^{\lambda}(x)\) with respect to the proxy
anchor: after increasing
\(\ell_{f,0}^{\eta},\ell_{f,1}^{\eta},\ell_{f,2}^{\eta}\) if necessary,
\[
\|\nabla_y f(x,y)\|_{y_{\lambda,\mu}^\star(x),*}\le \ell_{f,0}^{\eta},\qquad
\|\nabla_{xy}^2 f(x,y)\|_{2\to y_{\lambda,\mu}^\star(x),*}\le \ell_{f,1}^{\eta},
\]
and
\[
\|\nabla_{yy}^2 f(x,y)\|_{y_{\lambda,\mu}^\star(x)\to y_{\lambda,\mu}^\star(x),*}
\le
\ell_{f,1}^{\eta},
\]
with the analogous Lipschitz-Hessian bounds controlled by \(\ell_{f,2}^{\eta}\).

\subsection{Self-concordant Metric Change and Euclidean--Dikin Conversion}

The two lemmas in this subsection collect the self-concordant metric-change and Euclidean–Dikin conversion bounds that we invoke throughout the appendix.

\begin{lemma}[Anchor-switch bound in anchored Dikin norms]
\label{lem:anchor_switch}
Let \(y_1,y_2\in \operatorname{int}(Y)\), and define
\(r(y_1,y_2):=\|y_1-y_2\|_{y_1}\). If \(r(y_1,y_2)<1\), then
\[
(1-r(y_1,y_2))^2H(y_1)
\preceq
H(y_2)
\preceq
(1-r(y_1,y_2))^{-2}H(y_1),
\]
where \(H(y):=\nabla^2\phi(y)\). Equivalently, for every \(u\),
\[
\|u\|_{y_2}^2
\le
(1-r(y_1,y_2))^{-2}\|u\|_{y_1}^2.
\]
The corresponding dual-norm bound also holds:
\[
\|w\|_{y_2,*}^2
\le
(1-r(y_1,y_2))^{-2}\|w\|_{y_1,*}^2.
\]
\end{lemma}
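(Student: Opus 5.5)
The plan is to prove the statement directly from the explicit form of the logarithmic barrier, \(H(y)=\nabla^2\phi(y)=\sum_{i=1}^m a_ia_i^\top/s_i(y)^2\), rather than through the general self-concordance machinery. The key quantity is the relative change of each slack along the segment from \(y_1\) to \(y_2\).

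Set \(h:=y_2-y_1\) and \(r:=r(y_1,y_2)=\|h\|_{y_1}\). Since \(s_i(y_2)=s_i(y_1)-a_i^\top h\), each slack changes by the ratio
\[
\frac{s_i(y_2)}{s_i(y_1)}=1-\frac{a_i^\top h}{s_i(y_1)}.
\]
Because \(\|h\|_{y_1}^2=\sum_i (a_i^\top h)^2/s_i(y_1)^2\), each individual term obeys \(|a_i^\top h|/s_i(y_1)\le r\). Hence
\[
1-r\le \frac{s_i(y_2)}{s_i(y_1)}\le 1+r\qquad\text{for every } i,
\]
and positivity of \(s_i(y_2)\) when \(r<1\) is consistent with \(y_2\in\operatorname{int}(Y)\), which is in any case assumed.

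It follows that \(s_i(y_2)^{-2}\le (1-r)^{-2}s_i(y_1)^{-2}\) and \(s_i(y_2)^{-2}\ge (1+r)^{-2}s_i(y_1)^{-2}\ge (1-r)^2 s_i(y_1)^{-2}\). The last inequality uses \((1+r)(1-r)\le 1\), so \((1+r)^{-1}\ge 1-r\). Multiplying by the positive semidefinite rank-one terms \(a_ia_i^\top\) and summing over \(i\) gives
\[
(1-r)^2H(y_1)\preceq H(y_2)\preceq (1-r)^{-2}H(y_1).
\]
The primal statement \(\|u\|_{y_2}^2\le(1-r)^{-2}\|u\|_{y_1}^2\) is just the upper bound evaluated at \(u\).

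For the dual norm, note that \(H(y_1)\) and \(H(y_2)\) are positive definite. This holds because \(Y\) is compact, so \(A\) has full column rank and each \(H(y)=A^\top\operatorname{Diag}(s^{-2})A\) is positive definite. Operator monotonicity of inversion on positive definite matrices then turns the lower bound \((1-r)^2H(y_1)\preceq H(y_2)\) into
\[
H(y_2)^{-1}\preceq (1-r)^{-2}H(y_1)^{-1}.
\]
Testing this inequality against \(w\) gives the claimed dual-norm bound.

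The only subtle point is the full-rank justification needed for the inverses. If \(A\) lacked full column rank, \(Y\) would contain a line and so could not be bounded. I expect no real obstacle; the per-constraint ratio bound is the heart of the proof.
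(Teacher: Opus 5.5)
Your proposal is correct and matches the paper's proof essentially step for step: the per-constraint bound \(|a_i^\top h|/s_i(y_1)\le r\), the resulting slack sandwich \((1-r)s_i(y_1)\le s_i(y_2)\le(1+r)s_i(y_1)\), termwise comparison of the rank-one sum using \((1+r)^{-2}\ge(1-r)^2\), and inversion of the lower matrix bound for the dual norm. The only addition is your explicit full-column-rank argument for invertibility of \(H\). The paper does not include this inside this proof; it establishes the same fact in the subsequent Euclidean--Dikin conversion lemma.
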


\begin{proof}
Let \(h:=y_2-y_1\) and \(r:=\|h\|_{y_1}\). For each constraint \(i\),
\[
\frac{|a_i^\top h|}{s_i(y_1)}
\le
\left(\sum_{j=1}^m\frac{(a_j^\top h)^2}{s_j(y_1)^2}\right)^{1/2}
=
\|h\|_{y_1}
=
r.
\]
Hence
\[
(1-r)s_i(y_1)
\le
s_i(y_2)
=
s_i(y_1)-a_i^\top h
\le
(1+r)s_i(y_1).
\]
Since \(r<1\), all slacks \(s_i(y_2)\) are positive. Therefore, for every \(u\),
\[
u^\top H(y_2)u
=
\sum_{i=1}^m\frac{(a_i^\top u)^2}{s_i(y_2)^2}
\le
(1-r)^{-2}
\sum_{i=1}^m\frac{(a_i^\top u)^2}{s_i(y_1)^2}
=
(1-r)^{-2}u^\top H(y_1)u.
\]
Similarly,
\[
u^\top H(y_2)u
\ge
(1+r)^{-2}u^\top H(y_1)u
\ge
(1-r)^2u^\top H(y_1)u,
\]
where the last inequality uses \((1+r)^{-2}\ge (1-r)^2\) for \(r\in[0,1)\). This proves the matrix
comparison. The primal norm bound follows immediately. The dual norm bound follows by inverting
the matrix comparison:
\[
H(y_2)^{-1}\preceq (1-r)^{-2}H(y_1)^{-1}.
\]
\end{proof}

\begin{lemma}[Euclidean--Dikin norm conversion]
\label{lem:euc_dikin}
Let \(Y=\{y\in\mathbb R^{d_y}:Ay\le b\}\) be a compact full-dimensional polytope, and let
\[
\phi(y)=-\sum_{i=1}^m\log s_i(y),
\qquad
s_i(y)=b_i-a_i^\top y,
\qquad
y\in\operatorname{int}(Y).
\]
Then there exists a constant \(\kappa_\phi\ge 1\), depending only on \(Y\), such that for every
\(y\in\operatorname{int}(Y)\),
\[
\|u\|_2\le \kappa_\phi\|u\|_y,
\qquad
\|w\|_{y,*}\le \kappa_\phi\|w\|_2,
\qquad
\forall u,w.
\]
In particular, the same bounds hold for every exact or proxy anchor.
\end{lemma}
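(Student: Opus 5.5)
Both inequalities follow from a single uniform spectral lower bound, \(\nabla^2\phi(y)\succeq \kappa_\phi^{-2} I\) for all \(y\in\operatorname{int}(Y)\). Indeed, \(\|u\|_2^2\le\kappa_\phi^2\|u\|_y^2\) is exactly \(u^\top H(y)u\ge \kappa_\phi^{-2}\|u\|_2^2\). Inverting this comparison gives \(H(y)^{-1}\preceq\kappa_\phi^2 I\), which is the dual statement \(\|w\|_{y,*}^2=w^\top H(y)^{-1}w\le\kappa_\phi^2\|w\|_2^2\). So the plan is to prove a positive lower bound on \(\lambda_{\min}(H(y))\) that is uniform over the interior, and then set \(\kappa_\phi:=\max\{1,\lambda^{-1/2}\}\).

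To get that lower bound, first use compactness of \(Y\) to bound the slacks from above. Each \(s_i(y)=b_i-a_i^\top y\) is continuous on the compact set \(Y\), so \(S:=\max_i\sup_{y\in Y}s_i(y)<\infty\). This gives, for every \(y\in\operatorname{int}(Y)\) and every \(u\),
\[
u^\top H(y)u=\sum_{i=1}^m\frac{(a_i^\top u)^2}{s_i(y)^2}\ge S^{-2}\,u^\top A^\top A u .
\]
The point to notice is that small slacks only increase the Hessian, so the boundary blow-up helps here rather than hurts. The only danger is that \(A^\top A\) could be singular.

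The one substantive step, then, is showing \(A^\top A\succ0\), i.e.\ that \(A\) has full column rank. This is where boundedness of \(Y\) is used. Suppose \(Au=0\) for some \(u\neq0\). Pick any \(y\in Y\); then \(A(y+tu)=Ay\le b\) for all \(t\in\mathbb R\), so the whole line \(y+tu\) lies in \(Y\), contradicting compactness. Hence \(\sigma:=\lambda_{\min}(A^\top A)>0\). This yields \(H(y)\succeq (\sigma/S^2)I\) uniformly, and we take \(\kappa_\phi:=\max\{1,S/\sqrt{\sigma}\}\), which depends only on \(A\) and \(b\).

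Finally, the statement about exact and proxy anchors is immediate. The anchors \(y_\mu^\star(x)\) and \(y_{\lambda,\mu}^\star(x)\) lie in \(\operatorname{int}(Y)\): for the exact anchor this is Theorem~\ref{thm:barrier_smoothing}(i), and the proxy anchor is interior by definition. The uniform bound therefore applies to them. No step is a real obstacle; the full-rank argument is the only non-routine point.
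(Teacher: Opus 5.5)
Your proposal is correct and follows essentially the same argument as the paper. Both proofs bound the slacks from above using compactness, prove that $A$ has full column rank by the invariant-line contradiction, take a uniform lower bound on $\lambda_{\min}(\nabla^2\phi(y))$, and invert it to get the dual-norm bound.

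The only difference is cosmetic. The paper uses per-constraint bounds $\bar s_i$ and the matrix $A^\top\operatorname{Diag}(\bar s^{-2})A$. You use a single $S$ and $A^\top A$, which gives a slightly looser but equally valid $\kappa_\phi$.
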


\begin{proof}
Let \(\bar s_i:=\max_{y\in Y}s_i(y)\). Since \(Y\) is compact and has nonempty interior, each
\(\bar s_i\) is finite and positive. For every \(y\in\operatorname{int}(Y)\), we have
\(0<s_i(y)\le \bar s_i\), and therefore
\[
\nabla^2\phi(y)
=
\sum_{i=1}^m\frac{a_i a_i^\top}{s_i(y)^2}
\succcurlyeq
\sum_{i=1}^m\frac{a_i a_i^\top}{\bar s_i^2}
=
A^\top\operatorname{Diag}(\bar s^{-2})A.
\]
The matrix \(A^\top\operatorname{Diag}(\bar s^{-2})A\) is positive definite. Indeed, if there were a
nonzero vector \(v\) with \(Av=0\), then for any \(y\in Y\), the line \(y+tv\) would satisfy
\(A(y+tv)=Ay\le b\) for all \(t\in\mathbb R\), contradicting compactness of \(Y\). Hence
\(A\) has full column rank, and the displayed matrix is positive definite.

Let
\[
\rho_\phi
:=
\lambda_{\min}\!\left(A^\top\operatorname{Diag}(\bar s^{-2})A\right)>0.
\]
Then \(\nabla^2\phi(y)\succeq \rho_\phi I\) for every \(y\in\operatorname{int}(Y)\). Therefore,
\[
\|u\|_y^2
=
u^\top\nabla^2\phi(y)u
\ge
\rho_\phi\|u\|_2^2,
\]
so \(\|u\|_2\le \rho_\phi^{-1/2}\|u\|_y\). Also,
\[
\|w\|_{y,*}^2
=
w^\top\nabla^2\phi(y)^{-1}w
\le
\rho_\phi^{-1}\|w\|_2^2,
\]
so \(\|w\|_{y,*}\le \rho_\phi^{-1/2}\|w\|_2\). Taking
\(\kappa_\phi:=\max\{1,\rho_\phi^{-1/2}\}\) gives the claim.
\end{proof}

\begin{remark}[Absorbing the Euclidean--Dikin conversion factor]
Lemma~\ref{lem:euc_dikin} provides a fixed geometric constant \(\kappa_\phi\) that converts
between Euclidean norms and anchored Dikin norms. In all statements below that are written in
Euclidean norm, we may absorb this factor into the corresponding local Dikin constants. We keep the intrinsic Dikin constants \(\rho_\psi^\eta,\ell_{\psi,1}^\eta,\ell_{\psi,2}^\eta,\ldots\)
visible, and absorb \(\kappa_\phi\) only into the derived Euclidean-facing constants.
\end{remark}

\subsection{Proof of Proposition~\ref{prop:derived_dikin_regularity}}
\label{app:proof_dikin_regularity}
\begin{proof}
Fix \(x\in X\). We prove the expanded bounds in
Appendix~\ref{app:expanded_form}; the main-text Proposition~\ref{prop:derived_dikin_regularity} follows immediately by
restricting from the buffer neighborhoods to the target neighborhoods and absorbing fixed \(\eta\)-dependent constants.
\proofstep{(i) Exact neighborhood regularity of \(\psi_\mu\).}
Let \(y\in T_{2\eta,\mu}(x)\). By Lemma~\ref{lem:anchor_switch},
\[
(1-2\eta)^2H_\mu^\star(x)
\preceq
\nabla^2\phi(y)
\preceq
(1-2\eta)^{-2}H_\mu^\star(x).
\]
Since
\(\nabla_{yy}^2\psi_\mu(x,y)=\nabla_{yy}^2g(x,y)+\mu\nabla^2\phi(y)\), and
\(\nabla_{yy}^2g(x,y)\succeq \rho_g I\), we obtain
\[
\nabla_{yy}^2\psi_\mu(x,y)
\succeq
\mu(1-2\eta)^2H_\mu^\star(x).
\]
Thus the lower Dikin-curvature bound holds with
\(\rho_\psi^\eta:=\mu(1-2\eta)^2\). For the upper bound, Assumption~\ref{ass:euclidean} gives
\(\nabla_{yy}^2g(x,y)\preceq \ell_{g,1}I\), and Lemma~\ref{lem:anchor_switch}
implies \(I\preceq \kappa_\phi^2H_\mu^\star(x)\). Hence
\[
\nabla_{yy}^2\psi_\mu(x,y)
\preceq
\bigl(\kappa_\phi^2\ell_{g,1}+\mu(1-2\eta)^{-2}\bigr)H_\mu^\star(x).
\]
After increasing constants, this gives the upper bound with \(\ell_{\psi,1}^\eta<\infty\).
Because the barrier does not depend on \(x\), \(\nabla_{xy}^2\psi_\mu=\nabla_{xy}^2g\). The mixed
Hessian bound follows from Assumption~\ref{ass:euclidean} and the Euclidean--Dikin conversion:
\[
\|\nabla_{xy}^2\psi_\mu(x,y)\|_{2\to y_\mu^\star(x),*}
\le
\kappa_\phi\ell_{g,1}.
\]
The Lipschitz-Hessian bounds follow similarly from the Lipschitzness of \(\nabla^2g\), the
self-concordant third-derivative bounds for the logarithmic barrier on \(T_{2\eta,\mu}(x)\), and
the norm conversions in Lemma~\ref{lem:euc_dikin}. Hence there exists
\(\ell_{\psi,2}^\eta<\infty\) such that the exact-neighborhood Lipschitz-Hessian bounds in
\eqref{ex:E1} hold.
\proofstep{(ii) Exact neighborhood regularity of \(f\).}
The \(f\)-bounds in \eqref{ex:E2} follow directly from Assumption~\ref{ass:euclidean} and
Lemma~\ref{lem:anchor_switch}. In particular,
\[
\|\nabla_y f(x,y)\|_{y_\mu^\star(x),*}\le \kappa_\phi\ell_{f,0},
\qquad
\|\nabla_{xy}^2 f(x,y)\|_{2\to y_\mu^\star(x),*}\le \kappa_\phi\ell_{f,1},
\]
and
\[
\|\nabla_{yy}^2 f(x,y)\|_{y_\mu^\star(x)\to y_\mu^\star(x),*}
\le
\kappa_\phi^2\ell_{f,1}.
\]
The corresponding Hessian-difference bounds follow from the Lipschitzness of \(\nabla^2 f\) and
the same norm conversions. Enlarging constants gives finite
\(\ell_{f,0}^\eta,\ell_{f,1}^\eta,\ell_{f,2}^\eta\).

\proofstep{(iii(a)) Proxy-center proximity.}
We first prove the proxy-center transfer bound in \eqref{ex:E3}. Let
\[
D_Y:=\sup_{u,v\in Y}\|u-v\|_2<\infty,
\qquad
\Delta_f:=\ell_{f,0}D_Y .
\]
Since \(Y\) is compact and \(\|\nabla_y f(x,y)\|_2\le \ell_{f,0}\), we have
\[
|f(x,y)-f(x,y')|\le \Delta_f
\qquad
\text{for all }x\in X,\ y,y'\in Y.
\]
By optimality of \(y_{\lambda,\mu}^\star(x)\) for \(L_{\lambda,\mu}(x,\cdot)\), and using
\(\psi_\mu^\star(x)=\psi_\mu(x,y_\mu^\star(x))\),
\[
f(x,y_{\lambda,\mu}^\star(x))
+
\lambda\bigl(\psi_\mu(x,y_{\lambda,\mu}^\star(x))-\psi_\mu(x,y_\mu^\star(x))\bigr)
\le
f(x,y_\mu^\star(x)).
\]
Therefore
\begin{equation}
\label{eq:proxy_center_objective_gap}
\psi_\mu(x,y_{\lambda,\mu}^\star(x))-\psi_\mu(x,y_\mu^\star(x))
\le
\frac{\Delta_f}{\lambda}.
\end{equation}

We next show that \(y_{\lambda,\mu}^\star(x)\) enters the exact neighborhood for all sufficiently large
\(\lambda\). Suppose, to the contrary, that
\[
r_\lambda(x):=
\|y_{\lambda,\mu}^\star(x)-y_\mu^\star(x)\|_{y_\mu^\star(x)}
>
\eta .
\]
Define the point on the exact Dikin sphere
\[
\widehat y
:=
y_\mu^\star(x)
+
\frac{\eta}{r_\lambda(x)}
\bigl(y_{\lambda,\mu}^\star(x)-y_\mu^\star(x)\bigr).
\]
Then \(\|\widehat y-y_\mu^\star(x)\|_{y_\mu^\star(x)}=\eta\), so by the exact neighborhood curvature bound,
\[
\psi_\mu(x,\widehat y)-\psi_\mu(x,y_\mu^\star(x))
\ge
\frac{\rho_\psi^\eta}{2}\eta^2.
\]
Because \(\widehat y\) lies on the line segment between \(y_\mu^\star(x)\) and
\(y_{\lambda,\mu}^\star(x)\), convexity of \(\psi_\mu(x,\cdot)\) gives
\[
\psi_\mu(x,y_{\lambda,\mu}^\star(x))-\psi_\mu(x,y_\mu^\star(x))
\ge
\psi_\mu(x,\widehat y)-\psi_\mu(x,y_\mu^\star(x))
\ge
\frac{\rho_\psi^\eta}{2}\eta^2.
\]
This contradicts \eqref{eq:proxy_center_objective_gap} whenever
\[
\lambda\ge \frac{2\Delta_f}{\rho_\psi^\eta\eta^2}.
\]
Hence, for all such \(\lambda\),
\[
y_{\lambda,\mu}^\star(x)\in T_{\eta,\mu}(x).
\]

Once \(y_{\lambda,\mu}^\star(x)\in T_{\eta,\mu}(x)\), we can use the strong monotonicity of
\(\nabla_y\psi_\mu(x,\cdot)\) in the anchored Dikin metric. The optimality conditions are
\[
\nabla_y\psi_\mu(x,y_\mu^\star(x))=0,
\qquad
\nabla_y f(x,y_{\lambda,\mu}^\star(x))
+
\lambda\nabla_y\psi_\mu(x,y_{\lambda,\mu}^\star(x))
=
0.
\]
Therefore,
\begin{align*}
\rho_\psi^\eta
\|y_{\lambda,\mu}^\star(x)-y_\mu^\star(x)\|_{y_\mu^\star(x)}^2
&\le
\left\langle
\nabla_y\psi_\mu(x,y_{\lambda,\mu}^\star(x))
-
\nabla_y\psi_\mu(x,y_\mu^\star(x)),
\,y_{\lambda,\mu}^\star(x)-y_\mu^\star(x)
\right\rangle \\
&=
-\frac{1}{\lambda}
\left\langle
\nabla_y f(x,y_{\lambda,\mu}^\star(x)),
\,y_{\lambda,\mu}^\star(x)-y_\mu^\star(x)
\right\rangle \\
&\le
\frac{1}{\lambda}
\|\nabla_y f(x,y_{\lambda,\mu}^\star(x))\|_{y_\mu^\star(x),*}
\|y_{\lambda,\mu}^\star(x)-y_\mu^\star(x)\|_{y_\mu^\star(x)}.
\end{align*}
Using the exact neighborhood \(f\)-bound,
\[
\|\nabla_y f(x,y_{\lambda,\mu}^\star(x))\|_{y_\mu^\star(x),*}
\le
\ell_{f,0}^\eta,
\]
and canceling one factor gives
\[
\|y_{\lambda,\mu}^\star(x)-y_\mu^\star(x)\|_{y_\mu^\star(x)}
\le
\frac{\ell_{f,0}^\eta}{\lambda\rho_\psi^\eta}.
\]
Thus \eqref{ex:E3} holds with, for instance,
\[
C_f^\eta:=\frac{2\ell_{f,0}^\eta}{\rho_\psi^\eta}
\]
after increasing the constant if necessary. Finally, choose
\[
\lambda_\eta
:=
\max\left\{
\frac{2\Delta_f}{\rho_\psi^\eta\eta^2},
\frac{C_f^\eta}{\eta}
\right\}.
\]
Then for every \(\lambda\ge\lambda_\eta\),
\[
\|y_{\lambda,\mu}^\star(x)-y_\mu^\star(x)\|_{y_\mu^\star(x)}
\le
\frac{C_f^\eta}{\lambda}
\le
\eta .
\]
The constants are uniform over \(x\in X\), because
\(\Delta_f,\rho_\psi^\eta,\ell_{f,0}^\eta\) are uniform.

\proofstep{(iii(b)) Proxy neighborhood transfer.}
We now prove the proxy neighborhood regularity bounds in \eqref{ex:E4}. The preceding step gives, for every
\(\lambda\ge\lambda_\eta\),
\[
\|y_{\lambda,\mu}^\star(x)-y_\mu^\star(x)\|_{y_\mu^\star(x)}
\le
\eta .
\]
By the self-concordant metric-change inequality,
\begin{equation}
\label{eq:proxy_exact_anchor_comparison}
(1-\eta)^2H_\mu^\star(x)
\preceq
H_{\lambda,\mu}^\star(x)
\preceq
(1-\eta)^{-2}H_\mu^\star(x).
\end{equation}
Thus the exact and proxy anchors are spectrally comparable. Moreover, if
\(y\in T_{2\eta,\mu}^{\lambda}(x)\), then another application of the same metric-change inequality,
now around the proxy anchor, gives
\begin{equation}
\label{eq:proxy_tube_barrier_metric_change}
(1-2\eta)^2H_{\lambda,\mu}^\star(x)
\preceq
\nabla^2\phi(y)
\preceq
(1-2\eta)^{-2}H_{\lambda,\mu}^\star(x).
\end{equation}
This is the key local metric comparison on the proxy neighborhood.

We first prove the proxy curvature bound for \(\psi_\mu\). Since
\[
\nabla_{yy}^2\psi_\mu(x,y)
=
\nabla_{yy}^2g(x,y)+\mu\nabla^2\phi(y),
\]
and \(\nabla_{yy}^2g(x,y)\succeq 0\), \eqref{eq:proxy_tube_barrier_metric_change} implies
\[
\nabla_{yy}^2\psi_\mu(x,y)
\succeq
\mu(1-2\eta)^2H_{\lambda,\mu}^\star(x).
\]
Thus the lower curvature bound holds with
\[
\rho_{\psi,\lambda}^\eta:=\mu(1-2\eta)^2.
\]
For the upper bound, Assumption~\ref{ass:euclidean} gives
\(\nabla_{yy}^2g(x,y)\preceq \ell_{g,1}I\). By the Euclidean--Dikin conversion,
\(I\preceq \kappa_\phi^2H_{\lambda,\mu}^\star(x)\). Combining this with
\eqref{eq:proxy_tube_barrier_metric_change} yields
\[
\nabla_{yy}^2\psi_\mu(x,y)
\preceq
\bigl(\kappa_\phi^2\ell_{g,1}
+
\mu(1-2\eta)^{-2}\bigr)H_{\lambda,\mu}^\star(x).
\]
After increasing constants, this gives
\[
\rho_{\psi,\lambda}^\eta H_{\lambda,\mu}^\star(x)
\preceq
\nabla_{yy}^2\psi_\mu(x,y)
\preceq
\ell_{\psi,\lambda,1}^\eta H_{\lambda,\mu}^\star(x),
\qquad
y\in T_{2\eta,\mu}^{\lambda}(x).
\]

The mixed-Hessian bound follows similarly. Since the barrier does not depend on \(x\),
\(\nabla_{xy}^2\psi_\mu=\nabla_{xy}^2g\). Hence Assumption~\ref{ass:euclidean} and the
Euclidean--Dikin conversion give
\[
\|\nabla_{xy}^2\psi_\mu(x,y)\|_{2\to y_{\lambda,\mu}^\star(x),*}
\le
\kappa_\phi\ell_{g,1}.
\]
Thus the proxy mixed-Hessian bound holds after increasing
\(\ell_{\psi,\lambda,1}^\eta\).

It remains to justify the Lipschitz-Hessian bounds on the proxy neighborhood. Let
\(y_1,y_2\in T_{2\eta,\mu}^{\lambda}(x)\). For the \(g\)-part, the joint Lipschitzness of
\(\nabla^2g\), together with the Euclidean--Dikin norm conversion, gives
\[
\|\nabla_{yy}^2g(x,y_1)-\nabla_{yy}^2g(x,y_2)\|_{y_{\lambda,\mu}^\star(x)\to y_{\lambda,\mu}^\star(x),*}
\le
C_g^\eta\|y_1-y_2\|_{y_{\lambda,\mu}^\star(x)}
\]
and
\[
\|\nabla_{xy}^2g(x,y_1)-\nabla_{xy}^2g(x,y_2)\|_{2\to y_{\lambda,\mu}^\star(x),*}
\le
C_g^\eta\|y_1-y_2\|_{y_{\lambda,\mu}^\star(x)}
\]
for a finite \(\eta\)-dependent constant \(C_g^\eta\). For the barrier part, the logarithmic barrier is
self-concordant, so on the Dikin ball \(T_{2\eta,\mu}^{\lambda}(x)\) its Hessian is locally Lipschitz
in the proxy Dikin norm. In particular, there exists a finite constant \(C_\phi^\eta\) such that
\[
\|\nabla^2\phi(y_1)-\nabla^2\phi(y_2)\|_{y_{\lambda,\mu}^\star(x)\to y_{\lambda,\mu}^\star(x),*}
\le
C_\phi^\eta\|y_1-y_2\|_{y_{\lambda,\mu}^\star(x)} .
\]
Combining the \(g\)-part and the barrier part gives the desired Lipschitz-Hessian bound for
\(\nabla_{yy}^2\psi_\mu\). Since \(\nabla_{xy}^2\psi_\mu=\nabla_{xy}^2g\), the corresponding
mixed-Hessian Lipschitz bound follows from the preceding \(g\)-part estimate. Hence there exists a
finite constant \(\ell_{\psi,\lambda,2}^\eta<\infty\) such that all proxy neighborhood Lipschitz-Hessian bounds
in \eqref{ex:E4} hold.

The \(f\)-bounds on \(T_{2\eta,\mu}^{\lambda}(x)\) are obtained in exactly the same way, but without
the barrier term. Assumption~\ref{ass:euclidean} gives Euclidean bounds and Euclidean
Lipschitz-Hessian bounds for \(f\), while Lemma~\ref{lem:euc_dikin} converts them to the proxy
anchored Dikin norms. After increasing
\(\ell_{f,0}^\eta,\ell_{f,1}^\eta,\ell_{f,2}^\eta\), we obtain
\[
\|\nabla_y f(x,y)\|_{y_{\lambda,\mu}^\star(x),*}\le \ell_{f,0}^\eta,
\qquad
\|\nabla_{xy}^2 f(x,y)\|_{2\to y_{\lambda,\mu}^\star(x),*}\le \ell_{f,1}^\eta,
\]
and
\[
\|\nabla_{yy}^2 f(x,y)\|_{y_{\lambda,\mu}^\star(x)\to y_{\lambda,\mu}^\star(x),*}
\le
\ell_{f,1}^\eta,
\]
together with the analogous Lipschitz-Hessian bounds controlled by \(\ell_{f,2}^\eta\).

Finally, \eqref{eq:proxy_exact_anchor_comparison} also shows explicitly that the exact and proxy
anchors are uniformly comparable for all \(\lambda\ge\lambda_\eta\). If desired, one may also view
the proxy neighborhood as lying in an enlarged exact neighborhood: for
\(y\in T_{2\eta,\mu}^{\lambda}(x)\),
\[
\|y-y_\mu^\star(x)\|_{y_\mu^\star(x)}
\le
\|y_{\lambda,\mu}^\star(x)-y_\mu^\star(x)\|_{y_\mu^\star(x)}
+
\|y-y_{\lambda,\mu}^\star(x)\|_{y_\mu^\star(x)}
\le
\eta+\frac{2\eta}{1-\eta}.
\]
Thus the proxy neighborhood is contained in an enlarged exact Dikin neighborhood with radius
\(\eta+2\eta/(1-\eta)\). The proof above, however, works directly in the proxy anchor and therefore
only requires the proxy Dikin ball condition \(2\eta<1\). This proves \eqref{ex:E4}.

\end{proof}

\begin{remark}[Admissible multiplier range]
\label{rem:admissible_multiplier_range}
The proxy neighborhood regularity part of Proposition~\ref{prop:derived_dikin_regularity} requires the
initial multiplier to lie in a sufficiently large, problem-dependent range. More precisely, the proof
above exhibits a finite threshold \(\lambda_\eta<\infty\), depending only on the tube radius \(\eta\),
the barrier parameter \(\mu\), the geometry of \(Y\), and the local bounds in Assumption~\ref{ass:euclidean},
such that the proxy-center transfer and proxy neighborhood regularity bounds hold for every
\(\lambda\ge\lambda_0\) whenever \(\lambda_0\ge\lambda_\eta\). This threshold is not part of the \textit{barrier-aware} schedule design. It is a static admissibility condition
for the local proxy geometry. Once \(\lambda_0\ge\lambda_\eta\), the monotonicity requirement
\(\delta_k=\lambda_{k+1}-\lambda_k\ge0\) guarantees \(\lambda_k\ge\lambda_\eta\) for all outer
iterations \(k\). Thus all proxy neighborhood estimates used in the tracker and Lyapunov analysis remain
valid along the entire multiplier sequence. In the main text, this one-time condition is absorbed into
the phrase ``admissible multiplier range,'' while Definition~\ref{def:barrier_aware} records only the
dynamic schedule conditions controlling contraction, multiplier growth, and tube closure.
\end{remark}

\subsection{Local Dikin Curvature of the Proxy Objective}
\label{app:proxy_objective_curvature}

For notational simplicity in the sequel, we use common exact/proxy neighborhood constants. That is, after
Proposition~\ref{prop:derived_dikin_regularity}, we decrease \(\rho_\psi^\eta\) if necessary and
increase \(\ell_{\psi,1}^\eta,\ell_{\psi,2}^\eta\) if necessary so that the exact and proxy neighborhoods
regularity bounds both hold with the same constants whenever \(\lambda\ge\lambda_\eta\). In
particular, for every \(y\in T_{2\eta,\mu}^{\lambda}(x)\),
\begin{equation*}
\rho_\psi^\eta H_{\lambda,\mu}^\star(x)
\preceq
\nabla_{yy}^2\psi_\mu(x,y)
\preceq
\ell_{\psi,1}^\eta H_{\lambda,\mu}^\star(x),
\end{equation*}
with the corresponding mixed-Hessian and Lipschitz-Hessian bounds in the proxy anchored Dikin
norm. We also enlarge \(\ell_{f,0}^\eta,\ell_{f,1}^\eta,\ell_{f,2}^\eta\) if necessary so that the
\(f\)-bounds hold on both exact and proxy buffer tubes. All constants remain finite and depend only
on the quantities specified in Proposition~\ref{prop:derived_dikin_regularity}.

Using these unified constants, we can now read off the curvature of the proxy objective \(L_{\lambda,\mu}=f+\lambda\psi_\mu\) directly on the proxy buffer neighborhood. This is the form we use later to certify contraction of the \(y\)-tracker.

\begin{lemma}[Local curvature of \(L_{\lambda,\mu}\) on the proxy neighborhood]
\label{lem:proxy_objective_curvature}
Suppose the conclusions of Proposition~\ref{prop:derived_dikin_regularity} hold and
\[
\lambda\ge \lambda_\eta,
\qquad
\lambda\ge \frac{2\ell_{f,1}^\eta}{\rho_\psi^\eta}.
\]
Then, for every \(x\in X\) and every
\(y\in T_{2\eta,\mu}^{\lambda}(x)\),
\begin{equation*}
\frac{\lambda\rho_\psi^\eta}{2}H_{\lambda,\mu}^\star(x)
\preceq
\nabla_{yy}^2L_{\lambda,\mu}(x,y)
\preceq
\bigl(\lambda\ell_{\psi,1}^\eta+\ell_{f,1}^\eta\bigr)
H_{\lambda,\mu}^\star(x).
\end{equation*}
In particular, \(y\mapsto L_{\lambda,\mu}(x,y)\) is strongly convex on the proxy buffer neighborhood in the
proxy anchored Dikin metric.
\end{lemma}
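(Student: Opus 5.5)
The proof will be a direct sandwich argument on the Hessian decomposition
\[
\nabla_{yy}^2L_{\lambda,\mu}(x,y)=\nabla_{yy}^2 f(x,y)+\lambda\nabla_{yy}^2\psi_\mu(x,y).
\]
The term \(\psi_\mu^\star(x)\) does not depend on \(y\), so it contributes nothing to the Hessian. We will bound each summand in the Loewner order relative to the proxy anchor \(H_{\lambda,\mu}^\star(x)\), for an arbitrary \(y\in T_{2\eta,\mu}^{\lambda}(x)\). Since \(\lambda\ge\lambda_\eta\), the unified proxy-neighborhood bounds recorded after Proposition~\ref{prop:derived_dikin_regularity} apply. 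In particular, \eqref{ex:E4} with the common constants gives
\[
\lambda\rho_\psi^\eta H_{\lambda,\mu}^\star(x)\preceq\lambda\nabla_{yy}^2\psi_\mu(x,y)\preceq\lambda\ell_{\psi,1}^\eta H_{\lambda,\mu}^\star(x).
\]

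Next we convert the operator-norm bound on \(\nabla_{yy}^2 f\) into a two-sided Loewner bound. The proxy-anchored \(f\)-bound reads
\[
\|\nabla_{yy}^2 f(x,y)\|_{y_{\lambda,\mu}^\star(x)\to y_{\lambda,\mu}^\star(x),*}\le\ell_{f,1}^\eta .
\]
Write \(H:=H_{\lambda,\mu}^\star(x)\) and \(M:=\nabla_{yy}^2 f(x,y)\), which is symmetric. Then for every \(u\),
\[
|u^\top Mu|\le\|Mu\|_{H,*}\,\|u\|_H\le\ell_{f,1}^\eta\|u\|_H^2 .
\]
The first inequality is the duality pairing \(\langle w,u\rangle\le\|w\|_{c,*}\|u\|_c\). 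Equivalently, \(H^{-1/2}MH^{-1/2}\) has spectral norm at most \(\ell_{f,1}^\eta\). Hence
\[
-\ell_{f,1}^\eta H\preceq\nabla_{yy}^2 f(x,y)\preceq\ell_{f,1}^\eta H .
\]

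Adding the two sandwiches gives
\[
(\lambda\rho_\psi^\eta-\ell_{f,1}^\eta)H\preceq\nabla_{yy}^2L_{\lambda,\mu}(x,y)\preceq(\lambda\ell_{\psi,1}^\eta+\ell_{f,1}^\eta)H .
\]
The upper bound is already the claim. For the lower bound, the hypothesis \(\lambda\ge 2\ell_{f,1}^\eta/\rho_\psi^\eta\) means \(\ell_{f,1}^\eta\le\lambda\rho_\psi^\eta/2\). Therefore \(\lambda\rho_\psi^\eta-\ell_{f,1}^\eta\ge\lambda\rho_\psi^\eta/2\), which is the claimed lower bound. Strong convexity in the proxy anchored metric then follows by integrating the lower Hessian bound along segments. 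The proxy buffer ball is the intersection of an ellipsoid with \(\operatorname{int}(Y)\), and an ellipsoid of radius \(2\eta<1\) already lies in \(\operatorname{int}(Y)\) by the slack bound in Lemma~\ref{lem:anchor_switch}, so the ball is convex and segments between its points stay inside it.

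The only step that needs any care is the conversion of the \(\|\cdot\|_{c\to c,*}\) bound on \(\nabla_{yy}^2 f\) into a symmetric Loewner bound, together with checking that the unified constants apply on the buffer set \(T_{2\eta,\mu}^\lambda(x)\) and not only on the target set. Both follow from what is already recorded, so the remaining argument is routine.
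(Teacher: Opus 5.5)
Your proposal is correct and matches the paper's proof: you split $\nabla_{yy}^2L_{\lambda,\mu}=\nabla_{yy}^2f+\lambda\nabla_{yy}^2\psi_\mu$, sandwich the barrierized term with the common proxy-anchor constants on $T_{2\eta,\mu}^{\lambda}(x)$, use $-\ell_{f,1}^\eta H_{\lambda,\mu}^\star(x)\preceq\nabla_{yy}^2 f(x,y)\preceq\ell_{f,1}^\eta H_{\lambda,\mu}^\star(x)$, and absorb $\ell_{f,1}^\eta\le\lambda\rho_\psi^\eta/2$ into the lower bound. The paper states the operator-norm-to-Loewner conversion without justification and gives no argument for the strong-convexity remark; your primal/dual pairing argument and your check that the radius-$2\eta$ Dikin ball is convex and lies in $\operatorname{int}(Y)$ supply those two steps.
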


\begin{proof}
By definition,
\begin{equation*}
\nabla_{yy}^2L_{\lambda,\mu}(x,y)
=
\nabla_{yy}^2 f(x,y)
+
\lambda\nabla_{yy}^2\psi_\mu(x,y).
\end{equation*}
On \(T_{2\eta,\mu}^{\lambda}(x)\), the common-constant convention gives
\(\rho_\psi^\eta H_{\lambda,\mu}^\star(x)\preceq
\nabla_{yy}^2\psi_\mu(x,y)\preceq
\ell_{\psi,1}^\eta H_{\lambda,\mu}^\star(x)\). The \(f\)-regularity bound gives, in quadratic-form
order,
\begin{equation*}
-\ell_{f,1}^\eta H_{\lambda,\mu}^\star(x)
\preceq
\nabla_{yy}^2 f(x,y)
\preceq
\ell_{f,1}^\eta H_{\lambda,\mu}^\star(x).
\end{equation*}
Therefore,
\begin{equation*}
\nabla_{yy}^2L_{\lambda,\mu}(x,y)
\preceq
\bigl(\lambda\ell_{\psi,1}^\eta+\ell_{f,1}^\eta\bigr)
H_{\lambda,\mu}^\star(x),
\end{equation*}
and
\begin{equation*}
\nabla_{yy}^2L_{\lambda,\mu}(x,y)
\succeq
\bigl(\lambda\rho_\psi^\eta-\ell_{f,1}^\eta\bigr)
H_{\lambda,\mu}^\star(x).
\end{equation*}
Since \(\lambda\ge 2\ell_{f,1}^\eta/\rho_\psi^\eta\), we have
\(\lambda\rho_\psi^\eta-\ell_{f,1}^\eta\ge \lambda\rho_\psi^\eta/2\), which proves the claim.
\end{proof}

\section{Proof of Proposition~\ref{prop:local_consequences}}
\label{app:local_consequences}

This appendix proves the local consequences of Proposition~\ref{prop:derived_dikin_regularity}.
Throughout this section, all estimates are understood on the maintained exact and proxy Dikin neighborhoods,
and all constants are the corresponding local Dikin constants from Proposition~\ref{prop:derived_dikin_regularity}.

\paragraph{Proof roadmap.}
Proposition~\ref{prop:local_consequences}\textup{(i)} is the stability result for the proxy minimizer
map \((x,\lambda)\mapsto y_{\lambda,\mu}^\star(x)\). It combines the proxy curvature Lemma~\ref{lem:proxy_objective_curvature} for
\(L_{\lambda,\mu}\) with the local \(xy\)-cross control of \(f\) and \(\psi_\mu\), together with the
observation that the \(\lambda\)-dependence enters the optimality system only through the term
\(\lambda\nabla_y\psi_\mu(x,y)\). This yields a decomposition of the proxy minimizer drift into an
\(x\)-motion term and a \(1/\lambda\)-bias term. Proposition~\ref{prop:local_consequences}\textup{(ii)}
is the corresponding stability result for the exact barrierized minimizer map
\(x\mapsto y_\mu^\star(x)\). Proposition~\ref{prop:local_consequences}\textup{(iii)} then upgrades the stability of
\(y_\mu^\star(x)\) to local smoothness of the barrier-smoothed outer objective
\(F_\mu(x)=f(x,y_\mu^\star(x))\). The proof differentiates the exact barrierized minimizer map
through the optimality condition, controls its Jacobian by the local curvature and cross-regularity of
\(\psi_\mu\), and combines this with the local bounds for \(f\). Thus the Dikin estimates recover,
inside the maintained neighborhoods, the same type of outer smoothness estimate that is globally available in
the Euclidean unconstrained setting. Finally, Proposition~\ref{prop:local_consequences}\textup{(iv)} quantifies the error incurred by
replacing the true hypergradient \(\nabla F_\mu(x)\) with the first-order proxy based on
\(y_{\lambda,\mu}^\star(x)\) and \(y_\mu^\star(x)\). Its proof uses the envelope identity for
\(C_{\lambda,\mu}^\star(x):=\min_{y\in\operatorname{int}(Y)}L_{\lambda,\mu}(x,y)\), together with
Proposition~\ref{prop:local_consequences}\textup{(i)} and the local smoothness bounds of
\(\psi_\mu\) and \(f\), to show that the proxy-gradient bias decays at rate \(O(1/\lambda)\).

\subsection{Proxy Minimizer-map Stability}
\label{sec:proxy minimizer-map}

\begin{proof}
We prove the local estimate for admissible pairs
\((x_1,\lambda_1),(x_2,\lambda_2)\) such that
\(y^\star_{\lambda_2,\mu}(x_2)\in T_{2\eta,\mu}^{\lambda_1}(x_1)\). This is the case used in the
algorithmic analysis, and the condition is verified for consecutive algorithmic centers by the
tube-maintenance argument. By Proposition~\ref{prop:derived_dikin_regularity} and
Lemma~\ref{lem:proxy_objective_curvature}, for all \(x\in X\) and
\(y\in T_{2\eta,\mu}^{\lambda_1}(x)\),
\[
\nabla^2_{yy}L_{\lambda_1,\mu}(x,y)
=
\nabla^2_{yy} f(x,y)+\lambda_1\nabla^2_{yy}\psi_\mu(x,y)
\succeq
\frac{\lambda_1\rho_\psi^\eta}{2}H_{\lambda_1,\mu}^\star(x).
\]
Hence \(L_{\lambda_1,\mu}(x,\cdot)\) is
\(\lambda_1\rho_\psi^\eta/2\)-strongly convex with respect to the Dikin metric induced by
\(H_{\lambda_1,\mu}^\star(x)\).

By strong convexity,
\[
\frac{\lambda_1\rho_\psi^\eta}{2}
\bigl\|
y_{\lambda_2,\mu}^\star(x_2)-y_{\lambda_1,\mu}^\star(x_1)
\bigr\|_{y_{\lambda_1,\mu}^\star(x_1)}
\le
\bigl\|
\nabla_y L_{\lambda_1,\mu}(x_1,y_{\lambda_2,\mu}^\star(x_2))
\bigr\|_{y_{\lambda_1,\mu}^\star(x_1),*}.
\]

We expand the gradient:
\[
\nabla_y L_{\lambda_1,\mu}(x_1,y_{\lambda_2,\mu}^\star(x_2))
=
\nabla_y f(x_1,y_{\lambda_2,\mu}^\star(x_2))
+
\lambda_1\nabla_y\psi_\mu(x_1,y_{\lambda_2,\mu}^\star(x_2)).
\]
Add and subtract terms at \((x_2,y_{\lambda_2,\mu}^\star(x_2))\):
\begin{align*}
\nabla_y L_{\lambda_1,\mu}(x_1,y_{\lambda_2,\mu}^\star(x_2))
&=
\bigl(
\nabla_y f(x_1,y_{\lambda_2,\mu}^\star(x_2))
-
\nabla_y f(x_2,y_{\lambda_2,\mu}^\star(x_2))
\bigr) \\
&\quad
+
\lambda_1
\bigl(
\nabla_y\psi_\mu(x_1,y_{\lambda_2,\mu}^\star(x_2))
-
\nabla_y\psi_\mu(x_2,y_{\lambda_2,\mu}^\star(x_2))
\bigr) \\
&\quad
+
\bigl(
\nabla_y f(x_2,y_{\lambda_2,\mu}^\star(x_2))
+
\lambda_1\nabla_y\psi_\mu(x_2,y_{\lambda_2,\mu}^\star(x_2))
\bigr).
\end{align*}
Since \(y_{\lambda_2,\mu}^\star(x_2)\) satisfies the optimality condition
\[
\nabla_y f(x_2,y_{\lambda_2,\mu}^\star(x_2))
+
\lambda_2\nabla_y\psi_\mu(x_2,y_{\lambda_2,\mu}^\star(x_2))
=0,
\]
the last term becomes
\[
(\lambda_1-\lambda_2)
\nabla_y\psi_\mu(x_2,y_{\lambda_2,\mu}^\star(x_2)).
\]
Using the stationarity relation and the bound
\(\|\nabla_y f(x,y)\|_{y,*}\le \ell_{f,0}^\eta\), we have
\[
\bigl\|
\nabla_y\psi_\mu(x_2,y_{\lambda_2,\mu}^\star(x_2))
\bigr\|_{y_{\lambda_1,\mu}^\star(x_1),*}
=
\frac{1}{\lambda_2}
\bigl\|
\nabla_y f(x_2,y_{\lambda_2,\mu}^\star(x_2))
\bigr\|_{y_{\lambda_1,\mu}^\star(x_1),*}
\le
\frac{\ell_{f,0}^\eta}{\lambda_2}.
\]

Applying the local Dikin Lipschitz-in-\(x\) bounds in the Dikin dual norm,
\[
\bigl\|
\nabla_y f(x_1,y_{\lambda_2,\mu}^\star(x_2))
-
\nabla_y f(x_2,y_{\lambda_2,\mu}^\star(x_2))
\bigr\|_{y_{\lambda_1,\mu}^\star(x_1),*}
\le
\ell_{f,1}^\eta\|x_2-x_1\|_2,
\]
and
\[
\bigl\|
\nabla_y\psi_\mu(x_1,y_{\lambda_2,\mu}^\star(x_2))
-
\nabla_y\psi_\mu(x_2,y_{\lambda_2,\mu}^\star(x_2))
\bigr\|_{y_{\lambda_1,\mu}^\star(x_1),*}
\le
\ell_{\psi,1}^\eta\|x_2-x_1\|_2.
\]

Combining these estimates,
\begin{align*}
\bigl\|
\nabla_y L_{\lambda_1,\mu}(x_1,y_{\lambda_2,\mu}^\star(x_2))
\bigr\|_{y_{\lambda_1,\mu}^\star(x_1),*}
&\le
\ell_{f,1}^\eta\|x_2-x_1\|_2
+
\lambda_1\ell_{\psi,1}^\eta\|x_2-x_1\|_2 \\
&\quad
+
|\lambda_2-\lambda_1|\frac{\ell_{f,0}^\eta}{\lambda_2}.
\end{align*}
Substituting and dividing by \(\lambda_1\rho_\psi^\eta/2\) yields
\begin{align*}
\bigl\|
y_{\lambda_2,\mu}^\star(x_2)-y_{\lambda_1,\mu}^\star(x_1)
\bigr\|_{y_{\lambda_1,\mu}^\star(x_1)}
&\le
\frac{2|\lambda_2-\lambda_1|}{\lambda_1\lambda_2}
\frac{\ell_{f,0}^\eta}{\rho_\psi^\eta} \\
&\quad
+
\frac{2(\lambda_1\ell_{\psi,1}^\eta+\ell_{f,1}^\eta)}
{\lambda_1\rho_\psi^\eta}
\|x_2-x_1\|_2 .
\end{align*}
Equivalently,
\[
\bigl\|
y_{\lambda_2,\mu}^\star(x_2)-y_{\lambda_1,\mu}^\star(x_1)
\bigr\|_{y_{\lambda_1,\mu}^\star(x_1)}
\le
\frac{2\ell_{f,0}^{\eta}}{\rho_\psi^\eta}
\left|
\frac1{\lambda_2}-\frac1{\lambda_1}
\right|
+
\ell_{\lambda,0}^{\eta}\|x_2-x_1\|_2,
\]
for some
\[
\ell_{\lambda,0}^{\eta}
\le
\frac{2(\lambda_1\ell_{\psi,1}^{\eta}+\ell_{f,1}^{\eta})}
{\lambda_1\rho_\psi^\eta},
\qquad
\text{or}
\qquad
\ell_{\lambda,0}^{\eta}
\le
\frac{3\ell_{\psi,1}^{\eta}}{\rho_\psi^\eta}.
\]
The latter simplified bound follows because the admissible multiplier range gives
\(\lambda_1\ge 2\ell_{f,1}^{\eta}/\rho_\psi^\eta\), while
\(\rho_\psi^\eta\le \ell_{\psi,1}^{\eta}\); hence
\(\ell_{f,1}^{\eta}/\lambda_1\le \ell_{\psi,1}^{\eta}/2\).

In addition, note that
\[
y_\mu^\star(x)=\lim_{\lambda\to\infty}y_{\lambda,\mu}^\star(x).
\]
Taking \(x_1=x_2=x\), \(\lambda_2=\lambda\), and letting
\(\lambda_1\to\infty\) in the bound above, using continuity of the barrier Hessian at the limiting
anchor, gives that for any \(x\in X\) and any finite \(\lambda\ge\lambda_0\),
\begin{equation}\label{eq:dikin_penalty_bias}
\bigl\|
y_{\lambda,\mu}^\star(x)-y_\mu^\star(x)
\bigr\|_{y_\mu^\star(x)}
\le
\frac{2\,\ell_{f,0}^{\eta}}{\lambda\rho_\psi^\eta}.
\end{equation}
\end{proof}

\subsection{Exact Minimizer-map Stability}

\begin{proof}
We obtain the exact minimizer stability as the limiting case of the proxy minimizer stability.
Apply Proposition~\ref{prop:local_consequences}\textup{(i)} with
\(\lambda_1=\lambda_2=\lambda\). For every \(\lambda\ge\lambda_0\),
\[
\|y_{\lambda,\mu}^\star(x_2)-y_{\lambda,\mu}^\star(x_1)\|_{y_{\lambda,\mu}^\star(x_1)}
\le
\ell_{\lambda,0}^{\eta}\|x_2-x_1\|_2 .
\]
By the proxy--exact proximity bound,
\(y_{\lambda,\mu}^\star(x_i)\to y_\mu^\star(x_i)\) as \(\lambda\to\infty\) for \(i=1,2\).
Since the anchors remain in the interior and the barrier Hessian is continuous on \(\operatorname{int}(Y)\),
we also have
\[
H_{\lambda,\mu}^\star(x_1)
=
\nabla^2\phi(y_{\lambda,\mu}^\star(x_1))
\to
\nabla^2\phi(y_\mu^\star(x_1))
=
H_\mu^\star(x_1).
\]
Therefore,
\[
\|y_{\lambda,\mu}^\star(x_2)-y_{\lambda,\mu}^\star(x_1)\|_{y_{\lambda,\mu}^\star(x_1)}
\to
\|y_\mu^\star(x_2)-y_\mu^\star(x_1)\|_{y_\mu^\star(x_1)} .
\]
Passing to the limit in the proxy stability bound yields
\[
\|y_\mu^\star(x_2)-y_\mu^\star(x_1)\|_{y_\mu^\star(x_1)}
\le
\ell_{\lambda,0}^{\eta}\|x_2-x_1\|_2 .
\]
Thus Proposition~\ref{prop:local_consequences}\textup{(ii)} holds with
\(\ell_{*,0}^{\eta}:=\ell_{\lambda,0}^{\eta}\), after increasing the constant if necessary.
\end{proof}

\subsection{Local Smoothness of the Barrier-Smoothed Outer Objective}

\begin{proof}
Since \(\nabla_y\psi_\mu(x,y_\mu^\star(x))=0\) and \(\psi_\mu\) is strongly convex in \(y\), the
implicit function theorem applies and yields
\[
Dy_\mu^\star(x)
=
-\bigl(\nabla^2_{yy}\psi_\mu(x,y_\mu^\star(x))\bigr)^{-1}
\nabla^2_{xy}\psi_\mu(x,y_\mu^\star(x)).
\]
By the chain rule,
\begin{align}
\nabla F_\mu(x)
&=
\nabla_x f(x,y_\mu^\star(x))
+
\bigl(Dy_\mu^\star(x)\bigr)^\top \nabla_y f(x,y_\mu^\star(x))
\nonumber\\
&=
\nabla_x f(x,y_\mu^\star(x))
-
\nabla^2_{xy}\psi_\mu(x,y_\mu^\star(x))^\top
\bigl(\nabla^2_{yy}\psi_\mu(x,y_\mu^\star(x))\bigr)^{-1}
\nabla_y f(x,y_\mu^\star(x)).
\label{eq:local_smooth_gradF}
\end{align}
By Proposition~\ref{prop:local_consequences}\textup{(ii)} and the Euclidean--Dikin norm conversion,
after enlarging \(\ell_{*,0}^\eta\) if necessary, we have
\begin{equation}
\label{eq:local_smooth_y_lip}
\|y_\mu^\star(x_2)-y_\mu^\star(x_1)\|_2
\le
\ell_{*,0}^\eta\|x_2-x_1\|_2.
\end{equation}
From \eqref{eq:local_smooth_gradF},
\[
\nabla F_\mu(x_2)-\nabla F_\mu(x_1)
=
\underbrace{
\Bigl(\nabla_x f(x_2,y_\mu^\star(x_2))
-
\nabla_x f(x_1,y_\mu^\star(x_1))\Bigr)
}_{(I)}
\]
\[
-
\underbrace{
\Bigl(
\nabla^2_{xy}\psi_\mu(x_2,y_\mu^\star(x_2))^\top
\bigl(\nabla^2_{yy}\psi_\mu(x_2,y_\mu^\star(x_2))\bigr)^{-1}
\nabla_y f(x_2,y_\mu^\star(x_2))
\Bigr)
}_{\text{first implicit term}}
\]
\[
+
\underbrace{
\Bigl(
\nabla^2_{xy}\psi_\mu(x_1,y_\mu^\star(x_1))^\top
\bigl(\nabla^2_{yy}\psi_\mu(x_1,y_\mu^\star(x_1))\bigr)^{-1}
\nabla_y f(x_1,y_\mu^\star(x_1))
\Bigr)
}_{\text{second implicit term}}.
\]
Equivalently, writing the difference of the two implicit terms as \((II)\), we have
\[
\nabla F_\mu(x_2)-\nabla F_\mu(x_1)=(I)-(II).
\]

\smallskip
\noindent\emph{Term \((I)\).}
By the Hessian bound for \(f\) in Assumption~\ref{ass:euclidean},
\[
\|(I)\|_2
\le
\ell_{f,1}
\Bigl(\|x_2-x_1\|_2+\|y_\mu^\star(x_2)-y_\mu^\star(x_1)\|_2\Bigr)
\le
\ell_{f,1}\left(1+\ell_{*,0}^\eta\right)\|x_2-x_1\|_2.
\]
With a slight abuse of notation, we absorb \(1\) into \(\ell_{*,0}^\eta\) to obtain
\[
\|(I)\|_2
\le
\ell_{f,1}\ell_{*,0}^\eta \|x_2-x_1\|_2.
\]

\smallskip
\noindent\emph{Term \((II)\).}
Add and subtract intermediate terms:
\begin{align*}
(II)
&=
\Bigl(
\nabla^2_{xy}\psi_\mu(x_2,y_\mu^\star(x_2))
-
\nabla^2_{xy}\psi_\mu(x_1,y_\mu^\star(x_1))
\Bigr)^\top
\bigl(\nabla^2_{yy}\psi_\mu(x_2,y_\mu^\star(x_2))\bigr)^{-1}
\nabla_y f(x_2,y_\mu^\star(x_2))
\\
&\quad
+
\nabla^2_{xy}\psi_\mu(x_1,y_\mu^\star(x_1))^\top
\Bigl(
\bigl(\nabla^2_{yy}\psi_\mu(x_2,y_\mu^\star(x_2))\bigr)^{-1}
-
\bigl(\nabla^2_{yy}\psi_\mu(x_1,y_\mu^\star(x_1))\bigr)^{-1}
\Bigr)
\nabla_y f(x_2,y_\mu^\star(x_2))
\\
&\quad
+
\nabla^2_{xy}\psi_\mu(x_1,y_\mu^\star(x_1))^\top
\bigl(\nabla^2_{yy}\psi_\mu(x_1,y_\mu^\star(x_1))\bigr)^{-1}
\Bigl(
\nabla_y f(x_2,y_\mu^\star(x_2))
-
\nabla_y f(x_1,y_\mu^\star(x_1))
\Bigr)
\\
&=:\ (IIa)+(IIb)+(IIc).
\end{align*}

We will use the following duality consequence of the cross-regularity bound in
Proposition~\ref{prop:derived_dikin_regularity}: for any
\((x,y)\in X\times T_{\eta,\mu}(x)\) and any \(v\in\mathbb R^{d_y}\),
\begin{equation}
\label{eq:local_smooth_cross_duality}
\|\nabla^2_{xy}\psi_\mu(x,y)^\top v\|_2
\le
\|\nabla^2_{xy}\psi_\mu(x,y)\|_{2\to y_\mu^\star(x),*}\,
\|v\|_{y_\mu^\star(x)}
\le
\ell_{\psi,1}^\eta\,\|v\|_{y_\mu^\star(x)}.
\end{equation}

\smallskip
\noindent\emph{Bound for \((IIa)\).}
By duality in the anchored Dikin geometry at \(y_\mu^\star(x_1)\),
\[
\|(IIa)\|_2
\le
\Bigl\|
\nabla^2_{xy}\psi_\mu(x_2,y_\mu^\star(x_2))
-
\nabla^2_{xy}\psi_\mu(x_1,y_\mu^\star(x_1))
\Bigr\|_{2\to y_\mu^\star(x_1),*}
\]
\[
\qquad\qquad\cdot
\Bigl\|
\bigl(\nabla^2_{yy}\psi_\mu(x_2,y_\mu^\star(x_2))\bigr)^{-1}
\nabla_y f(x_2,y_\mu^\star(x_2))
\Bigr\|_{y_\mu^\star(x_1)}.
\]
Using cross-Hessian Lipschitzness along the minimizer graph in
Proposition~\ref{prop:derived_dikin_regularity},
\[
\Bigl\|
\nabla^2_{xy}\psi_\mu(x_2,y_\mu^\star(x_2))
-
\nabla^2_{xy}\psi_\mu(x_1,y_\mu^\star(x_1))
\Bigr\|_{2\to y_\mu^\star(x_1),*}
\le
\ell_{\psi,2}^\eta\|x_2-x_1\|_2.
\]
It remains to bound
\[
\Bigl\|
\bigl(\nabla^2_{yy}\psi_\mu(x_2,y_\mu^\star(x_2))\bigr)^{-1}
\nabla_y f(x_2,y_\mu^\star(x_2))
\Bigr\|_{y_\mu^\star(x_1)}.
\]
By local Dikin curvature and anchor-switch equivalence,
\[
\Bigl\|
\bigl(\nabla^2_{yy}\psi_\mu(x_2,y_\mu^\star(x_2))\bigr)^{-1}
\nabla_y f(x_2,y_\mu^\star(x_2))
\Bigr\|_{y_\mu^\star(x_1)}
\le
\frac{1}{\rho_\psi^\eta}
\|\nabla_y f(x_2,y_\mu^\star(x_2))\|_{y_\mu^\star(x_2),*}
\le
\frac{\ell^\eta_{f,0}}{\rho_\psi^\eta}.
\]
Therefore,
\begin{equation}
\label{eq:local_smooth_IIa}
\|(IIa)\|_2
\le
\frac{\ell_{\psi,2}^\eta\,\ell_{f,0}^\eta}{\rho_\psi^\eta}
\|x_2-x_1\|_2.
\end{equation}

\smallskip
\noindent\emph{Bound for \((IIb)\).}
We have
\[
(IIb)
=
\nabla^2_{xy}\psi_\mu(x_1,y_\mu^\star(x_1))^\top
\Bigl(
\bigl(\nabla^2_{yy}\psi_\mu(x_2,y_\mu^\star(x_2))\bigr)^{-1}
-
\bigl(\nabla^2_{yy}\psi_\mu(x_1,y_\mu^\star(x_1))\bigr)^{-1}
\Bigr)
\nabla_y f(x_2,y_\mu^\star(x_2)).
\]
Applying \eqref{eq:local_smooth_cross_duality} at
\((x_1,y_\mu^\star(x_1))\) yields
\[
\|(IIb)\|_2
\le
\ell_{\psi,1}^\eta
\Bigl\|
\Bigl(
\bigl(\nabla^2_{yy}\psi_\mu(x_2,y_\mu^\star(x_2))\bigr)^{-1}
-
\bigl(\nabla^2_{yy}\psi_\mu(x_1,y_\mu^\star(x_1))\bigr)^{-1}
\Bigr)
\nabla_y f(x_2,y_\mu^\star(x_2))
\Bigr\|_{y_\mu^\star(x_1)}.
\]
Using the resolvent identity,
\[
\bigl(\nabla^2_{yy}\psi_\mu(x_2,y_\mu^\star(x_2))\bigr)^{-1}
-
\bigl(\nabla^2_{yy}\psi_\mu(x_1,y_\mu^\star(x_1))\bigr)^{-1}
\]
\[
=
\bigl(\nabla^2_{yy}\psi_\mu(x_2,y_\mu^\star(x_2))\bigr)^{-1}
\Bigl(
\nabla^2_{yy}\psi_\mu(x_1,y_\mu^\star(x_1))
-
\nabla^2_{yy}\psi_\mu(x_2,y_\mu^\star(x_2))
\Bigr)
\bigl(\nabla^2_{yy}\psi_\mu(x_1,y_\mu^\star(x_1))\bigr)^{-1}.
\]
Therefore,
\[
\Bigl\|
\Bigl(
\bigl(\nabla^2_{yy}\psi_\mu(x_2,y_\mu^\star(x_2))\bigr)^{-1}
-
\bigl(\nabla^2_{yy}\psi_\mu(x_1,y_\mu^\star(x_1))\bigr)^{-1}
\Bigr)
\nabla_y f(x_2,y_\mu^\star(x_2))
\Bigr\|_{y_\mu^\star(x_1)}
\]
\[
\le
\Bigl\|
\bigl(\nabla^2_{yy}\psi_\mu(x_2,y_\mu^\star(x_2))\bigr)^{-1}
\Bigr\|_{y_\mu^\star(x_1)\leftarrow y_\mu^\star(x_1),*}
\]
\[
\quad\cdot
\Bigl\|
\nabla^2_{yy}\psi_\mu(x_1,y_\mu^\star(x_1))
-
\nabla^2_{yy}\psi_\mu(x_2,y_\mu^\star(x_2))
\Bigr\|_{y_\mu^\star(x_1),*\to y_\mu^\star(x_1)}
\]
\[
\quad\cdot
\Bigl\|
\bigl(\nabla^2_{yy}\psi_\mu(x_1,y_\mu^\star(x_1))\bigr)^{-1}
\nabla_y f(x_2,y_\mu^\star(x_2))
\Bigr\|_{y_\mu^\star(x_1)}.
\]
By local Dikin curvature and anchor-switch,
\[
\Bigl\|
\bigl(\nabla^2_{yy}\psi_\mu(x_2,y_\mu^\star(x_2))\bigr)^{-1}
\Bigr\|_{y_\mu^\star(x_1)\leftarrow y_\mu^\star(x_1),*}
\le
\frac{1}{\rho_\psi^\eta},
\]
and
\[
\Bigl\|
\bigl(\nabla^2_{yy}\psi_\mu(x_1,y_\mu^\star(x_1))\bigr)^{-1}
\nabla_y f(x_2,y_\mu^\star(x_2))
\Bigr\|_{y_\mu^\star(x_1)}
\le
\frac{1}{\rho_\psi^\eta}
\|\nabla_y f(x_2,y_\mu^\star(x_2))\|_{y_\mu^\star(x_1),*}
\le
\frac{\ell_{f,0}^\eta}{\rho_\psi^\eta}.
\]
Finally, by the local Hessian Lipschitzness and the drift bound
\eqref{eq:local_smooth_y_lip},
\[
\Bigl\|
\nabla^2_{yy}\psi_\mu(x_1,y_\mu^\star(x_1))
-
\nabla^2_{yy}\psi_\mu(x_2,y_\mu^\star(x_2))
\Bigr\|_{y_\mu^\star(x_1),*\to y_\mu^\star(x_1)}
\]
\[
\le
\ell_{\psi,2}^\eta
\Bigl(
\|x_2-x_1\|_2
+
\|y_\mu^\star(x_2)-y_\mu^\star(x_1)\|_{y_\mu^\star(x_1)}
\Bigr)
\le
\ell_{\psi,2}^\eta\ell_{*,0}^\eta\|x_2-x_1\|_2.
\]
Putting these together gives
\begin{equation}
\label{eq:local_smooth_IIb}
\|(IIb)\|_2
\le
\frac{\ell_{\psi,1}^\eta\,\ell_{\psi,2}^\eta\,\ell_{f,0}^\eta}
{(\rho_\psi^\eta)^2}
\ell_{*,0}^\eta
\|x_2-x_1\|_2.
\end{equation}

\smallskip
\noindent\emph{Bound for \((IIc)\).}
We have
\[
(IIc)
=
\nabla^2_{xy}\psi_\mu(x_1,y_\mu^\star(x_1))^\top
\bigl(\nabla^2_{yy}\psi_\mu(x_1,y_\mu^\star(x_1))\bigr)^{-1}
\Bigl(
\nabla_y f(x_2,y_\mu^\star(x_2))
-
\nabla_y f(x_1,y_\mu^\star(x_1))
\Bigr).
\]
Applying \eqref{eq:local_smooth_cross_duality} at
\((x_1,y_\mu^\star(x_1))\) gives
\[
\|(IIc)\|_2
\le
\ell_{\psi,1}^\eta
\Bigl\|
\bigl(\nabla^2_{yy}\psi_\mu(x_1,y_\mu^\star(x_1))\bigr)^{-1}
\Bigl(
\nabla_y f(x_2,y_\mu^\star(x_2))
-
\nabla_y f(x_1,y_\mu^\star(x_1))
\Bigr)
\Bigr\|_{y_\mu^\star(x_1)}
\]
\[
\le
\frac{\ell_{\psi,1}^\eta}{\rho_\psi^\eta}
\Bigl\|
\nabla_y f(x_2,y_\mu^\star(x_2))
-
\nabla_y f(x_1,y_\mu^\star(x_1))
\Bigr\|_{y_\mu^\star(x_1),*}.
\]
Using joint smoothness of \(f\) in \((x,y)\),
\[
\Bigl\|
\nabla_y f(x_2,y_\mu^\star(x_2))
-
\nabla_y f(x_1,y_\mu^\star(x_1))
\Bigr\|_{y_\mu^\star(x_1),*}
\le
\ell_{f,1}^\eta
\Bigl(
\|x_2-x_1\|_2
+
\|y_\mu^\star(x_2)-y_\mu^\star(x_1)\|_2
\Bigr).
\]
Then \eqref{eq:local_smooth_y_lip} yields
\[
\|y_\mu^\star(x_2)-y_\mu^\star(x_1)\|_2
\le
\ell_{*,0}^\eta\|x_2-x_1\|_2.
\]
Therefore,
\begin{equation}
\label{eq:local_smooth_IIc}
\|(IIc)\|_2
\le
\frac{\ell_{\psi,1}^\eta\,\ell_{f,1}^\eta}{\rho_\psi^\eta}
\ell_{*,0}^\eta
\|x_2-x_1\|_2.
\end{equation}

Combining the bounds for \((I)\), \eqref{eq:local_smooth_IIa},
\eqref{eq:local_smooth_IIb}, and \eqref{eq:local_smooth_IIc}, we obtain
\[
\|\nabla F_\mu(x_2)-\nabla F_\mu(x_1)\|_2
\le
L_F^\eta\|x_2-x_1\|_2,
\]
where an explicit admissible choice is
\begin{equation}
\label{eq:LF_explicit}
L_F^\eta
:=
\ell_{f,1}\ell_{*,0}^\eta
+
\frac{\ell_{\psi,2}^\eta\,\ell_{f,0}^\eta}{\rho_\psi^\eta}
+
\frac{\ell_{\psi,1}^\eta\,\ell_{f,1}^\eta}{\rho_\psi^\eta}
\ell_{*,0}^\eta
+
\frac{\ell_{\psi,1}^\eta\,\ell_{\psi,2}^\eta\,\ell_{f,0}^\eta}
{(\rho_\psi^\eta)^2}
\ell_{*,0}^\eta .
\end{equation}
Using the fact \(\ell_{\psi,1}^\eta/\rho_\psi^\eta\ge 1\), we get the simpler sufficient bound
\begin{equation}
\label{eq:LF_simplified}
L_F^\eta
:=
\left(
\ell_{f,1}
+
\frac{\ell_{\psi,1}^\eta\,\ell_{f,1}^\eta}{\rho_\psi^\eta}
+
2\frac{\ell_{\psi,1}^\eta\,\ell_{\psi,2}^\eta\,\ell_{f,0}^\eta}
{(\rho_\psi^\eta)^2}
\right)
\ell_{*,0}^\eta .
\end{equation}
\end{proof}

\subsection{Local Proxy-gradient Bias}

\begin{proof}
By definition,
\[
C_{\lambda,\mu}^\star(x)
=
\min_{y\in\operatorname{int}(Y)}
\Bigl(f(x,y)+\lambda\psi_\mu(x,y)\Bigr)
-
\lambda\psi_\mu^\star(x),
\qquad
\psi_\mu^\star(x):=\min_{y\in\operatorname{int}(Y)}\psi_\mu(x,y).
\]
By the envelope theorem for the first minimization, since
\(\nabla_y(f+\lambda\psi_\mu)(x,y_{\lambda,\mu}^\star(x))=0\),
\[
\nabla_x
\min_{y\in\operatorname{int}(Y)}
\bigl(f(x,y)+\lambda\psi_\mu(x,y)\bigr)
=
\nabla_x f(x,y_{\lambda,\mu}^\star(x))
+
\lambda\nabla_x\psi_\mu(x,y_{\lambda,\mu}^\star(x)).
\]
For the second term, since \(y_\mu^\star(x)\) minimizes \(\psi_\mu(x,\cdot)\), the envelope theorem gives
\[
\nabla_x\psi_\mu^\star(x)=\nabla_x\psi_\mu(x,y_\mu^\star(x)).
\]
Subtracting yields
\begin{equation}
\label{eq:proxy_envelope_identity}
\nabla C_{\lambda,\mu}^\star(x)
=
\nabla_x f(x,y_{\lambda,\mu}^\star(x))
+
\lambda\Bigl(
\nabla_x\psi_\mu(x,y_{\lambda,\mu}^\star(x))
-
\nabla_x\psi_\mu(x,y_\mu^\star(x))
\Bigr).
\end{equation}

\smallskip
We then apply Lemma~\ref{lem:proxy_direction_linearization} with \(y=y_{\lambda,\mu}^\star(x)\). Because
\(y_{\lambda,\mu}^\star(x)\) minimizes \(L_{\lambda,\mu}(x,\cdot)\), we have
\[
\nabla_y L_{\lambda,\mu}(x,y_{\lambda,\mu}^\star(x))=0,
\]
so the correction term in Lemma~\ref{lem:proxy_direction_linearization} vanishes. Therefore,
\begin{equation}
\label{eq:proxy_bias_preliminary}
\begin{aligned}
&\|\nabla F_\mu(x)-\nabla_x L_{\lambda,\mu}(x,y_{\lambda,\mu}^\star(x))\|_2 \\
&\qquad\le
2\Bigl(\frac{\ell_{\psi,1}^\eta}{\rho_\psi^\eta}\Bigr)
\|y_{\lambda,\mu}^\star(x)-y_\mu^\star(x)\|_{y_\mu^\star(x)}
\Bigl(
\ell_{f,1}^\eta
+
\lambda\min\{
2\ell_{\psi,1}^\eta,\,
\ell_{\psi,2}^\eta
\|y_{\lambda,\mu}^\star(x)-y_\mu^\star(x)\|_{y_\mu^\star(x)}
\}
\Bigr).
\end{aligned}
\end{equation}
By \eqref{eq:proxy_envelope_identity},
\[
\nabla C_{\lambda,\mu}^\star(x)
=
\nabla_x L_{\lambda,\mu}(x,y_{\lambda,\mu}^\star(x)),
\]
so \eqref{eq:proxy_bias_preliminary} is a bound on
\(\|\nabla F_\mu(x)-\nabla C_{\lambda,\mu}^\star(x)\|_2\).

\smallskip
\noindent We then bound
\(\|y_{\lambda,\mu}^\star(x)-y_\mu^\star(x)\|_{y_\mu^\star(x)}\) by \(O(1/\lambda)\).
The optimality conditions are
\[
\nabla_y\psi_\mu(x,y_\mu^\star(x))=0,
\qquad
\nabla_y f(x,y_{\lambda,\mu}^\star(x))
+
\lambda\nabla_y\psi_\mu(x,y_{\lambda,\mu}^\star(x))=0.
\]
Using strong monotonicity of \(\nabla_y\psi_\mu(x,\cdot)\) on the neighborhood in the anchored Dikin geometry
(Proposition~\ref{prop:derived_dikin_regularity}), we have
\[
\rho_\psi^\eta
\|y_{\lambda,\mu}^\star(x)-y_\mu^\star(x)\|_{y_\mu^\star(x)}^2
\le
\big\langle
\nabla_y\psi_\mu(x,y_{\lambda,\mu}^\star(x))
-
\nabla_y\psi_\mu(x,y_\mu^\star(x)),
\,y_{\lambda,\mu}^\star(x)-y_\mu^\star(x)
\big\rangle.
\]
Since \(\nabla_y\psi_\mu(x,y_\mu^\star(x))=0\), this becomes
\[
\rho_\psi^\eta
\|y_{\lambda,\mu}^\star(x)-y_\mu^\star(x)\|_{y_\mu^\star(x)}^2
\le
\big\langle
\nabla_y\psi_\mu(x,y_{\lambda,\mu}^\star(x)),
\,y_{\lambda,\mu}^\star(x)-y_\mu^\star(x)
\big\rangle.
\]
Substitute
\[
\nabla_y\psi_\mu(x,y_{\lambda,\mu}^\star(x))
=
-\frac{1}{\lambda}\nabla_y f(x,y_{\lambda,\mu}^\star(x))
\]
and apply Cauchy--Schwarz in the anchored Dikin primal/dual pair:
\[
\rho_\psi^\eta
\|y_{\lambda,\mu}^\star(x)-y_\mu^\star(x)\|_{y_\mu^\star(x)}^2
\le
\frac{1}{\lambda}
\|\nabla_y f(x,y_{\lambda,\mu}^\star(x))\|_{y_\mu^\star(x),*}
\|y_{\lambda,\mu}^\star(x)-y_\mu^\star(x)\|_{y_\mu^\star(x)}.
\]
Canceling one factor gives
\[
\|y_{\lambda,\mu}^\star(x)-y_\mu^\star(x)\|_{y_\mu^\star(x)}
\le
\frac{1}{\lambda\rho_\psi^\eta}
\|\nabla_y f(x,y_{\lambda,\mu}^\star(x))\|_{y_\mu^\star(x),*}.
\]
By Assumption~\ref{ass:euclidean}, \(\|\nabla_y f(x,y)\|_2\le \ell_{f,0}\), and by the
Euclidean--Dikin norm conversion, after absorbing the geometric conversion factor into
\(\ell_{f,0}^\eta\),
\begin{equation}
\label{eq:proxy_center_bias_rate}
\|y_{\lambda,\mu}^\star(x)-y_\mu^\star(x)\|_{y_\mu^\star(x)}
\le
\frac{\ell_{f,0}^\eta}{\lambda\rho_\psi^\eta}.
\end{equation}

\smallskip
\noindent We then remove the \(\lambda\)-dependence from the bracket in
\eqref{eq:proxy_bias_preliminary}.
Using \(\min\{a,b\}\le b\) and \eqref{eq:proxy_center_bias_rate},
\[
\lambda
\min\Bigl\{
2\ell_{\psi,1}^\eta,\,
\ell_{\psi,2}^\eta
\|y_{\lambda,\mu}^\star(x)-y_\mu^\star(x)\|_{y_\mu^\star(x)}
\Bigr\}
\le
\lambda\ell_{\psi,2}^\eta
\|y_{\lambda,\mu}^\star(x)-y_\mu^\star(x)\|_{y_\mu^\star(x)}
\le
\frac{\ell_{\psi,2}^\eta\ell_{f,0}^\eta}{\rho_\psi^\eta}.
\]
Thus the bracket in \eqref{eq:proxy_bias_preliminary} is bounded by the local Dikin constant
\[
\ell_{f,1}^\eta
+
\frac{\ell_{\psi,2}^\eta\ell_{f,0}^\eta}{\rho_\psi^\eta}.
\]
Plugging this bound and \eqref{eq:proxy_center_bias_rate} into
\eqref{eq:proxy_bias_preliminary} yields
\[
\|\nabla F_\mu(x)-\nabla C_{\lambda,\mu}^\star(x)\|_2
\le
2\Bigl(\frac{\ell_{\psi,1}^\eta}{\rho_\psi^\eta}\Bigr)
\cdot
\frac{\ell_{f,0}^\eta}{\rho_\psi^\eta\lambda}
\cdot
\Bigl(
\ell_{f,1}^\eta
+
\frac{\ell_{\psi,2}^\eta\ell_{f,0}^\eta}{\rho_\psi^\eta}
\Bigr)
=
\frac{c_x^\eta}{\lambda},
\]
where an explicit admissible constant is
\begin{equation}
\label{eq:cx_eta_explicit}
c_x^\eta
:=
2
\Bigl(
\frac{\ell_{\psi,1}^\eta\ell_{f,0}^\eta}{(\rho_\psi^\eta)^2}
\Bigr)
\Bigl(
\ell_{f,1}^\eta
+
\frac{\ell_{\psi,2}^\eta\ell_{f,0}^\eta}{\rho_\psi^\eta}
\Bigr).
\end{equation}
This proves Proposition~\ref{prop:local_consequences}\textup{(iv)}.
\end{proof}

\section{Tracker Contraction}
\label{app:tracker}

We now analyze the two inner trackers under the local Dikin geometry. The tracker argument has two
layers. First, with \(x_k\) and \(\lambda_k\) fixed, the exact and proxy lower objectives are locally
well conditioned on their target neighborhoods, so repeated inner steps contract toward the corresponding
fixed centers. Second, after the outer update, the centers themselves move:
\(y_\mu^\star(x_k)\) changes with \(x_k\), while
\(y_{\lambda_k,\mu}^\star(x_k)\) changes with both \(x_k\) and \(\lambda_k\). The moving-center
recursions transfer the fixed-center contraction from the old anchors to the new anchors and produce
the drift terms controlled by the \textit{barrier-aware} schedule.

There is one technical point before the tracker proofs. Algorithm~\ref{alg:dikin_sbo} uses the
implementable frozen barrier-metrics \(\nabla^2\phi(z_k), \nabla^2\phi(y_k)\)
during the \(T\) inner steps of outer iteration \(k\). The analysis, however, measures tracker errors in
the unknown center-anchor metrics
\[
H_\mu^\star(x_k)=\nabla^2\phi(y_\mu^\star(x_k)),
\qquad
H_{\lambda_k,\mu}^\star(x_k)
=
\nabla^2\phi(y_{\lambda_k,\mu}^\star(x_k)).
\]
The next subsection justifies this reduction. The point is that, once the warm starts lie in the target
neighborhoods, the frozen metrics are spectrally equivalent to the corresponding center anchors by
self-concordant metric change. Moreover, a frozen-metric step has the same contraction form in the
center-anchor norm, up to fixed \(\eta\)-dependent constants. After this reduction, all later tracker
proofs can be written directly in the clean center-anchor Dikin norms.

\subsection{Frozen Metrics and Center-anchor Reduction}
\label{app:frozen_metric_reduction}

\begin{proposition}[Frozen Dikin metrics are center-anchor equivalent]
\label{prop:frozen_metric_equivalence}
Fix an outer iteration \(k\). Suppose
\(z_k\in T_{\eta,\mu}(x_k)\) and
\(y_k\in T_{\eta,\mu}^{\lambda_k}(x_k)\),
then
\[
(1-\eta)^2H_\mu^\star(x_k)
\preceq
\nabla^2\phi(z_k)
\preceq
(1-\eta)^{-2}H_\mu^\star(x_k),
\]
and
\[
(1-\eta)^2H_{\lambda_k,\mu}^\star(x_k)
\preceq
\nabla^2\phi(y_k)
\preceq
(1-\eta)^{-2}H_{\lambda_k,\mu}^\star(x_k).
\]
Consequently, after replacing the local Dikin constants by fixed \(\eta\)-dependent multiples, the
exact and proxy curvature, smoothness, and Lipschitz-Hessian bounds of
Proposition~\ref{prop:derived_dikin_regularity} also hold with respect to the frozen metrics
\(\nabla^2\phi(z_k)\) and \(\nabla^2\phi(y_k)\). In particular, on the exact buffer neighborhood,
\[
\bar\rho_\psi^\eta \nabla^2\phi(z_k)
\preceq
\nabla_{yy}^2\psi_\mu(x_k,y)
\preceq
\bar\ell_{\psi,1}^\eta \nabla^2\phi(z_k),
\]
and on the proxy buffer neighborhood,
\[
\frac{\lambda_k\bar\rho_\psi^\eta}{2}\nabla^2\phi(y_k)
\preceq
\nabla_{yy}^2L_{\lambda_k,\mu}(x_k,y)
\preceq
\bigl(\lambda_k\bar\ell_{\psi,1}^\eta+\bar\ell_{f,1}^\eta\bigr)\nabla^2\phi(y_k),
\]
where one may take
\[
\bar\rho_\psi^\eta:=(1-\eta)^2\rho_\psi^\eta,
\qquad
\bar\ell_{\psi,1}^\eta:=(1-\eta)^{-2}\ell_{\psi,1}^\eta,
\qquad
\bar\ell_{f,1}^\eta:=(1-\eta)^{-2}\ell_{f,1}^\eta.
\]
\end{proposition}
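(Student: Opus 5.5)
The plan is to derive everything from the anchor-switch Lemma~\ref{lem:anchor_switch} together with the common-constant versions of Proposition~\ref{prop:derived_dikin_regularity} and Lemma~\ref{lem:proxy_objective_curvature}. First, for the exact tracker, I take \(y_1=y_\mu^\star(x_k)\) and \(y_2=z_k\) in Lemma~\ref{lem:anchor_switch}. The hypothesis \(z_k\in T_{\eta,\mu}(x_k)\) gives \(r(y_1,y_2)=\|z_k-y_\mu^\star(x_k)\|_{y_\mu^\star(x_k)}\le\eta<1\). The lemma then yields
\[
(1-\eta)^2H_\mu^\star(x_k)\preceq\nabla^2\phi(z_k)\preceq(1-\eta)^{-2}H_\mu^\star(x_k).
\]
Here I use that \(r\mapsto(1-r)^{\pm2}\) is monotone on \([0,1)\), so the bound with \(r\) upgrades to the bound with \(\eta\). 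The proxy comparison is identical: anchor at \(y_{\lambda_k,\mu}^\star(x_k)\) and use \(y_k\in T^{\lambda_k}_{\eta,\mu}(x_k)\).

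Second, I transfer the curvature sandwiches. On the exact buffer neighborhood \(T_{2\eta,\mu}(x_k)\), \eqref{ex:E1} gives \(\rho_\psi^\eta H_\mu^\star(x_k)\preceq\nabla^2_{yy}\psi_\mu(x_k,y)\preceq\ell_{\psi,1}^\eta H_\mu^\star(x_k)\). Substituting \(H_\mu^\star(x_k)\succeq(1-\eta)^2\nabla^2\phi(z_k)\) into the lower bound, and \(H_\mu^\star(x_k)\preceq(1-\eta)^{-2}\nabla^2\phi(z_k)\) into the upper bound, gives the stated constants \(\bar\rho_\psi^\eta=(1-\eta)^2\rho_\psi^\eta\) and \(\bar\ell_{\psi,1}^\eta=(1-\eta)^{-2}\ell_{\psi,1}^\eta\). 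This is just transitivity of \(\preceq\), with positive scalars multiplying PSD orderings. For the proxy objective I start from Lemma~\ref{lem:proxy_objective_curvature}, which applies since \(\lambda_k\ge\lambda_0\ge\max\{\lambda_\eta,2\ell_{f,1}^\eta/\rho_\psi^\eta\}\) by \eqref{eq:main_sched_s1}, Remark~\ref{rem:admissible_multiplier_range}, and monotonicity of \(\lambda_k\). Substituting \(H^\star_{\lambda_k,\mu}(x_k)\) by \(\nabla^2\phi(y_k)\) in the same way gives lower constant \(\lambda_k(1-\eta)^2\rho_\psi^\eta/2=\lambda_k\bar\rho_\psi^\eta/2\). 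The upper constant becomes \((1-\eta)^{-2}(\lambda_k\ell_{\psi,1}^\eta+\ell_{f,1}^\eta)=\lambda_k\bar\ell_{\psi,1}^\eta+\bar\ell_{f,1}^\eta\).

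Third, I handle the remaining bounds: mixed-Hessian, Lipschitz-Hessian, and the \(f\)-bounds. Each is an operator or dual-norm inequality in the anchor norm, and the matrix comparison converts norms. Concretely, \(\|u\|_{z_k}\le(1-\eta)^{-1}\|u\|_{y_\mu^\star(x_k)}\) and \(\|u\|_{y_\mu^\star(x_k)}\le(1-\eta)^{-1}\|u\|_{z_k}\), with the same factors for dual norms after inverting the ordering. An induced norm \(\|M\|_{a\to a,*}\) therefore changes by at most \((1-\eta)^{-2}\). A \(2\to a,*\) norm changes by \((1-\eta)^{-1}\). A Lipschitz bound of the form \(\|\Delta M\|_{a\to a,*}\le\ell\|y_1-y_2\|_a\) changes by at most \((1-\eta)^{-3}\). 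These are the "fixed \(\eta\)-dependent multiples", which I simply absorb into the constants.

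The only subtle point is bookkeeping: the domains must match. The frozen-metric bounds are asserted on the buffer neighborhoods, which are defined relative to the center anchors, not the frozen points. So I keep the domain as \(T_{2\eta,\mu}(x_k)\) or \(T^{\lambda_k}_{2\eta,\mu}(x_k)\) and only swap the metric. This way no new self-concordance radius condition beyond \(\eta<1/2\) arises. I also note that the frozen metric uses \(\eta\), not \(2\eta\), since the warm starts lie in the target neighborhoods. I do not expect any genuine obstacle.
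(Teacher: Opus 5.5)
Your proposal is correct and takes essentially the same route as the paper: the anchor-switch comparison of Lemma~\ref{lem:anchor_switch} at the center, Loewner-order transitivity to transfer the curvature sandwiches (via Lemma~\ref{lem:proxy_objective_curvature} on the proxy side), and the \((1-\eta)^{\pm1}\) norm-equivalence factors for the operator-norm and Lipschitz-Hessian bounds. Your additional bookkeeping only makes explicit what the paper leaves implicit: monotonicity in \(r\), the admissibility check \(\lambda_k\ge\lambda_0\ge\max\{\lambda_\eta,2\ell_{f,1}^\eta/\rho_\psi^\eta\}\), and the explicit \((1-\eta)^{-3}\) factor for the Lipschitz-Hessian bounds.
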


\begin{proof}
We first prove the exact-tracker metric comparison. Since
\(z_k\in T_{\eta,\mu}(x_k)\), by definition
\[
\|z_k-y_\mu^\star(x_k)\|_{y_\mu^\star(x_k)}\le \eta < 1.
\]
Applying the self-concordant metric-change lemma with
\(y_1=y_\mu^\star(x_k)\) and \(y_2=z_k\) gives
\[
(1-\eta)^2\nabla^2\phi(y_\mu^\star(x_k))
\preceq
\nabla^2\phi(z_k)
\preceq
(1-\eta)^{-2}\nabla^2\phi(y_\mu^\star(x_k)).
\]
Since \(H_\mu^\star(x_k)=\nabla^2\phi(y_\mu^\star(x_k))\)
, this proves
\[
(1-\eta)^2H_\mu^\star(x_k)
\preceq
\nabla^2\phi(z_k)
\preceq
(1-\eta)^{-2}H_\mu^\star(x_k).
\]

The proxy-tracker comparison is identical. Since
\(y_k\in T_{\eta,\mu}^{\lambda_k}(x_k)\),
\[
\|y_k-y_{\lambda_k,\mu}^\star(x_k)\|_{y_{\lambda_k,\mu}^\star(x_k)}
\le \eta .
\]
Applying the same metric-change lemma with
\(y_1=y_{\lambda_k,\mu}^\star(x_k)\) and \(y_2=y_k\) gives
\[
(1-\eta)^2H_{\lambda_k,\mu}^\star(x_k)
\preceq
\nabla^2\phi(y_k)
\preceq
(1-\eta)^{-2}H_{\lambda_k,\mu}^\star(x_k).
\]

We now transfer the curvature bounds. On the exact buffer neighborhood,
Proposition~\ref{prop:derived_dikin_regularity} gives
\[
\rho_\psi^\eta H_\mu^\star(x_k)
\preceq
\nabla_{yy}^2\psi_\mu(x_k,y)
\preceq
\ell_{\psi,1}^\eta H_\mu^\star(x_k).
\]
From the metric comparison above,
\[
H_\mu^\star(x_k)
\preceq
(1-\eta)^{-2}\nabla^2\phi(z_k),
\qquad
H_\mu^\star(x_k)
\succeq
(1-\eta)^2\nabla^2\phi(z_k).
\]
Therefore
\[
(1-\eta)^2\rho_\psi^\eta \nabla^2\phi(z_k)
\preceq
\nabla_{yy}^2\psi_\mu(x_k,y)
\preceq
(1-\eta)^{-2}\ell_{\psi,1}^\eta \nabla^2\phi(z_k).
\]
Thus the exact lower objective is strongly convex and smooth in the frozen metric \(\nabla^2\phi(z_k)\), with
constants \(\bar\rho_\psi^\eta=(1-\eta)^2\rho_\psi^\eta\) and
\(\bar\ell_{\psi,1}^\eta=(1-\eta)^{-2}\ell_{\psi,1}^\eta\).

For the proxy tracker, Lemma~\ref{lem:proxy_objective_curvature} gives, on
\(T_{2\eta,\mu}^{\lambda_k}(x_k)\),
\[
\frac{\lambda_k\rho_\psi^\eta}{2}H_{\lambda_k,\mu}^\star(x_k)
\preceq
\nabla_{yy}^2L_{\lambda_k,\mu}(x_k,y)
\preceq
\bigl(\lambda_k\ell_{\psi,1}^\eta+\ell_{f,1}^\eta\bigr)
H_{\lambda_k,\mu}^\star(x_k).
\]
Using
\[
H_{\lambda_k,\mu}^\star(x_k)
\succeq
(1-\eta)^2\nabla^2\phi(y_k),
\qquad
H_{\lambda_k,\mu}^\star(x_k)
\preceq
(1-\eta)^{-2}\nabla^2\phi(y_k),
\]
we obtain
\[
\frac{\lambda_k(1-\eta)^2\rho_\psi^\eta}{2}\nabla^2\phi(y_k)
\preceq
\nabla_{yy}^2L_{\lambda_k,\mu}(x_k,y)
\preceq
(1-\eta)^{-2}
\bigl(\lambda_k\ell_{\psi,1}^\eta+\ell_{f,1}^\eta\bigr)\nabla^2\phi(y_k).
\]
Equivalently,
\[
\frac{\lambda_k\bar\rho_\psi^\eta}{2}\nabla^2\phi(y_k)
\preceq
\nabla_{yy}^2L_{\lambda_k,\mu}(x_k,y)
\preceq
\bigl(\lambda_k\bar\ell_{\psi,1}^\eta+\bar\ell_{f,1}^\eta\bigr)\nabla^2\phi(y_k),
\]
with
\(\bar\ell_{f,1}^\eta=(1-\eta)^{-2}\ell_{f,1}^\eta\).

Finally, the same metric comparison transfers the corresponding operator-norm bounds. Indeed, if
\(H\) denotes either center anchor and \(B\) the corresponding frozen metric, then
\[
(1-\eta)^2H\preceq B\preceq (1-\eta)^{-2}H
\]
implies
\[
\|u\|_B\le (1-\eta)^{-1}\|u\|_H,
\qquad
\|u\|_H\le (1-\eta)^{-1}\|u\|_B,
\]
and
\[
\|w\|_{B,*}\le (1-\eta)^{-1}\|w\|_{H,*},
\qquad
\|w\|_{H,*}\le (1-\eta)^{-1}\|w\|_{B,*}.
\]
Hence any bound of the form
\[
\|M\|_{2\to H,*}\le C
\]
transfers to
\[
\|M\|_{2\to B,*}\le (1-\eta)^{-1}C,
\]
and any bound of the form
\[
\|M\|_{H\to H,*}\le C
\]
transfers to
\[
\|M\|_{B\to B,*}\le (1-\eta)^{-2}C.
\]
Similarly, Lipschitz-Hessian bounds transfer after multiplying by a fixed power of
\((1-\eta)^{-1}\). Since \(\eta\in(0,1/2)\) is fixed, these factors are finite and can be absorbed into
the local Dikin constants. This proves the claimed frozen-metric regularity bounds.
\end{proof}

\begin{lemma}[Frozen-metric steps in anchored normal form]
\label{lem:frozen_metric_anchored_contraction}
Let \(c\in\operatorname{int}(Y)\), let \(H_c:=\nabla^2\phi(c)\), and let
\(\bar y\in\operatorname{int}(Y)\) satisfy
\(\|\bar y-c\|_{c}\le \eta\), where \(\eta\in(0,1/2)\). Define the frozen barrier-metric
\(B:=\nabla^2\phi(\bar y)\). Let \(h\) be twice differentiable on the Dikin buffer
\(T_{2\eta}(c):=\{y\in\operatorname{int}(Y):\|y-c\|_c\le 2\eta\}\), and suppose
\(\nabla h(c)=0\) and \( m H_c\preceq \nabla^2 h(y)\preceq L H_c,
y\in T_{2\eta}(c), 
\)
 for some \(0<m\le L<\infty\).
 
Let \(
\theta_\eta:=(1-\eta)^{-2}-1.\) Choose $\eta$ so that \(\theta_\eta L\le m/4\). Then, for every \(y\in T_{\eta}(c)\), the frozen-metric step
\(
y^+ := y-\alpha B^{-1}\nabla h(y)\)
satisfies
\[
\|y^+-c\|_c^2
\le
(1-\alpha m)\|y-c\|_c^2
\]
whenever \(
\alpha(1+\theta_\eta)^2L\le \frac12 .\)
In particular, after taking \(\eta\) sufficiently small and absorbing fixed \(\eta\)-dependent constants
into \(m\) and \(L\), the frozen-metric step has the same anchored contraction form as the ideal
center-metric step.
\end{lemma}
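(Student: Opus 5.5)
Work in coordinates normalized by the center anchor and treat the frozen metric as a perturbation of the ideal metric \(H_c\). Set \(e:=y-c\), \(u:=H_c^{1/2}e\) (so \(\|u\|_2=\|e\|_c\)), and \(P:=H_c^{1/2}B^{-1}H_c^{1/2}\).

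\emph{Step 1 (integral form of the gradient).} Since \(\nabla h(c)=0\), write \(\nabla h(y)=Ge\) with \(G:=\int_0^1\nabla^2h(c+se)\,ds\). The Dikin ball \(T_\eta(c)\) is convex and contains \(c\) and \(y\), so the whole segment lies in \(T_\eta(c)\subset T_{2\eta}(c)\). Hence \(mH_c\preceq G\preceq LH_c\). Equivalently, \(\widehat G:=H_c^{-1/2}GH_c^{-1/2}\) is symmetric with \(mI\preceq\widehat G\preceq LI\), and therefore also \(\widehat G^2\preceq L\widehat G\).

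\emph{Step 2 (frozen-metric comparison).} Because \(\|\bar y-c\|_c\le\eta<1\), Lemma~\ref{lem:anchor_switch} gives \((1-\eta)^2H_c\preceq B\preceq(1-\eta)^{-2}H_c\). Thus \(P\) is symmetric with spectrum in \([(1-\eta)^2,(1-\eta)^{-2}]\). It follows that \(\|P\|_2\le 1+\theta_\eta\). Moreover \(\|P-I\|_2\le\max\{(1-\eta)^{-2}-1,\,1-(1-\eta)^2\}=\theta_\eta\).

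\emph{Step 3 (one-step expansion).} The step reads \(u^+=u-\alpha P\widehat G u\), so
\[
\|u^+\|_2^2=\|u\|_2^2-2\alpha\,u^\top\widehat Gu-2\alpha\,u^\top(P-I)\widehat Gu+\alpha^2\|P\widehat Gu\|_2^2 .
\]
We bound the last three terms separately.
\begin{itemize}
\item Cross term. Since \(\|(P-I)\widehat Gu\|_2\le\theta_\eta\|\widehat Gu\|_2\le\theta_\eta L\|u\|_2\),
\[
|u^\top(P-I)\widehat Gu|\le\theta_\eta L\|u\|_2^2\le\tfrac m4\|u\|_2^2 .
\]
\item Quadratic term. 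Using \(\widehat G^2\preceq L\widehat G\),
\[
\|P\widehat Gu\|_2^2\le(1+\theta_\eta)^2\,u^\top\widehat G^2u\le(1+\theta_\eta)^2L\,u^\top\widehat Gu .
\]
Under \(\alpha(1+\theta_\eta)^2L\le\tfrac12\) this gives \(\alpha^2\|P\widehat Gu\|_2^2\le\tfrac{\alpha}{2}\,u^\top\widehat Gu\).
\end{itemize}
Combining, and then using \(u^\top\widehat Gu\ge m\|u\|_2^2\),
\[
\|u^+\|_2^2\le\|u\|_2^2-\tfrac32\alpha\,u^\top\widehat Gu+\tfrac{\alpha m}{2}\|u\|_2^2\le(1-\alpha m)\|u\|_2^2 .
\]
Translating back, \(\|y^+-c\|_c^2\le(1-\alpha m)\|y-c\|_c^2\).

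The step I expect to be the main obstacle is the quadratic term. The naive bound \(\|P\widehat Gu\|\le(1+\theta_\eta)L\|u\|\) produces \(\alpha^2L^2\), which the step-size condition does not control against \(\alpha m\). The key is to bound it by \(u^\top\widehat Gu\) through \(\widehat G^2\preceq L\widehat G\), so that it is absorbed into the descent term rather than compared with \(m\).

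The non-symmetric factor \(P\widehat G\) is handled entirely by \(\|P-I\|_2\le\theta_\eta\) together with the hypothesis \(\theta_\eta L\le m/4\). The final remark of the lemma then follows by absorbing the fixed factors \((1+\theta_\eta)^{\pm2}\) into \(m\) and \(L\).
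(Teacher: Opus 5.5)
Your proof is correct and follows the paper's argument essentially step for step. Both use the same normalized operator \(P=H_c^{1/2}B^{-1}H_c^{1/2}\) with \(\|P\|_2\le 1+\theta_\eta\) and \(\|P-I\|_2\le\theta_\eta\), the same three-term expansion, the cross term controlled via \(\theta_\eta L\le m/4\), and the quadratic term absorbed into the descent term. The only deviation is that you obtain the co-coercivity bound from the averaged Hessian (\(\widehat G^2\preceq L\widehat G\)) instead of quoting Baillon--Haddad; this is in fact the tighter justification here, since the Hessian bounds are only assumed on the convex Dikin ball.
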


\begin{proof}
By the self-concordant metric-change inequality and the assumption
\(\|\bar y-c\|_c\le \eta\), we have
\[
(1-\eta)^2H_c\preceq B\preceq (1-\eta)^{-2}H_c.
\]
Define
\[
S:=H_c^{1/2}B^{-1}H_c^{1/2}.
\]
Then the eigenvalues of \(S\) lie in
\([(1-\eta)^2,(1-\eta)^{-2}]\). Hence
\[
\|S\|_2\le (1-\eta)^{-2}=1+\theta_\eta,
\qquad
\|S-I\|_2\le \theta_\eta.
\]

Let
\[
e:=y-c,\qquad p:=H_c^{1/2}e,\qquad r:=H_c^{-1/2}\nabla h(y).
\]
The frozen-metric update can be written in the \(H_c\)-coordinates as
\[
p^+ := H_c^{1/2}(y^+-c)
=
p-\alpha S r .
\]
Therefore
\[
\|p^+\|_2^2
=
\|p\|_2^2
-2\alpha\langle Sr,p\rangle
+\alpha^2\|Sr\|_2^2.
\]

Since \(\nabla h(c)=0\) and \(mH_c\preceq\nabla^2h\preceq LH_c\) on the buffer neighborhood, \(h\) is
\(m\)-strongly convex and \(L\)-smooth in the anchored norm. Thus
\[
\langle r,p\rangle
=
\langle \nabla h(y),y-c\rangle
\ge
m\|p\|_2^2,
\]
and
\[
\|r\|_2
=
\|\nabla h(y)\|_{c,*}
\le
L\|y-c\|_c
=
L\|p\|_2.
\]
Consequently,
\[
\begin{aligned}
\langle Sr,p\rangle
&=
\langle r,p\rangle+\langle (S-I)r,p\rangle  \\
&\ge
\langle r,p\rangle-\theta_\eta\|r\|_2\|p\|_2 \\
&\ge
\langle r,p\rangle-\theta_\eta L\|p\|_2^2 \\
&\ge
\left(1-\frac{\theta_\eta L}{m}\right)\langle r,p\rangle
\ge
\frac34\langle r,p\rangle,
\end{aligned}
\]
where the last step uses \(\theta_\eta L\le m/4\).

Moreover, the \(L\)-smoothness and convexity of \(h\) in the anchored metric imply the
Baillon--Haddad inequality
\[
\|r\|_2^2
=
\|\nabla h(y)\|_{c,*}^2
\le
L\langle \nabla h(y),y-c\rangle
=
L\langle r,p\rangle.
\]
Hence
\[
\|Sr\|_2^2
\le
(1+\theta_\eta)^2\|r\|_2^2
\le
(1+\theta_\eta)^2L\langle r,p\rangle.
\]
Using \(\alpha(1+\theta_\eta)^2L\le 1/2\), we obtain
\[
\alpha^2\|Sr\|_2^2
\le
\frac{\alpha}{2}\langle r,p\rangle.
\]
Combining the preceding estimates gives
\[
\begin{aligned}
\|p^+\|_2^2
&\le
\|p\|_2^2
-2\alpha\cdot \frac34\langle r,p\rangle
+\frac{\alpha}{2}\langle r,p\rangle  \\
&=
\|p\|_2^2-\alpha\langle r,p\rangle \\
&\le
(1-\alpha m)\|p\|_2^2.
\end{aligned}
\]
Since \(\|p\|_2=\|y-c\|_c\) and \(\|p^+\|_2=\|y^+-c\|_c\), this proves
\[
\|y^+-c\|_c^2\le (1-\alpha m)\|y-c\|_c^2.
\]
\end{proof}

Throughout the algorithmic analysis, we choose \(\eta\) small enough so that the frozen-metric
normal-form condition in Lemma~\ref{lem:frozen_metric_anchored_contraction} holds for both the
exact and proxy tracker curvature pairs; equivalently, the fixed \(\eta\)-dependent metric-comparison
factors are absorbed into the local Dikin constants used in Definition~\ref{def:barrier_aware}.

\paragraph{Convention for the remaining tracker analysis.}
Proposition~\ref{prop:frozen_metric_equivalence} and
Lemma~\ref{lem:frozen_metric_anchored_contraction} allow us to analyze the implementable frozen
barrier-metric updates as center-anchor Dikin contractions. Thus, in the fixed-anchor contraction
lemmas below, we write the estimates in the norms anchored at
\(y_\mu^\star(x_k)\) and \(y_{\lambda_k,\mu}^\star(x_k)\), even though Algorithm~\ref{alg:dikin_sbo}
uses the frozen metrics \(\nabla^2\phi(z_k)\) and \(\nabla^2\phi(y_k)\). All constants below are
understood after absorbing fixed \(\eta\)-dependent metric-comparison factors. We therefore relabel
the enlarged constants as
\(\rho_\psi^\eta,\ell_{\psi,1}^\eta,\ell_{f,1}^\eta\), etc., without changing the schedule scaling or the
convergence rate.

\subsection{Contraction for the \(y\)-tracker}
\label{app:y_fixed_anchor}

\begin{lemma}[Contraction of the proxy tracker]
\label{lem:y_fixed_anchor_contraction}
Fix an outer iteration \(k\). Suppose the proxy inner iterates remain in
\(T_{\eta,\mu}^{\lambda_k}(x_k)\), and suppose
\begin{equation}\label{eq:step_proxy_contraction}
\alpha_k\ell_{f,1}^{\eta}\le \frac18,
\qquad
\beta_k\ell_{\psi,1}^{\eta}\le \frac18,
\qquad
\beta_k:=\alpha_k\lambda_k .
\end{equation}
Then
\begin{equation}\label{eq:ytracker_contraction}
\|y_{k+1}-y_{\lambda_k,\mu}^\star(x_k)\|_{y_{\lambda_k,\mu}^\star(x_k)}^2
\le
\left(1-\frac{7\rho_\psi^\eta}{8}\beta_k\right)^T
\|y_k-y_{\lambda_k,\mu}^\star(x_k)\|_{y_{\lambda_k,\mu}^\star(x_k)}^2 .
\end{equation}
\end{lemma}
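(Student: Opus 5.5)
The approach is to prove a one-step contraction in the proxy anchored norm and then iterate it $T$ times. Fix $k$ and write $c:=y_{\lambda_k,\mu}^\star(x_k)$, $\|\cdot\|:=\|\cdot\|_c$, and $h(y):=L_{\lambda_k,\mu}(x_k,y)$, so that $\nabla h(c)=0$. The inner update is $y^{(t+1)}=y^{(t)}-\alpha_k\nabla^2\phi(y_k)^{-1}\nabla h(y^{(t)})$. By the convention following Lemma~\ref{lem:frozen_metric_anchored_contraction}, this frozen-metric step may be analysed as the center-metric step $y^{(t+1)}=y^{(t)}-\alpha_k H_c^{-1}\nabla h(y^{(t)})$, with the $\eta$-dependent comparison factors already absorbed into the constants.

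On $T_{\eta,\mu}^{\lambda_k}(x_k)\subset T_{2\eta,\mu}^{\lambda_k}(x_k)$, Lemma~\ref{lem:proxy_objective_curvature} supplies the curvature bounds. Its hypothesis $\lambda_k\ge 2\ell_{f,1}^\eta/\rho_\psi^\eta$ holds because of \eqref{eq:main_sched_s1} and the monotonicity of $\lambda_k$. The lemma gives $m H_c\preceq\nabla^2 h\preceq L H_c$. I will not use the crude lower constant $\lambda_k\rho_\psi^\eta/2$. Instead I will keep the finer split $\nabla^2h=\nabla^2_{yy}f+\lambda_k\nabla^2_{yy}\psi_\mu$, which gives $m\ge\lambda_k\rho_\psi^\eta-\ell_{f,1}^\eta$ and $L\le\lambda_k\ell_{\psi,1}^\eta+\ell_{f,1}^\eta$.

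For the one-step estimate, set $e=y^{(t)}-c$ and $r=\nabla h(y^{(t)})$. Then
\[
\|e^+\|^2=\|e\|^2-2\alpha_k\langle r,e\rangle+\alpha_k^2\|r\|_{c,*}^2 .
\]
I will bound the two terms on the right as follows.
\begin{itemize}
\item The segment $[c,y^{(t)}]$ lies in the convex Dikin ball. Integrating the Hessian along it gives $\langle r,e\rangle\ge m\|e\|^2$, and the Baillon--Haddad bound in the anchored metric gives $\|r\|_{c,*}^2\le L\langle r,e\rangle$.
\item The step conditions \eqref{eq:step_proxy_contraction} give $\alpha_k L\le \beta_k\ell_{\psi,1}^\eta+\alpha_k\ell_{f,1}^\eta\le 1/4$.
\end{itemize}
Combining these,
\[
\|e^+\|^2\le\|e\|^2-\tfrac{7}{4}\alpha_k\langle r,e\rangle\le(1-\tfrac74\alpha_k m)\|e\|^2 .
\]

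It then remains to show $\tfrac74\alpha_k m\ge\tfrac78\rho_\psi^\eta\beta_k$. This amounts to $\alpha_k(\lambda_k\rho_\psi^\eta-\ell_{f,1}^\eta)\ge\tfrac12\beta_k\rho_\psi^\eta$, which is exactly $\lambda_k\ge 2\ell_{f,1}^\eta/\rho_\psi^\eta$, the condition already verified above. The one-step factor $1-\frac{7}{8}\rho_\psi^\eta\beta_k$ is nonnegative because $\rho_\psi^\eta\le\ell_{\psi,1}^\eta$ and $\beta_k\ell_{\psi,1}^\eta\le 1/8$. Iterating $T$ times, under the hypothesis that all inner iterates stay in the proxy neighborhood, gives \eqref{eq:ytracker_contraction}, since $y_k^{(0)}=y_k$ and $y_k^{(T)}=y_{k+1}$.

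The main obstacle is the frozen-metric reduction, because the algorithm uses $\nabla^2\phi(y_k)$ rather than $H_c$. If the absorbed convention proves too loose, I would apply Lemma~\ref{lem:frozen_metric_anchored_contraction} directly. That lemma yields $(1-\alpha_k m)$ with $m=\lambda_k\rho_\psi^\eta/2$, and the required factor $\frac78\rho_\psi^\eta\beta_k$ would then have to come from rescaling the relabelled constants. The cleanest justification is that the paper explicitly relabels enlarged constants, so I would state the bound for the center-metric step and invoke that convention.
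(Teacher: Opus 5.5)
Your proposal is correct and follows the paper's proof essentially step for step. It uses the same frozen-to-center-anchor reduction via Lemma~\ref{lem:frozen_metric_anchored_contraction}, the same three-term expansion in the proxy anchored norm, and Baillon--Haddad together with $\alpha_k(\lambda_k\ell_{\psi,1}^\eta+\ell_{f,1}^\eta)\le 1/4$ to obtain the factor $\tfrac74\alpha_k$; your ``finer split'' plus $\lambda_k\ge 2\ell_{f,1}^\eta/\rho_\psi^\eta$ is just an inline rederivation of the constant $\lambda_k\rho_\psi^\eta/2$ from Lemma~\ref{lem:proxy_objective_curvature}, followed by the same $T$-fold iteration. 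Your explicit check that $1-\tfrac78\rho_\psi^\eta\beta_k\ge 0$ before iterating is a small point the paper leaves implicit.
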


\begin{proof}
The implemented proxy-tracker update in Algorithm~\ref{alg:dikin_sbo} uses the frozen barrier-metric
\(\nabla^2\phi(y_k)\) during the \(T\) inner steps. By
Lemma~\ref{lem:frozen_metric_anchored_contraction}, on the maintained proxy neighborhood this frozen-metric
step admits the same anchored contraction normal form as a center-metric step, after absorbing fixed
\(\eta\)-dependent comparison factors into
\(\rho_\psi^\eta,\ell_{\psi,1}^\eta,\ell_{f,1}^\eta\). Thus it suffices to prove the contraction in the
anchored Dikin norm at \(y_{\lambda_k,\mu}^\star(x_k)\).

Apply Lemma~\ref{lem:frozen_metric_anchored_contraction} with
\[
h(y)=L_{\lambda_k,\mu}(x_k,y),
\qquad
c=y_{\lambda_k,\mu}^\star(x_k),
\qquad
\bar y=y_k,
\qquad
B=\nabla^2\phi(y_k).
\]
By Proposition~\ref{prop:derived_dikin_regularity} and
Lemma~\ref{lem:proxy_objective_curvature}, for every
\(y\in T_{\eta,\mu}^{\lambda_k}(x_k)\),
\begin{equation}
\label{eq:proxy_tracker_curvature}
\frac{\lambda_k\rho_\psi^\eta}{2}H_{\lambda_k,\mu}^\star(x_k)
\preceq
\nabla_{yy}^2L_{\lambda_k,\mu}(x_k,y)
\preceq
\bigl(\lambda_k\ell_{\psi,1}^\eta+\ell_{f,1}^\eta\bigr)
H_{\lambda_k,\mu}^\star(x_k).
\end{equation}
Therefore, \(y\mapsto L_{\lambda_k,\mu}(x_k,y)\) is
\(\lambda_k\rho_\psi^\eta/2\)-strongly convex and
\(\lambda_k\ell_{\psi,1}^\eta+\ell_{f,1}^\eta\)-smooth with respect to the anchored norm
\(\|\cdot\|_{y_{\lambda_k,\mu}^\star(x_k)}\) on the maintained proxy neighborhood.

Thus, for every inner iterate
\(y_k^{(t)}\in T_{\eta,\mu}^{\lambda_k}(x_k)\), strong convexity gives
\begin{align}
&\left\langle
\nabla_yL_{\lambda_k,\mu}(x_k,y_k^{(t)})
-
\nabla_yL_{\lambda_k,\mu}(x_k,y_{\lambda_k,\mu}^\star(x_k)),
\,y_k^{(t)}-y_{\lambda_k,\mu}^\star(x_k)
\right\rangle
\nonumber\\
&\qquad\ge
\frac{\lambda_k\rho_\psi^\eta}{2}
\|y_k^{(t)}-y_{\lambda_k,\mu}^\star(x_k)\|_{y_{\lambda_k,\mu}^\star(x_k)}^2 .
\label{eq:proxy_tracker_sc}
\end{align}
Since \(y_{\lambda_k,\mu}^\star(x_k)\) minimizes \(L_{\lambda_k,\mu}(x_k,\cdot)\),
\[
\nabla_yL_{\lambda_k,\mu}(x_k,y_{\lambda_k,\mu}^\star(x_k))=0,
\]
and hence \eqref{eq:proxy_tracker_sc} becomes
\begin{equation}
\label{eq:proxy_tracker_sc_simplified}
\left\langle
\nabla_yL_{\lambda_k,\mu}(x_k,y_k^{(t)}),
\,y_k^{(t)}-y_{\lambda_k,\mu}^\star(x_k)
\right\rangle
\ge
\frac{\lambda_k\rho_\psi^\eta}{2}
\|y_k^{(t)}-y_{\lambda_k,\mu}^\star(x_k)\|_{y_{\lambda_k,\mu}^\star(x_k)}^2 .
\end{equation}

Moreover, convexity and smoothness imply the Baillon--Haddad co-coercivity inequality in the
anchored primal/dual pair:
\begin{align}
&\|\nabla_yL_{\lambda_k,\mu}(x_k,y_k^{(t)})
-
\nabla_yL_{\lambda_k,\mu}(x_k,y_{\lambda_k,\mu}^\star(x_k))
\|_{y_{\lambda_k,\mu}^\star(x_k),*}^2
\nonumber\\
&\qquad\le
\bigl(\lambda_k\ell_{\psi,1}^\eta+\ell_{f,1}^\eta\bigr)
\left\langle
\nabla_yL_{\lambda_k,\mu}(x_k,y_k^{(t)})
-
\nabla_yL_{\lambda_k,\mu}(x_k,y_{\lambda_k,\mu}^\star(x_k)),
\,y_k^{(t)}-y_{\lambda_k,\mu}^\star(x_k)
\right\rangle .
\label{eq:proxy_tracker_cocoercive}
\end{align}
Using again
\(\nabla_yL_{\lambda_k,\mu}(x_k,y_{\lambda_k,\mu}^\star(x_k))=0\), this gives
\begin{equation}
\label{eq:proxy_tracker_grad_sq}
\|\nabla_yL_{\lambda_k,\mu}(x_k,y_k^{(t)})\|_{y_{\lambda_k,\mu}^\star(x_k),*}^2
\le
\bigl(\lambda_k\ell_{\psi,1}^\eta+\ell_{f,1}^\eta\bigr)
\left\langle
\nabla_yL_{\lambda_k,\mu}(x_k,y_k^{(t)}),
\,y_k^{(t)}-y_{\lambda_k,\mu}^\star(x_k)
\right\rangle .
\end{equation}

By the anchored normal form supplied by
Lemma~\ref{lem:frozen_metric_anchored_contraction}, the one-step proxy update satisfies the
center-anchor expansion
\begin{align*}
&\|y_k^{(t+1)}-y_{\lambda_k,\mu}^\star(x_k)\|_{y_{\lambda_k,\mu}^\star(x_k)}^2 \\
&\quad =
\left\|
y_k^{(t)}-y_{\lambda_k,\mu}^\star(x_k)
-
\alpha_k\bigl(H_{\lambda_k,\mu}^\star(x_k)\bigr)^{-1}
\nabla_yL_{\lambda_k,\mu}(x_k,y_k^{(t)})
\right\|_{y_{\lambda_k,\mu}^\star(x_k)}^2 \\
&\quad =
\|y_k^{(t)}-y_{\lambda_k,\mu}^\star(x_k)\|_{y_{\lambda_k,\mu}^\star(x_k)}^2 \\
&\qquad
-2\alpha_k
\left\langle
\nabla_yL_{\lambda_k,\mu}(x_k,y_k^{(t)}),
\,y_k^{(t)}-y_{\lambda_k,\mu}^\star(x_k)
\right\rangle \\
&\qquad
+\alpha_k^2
\|\nabla_yL_{\lambda_k,\mu}(x_k,y_k^{(t)})\|_{y_{\lambda_k,\mu}^\star(x_k),*}^2 .
\end{align*}
Here the expansion is written in the center-anchor norm for analysis; the implemented frozen-metric
step is reduced to this normal form by Lemma~\ref{lem:frozen_metric_anchored_contraction}.

Applying \eqref{eq:proxy_tracker_grad_sq} to the last term gives
\[
\alpha_k^2
\|\nabla_yL_{\lambda_k,\mu}(x_k,y_k^{(t)})\|_{y_{\lambda_k,\mu}^\star(x_k),*}^2
\le
\alpha_k^2
\bigl(\lambda_k\ell_{\psi,1}^\eta+\ell_{f,1}^\eta\bigr)
\left\langle
\nabla_yL_{\lambda_k,\mu}(x_k,y_k^{(t)}),
\,y_k^{(t)}-y_{\lambda_k,\mu}^\star(x_k)
\right\rangle .
\]
Therefore,
\begin{align*}
&\|y_k^{(t+1)}-y_{\lambda_k,\mu}^\star(x_k)\|_{y_{\lambda_k,\mu}^\star(x_k)}^2 \\
&\quad\le
\|y_k^{(t)}-y_{\lambda_k,\mu}^\star(x_k)\|_{y_{\lambda_k,\mu}^\star(x_k)}^2 \\
&\qquad
-
\Bigl(
2\alpha_k
-
\alpha_k^2(\lambda_k\ell_{\psi,1}^\eta+\ell_{f,1}^\eta)
\Bigr)
\left\langle
\nabla_yL_{\lambda_k,\mu}(x_k,y_k^{(t)}),
\,y_k^{(t)}-y_{\lambda_k,\mu}^\star(x_k)
\right\rangle .
\end{align*}

Under the step-size rule \eqref{eq:step_proxy_contraction},
\[
\alpha_k(\lambda_k\ell_{\psi,1}^{\eta}+\ell_{f,1}^{\eta})
=
\beta_k\ell_{\psi,1}^{\eta}
+
\alpha_k\ell_{f,1}^{\eta}
\le
\frac14.
\]
Thus
\[
2\alpha_k-\alpha_k^2(\lambda_k\ell_{\psi,1}^{\eta}+\ell_{f,1}^{\eta})
=
\alpha_k
\Bigl(
2-\alpha_k(\lambda_k\ell_{\psi,1}^{\eta}+\ell_{f,1}^{\eta})
\Bigr)
\ge
\frac{7}{4}\alpha_k .
\]
Combining this with \eqref{eq:proxy_tracker_sc_simplified}, we get
\begin{align*}
&\|y_k^{(t+1)}-y_{\lambda_k,\mu}^\star(x_k)\|_{y_{\lambda_k,\mu}^\star(x_k)}^2 \\
&\quad\le
\|y_k^{(t)}-y_{\lambda_k,\mu}^\star(x_k)\|_{y_{\lambda_k,\mu}^\star(x_k)}^2
-
\frac{7}{4}\alpha_k
\left\langle
\nabla_yL_{\lambda_k,\mu}(x_k,y_k^{(t)}),
\,y_k^{(t)}-y_{\lambda_k,\mu}^\star(x_k)
\right\rangle \\
&\quad\le
\left(1-\frac{7}{8}\alpha_k\lambda_k\rho_\psi^\eta\right)
\|y_k^{(t)}-y_{\lambda_k,\mu}^\star(x_k)\|_{y_{\lambda_k,\mu}^\star(x_k)}^2 \\
&\quad=
\left(1-\frac{7}{8}\rho_\psi^\eta\beta_k\right)
\|y_k^{(t)}-y_{\lambda_k,\mu}^\star(x_k)\|_{y_{\lambda_k,\mu}^\star(x_k)}^2 .
\end{align*}
Therefore,
\begin{equation}
\label{eq:proxy_tracker_one_step}
\|y_k^{(t+1)}-y_{\lambda_k,\mu}^\star(x_k)\|_{y_{\lambda_k,\mu}^\star(x_k)}^2
\le
\left(1-\frac{7\rho_\psi^\eta}{8}\beta_k\right)
\|y_k^{(t)}-y_{\lambda_k,\mu}^\star(x_k)\|_{y_{\lambda_k,\mu}^\star(x_k)}^2 .
\end{equation}
Repeating \eqref{eq:proxy_tracker_one_step} for \(t=0,\ldots,T-1\), and using
\(y_k^{(0)}=y_k\) and \(y_{k+1}=y_k^{(T)}\), gives
\[
\|y_{k+1}-y_{\lambda_k,\mu}^\star(x_k)\|_{y_{\lambda_k,\mu}^\star(x_k)}^2
\le
\left(1-\frac{7\rho_\psi^\eta}{8}\beta_k\right)^T
\|y_k-y_{\lambda_k,\mu}^\star(x_k)\|_{y_{\lambda_k,\mu}^\star(x_k)}^2 .
\]
This proves \eqref{eq:ytracker_contraction}.
\end{proof}

\begin{corollary}[Scheduled fixed-anchor contraction of the proxy tracker]
\label{cor:y_fixed_anchor_scheduled}
Under the step size requirement of Lemma~\ref{lem:y_fixed_anchor_contraction}, if in addition
\(T\rho_\psi^\eta\beta_k\le 1/4\), then
\[
\|y_{k+1}-y_{\lambda_k,\mu}^\star(x_k)\|_{y_{\lambda_k,\mu}^\star(x_k)}^2
\le
\left(1-\frac{3T\rho_\psi^\eta}{4}\beta_k\right)
\|y_k-y_{\lambda_k,\mu}^\star(x_k)\|_{y_{\lambda_k,\mu}^\star(x_k)}^2 .
\]
\end{corollary}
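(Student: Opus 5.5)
We derive the corollary directly from Lemma~\ref{lem:y_fixed_anchor_contraction}; only an elementary scalar inequality remains to be proved. Write \(a:=\tfrac{7}{8}\rho_\psi^\eta\beta_k\). The lemma bounds the squared anchored error after the \(T\) inner steps by \((1-a)^T\) times the initial squared error. It therefore suffices to show
\[
(1-a)^T\le 1-\tfrac{3}{4}T\rho_\psi^\eta\beta_k
\qquad\text{whenever } T\rho_\psi^\eta\beta_k\le \tfrac14 .
\]

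First, note that \(0\le a\le \tfrac78\cdot\tfrac14<1\), because \(T\ge1\) and \(\beta_k\ge0\); so \((1-a)^T\) is well defined and nonnegative. Next, use \(1-a\le e^{-a}\) to get \((1-a)^T\le e^{-Ta}\). Set \(s:=Ta=\tfrac78 T\rho_\psi^\eta\beta_k\), so that \(s\in[0,7/32]\). On \([0,1]\) the function \(e^{-s}\) is convex, hence lies below its chord: \(e^{-s}\le 1-s(1-e^{-1})\). Since the relevant \(s\) stays in a small interval, a tighter bound follows from the second-order expansion \(e^{-s}\le 1-s+s^2/2\). Using \(s\le 7/32\), this gives \(e^{-s}\le 1-s(1-\tfrac{7}{64})=1-\tfrac{57}{64}s\). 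Substituting \(s=\tfrac78T\rho_\psi^\eta\beta_k\) yields the coefficient \(\tfrac{57}{64}\cdot\tfrac78=\tfrac{399}{512}\ge\tfrac34\). Therefore \((1-a)^T\le 1-\tfrac34 T\rho_\psi^\eta\beta_k\), which is the claim.

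Alternatively, one can avoid exponentials by using the Bonferroni-type bound \((1-a)^T\le 1-Ta+\binom{T}{2}a^2\le 1-Ta+\tfrac{(Ta)^2}{2}\), valid for \(a\in[0,1]\). With \(Ta\le 7/32\), this gives the same factor \(1-\tfrac{57}{64}Ta\).

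There is no real obstacle. The only points to check are that the hypotheses of the lemma are unchanged (the corollary assumes them) and that the numerical constant \(\tfrac{399}{512}\) indeed dominates \(\tfrac34\), which the computation confirms.
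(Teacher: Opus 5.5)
Your proof is correct and follows the paper's route: take the one-step factor $1-\tfrac78\rho_\psi^\eta\beta_k$ from Lemma~\ref{lem:y_fixed_anchor_contraction}, iterate it $T$ times, and then show $(1-\tfrac78\rho_\psi^\eta\beta_k)^T\le 1-\tfrac34 T\rho_\psi^\eta\beta_k$. The paper only asserts that last scalar inequality, and your second-order bound (with $Ta\le 7/32$, giving coefficient $\tfrac{399}{512}\ge\tfrac34$) supplies the justification it needs; a second-order bound is genuinely required here, because the cruder bound $(1-a)^T\le 1-\tfrac34 Ta$ used for the $z$-tracker would give only $\tfrac{21}{32}$ when $a=\tfrac78\rho_\psi^\eta\beta_k$, and your chord remark would likewise fall short.
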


\begin{proof}
The proof of Lemma~\ref{lem:y_fixed_anchor_contraction} gives the one-step bound
\[
\|y_k^{(t+1)}-y_{\lambda_k,\mu}^\star(x_k)\|_{y_{\lambda_k,\mu}^\star(x_k)}^2
\le
\left(1-\frac{7\rho_\psi^\eta}{8}\beta_k\right)
\|y_k^{(t)}-y_{\lambda_k,\mu}^\star(x_k)\|_{y_{\lambda_k,\mu}^\star(x_k)}^2,
\]
because \(\alpha_k\ell_{f,1}^{\eta}+\beta_k\ell_{\psi,1}^{\eta}\le 1/4\). Repeating this \(T\) times
and using \(T\rho_\psi^\eta\beta_k\le 1/4\) yields
\[
\left(1-\frac{7\rho_\psi^\eta}{8}\beta_k\right)^T
\le
1-\frac{3T\rho_\psi^\eta}{4}\beta_k,
\]
which proves the claim.
\end{proof}

\subsection{Dikin Recursion for the \(y\)-tracker}

\begin{lemma}[Moving-anchor recursion for the proxy tracker]
\label{lem:y_moving_anchor_recursion}
Fix an outer iteration \(k\) and define
\(\delta_k:=\lambda_{k+1}-\lambda_k\ge 0\). Let
\[
I_{k+1}
:=
\big\|y_{k+1}-y_{\lambda_{k+1},\mu}^\star(x_{k+1})\big\|_{y_{\lambda_{k+1},\mu}^\star(x_{k+1})}^2 .
\]
Then, for any \(j_k>0\),
\begin{align}
\lambda_{k+1} I_{k+1}
\le\;&
\lambda_k
\Bigg[
\kappa_{y,k}^2
\frac{\lambda_{k+1}}{\lambda_k}
\Big(
1+\frac{2\delta_k}{\lambda_k}
+2\xi\alpha_k j_k
+(\ell_{\lambda,1}^\eta)^2\xi^2
\bigl(\ell_{f,0}^2\alpha_k^2+4\ell_{g,0}^2\beta_k^2\bigr)
\Big)
\Bigg]
\nonumber\\
&\qquad\qquad\cdot
\big\|y_{k+1}-y_{\lambda_k,\mu}^\star(x_k)\big\|_{y_{\lambda_k,\mu}^\star(x_k)}^2
\nonumber\\
&\quad
+\lambda_{k+1}\kappa_{y,k}^2
\Bigg[
2(\ell_{\lambda,0}^\eta)^2
+
\frac{(\ell_{\lambda,0}^\eta)^2}{2\xi\alpha_k j_k}
+
\frac12
\Bigg]
\xi^2\alpha_k^2\|q_k^x\|_2^2
\nonumber\\
&\quad
+\kappa_{y,k}^2
\Bigg[
\frac{8(\ell_{f,0}^\eta)^2}{(\rho_\psi^\eta)^2}
\frac{\delta_k}{\lambda_{k+1}}
+
2\frac{\kappa_{y,k}^2(\ell_{f,0}^\eta)^2}{(\rho_\psi^\eta)^2}
\frac{\lambda_{k+1}}{\lambda_k}
\Bigg]
\frac{\delta_k}{\lambda_k^2}.
\label{eq:proxy_moving_anchor_raw}
\end{align}
\end{lemma}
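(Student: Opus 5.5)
The plan is to compare the new tracker error with the old one by splitting the center motion into a multiplier step and an $x$-step, and handling the change of anchor with Lemma~\ref{lem:anchor_switch}. Write $c_k:=y_{\lambda_k,\mu}^\star(x_k)$, $\tilde c:=y_{\lambda_{k+1},\mu}^\star(x_k)$, $c_{k+1}:=y_{\lambda_{k+1},\mu}^\star(x_{k+1})$, $a:=y_{k+1}-c_k$ and $\Delta x:=x_{k+1}-x_k=-\xi\alpha_k q_k^x$. Then
\[
y_{k+1}-c_{k+1}=a-d_\lambda-d_x,\qquad d_\lambda:=\tilde c-c_k,\qquad d_x:=c_{k+1}-\tilde c .
\]
I read $\kappa_{y,k}:=(1-\|c_{k+1}-c_k\|_{c_k})^{-1}$ as the anchor-switch factor, which is finite as long as the new center lies in the old buffer neighborhood. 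With that reading, Lemma~\ref{lem:anchor_switch} gives
\[
I_{k+1}\le\kappa_{y,k}^2\,\|a-d_\lambda-d_x\|_{c_k}^2 .
\]
Every remaining computation is then carried out in the single anchored norm at $c_k$.

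\textbf{Step 1: expand the square and split $d_x$.} Expand $\|a-d_\lambda-d_x\|_{c_k}^2$ into $\|a\|^2$, the two cross terms, and $\|d_\lambda+d_x\|^2$. For $d_x$, use the local Dikin Jacobian of $x\mapsto y_{\lambda_{k+1},\mu}^\star(x)$ to write $d_x=J\Delta x+R$. Here $J\Delta x$ is the linear part, with $\|J\Delta x\|_{c_k}\le\ell_{\lambda,0}^\eta\|\Delta x\|_2$ by Proposition~\ref{prop:local_consequences}(i), and the remainder satisfies $\|R\|_{c_k}\le\ell_{\lambda,1}^\eta\|\Delta x\|_2^2$.

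\textbf{Step 2: bound the cross terms with Young's inequality.} Each cross term gets its own weight.
\begin{itemize}
\item For the linear part, use weight $\xi\alpha_k j_k$:
\[
2|\langle a,J\Delta x\rangle|\le 2\xi\alpha_k j_k\|a\|^2+\frac{(\ell_{\lambda,0}^\eta)^2}{2\xi\alpha_k j_k}\,\xi^2\alpha_k^2\|q_k^x\|_2^2 .
\]
\item For the remainder, write $2\|a\|\,\ell_{\lambda,1}^\eta\|\Delta x\|^2\le(\ell_{\lambda,1}^\eta)^2\|\Delta x\|^2\|a\|^2+\tfrac12\|\Delta x\|^2$. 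Then bound the factor $\|\Delta x\|^2$ multiplying $\|a\|^2$ by a deterministic quantity. Since $\|q_k^x\|_2\le\ell_{f,0}+2\lambda_k\ell_{g,0}$ (from (A4) and the form of $q_k^x$), this gives $\|\Delta x\|^2\le\xi^2(\ell_{f,0}^2\alpha_k^2+4\ell_{g,0}^2\beta_k^2)$ after absorbing the factor $2$. This produces the $(\ell_{\lambda,1}^\eta)^2$ term and the $\tfrac12$ term in the statement.
\item For the multiplier cross term, use weight $\delta_k/\lambda_k$:
\[
2|\langle a,d_\lambda\rangle|\le\frac{2\delta_k}{\lambda_k}\|a\|^2+\frac{\lambda_k}{2\delta_k}\|d_\lambda\|^2 .
\]
\end{itemize}

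\textbf{Step 3: bound the pure drift terms.} Bound $\|d_\lambda+d_x\|^2\le 2\|d_x\|^2+2\|d_\lambda\|^2$. Use $2\|d_x\|^2\le 2(\ell_{\lambda,0}^\eta)^2\xi^2\alpha_k^2\|q_k^x\|^2$. For the multiplier drift, Proposition~\ref{prop:local_consequences}(i) with $x_1=x_2$ gives
\[
\|d_\lambda\|_{c_k}\le\frac{2\ell_{f,0}^\eta}{\rho_\psi^\eta}\,\frac{\delta_k}{\lambda_k\lambda_{k+1}} .
\]
Inserting this into $\tfrac{\lambda_k}{2\delta_k}\|d_\lambda\|^2$ and into $2\|d_\lambda\|^2$, then multiplying the whole inequality by $\lambda_{k+1}$, yields the $\delta_k/\lambda_k^2$ terms. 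Collecting everything gives \eqref{eq:proxy_moving_anchor_raw}, after writing $\lambda_{k+1}\|a\|^2=\lambda_k\frac{\lambda_{k+1}}{\lambda_k}\|a\|^2$ and using $\lambda_{k+1}\ge\lambda_k$ to loosen constants.

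\textbf{Main obstacle: anchor bookkeeping.} Proposition~\ref{prop:local_consequences}(i) measures displacement in the norm at the first argument's center. So $d_x$ is naturally measured at $\tilde c$, not at $c_k$. I would convert it with a second application of Lemma~\ref{lem:anchor_switch} between $c_k$ and $\tilde c$. That costs one more anchor-switch factor, and I expect this to be the source of the extra $\kappa_{y,k}^2$ inside the last bracket, with $\kappa_{y,k}$ enlarged to dominate both switches.

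This conversion also requires the admissibility conditions $\tilde c,c_{k+1}\in T^{\lambda_k}_{2\eta,\mu}(x_k)$. Otherwise neither the Jacobian bounds nor the anchor switches are valid. I would state these as standing hypotheses, to be verified later by the tube-invariance argument. The remaining constants are matched by routine absorption.
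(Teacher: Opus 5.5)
Your proof is correct in outline and uses the same ingredients as the paper:
\begin{itemize}
\item the anchor switch via Lemma~\ref{lem:anchor_switch} with $\kappa_{y,k}=(1-\|c_{k+1}-c_k\|_{c_k})^{-1}$ (your reading is the paper's definition);
\item the three-term expansion at $c_k$;
\item Proposition~\ref{prop:local_consequences}(i) for the drift;
\item Young's inequality with weight $\xi\alpha_k j_k$ on the linearized $x$-motion;
\item the $\ell_{\lambda,1}^\eta$ Taylor remainder combined with $\|q_k^x\|_2\le\ell_{f,0}+2\lambda_k\ell_{g,0}$;
\item Young's inequality with weight $\delta_k/\lambda_k$ on the multiplier piece.
\end{itemize}

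The one genuine difference is the order of the split. The paper (inside Lemma~\ref{lem:proxy_center_cross_bound}) moves $x$ first at fixed $\lambda_k$, then $\lambda$ at fixed $x_{k+1}$. It bounds the pure drift $\|c_{k+1}-c_k\|_{c_k}^2$ by applying Proposition~\ref{prop:local_consequences}(i) to both arguments at once. In that ordering the Taylor expansion of $y_{\lambda_k,\mu}^\star$ at $x_k$ is natively measured at $c_k$. The multiplier piece $y_{\lambda_{k+1},\mu}^\star(x_{k+1})-y_{\lambda_k,\mu}^\star(x_{k+1})$, by contrast, lives at $y_{\lambda_k,\mu}^\star(x_{k+1})$ and must be switched back to $c_k$. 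That switch is what produces the extra $\kappa_{y,k}^2$ in the last bracket.

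Your ordering reverses this. You control $d_\lambda$ directly at $c_k$, so your last bracket carries only $\kappa_{y,k}^2$; that is stronger than stated, hence harmless. But the extra switch from $\tilde c$ to $c_k$ now multiplies every $\ell_{\lambda,0}^\eta$ and $\ell_{\lambda,1}^\eta$ contribution.

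So your expectation about where the extra factor ends up is mistaken. Enlarging $\kappa_{y,k}$ to dominate both switches would leave $\kappa_{y,k}^4$ in front of the $\ell_{\lambda,0}^\eta$ terms on the $\|q_k^x\|_2^2$ line and in front of the $\ell_{\lambda,1}^\eta$ term on the first line. The statement does not allow that. Absorb the factor into the local constants instead:
\begin{itemize}
\item Proposition~\ref{prop:local_consequences}(i) with $x_1=x_2=x_k$ gives $\|\tilde c-c_k\|_{c_k}\le 2\ell_{f,0}^\eta\delta_k/(\rho_\psi^\eta\lambda_k\lambda_{k+1})\le 2\ell_{f,0}^\eta/(\rho_\psi^\eta\lambda_0)\le\eta/4$, using the lower bound on $\lambda_0$ in \eqref{eq:main_sched_s1}.
\item Lemma~\ref{lem:anchor_switch} then yields $\|u\|_{c_k}\le(1-\eta/4)^{-1}\|u\|_{\tilde c}$.
\item This is a fixed $\eta$-dependent factor, which the paper's conventions let you fold into $\ell_{\lambda,0}^\eta$ and $\ell_{\lambda,1}^\eta$.
\end{itemize}
With this change your route reproduces \eqref{eq:proxy_moving_anchor_raw}. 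Its small advantage is that the extra switch is explicitly $1+O(\delta_k/\lambda_k^2)$ and is taken between the right pair of points. The paper instead reuses $\kappa_{y,k}$, defined through the total drift $c_{k+1}-c_k$, for the switch between $c_k$ and $y_{\lambda_k,\mu}^\star(x_{k+1})$.

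Finally, fix a constant slip in the remainder step. A map with $\ell_{\lambda,1}^\eta$-Lipschitz Jacobian has $\|R\|\le\tfrac12\ell_{\lambda,1}^\eta\|\Delta x\|_2^2$, as in (J1) of Lemma~\ref{lem:proxy_center_cross_bound}. As written, your step has two errors that happen to cancel:
\begin{itemize}
\item the inequality $2\ell\|a\|\,\|\Delta x\|^2\le\ell^2\|\Delta x\|^2\|a\|^2+\tfrac12\|\Delta x\|^2$ is false (take $\ell\|a\|=1$);
\item \emph{absorbing the factor 2} in $\|\Delta x\|_2^2\le 2\xi^2(\ell_{f,0}^2\alpha_k^2+4\ell_{g,0}^2\beta_k^2)$ is not an inequality.
\end{itemize}
With the $\tfrac12$ restored, $\ell_{\lambda,1}^\eta\|a\|\,\|\Delta x\|^2\le\tfrac12(\ell_{\lambda,1}^\eta)^2\|\Delta x\|^2\|a\|^2+\tfrac12\|\Delta x\|^2$. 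Together with the bound on $\|\Delta x\|_2^2$, this gives exactly the stated coefficients. Also treat $\delta_k=0$ separately, since your Young weight $\delta_k/\lambda_k$ then degenerates; that case is trivial because $d_\lambda=0$.
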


\begin{proof}
\proofstep{Anchor change.}
Define
\[
\kappa_{y,k}
:=
\left(
1-
\big\|y_{\lambda_{k+1},\mu}^\star(x_{k+1})
-y_{\lambda_k,\mu}^\star(x_k)\big\|_{y_{\lambda_k,\mu}^\star(x_k)}
\right)^{-1}.
\]
Whenever the center displacement is less than one in the old anchor norm, Lemma~\ref{lem:anchor_switch}
applied to the two proxy anchors gives
\[
\|u\|_{y_{\lambda_{k+1},\mu}^\star(x_{k+1})}^2
\le
\kappa_{y,k}^2
\|u\|_{y_{\lambda_k,\mu}^\star(x_k)}^2
\qquad \forall u.
\]
Therefore,
\begin{equation}
\label{eq:proxy_anchor_switch}
I_{k+1}
\le
\kappa_{y,k}^2
\big\|y_{k+1}-y_{\lambda_{k+1},\mu}^\star(x_{k+1})
\big\|_{y_{\lambda_k,\mu}^\star(x_k)}^2 .
\end{equation}
Expanding the squared anchored Dikin norm at the old proxy anchor gives
\begin{align}
&\big\|y_{k+1}-y_{\lambda_{k+1},\mu}^\star(x_{k+1})
\big\|_{y_{\lambda_k,\mu}^\star(x_k)}^2
\nonumber\\
&\quad =
\big\|y_{k+1}-y_{\lambda_k,\mu}^\star(x_k)
\big\|_{y_{\lambda_k,\mu}^\star(x_k)}^2
+
\big\|y_{\lambda_{k+1},\mu}^\star(x_{k+1})
-y_{\lambda_k,\mu}^\star(x_k)
\big\|_{y_{\lambda_k,\mu}^\star(x_k)}^2
\nonumber\\
&\qquad
-2
\Big\langle
y_{k+1}-y_{\lambda_k,\mu}^\star(x_k),\,
y_{\lambda_{k+1},\mu}^\star(x_{k+1})
-y_{\lambda_k,\mu}^\star(x_k)
\Big\rangle_{y_{\lambda_k,\mu}^\star(x_k)} .
\label{eq:proxy_three_term_expansion}
\end{align}

\proofstep{Center drift.}
Apply Proposition~\ref{prop:local_consequences}\textup{(i)} with
\((\lambda_1,x_1)=(\lambda_k,x_k)\) and
\((\lambda_2,x_2)=(\lambda_{k+1},x_{k+1})\). This gives
\begin{equation}
\label{eq:proxy_center_drift}
\big\|y_{\lambda_{k+1},\mu}^\star(x_{k+1})
-y_{\lambda_k,\mu}^\star(x_k)
\big\|_{y_{\lambda_k,\mu}^\star(x_k)}
\le
\frac{2\delta_k\ell_{f,0}^\eta}{\lambda_k\lambda_{k+1}\rho_\psi^\eta}
+
\ell_{\lambda,0}^\eta\|x_{k+1}-x_k\|_2 .
\end{equation}
Squaring and using \((a+b)^2\le 2a^2+2b^2\) yields
\begin{align}
\label{eq:proxy_center_drift_square}
\big\|y_{\lambda_{k+1},\mu}^\star(x_{k+1})
-y_{\lambda_k,\mu}^\star(x_k)
\big\|_{y_{\lambda_k,\mu}^\star(x_k)}^2
\le\;&
\frac{8(\ell_{f,0}^\eta)^2}{(\rho_\psi^\eta)^2}
\frac{\delta_k^2}{\lambda_k^2\lambda_{k+1}^2}
+
2(\ell_{\lambda,0}^\eta)^2\|x_{k+1}-x_k\|_2^2 .
\end{align}
Since \(x_{k+1}-x_k=-\xi\alpha_k q_k^x\), we have
\[
\|x_{k+1}-x_k\|_2^2
=
\xi^2\alpha_k^2\|q_k^x\|_2^2 .
\]

\proofstep{Cross term.}
Consider the cross term in \eqref{eq:proxy_three_term_expansion}. Because
Lemma~\ref{lem:proxy_center_cross_bound} is stated in the anchored Dikin geometry at
\(y_{\lambda_k,\mu}^\star(x_k)\), and its right-hand side depends on the test vector only through
anchored Dikin norms, the same bound applies after replacing the test vector by its negative. Hence,
for any \(j_k>0\), applying Lemma~\ref{lem:proxy_center_cross_bound} with
\[
v_k:=y_{k+1}-y_{\lambda_k,\mu}^\star(x_k)
\]
gives
\begin{align}
&-2
\Big\langle
y_{k+1}-y_{\lambda_k,\mu}^\star(x_k),\,
y_{\lambda_{k+1},\mu}^\star(x_{k+1})
-y_{\lambda_k,\mu}^\star(x_k)
\Big\rangle_{y_{\lambda_k,\mu}^\star(x_k)}
\nonumber\\
&\quad\le
\Bigg(
\frac{2\delta_k}{\lambda_k}
+
2\xi\alpha_k j_k
+
(\ell_{\lambda,1}^\eta)^2\xi^2
\bigl(\ell_{f,0}^2\alpha_k^2+4\ell_{g,0}^2\beta_k^2\bigr)
\Bigg)
\big\|y_{k+1}-y_{\lambda_k,\mu}^\star(x_k)
\big\|_{y_{\lambda_k,\mu}^\star(x_k)}^2
\nonumber\\
&\qquad
+
\left(
\frac{\xi\alpha_k}{2j_k}(\ell_{\lambda,0}^\eta)^2
+
\frac{\xi^2}{2}\alpha_k^2
\right)
\|q_k^x\|_2^2
+
2\frac{\kappa_{y,k}^2(\ell_{f,0}^\eta)^2}{(\rho_\psi^\eta)^2}
\frac{\delta_k}{\lambda_k^3}.
\label{eq:proxy_cross_term_bound}
\end{align}

\proofstep{Collecting terms.}
Combining \eqref{eq:proxy_anchor_switch}, \eqref{eq:proxy_three_term_expansion},
\eqref{eq:proxy_center_drift_square}, and \eqref{eq:proxy_cross_term_bound}, we obtain
\begin{align}
I_{k+1}
\le\;&
\kappa_{y,k}^2
\Bigg(
1+\frac{2\delta_k}{\lambda_k}
+2\xi\alpha_k j_k
+(\ell_{\lambda,1}^\eta)^2\xi^2
\bigl(\ell_{f,0}^2\alpha_k^2+4\ell_{g,0}^2\beta_k^2\bigr)
\Bigg)
\nonumber\\
&\qquad\qquad\cdot
\big\|y_{k+1}-y_{\lambda_k,\mu}^\star(x_k)
\big\|_{y_{\lambda_k,\mu}^\star(x_k)}^2
\nonumber\\
&\quad
+\kappa_{y,k}^2
\Bigg[
2(\ell_{\lambda,0}^\eta)^2\xi^2\alpha_k^2
+
\frac{(\ell_{\lambda,0}^\eta)^2}{2}\frac{\xi\alpha_k}{j_k}
+
\frac{\xi^2}{2}\alpha_k^2
\Bigg]\|q_k^x\|_2^2
\nonumber\\
&\quad
+\kappa_{y,k}^2
\Bigg[
\frac{8(\ell_{f,0}^\eta)^2}{(\rho_\psi^\eta)^2}
\frac{\delta_k^2}{\lambda_k^2\lambda_{k+1}^2}
+
2\frac{\kappa_{y,k}^2(\ell_{f,0}^\eta)^2}{(\rho_\psi^\eta)^2}
\frac{\delta_k}{\lambda_k^3}
\Bigg].
\label{eq:proxy_moving_anchor_preweight}
\end{align}
Multiplying \eqref{eq:proxy_moving_anchor_preweight} by \(\lambda_{k+1}\) and rearranging the
last line gives \eqref{eq:proxy_moving_anchor_raw}.
\end{proof}

\begin{corollary}[Scheduled moving-anchor recursion for the proxy tracker]
\label{cor:y_moving_anchor_scheduled}
Adopting the convention from
Appendix~\ref{app:supporting_lemmas} that \(\ell_{*,1}^\eta\) is enlarged so that
\(\ell_{*,1}^\eta\ge\max\{1,\ell_{\lambda,1}^\eta\}\), if
\begin{equation}
\label{eq:proxy_recursion_schedule}
\frac{\delta_k}{\lambda_k}\le \frac{T\rho_\psi^\eta}{16}\beta_k,
\qquad
2\xi^2\max\{\ell_{g,0}^2,\ell_{f,0}^2\}(\ell_{*,1}^{\eta})^2\beta_k^2
\le
\frac{T\rho_\psi^\eta}{16}\beta_k,
\end{equation}
then
\begin{equation}
\label{eq:proxy_recursion_scheduled}
I_{k+1}
\le
\kappa_{y,k}^2
\left(1+\frac{3T\rho_\psi^\eta}{8}\beta_k+\frac{\delta_k}{\lambda_k}\right)
\big\|y_{k+1}-y_{\lambda_k,\mu}^\star(x_k)
\big\|_{y_{\lambda_k,\mu}^\star(x_k)}^2
+
O\!\left(
\frac{\xi^2(\ell_{\lambda,0}^{\eta})^2\alpha_k^2}{\rho_\psi^\eta T\beta_k}
\right)\|q_k^x\|_2^2
+
O\!\left(
\frac{(\ell_{f,0}^{\eta})^2}{(\rho_\psi^\eta)^2}
\frac{\delta_k}{\lambda_k^3}
\right).
\end{equation}
\end{corollary}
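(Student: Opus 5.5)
The plan is to start from the raw moving-anchor recursion in Lemma~\ref{lem:y_moving_anchor_recursion}, divide it by \(\lambda_{k+1}\), and bound each of its three groups of terms under the scheduled conditions \eqref{eq:proxy_recursion_schedule}. Before doing so, I fix the free parameter \(j_k\). The natural choice is to let \(2\xi\alpha_k j_k\) consume a fixed fraction of the contraction budget, namely
\[
2\xi\alpha_k j_k=\frac{T\rho_\psi^\eta}{8}\beta_k,
\qquad\text{i.e.}\qquad
j_k=\frac{T\rho_\psi^\eta\lambda_k}{16\xi}.
\]
With this choice, the penalty \((\ell_{\lambda,0}^\eta)^2/(2\xi\alpha_k j_k)\) that multiplies \(\xi^2\alpha_k^2\|q_k^x\|_2^2\) becomes \(4(\ell_{\lambda,0}^\eta)^2/(T\rho_\psi^\eta\beta_k)\). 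This is precisely the claimed \(O\bigl(\xi^2(\ell_{\lambda,0}^\eta)^2\alpha_k^2/(\rho_\psi^\eta T\beta_k)\bigr)\) coefficient.

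\textbf{First bracket.} The multiplier of \(\|y_{k+1}-y_{\lambda_k,\mu}^\star(x_k)\|^2\) is \(\kappa_{y,k}^2\) times \(1+2\delta_k/\lambda_k+2\xi\alpha_kj_k+(\ell_{\lambda,1}^\eta)^2\xi^2(\ell_{f,0}^2\alpha_k^2+4\ell_{g,0}^2\beta_k^2)\). I bound the pieces one at a time.
\begin{itemize}
\item Using \(\ell_{*,1}^\eta\ge\ell_{\lambda,1}^\eta\) and \(\alpha_k\le\beta_k\) (since \(\lambda_k\ge\lambda_0\ge 1\) can be assumed, or else absorbing \(\lambda_0^{-2}\)), the last term is at most \(5\xi^2\max\{\ell_{g,0}^2,\ell_{f,0}^2\}(\ell_{*,1}^\eta)^2\beta_k^2\). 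By the second condition in \eqref{eq:proxy_recursion_schedule}, this is at most \(\tfrac{5}{2}\cdot\tfrac{T\rho_\psi^\eta}{16}\beta_k\).
\item Splitting \(2\delta_k/\lambda_k=\delta_k/\lambda_k+\delta_k/\lambda_k\), one copy is bounded by \(T\rho_\psi^\eta\beta_k/16\) via the first condition, and the other is kept explicitly.
\end{itemize}
Summing \(1/8+1/16+5/32\) of \(T\rho_\psi^\eta\beta_k\) gives at most \(3T\rho_\psi^\eta\beta_k/8\). If the constants do not quite fit, I would shrink the fraction allocated to \(j_k\).

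The ratio \(\lambda_{k+1}/\lambda_k=1+\delta_k/\lambda_k\) appearing in the raw bound cancels when we divide by \(\lambda_{k+1}\). This is because the raw recursion was obtained by multiplying \eqref{eq:proxy_moving_anchor_preweight} by \(\lambda_{k+1}\), so it is cleaner to work with \eqref{eq:proxy_moving_anchor_preweight} directly.

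\textbf{Remaining terms.} The term \(\kappa_{y,k}^2[2(\ell_{\lambda,0}^\eta)^2+\tfrac12]\xi^2\alpha_k^2\|q_k^x\|^2\) is dominated by the \(j_k\) term once \(T\rho_\psi^\eta\beta_k\le1/4\) (from (S1)). This holds because \(1\le 1/(4T\rho_\psi^\eta\beta_k)\), and \(\ell_{\lambda,0}^\eta\ge1\) can be assumed after enlargement. The factor \(\kappa_{y,k}^2\) must be bounded by an absolute constant. By \eqref{eq:proxy_center_drift} and the tube-closure conditions (S3), the proxy center displacement is at most \(\eta<1/2\), so \(\kappa_{y,k}\le 2\). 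Finally, the \(\delta_k\) terms are \(\delta_k^2/(\lambda_k^2\lambda_{k+1}^2)\le\delta_k/\lambda_k^3\), using \(\delta_k\le\lambda_{k+1}\), together with \(\kappa^4\delta_k/\lambda_k^3\). Both are \(O\bigl((\ell_{f,0}^\eta)^2\delta_k/((\rho_\psi^\eta)^2\lambda_k^3)\bigr)\).

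\textbf{Main obstacle.} I expect the main obstacle to be the constant bookkeeping in the first bracket. In particular, reconciling \(\alpha_k^2\) against \(\beta_k^2\) requires a lower bound \(\lambda_k\ge1\), or some other absorption. Justifying that \(\kappa_{y,k}\) is uniformly bounded independently of the recursion also requires invoking the drift bound under (S3) rather than the corollary's stated hypotheses. I would state that bound explicitly as an implicit standing assumption inherited from the tube argument.
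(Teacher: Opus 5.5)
Your proposal is correct and follows the paper's proof essentially step for step. It uses the same choice \(j_k=T\rho_\psi^\eta\lambda_k/(16\xi)\), works from the pre-weighted recursion \eqref{eq:proxy_moving_anchor_preweight}, splits \(2\delta_k/\lambda_k\) into two copies, and uses the same budget \(\tfrac18+\tfrac1{16}+\tfrac5{32}=\tfrac{11}{32}\le\tfrac38\), relying as the paper does on \(\lambda_k\ge1\) for \(\alpha_k\le\beta_k\). Your explicit argument that \(\kappa_{y,k}\) is uniformly bounded via the tube-closure drift bound makes precise what the paper silently absorbs into the \(O(\cdot)\) terms. The only slip is that the \(j_k\)-penalty coefficient is \(8(\ell_{\lambda,0}^\eta)^2/(T\rho_\psi^\eta\beta_k)\), not \(4(\ell_{\lambda,0}^\eta)^2/(T\rho_\psi^\eta\beta_k)\), which is immaterial inside the \(O(\cdot)\).
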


\begin{proof}
Choose
\[
j_k=\frac{T\rho_\psi^\eta\lambda_k}{16\xi}.
\]
Then
\[
2\xi\alpha_k j_k
=
\frac{T\rho_\psi^\eta}{8}\beta_k .
\]
Substituting this choice into \eqref{eq:proxy_moving_anchor_preweight} gives
\begin{align}
I_{k+1}
\le\;&
\kappa_{y,k}^2
\Bigg(
1+\frac{2\delta_k}{\lambda_k}
+\frac{T\rho_\psi^\eta}{8}\beta_k
+(\ell_{\lambda,1}^\eta)^2\xi^2
\bigl(\ell_{f,0}^2\alpha_k^2+4\ell_{g,0}^2\beta_k^2\bigr)
\Bigg)
\nonumber\\
&\qquad\qquad\cdot
\big\|y_{k+1}-y_{\lambda_k,\mu}^\star(x_k)
\big\|_{y_{\lambda_k,\mu}^\star(x_k)}^2
\nonumber\\
&\quad
+\kappa_{y,k}^2
\Bigg[
2(\ell_{\lambda,0}^\eta)^2\xi^2\alpha_k^2
+
\frac{8(\ell_{\lambda,0}^\eta)^2\xi^2\alpha_k^2}{\rho_\psi^\eta T\beta_k}
+
\frac{\xi^2}{2}\alpha_k^2
\Bigg]\|q_k^x\|_2^2
\nonumber\\
&\quad
+\kappa_{y,k}^2
\Bigg[
\frac{8(\ell_{f,0}^\eta)^2}{(\rho_\psi^\eta)^2}
\frac{\delta_k^2}{\lambda_k^2\lambda_{k+1}^2}
+
2\frac{\kappa_{y,k}^2(\ell_{f,0}^\eta)^2}{(\rho_\psi^\eta)^2}
\frac{\delta_k}{\lambda_k^3}
\Bigg].
\label{eq:proxy_recursion_after_j}
\end{align}
We now bound the parenthesis in the first line of \eqref{eq:proxy_recursion_after_j}.
Split \(2\delta_k/\lambda_k=\delta_k/\lambda_k+\delta_k/\lambda_k\) and bound the second
copy by the first inequality in \eqref{eq:proxy_recursion_schedule}, giving
\(\delta_k/\lambda_k\le T\rho_\psi^\eta\beta_k/16\). The contraction-budget term is
\(T\rho_\psi^\eta\beta_k/8\). For the anchor-switch term, the convention
\(\ell_{\lambda,1}^\eta\le\ell_{*,1}^\eta\) and \(\alpha_k\le\beta_k\) (using
\(\lambda_k\ge 1\)) give
\[
(\ell_{\lambda,1}^\eta)^2\xi^2\bigl(\ell_{f,0}^2\alpha_k^2+4\ell_{g,0}^2\beta_k^2\bigr)
\le
5\xi^2(\ell_{*,1}^\eta)^2\max\{\ell_{g,0}^2,\ell_{f,0}^2\}\beta_k^2
\le
\frac{5T\rho_\psi^\eta}{32}\beta_k,
\]
where the last step uses the second inequality in \eqref{eq:proxy_recursion_schedule}.
Summing the four pieces yields
\(\delta_k/\lambda_k + T\rho_\psi^\eta\beta_k/16 + T\rho_\psi^\eta\beta_k/8 +
5T\rho_\psi^\eta\beta_k/32 = \delta_k/\lambda_k + 11T\rho_\psi^\eta\beta_k/32
\le \delta_k/\lambda_k + 3T\rho_\psi^\eta\beta_k/8\), i.e.
\[
\frac{\delta_k}{\lambda_k}+\frac{3T\rho_\psi^\eta}{8}\beta_k .
\]
Thus the first line of \eqref{eq:proxy_recursion_after_j} is bounded by
\[
\kappa_{y,k}^2
\left(1+\frac{3T\rho_\psi^\eta}{8}\beta_k+\frac{\delta_k}{\lambda_k}\right)
\big\|y_{k+1}-y_{\lambda_k,\mu}^\star(x_k)
\big\|_{y_{\lambda_k,\mu}^\star(x_k)}^2 .
\]
The second line is absorbed into
\[
O\!\left(
\frac{\xi^2(\ell_{\lambda,0}^{\eta})^2\alpha_k^2}{\rho_\psi^\eta T\beta_k}
\right)\|q_k^x\|_2^2,
\]
after increasing the absolute constant if necessary. The last line is bounded by
\[
O\!\left(
\frac{(\ell_{f,0}^{\eta})^2}{(\rho_\psi^\eta)^2}
\frac{\delta_k}{\lambda_k^3}
\right),
\]
using \(\lambda_{k+1}\ge \lambda_k\) and the schedule condition
\(\delta_k/\lambda_k=O(\beta_k)\). Combining these bounds proves
\eqref{eq:proxy_recursion_scheduled}.
\end{proof}

\subsection{Contraction for the \(z\)-tracker}

We next analyze the inner tracker \(z_k\), whose role is to follow the moving exact lower
minimizer \(y_{\mu}^*(x_k)\). As on the proxy side, the analysis separates into a fixed-anchor
contraction and a moving-anchor recursion.

\begin{lemma}[Contraction of the exact tracker]
\label{lem:z_fixed_anchor_contraction}
Fix an outer iteration \(k\). Suppose the exact inner iterates remain in
\(T_{\eta,\mu}(x_k)\), and suppose
\begin{equation}
\label{eq:step_exact_contraction}
\ell_{\psi,1}^\eta\,\gamma_k \le \frac{1}{4},
\qquad
T\,\rho_\psi^\eta\,\gamma_k \le \frac{1}{4}.
\end{equation}
Then
\begin{equation}
\label{eq:ztracker_contraction}
\|z_{k+1}-y_\mu^\star(x_k)\|_{y_\mu^\star(x_k)}^2
\le
\bigl(1-\gamma_k\rho_\psi^\eta\bigr)^T
\|z_k-y_\mu^\star(x_k)\|_{y_\mu^\star(x_k)}^2 .
\end{equation}
\end{lemma}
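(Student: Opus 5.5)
The plan mirrors the proof of Lemma~\ref{lem:y_fixed_anchor_contraction}, with \(h(y)=\psi_\mu(x_k,y)\), center \(c=y_\mu^\star(x_k)\), and frozen metric \(B=\nabla^2\phi(z_k)\).

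\textbf{Step 1: reduce to the anchored norm.} First invoke Proposition~\ref{prop:frozen_metric_equivalence} and Lemma~\ref{lem:frozen_metric_anchored_contraction}. This reduces the implemented frozen-metric step to the anchored normal form at \(H_\mu^\star(x_k)\), after absorbing the fixed \(\eta\)-dependent factors into \(\rho_\psi^\eta,\ell_{\psi,1}^\eta\). Proposition~\ref{prop:derived_dikin_regularity}(i) then gives \(\rho_\psi^\eta H_\mu^\star(x_k)\preceq\nabla_{yy}^2\psi_\mu(x_k,y)\preceq\ell_{\psi,1}^\eta H_\mu^\star(x_k)\) on the buffer \(T_{2\eta,\mu}(x_k)\). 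Since \(\nabla_y\psi_\mu(x_k,y_\mu^\star(x_k))=0\), this yields two inequalities for each inner iterate \(z_k^{(t)}\in T_{\eta,\mu}(x_k)\):
\begin{itemize}
\item strong monotonicity: \(\langle\nabla_y\psi_\mu(x_k,z_k^{(t)}),z_k^{(t)}-y_\mu^\star(x_k)\rangle\ge\rho_\psi^\eta\|z_k^{(t)}-y_\mu^\star(x_k)\|^2_{y_\mu^\star(x_k)}\);
\item Baillon--Haddad co-coercivity: \(\|\nabla_y\psi_\mu(x_k,z_k^{(t)})\|^2_{y_\mu^\star(x_k),*}\le\ell_{\psi,1}^\eta\langle\nabla_y\psi_\mu(x_k,z_k^{(t)}),z_k^{(t)}-y_\mu^\star(x_k)\rangle\).
\end{itemize}
The segment between \(z_k^{(t)}\) and the center lies in the convex Dikin ball, so both inequalities are legitimate.

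\textbf{Step 2: one-step bound.} Expand the squared anchored distance after one step as
\[
\|e\|^2-2\gamma_k\langle\nabla,e\rangle+\gamma_k^2\|\nabla\|_*^2,
\]
where \(e\) is the current error and \(\nabla\) the current gradient. Co-coercivity bounds the last term by \(\gamma_k^2\ell_{\psi,1}^\eta\langle\nabla,e\rangle\). With \(\gamma_k\ell_{\psi,1}^\eta\le1/4\), the net coefficient on \(\langle\nabla,e\rangle\) is at most \(-\tfrac74\gamma_k\). Strong monotonicity then gives the one-step factor \(1-\tfrac74\gamma_k\rho_\psi^\eta\le1-\gamma_k\rho_\psi^\eta\).

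\textbf{Step 3: iterate.} Iterate the one-step bound over \(t=0,\dots,T-1\), using \(z_k^{(0)}=z_k\) and \(z_{k+1}=z_k^{(T)}\), to obtain \eqref{eq:ztracker_contraction}. Both factors stay in \((0,1)\): we have \(\gamma_k\rho_\psi^\eta\le\gamma_k\ell_{\psi,1}^\eta\le1/4\), and the condition \(T\rho_\psi^\eta\gamma_k\le1/4\) keeps the \(T\)-fold product meaningful and sets up the linearized corollary \((1-\gamma_k\rho_\psi^\eta)^T\le1-\tfrac34T\rho_\psi^\eta\gamma_k\).

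\textbf{Main obstacle.} The delicate point is the frozen-metric reduction. Lemma~\ref{lem:frozen_metric_anchored_contraction} as stated only gives the factor \(1-\gamma_k m\) under \(\gamma_k(1+\theta_\eta)^2L\le1/2\) and \(\theta_\eta L\le m/4\), not the exact expansion with \(H_\mu^\star(x_k)^{-1}\). The cleanest route is to apply that lemma directly with \(m=\rho_\psi^\eta\) and \(L=\ell_{\psi,1}^\eta\). This gives \(\|z_k^{(t+1)}-y_\mu^\star\|^2\le(1-\gamma_k\rho_\psi^\eta)\|z_k^{(t)}-y_\mu^\star\|^2\), once \(\eta\) is small enough and the factor \((1-\eta)^{-4}\) is absorbed into \(\ell_{\psi,1}^\eta\) so that \(\gamma_k\ell_{\psi,1}^\eta\le1/4\) suffices.
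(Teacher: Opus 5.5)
Your proposal is correct and matches the paper's proof step for step: the frozen-metric reduction, anchored strong monotonicity together with Baillon--Haddad co-coercivity, the one-step expansion, and the $T$-fold iteration; your $\tfrac74\gamma_k$ coefficient is simply a sharper form of the paper's bound $2\gamma_k-\gamma_k^2\ell_{\psi,1}^\eta\ge\gamma_k$. You are right to apply the frozen-metric lemma directly rather than writing the step with $H_\mu^\star(x_k)^{-1}$ as the paper's displayed expansion does, and no absorption of $(1-\eta)^{-4}$ is needed, since $\theta_\eta\ell_{\psi,1}^\eta\le\rho_\psi^\eta/4$ with $\rho_\psi^\eta\le\ell_{\psi,1}^\eta$ forces $\theta_\eta\le 1/4$, so $\gamma_k\ell_{\psi,1}^\eta\le 1/4$ already gives $\gamma_k(1+\theta_\eta)^2\ell_{\psi,1}^\eta\le 25/64<1/2$.
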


\begin{proof}
The implemented exact-tracker update in Algorithm~\ref{alg:dikin_sbo} uses the frozen barrier
metric \(\nabla^2\phi(z_k)\) during the \(T\) inner steps. By
Lemma~\ref{lem:frozen_metric_anchored_contraction}, on the maintained exact neighborhood this
frozen-metric step admits the same anchored contraction normal form as a center-metric step, after
absorbing fixed \(\eta\)-dependent comparison factors into
\(\rho_\psi^\eta\) and \(\ell_{\psi,1}^\eta\). Thus it suffices to carry out the contraction argument in
the anchored Dikin norm at \(y_\mu^\star(x_k)\).

Apply Lemma~\ref{lem:frozen_metric_anchored_contraction} with
\[
h(y)=\psi_\mu(x_k,y),
\qquad
c=y_\mu^\star(x_k),
\qquad
\bar y=z_k,
\qquad
B=\nabla^2\phi(z_k).
\]
By Proposition~\ref{prop:derived_dikin_regularity}, for every
\(y\in T_{\eta,\mu}(x_k)\),
\begin{equation}
\label{eq:exact_tracker_curvature}
\rho_\psi^\eta H_\mu^\star(x_k)
\preceq
\nabla_{yy}^2\psi_\mu(x_k,y)
\preceq
\ell_{\psi,1}^\eta H_\mu^\star(x_k).
\end{equation}
Therefore, \(y\mapsto \psi_\mu(x_k,y)\) is \(\rho_\psi^\eta\)-strongly convex and
\(\ell_{\psi,1}^\eta\)-smooth with respect to the anchored norm
\(\|\cdot\|_{y_\mu^\star(x_k)}\) on the maintained exact neighborhood.

Thus, for every inner iterate \(z_k^{(t)}\in T_{\eta,\mu}(x_k)\), strong convexity gives
\begin{align}
&\left\langle
\nabla_y\psi_\mu(x_k,z_k^{(t)})
-
\nabla_y\psi_\mu(x_k,y_\mu^\star(x_k)),
\,z_k^{(t)}-y_\mu^\star(x_k)
\right\rangle
\nonumber\\
&\qquad\ge
\rho_\psi^\eta
\|z_k^{(t)}-y_\mu^\star(x_k)\|_{y_\mu^\star(x_k)}^2 .
\label{eq:exact_tracker_sc}
\end{align}
Since \(y_\mu^\star(x_k)\) minimizes \(\psi_\mu(x_k,\cdot)\),
\[
\nabla_y\psi_\mu(x_k,y_\mu^\star(x_k))=0,
\]
and hence \eqref{eq:exact_tracker_sc} becomes
\begin{equation}
\label{eq:exact_tracker_sc_simplified}
\left\langle
\nabla_y\psi_\mu(x_k,z_k^{(t)}),
\,z_k^{(t)}-y_\mu^\star(x_k)
\right\rangle
\ge
\rho_\psi^\eta
\|z_k^{(t)}-y_\mu^\star(x_k)\|_{y_\mu^\star(x_k)}^2 .
\end{equation}

Moreover, convexity and smoothness imply the Baillon--Haddad co-coercivity inequality in the
anchored primal/dual pair:
\begin{align}
&\|\nabla_y\psi_\mu(x_k,z_k^{(t)})
-
\nabla_y\psi_\mu(x_k,y_\mu^\star(x_k))
\|_{y_\mu^\star(x_k),*}^2
\nonumber\\
&\qquad\le
\ell_{\psi,1}^\eta
\left\langle
\nabla_y\psi_\mu(x_k,z_k^{(t)})
-
\nabla_y\psi_\mu(x_k,y_\mu^\star(x_k)),
\,z_k^{(t)}-y_\mu^\star(x_k)
\right\rangle .
\label{eq:exact_tracker_cocoercive}
\end{align}
Using again \(\nabla_y\psi_\mu(x_k,y_\mu^\star(x_k))=0\), this gives
\begin{equation}
\label{eq:exact_tracker_grad_sq}
\|\nabla_y\psi_\mu(x_k,z_k^{(t)})\|_{y_\mu^\star(x_k),*}^2
\le
\ell_{\psi,1}^\eta
\left\langle
\nabla_y\psi_\mu(x_k,z_k^{(t)}),
\,z_k^{(t)}-y_\mu^\star(x_k)
\right\rangle .
\end{equation}

By the anchored normal form supplied by
Lemma~\ref{lem:frozen_metric_anchored_contraction}, the one-step exact-tracker update satisfies the
center-anchor expansion
\begin{align*}
&\|z_k^{(t+1)}-y_\mu^\star(x_k)\|_{y_\mu^\star(x_k)}^2 \\
&\quad =
\left\|
z_k^{(t)}-y_\mu^\star(x_k)
-
\gamma_k\bigl(H_\mu^\star(x_k)\bigr)^{-1}
\nabla_y\psi_\mu(x_k,z_k^{(t)})
\right\|_{y_\mu^\star(x_k)}^2 \\
&\quad =
\|z_k^{(t)}-y_\mu^\star(x_k)\|_{y_\mu^\star(x_k)}^2 \\
&\qquad
-2\gamma_k
\left\langle
\nabla_y\psi_\mu(x_k,z_k^{(t)}),
\,z_k^{(t)}-y_\mu^\star(x_k)
\right\rangle \\
&\qquad
+\gamma_k^2
\|\nabla_y\psi_\mu(x_k,z_k^{(t)})\|_{y_\mu^\star(x_k),*}^2 .
\end{align*}
Here the expansion is written in the exact center-anchor norm for analysis; the implemented frozen
barrier-metric step is reduced to this normal form by
Lemma~\ref{lem:frozen_metric_anchored_contraction}.

Applying \eqref{eq:exact_tracker_grad_sq} to the last term yields
\[
\gamma_k^2
\|\nabla_y\psi_\mu(x_k,z_k^{(t)})\|_{y_\mu^\star(x_k),*}^2
\le
\gamma_k^2\ell_{\psi,1}^\eta
\left\langle
\nabla_y\psi_\mu(x_k,z_k^{(t)}),
\,z_k^{(t)}-y_\mu^\star(x_k)
\right\rangle .
\]
Therefore,
\begin{align*}
&\|z_k^{(t+1)}-y_\mu^\star(x_k)\|_{y_\mu^\star(x_k)}^2 \\
&\quad\le
\|z_k^{(t)}-y_\mu^\star(x_k)\|_{y_\mu^\star(x_k)}^2
-
\bigl(2\gamma_k-\gamma_k^2\ell_{\psi,1}^\eta\bigr)
\left\langle
\nabla_y\psi_\mu(x_k,z_k^{(t)}),
\,z_k^{(t)}-y_\mu^\star(x_k)
\right\rangle .
\end{align*}
Under the step-size condition \eqref{eq:step_exact_contraction},
\[
2\gamma_k-\gamma_k^2\ell_{\psi,1}^\eta
=
\gamma_k(2-\gamma_k\ell_{\psi,1}^\eta)
\ge
\gamma_k .
\]
Combining this with \eqref{eq:exact_tracker_sc_simplified}, we obtain the one-step contraction
\begin{align}
\|z_k^{(t+1)}-y_\mu^\star(x_k)\|_{y_\mu^\star(x_k)}^2
&\le
\|z_k^{(t)}-y_\mu^\star(x_k)\|_{y_\mu^\star(x_k)}^2
-
\gamma_k\rho_\psi^\eta
\|z_k^{(t)}-y_\mu^\star(x_k)\|_{y_\mu^\star(x_k)}^2
\nonumber\\
&=
\bigl(1-\gamma_k\rho_\psi^\eta\bigr)
\|z_k^{(t)}-y_\mu^\star(x_k)\|_{y_\mu^\star(x_k)}^2 .
\label{eq:exact_tracker_one_step}
\end{align}
Repeating \eqref{eq:exact_tracker_one_step} for \(t=0,\ldots,T-1\), and using
\(z_k^{(0)}=z_k\) and \(z_{k+1}=z_k^{(T)}\), gives
\[
\|z_{k+1}-y_\mu^\star(x_k)\|_{y_\mu^\star(x_k)}^2
\le
\bigl(1-\gamma_k\rho_\psi^\eta\bigr)^T
\|z_k-y_\mu^\star(x_k)\|_{y_\mu^\star(x_k)}^2 .
\]
This proves \eqref{eq:ztracker_contraction}.
\end{proof}

\begin{corollary}[Scheduled fixed-anchor contraction of the \(z\)-tracker]
\label{cor:z_fixed_anchor_scheduled}
Under the assumptions of Lemma~\ref{lem:z_fixed_anchor_contraction}, if in addition
\(T\rho_\psi^\eta\gamma_k\le 1/4\), then
\[
\|z_{k+1}-y_\mu^\star(x_k)\|_{y_\mu^\star(x_k)}^2
\le
\left(1-\frac{3T\rho_\psi^\eta}{4}\gamma_k\right)
\|z_k-y_\mu^\star(x_k)\|_{y_\mu^\star(x_k)}^2 .
\]
\end{corollary}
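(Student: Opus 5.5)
The plan is to start from the $T$-step bound of Lemma~\ref{lem:z_fixed_anchor_contraction},
\[
\|z_{k+1}-y_\mu^\star(x_k)\|_{y_\mu^\star(x_k)}^2\le (1-\gamma_k\rho_\psi^\eta)^T\|z_k-y_\mu^\star(x_k)\|_{y_\mu^\star(x_k)}^2,
\]
and reduce the corollary to a scalar inequality. Writing $a:=\gamma_k\rho_\psi^\eta$, it suffices to show $(1-a)^T\le 1-\tfrac34 Ta$ whenever $Ta\le 1/4$. I would follow the same route as Corollary~\ref{cor:y_fixed_anchor_scheduled}, with the one-step rate $1-a$ in place of $1-\tfrac78\rho_\psi^\eta\beta_k$.

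Note that $a\in[0,1/4]$, since $T\ge 1$. Hence $1-a\in[3/4,1]$, and all powers are well defined and nonnegative. I would use the elementary chain $(1-a)^T\le e^{-Ta}$ together with $e^{-s}\le 1-s+s^2/2$ for $s\ge 0$. With $s=Ta\le 1/4$ this gives
\[
e^{-s}\le 1-s\bigl(1-\tfrac{s}{2}\bigr)\le 1-\tfrac78 s\le 1-\tfrac34 s.
\]
Alternatively, one can argue by induction on $T$. Bernoulli-type reasoning gives $(1-a)^T\le 1-Ta+\binom{T}{2}a^2\le 1-Ta+\tfrac{(Ta)^2}{2}$, and $Ta\le 1/4$ then yields the bound $1-\tfrac78 Ta$.

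Substituting $s=T\rho_\psi^\eta\gamma_k$ into the lemma gives the claimed factor $1-\tfrac{3T\rho_\psi^\eta}{4}\gamma_k$. The hypotheses of Lemma~\ref{lem:z_fixed_anchor_contraction} already include the conditions $\ell_{\psi,1}^\eta\gamma_k\le 1/4$ and $T\rho_\psi^\eta\gamma_k\le 1/4$, as well as the requirement that the inner iterates stay in $T_{\eta,\mu}(x_k)$. No further geometric input is needed.

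There is no genuine obstacle. The only care point is to make sure the second-order Taylor or binomial remainder is bounded in the right direction; it is, because the expansion of $e^{-s}$ alternates and $s\ge0$. The slack between $\tfrac78$ and $\tfrac34$ leaves room to spare.
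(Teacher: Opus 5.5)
Your proposal is correct and follows the paper's proof. Both take the $T$-step factor $(1-\rho_\psi^\eta\gamma_k)^T$ from Lemma~\ref{lem:z_fixed_anchor_contraction} and apply the elementary bound $(1-a)^T\le 1-\tfrac34 Ta$ for $Ta\le 1/4$. The only difference is that you justify this scalar inequality via $(1-a)^T\le e^{-Ta}\le 1-Ta+\tfrac{(Ta)^2}{2}$, whereas the paper simply asserts it.
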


\begin{proof}
Lemma~\ref{lem:z_fixed_anchor_contraction} gives
\[
\|z_{k+1}-y_\mu^\star(x_k)\|_{y_\mu^\star(x_k)}^2
\le
(1-\rho_\psi^\eta\gamma_k)^T
\|z_k-y_\mu^\star(x_k)\|_{y_\mu^\star(x_k)}^2 .
\]
Since \(T\rho_\psi^\eta\gamma_k\le 1/4\), the elementary bound
\((1-a)^T\le 1-\frac34Ta\) for \(Ta\le 1/4\), with \(a=\rho_\psi^\eta\gamma_k\), yields the claim.
\end{proof}

\subsection{Dikin Recursion for the \(z\)-tracker}

\begin{lemma}[Moving-anchor recursion for the exact tracker]
\label{lem:z_moving_anchor_recursion}
Fix an outer iteration \(k\). Define
\[
J_{k+1}
:=
\big\|z_{k+1}-y_\mu^\star(x_{k+1})\big\|_{y_\mu^\star(x_{k+1})}^2 .
\]
Let
\[
\kappa_{z,k}
:=
\left(
1-
\big\|y_\mu^\star(x_{k+1})-y_\mu^\star(x_k)\big\|_{y_\mu^\star(x_k)}
\right)^{-1}.
\]
Then, for any \(j_k>0\),
\begin{align}
J_{k+1}
\le\;&
\kappa_{z,k}^2
\Bigl(
1
+
2\xi\alpha_k j_k
+
(\ell_{*,1}^\eta)^2\xi^2
\bigl(\ell_{f,0}^2\alpha_k^2+4\ell_{g,0}^2\beta_k^2\bigr)
\Bigr)
\big\|z_{k+1}-y_\mu^\star(x_k)\big\|_{y_\mu^\star(x_k)}^2
\nonumber\\
&\quad
+
\kappa_{z,k}^2
\Bigl(
(\ell_{*,0}^\eta)^2\xi^2\alpha_k^2
+
\frac{\xi\alpha_k}{2j_k}(\ell_{*,0}^\eta)^2
+
\frac{\xi^2}{2}\alpha_k^2
\Bigr)
\|q_k^x\|_2^2 .
\label{eq:exact_moving_anchor_raw}
\end{align}
\end{lemma}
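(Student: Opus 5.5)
The plan is to mirror the proof of Lemma~\ref{lem:y_moving_anchor_recursion}, with the simplification that the exact center depends only on \(x\). There is therefore no \(\delta_k\) term and no \(1/\lambda\) bias term.

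First, switch anchors. By Lemma~\ref{lem:anchor_switch}, applied with \(y_1=y_\mu^\star(x_k)\) and \(y_2=y_\mu^\star(x_{k+1})\), the squared norm at the new anchor is at most \(\kappa_{z,k}^2\) times the squared norm at the old anchor. This needs the displacement \(\|y_\mu^\star(x_{k+1})-y_\mu^\star(x_k)\|_{y_\mu^\star(x_k)}<1\). That holds because Proposition~\ref{prop:local_consequences}(ii) bounds it by \(\ell_{*,0}^\eta\xi\alpha_k\|q_k^x\|_2\), which is small under (S3). Hence \(J_{k+1}\le\kappa_{z,k}^2\|z_{k+1}-y_\mu^\star(x_{k+1})\|_{y_\mu^\star(x_k)}^2\).

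Next, expand this quantity in the old-anchor inner product, exactly as in \eqref{eq:proxy_three_term_expansion}. This gives three pieces: \(\|z_{k+1}-y_\mu^\star(x_k)\|^2\), the drift \(\|y_\mu^\star(x_{k+1})-y_\mu^\star(x_k)\|^2\), and the cross term \(-2\langle v_k,\,y_\mu^\star(x_{k+1})-y_\mu^\star(x_k)\rangle\) with \(v_k:=z_{k+1}-y_\mu^\star(x_k)\). The drift is bounded directly by Proposition~\ref{prop:local_consequences}(ii) as \((\ell_{*,0}^\eta)^2\xi^2\alpha_k^2\|q_k^x\|_2^2\), which is the first coefficient in the second line of \eqref{eq:exact_moving_anchor_raw}.

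The cross term is the main step, and I would handle it by a first-order Taylor expansion of the center map. Write
\[
y_\mu^\star(x_{k+1})-y_\mu^\star(x_k)=Dy_\mu^\star(x_k)(x_{k+1}-x_k)+R_k ,
\]
where \(\|R_k\|_{y_\mu^\star(x_k)}\le\tfrac12\ell_{*,1}^\eta\|x_{k+1}-x_k\|_2^2\) by the local Dikin Lipschitzness of \(\nabla y_\mu^\star\). The linear part is \(-\xi\alpha_k\langle v_k,Dy_\mu^\star(x_k)q_k^x\rangle\). Young's inequality with parameter \(j_k\), together with \(\|Dy_\mu^\star\|_{2\to y_\mu^\star}\le\ell_{*,0}^\eta\), bounds twice this by \(2\xi\alpha_k j_k\|v_k\|^2+\frac{\xi\alpha_k}{2j_k}(\ell_{*,0}^\eta)^2\|q_k^x\|_2^2\).

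For the remainder, Cauchy--Schwarz gives \(2|\langle v_k,R_k\rangle|\le\ell_{*,1}^\eta\xi^2\alpha_k^2\|v_k\|\,\|q_k^x\|_2^2\). I then split one factor of \(\|q_k^x\|_2\) with Young's inequality, using the crude bound \(\|q_k^x\|_2\le\ell_{f,0}+2\lambda_k\ell_{g,0}\). This bound follows from (A4) and the form of \(q_k^x\), and it gives \(\alpha_k^2\|q_k^x\|_2^2\le 2\ell_{f,0}^2\alpha_k^2+8\ell_{g,0}^2\beta_k^2\). The result is \((\ell_{*,1}^\eta)^2\xi^2(\ell_{f,0}^2\alpha_k^2+4\ell_{g,0}^2\beta_k^2)\|v_k\|^2+\tfrac{\xi^2}{2}\alpha_k^2\|q_k^x\|_2^2\).

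The delicate part is this bookkeeping: splitting \(\alpha_k^4\|q\|^4\) so that one copy goes to the uniform \(q\)-bound and the other to \(\|q\|^2\), with constants matching the display. If the constants do not come out cleanly, I would absorb factors of 2 into \(\ell_{*,1}^\eta\), consistent with the enlargement convention. Collecting the three pieces and multiplying by \(\kappa_{z,k}^2\) yields \eqref{eq:exact_moving_anchor_raw}.
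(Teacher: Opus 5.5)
Your proposal is correct and follows the paper's proof essentially step for step. Both use the anchor switch via Lemma~\ref{lem:anchor_switch}, the three-term expansion at the old anchor, and the drift bound from Proposition~\ref{prop:local_consequences}\textup{(ii)}; your Taylor-plus-Young treatment of the cross term is what the paper packages as Lemma~\ref{lem:exact_center_cross_bound}, applied to the negated test vector. Your worry about constants is unnecessary. Write $\ell_{*,1}^\eta\xi^2\alpha_k^2\|v_k\|\,\|q_k^x\|_2^2=AB$ with $A=\ell_{*,1}^\eta\xi\alpha_k\|q_k^x\|_2\|v_k\|$ and $B=\xi\alpha_k\|q_k^x\|_2$, use $AB\le\frac12A^2+\frac12B^2$, and apply $\alpha_k^2\|q_k^x\|_2^2\le 2(\ell_{f,0}^2\alpha_k^2+4\ell_{g,0}^2\beta_k^2)$ to $A^2$ only; this reproduces the displayed coefficients exactly, with no enlargement of $\ell_{*,1}^\eta$ needed.
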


\begin{proof}
\proofstep{Anchor change.}
Whenever
\[
\big\|y_\mu^\star(x_{k+1})-y_\mu^\star(x_k)\big\|_{y_\mu^\star(x_k)}<1,
\]
Lemma~\ref{lem:anchor_switch}, applied to the two exact anchors, gives
\[
\|u\|_{y_\mu^\star(x_{k+1})}^2
\le
\kappa_{z,k}^2
\|u\|_{y_\mu^\star(x_k)}^2
\qquad \forall u.
\]
Hence
\begin{equation}
\label{eq:exact_anchor_switch}
J_{k+1}
=
\|z_{k+1}-y_\mu^\star(x_{k+1})\|_{y_\mu^\star(x_{k+1})}^2
\le
\kappa_{z,k}^2
\|z_{k+1}-y_\mu^\star(x_{k+1})\|_{y_\mu^\star(x_k)}^2 .
\end{equation}
Expanding the squared norm at the old anchor \(y_\mu^\star(x_k)\), we obtain
\begin{align}
&\|z_{k+1}-y_\mu^\star(x_{k+1})\|_{y_\mu^\star(x_k)}^2
\nonumber\\
&\quad =
\|z_{k+1}-y_\mu^\star(x_k)\|_{y_\mu^\star(x_k)}^2
+
\|y_\mu^\star(x_{k+1})-y_\mu^\star(x_k)\|_{y_\mu^\star(x_k)}^2
\nonumber\\
&\qquad
-
2
\Big\langle
z_{k+1}-y_\mu^\star(x_k),\,
y_\mu^\star(x_{k+1})-y_\mu^\star(x_k)
\Big\rangle_{y_\mu^\star(x_k)} .
\label{eq:exact_three_term_expansion}
\end{align}

\paragraph{Center drift.}
By Proposition~\ref{prop:local_consequences}\textup{(ii)},
\[
\|y_\mu^\star(x_{k+1})-y_\mu^\star(x_k)\|_{y_\mu^\star(x_k)}
\le
\ell_{*,0}^\eta \|x_{k+1}-x_k\|_2 .
\]
Using the outer update \(x_{k+1}-x_k=-\xi\alpha_k q_k^x\), this gives
\begin{equation}
\label{eq:exact_center_drift_square}
\|y_\mu^\star(x_{k+1})-y_\mu^\star(x_k)\|_{y_\mu^\star(x_k)}^2
\le
(\ell_{*,0}^\eta)^2\xi^2\alpha_k^2\|q_k^x\|_2^2 .
\end{equation}

\proofstep{Cross term.}
Consider the cross term in \eqref{eq:exact_three_term_expansion}. Lemma~\ref{lem:exact_center_cross_bound} is
stated in the anchored Dikin geometry at \(y_\mu^\star(x_k)\), and its right-hand side depends on
the test vector only through anchored Dikin norms. Therefore the same bound applies after replacing
the test vector by its negative. Taking
\[
v_k:=z_{k+1}-y_\mu^\star(x_k),
\]
we obtain, for any \(j_k>0\),
\begin{align}
&-2
\Big\langle
z_{k+1}-y_\mu^\star(x_k),\,
y_\mu^\star(x_{k+1})-y_\mu^\star(x_k)
\Big\rangle_{y_\mu^\star(x_k)}
\nonumber\\
&\quad\le
\Bigl(
2\xi\alpha_k j_k
+
(\ell_{*,1}^\eta)^2\xi^2
\bigl(\ell_{f,0}^2\alpha_k^2+4\ell_{g,0}^2\beta_k^2\bigr)
\Bigr)
\big\|z_{k+1}-y_\mu^\star(x_k)\big\|_{y_\mu^\star(x_k)}^2
\nonumber\\
&\qquad
+
\left(
\frac{\xi\alpha_k}{2j_k}(\ell_{*,0}^\eta)^2
+
\frac{\xi^2}{2}\alpha_k^2
\right)
\|q_k^x\|_2^2 .
\label{eq:exact_cross_term_bound}
\end{align}

\proofstep{Collecting terms.}
Combining \eqref{eq:exact_anchor_switch}, \eqref{eq:exact_three_term_expansion},
\eqref{eq:exact_center_drift_square}, and \eqref{eq:exact_cross_term_bound}, we get
\begin{align*}
J_{k+1}
\le\;&
\kappa_{z,k}^2
\Bigl(
1
+
2\xi\alpha_k j_k
+
(\ell_{*,1}^\eta)^2\xi^2
\bigl(\ell_{f,0}^2\alpha_k^2+4\ell_{g,0}^2\beta_k^2\bigr)
\Bigr)
\big\|z_{k+1}-y_\mu^\star(x_k)\big\|_{y_\mu^\star(x_k)}^2
\\
&\quad
+
\kappa_{z,k}^2
\Bigl(
(\ell_{*,0}^\eta)^2\xi^2\alpha_k^2
+
\frac{\xi\alpha_k}{2j_k}(\ell_{*,0}^\eta)^2
+
\frac{\xi^2}{2}\alpha_k^2
\Bigr)
\|q_k^x\|_2^2 .
\end{align*}
This proves \eqref{eq:exact_moving_anchor_raw}.
\end{proof}

\begin{corollary}[Scheduled moving-anchor recursion for the exact tracker]
\label{cor:z_moving_anchor_scheduled}
If
\begin{equation}
\label{eq:exact_recursion_schedule}
(\ell_{*,1}^\eta)^2\xi^2
\bigl(\ell_{f,0}^2\alpha_k^2+4\ell_{g,0}^2\beta_k^2\bigr)
\le
\frac{T\rho_\psi^\eta}{16}\gamma_k,
\end{equation}
then
\begin{equation}
\label{eq:exact_recursion_scheduled}
J_{k+1}
\le
\kappa_{z,k}^2
\left(1+\frac{3T\rho_\psi^\eta}{8}\gamma_k\right)
\big\|z_{k+1}-y_\mu^\star(x_k)\big\|_{y_\mu^\star(x_k)}^2
+
O\!\left(
\frac{\xi^2(\ell_{*,0}^\eta)^2\alpha_k^2}
{T\rho_\psi^\eta\gamma_k}
\right)
\|q_k^x\|_2^2 .
\end{equation}
\end{corollary}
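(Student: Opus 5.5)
This is Corollary~\ref{cor:z_moving_anchor_scheduled}. I will derive it from Lemma~\ref{lem:z_moving_anchor_recursion} by a specific choice of the free parameter \(j_k\), exactly as in Corollary~\ref{cor:y_moving_anchor_scheduled} but with \(\gamma_k\) playing the role of \(\beta_k\). The only differences are that there is no multiplier-drift term and that the contraction budget is \(T\rho_\psi^\eta\gamma_k\).

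\textbf{Step 1: choose \(j_k\).} Set
\[
j_k:=\frac{T\rho_\psi^\eta\gamma_k}{8\xi\alpha_k},
\qquad\text{so that}\qquad
2\xi\alpha_k j_k=\frac{T\rho_\psi^\eta}{4}\gamma_k .
\]
With this choice the dual term becomes
\[
\frac{\xi\alpha_k}{2j_k}(\ell_{*,0}^\eta)^2=\frac{4\xi^2\alpha_k^2(\ell_{*,0}^\eta)^2}{T\rho_\psi^\eta\gamma_k}.
\]

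\textbf{Step 2: bound the coefficient of \(\|z_{k+1}-y_\mu^\star(x_k)\|^2\).} Substitute \(j_k\) into \eqref{eq:exact_moving_anchor_raw}. The parenthesis becomes \(1+\tfrac{T\rho_\psi^\eta}{4}\gamma_k+(\text{anchor-switch term})\). The hypothesis \eqref{eq:exact_recursion_schedule} bounds the anchor-switch term by \(\tfrac{T\rho_\psi^\eta}{16}\gamma_k\). The total excess is therefore \(\tfrac{5}{16}T\rho_\psi^\eta\gamma_k\le\tfrac38T\rho_\psi^\eta\gamma_k\), which gives the first term of \eqref{eq:exact_recursion_scheduled} with the factor \(\kappa_{z,k}^2\) retained.

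\textbf{Step 3: absorb the \(\|q_k^x\|^2\) coefficients into the displayed \(O(\cdot)\).} The remaining coefficient is
\[
\kappa_{z,k}^2\Bigl((\ell_{*,0}^\eta)^2\xi^2\alpha_k^2+\tfrac{4(\ell_{*,0}^\eta)^2\xi^2\alpha_k^2}{T\rho_\psi^\eta\gamma_k}+\tfrac{\xi^2}{2}\alpha_k^2\Bigr).
\]
Condition~\eqref{eq:main_sched_s1} gives \(T\rho_\psi^\eta\gamma_k\le1/4\), so \(1\le \frac{1}{4T\rho_\psi^\eta\gamma_k}\). Hence the first and third terms are each at most a constant multiple of \(\xi^2\alpha_k^2/(T\rho_\psi^\eta\gamma_k)\). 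The third term carries no \((\ell_{*,0}^\eta)^2\) factor, but the convention \(\ell_{*,0}^\eta\ge1\) from Table~\ref{tab:constants} lets it be absorbed. For \(\kappa_{z,k}^2\), Proposition~\ref{prop:local_consequences}(ii) gives the center drift \(\le\ell_{*,0}^\eta\xi\alpha_k\|q_k^x\|_2\). I would take the drift to be at most \(1/2\), which is guaranteed by tube closure under \eqref{eq:main_sched_s3}; then \(\kappa_{z,k}\le2\) is an absolute constant and can be absorbed into the \(O\).

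\textbf{Main obstacle.} The only real subtlety is this uniform bound on \(\kappa_{z,k}\). The corollary statement keeps \(\kappa_{z,k}^2\) explicit on the first term, so it is needed only inside the \(O(\cdot)\) on the second term. If a uniform bound is not available at this stage, I would instead leave \(\kappa_{z,k}^2\) as an explicit multiplicative factor there as well, as the raw lemma does.
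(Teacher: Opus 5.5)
Your proposal is correct and follows the paper's proof: substitute $j_k\propto T\rho_\psi^\eta\gamma_k/(\xi\alpha_k)$ into Lemma~\ref{lem:z_moving_anchor_recursion}, bound the anchor-switch term by the hypothesis, and absorb the remaining $\|q_k^x\|_2^2$ coefficients into the $O(\cdot)$. The paper's choice $j_k=T\rho_\psi^\eta\gamma_k/(16\xi\alpha_k)$ gives excess $3/16$ where yours gives $5/16$, both below $3/8$; your use of $T\rho_\psi^\eta\gamma_k\le 1/4$, $\ell_{*,0}^\eta\ge 1$, and a tube-closure bound on $\kappa_{z,k}$ spells out the absorption the paper covers only with ``increasing the absolute constant.''
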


\begin{proof}
Choose
\[
j_k:=\frac{T\rho_\psi^\eta\gamma_k}{16\xi\alpha_k}.
\]
Then
\[
2\xi\alpha_k j_k
=
\frac{T\rho_\psi^\eta}{8}\gamma_k .
\]
Substituting this choice into \eqref{eq:exact_moving_anchor_raw}, and using
\eqref{eq:exact_recursion_schedule}, gives
\[
1
+
2\xi\alpha_k j_k
+
(\ell_{*,1}^\eta)^2\xi^2
\bigl(\ell_{f,0}^2\alpha_k^2+4\ell_{g,0}^2\beta_k^2\bigr)
\le
1+\frac{3T\rho_\psi^\eta}{16}\gamma_k
\le
1+\frac{3T\rho_\psi^\eta}{8}\gamma_k .
\]
Moreover,
\[
\frac{\xi\alpha_k}{2j_k}(\ell_{*,0}^\eta)^2
=
\frac{8\xi^2(\ell_{*,0}^\eta)^2\alpha_k^2}
{T\rho_\psi^\eta\gamma_k}.
\]
The remaining terms in the coefficient of \(\|q_k^x\|_2^2\) are absorbed into the same order after
increasing the absolute constant. Therefore \eqref{eq:exact_recursion_scheduled} follows.
\end{proof}

\section{Tube Invariance}
\label{app:tube_invariance}

All local estimates above are valid on the enlarged buffer neighborhood. We now show that the
\textit{barrier-aware} schedule keeps the algorithmic iterates inside the smaller target neighborhoods. Define the
\(T\)-step contraction gaps
\begin{equation}
\label{eq:tube_contraction_gaps}
\Gamma_{\psi,k}
:=
1-\bigl(1-\rho_\psi^\eta\gamma_k\bigr)^T,
\qquad
\Gamma_{L,k}
:=
1-\left(1-\frac{7\rho_\psi^\eta}{8}\beta_k\right)^T,
\qquad
\beta_k:=\alpha_k\lambda_k .
\end{equation}
The tube-maintenance mechanism is that fixed-center contraction creates the gaps
\(\Gamma_{\psi,k}\) and \(\Gamma_{L,k}\), while the center drifts
\(\|y_\mu^\star(x_{k+1})-y_\mu^\star(x_k)\|_{y_\mu^\star(x_k)}\) and \(\|y_{\lambda_{k+1},\mu}^\star(x_{k+1})
-y_{\lambda_k,\mu}^\star(x_k)\|_{y_{\lambda_k,\mu}^\star(x_k)}\) consume part of these gaps.

\begin{lemma}[Explicit drift bounds]
\label{lem:tube_drift_explicit}
Suppose Assumption~\ref{ass:euclidean} holds, the local consequences in
Proposition~\ref{prop:local_consequences} hold, and the multiplier schedule satisfies
\[
\frac{\delta_k}{\lambda_k}
\le
\frac{T\rho_\psi^\eta}{16}\beta_k,
\qquad
\delta_k:=\lambda_{k+1}-\lambda_k\ge0,
\qquad
\lambda_k\ge\lambda_0 .
\]
Then
\begin{equation}
\label{eq:exact_drift_explicit}
\|y_\mu^\star(x_{k+1})-y_\mu^\star(x_k)\|_{y_\mu^\star(x_k)}
\le
\xi\,\ell_{*,0}^{\eta}
\left(\frac{\ell_{f,0}}{\lambda_0}+2\ell_{g,0}\right)\beta_k ,
\end{equation}
and
\begin{equation}
\label{eq:proxy_drift_explicit}
\|y_{\lambda_{k+1},\mu}^\star(x_{k+1})
-y_{\lambda_k,\mu}^\star(x_k)\|_{y_{\lambda_k,\mu}^\star(x_k)}
\le
\left(
\frac{T\ell_{f,0}^{\eta}}{8\lambda_0}
+
\xi\,\ell_{\lambda,0}^{\eta}
\left(\frac{\ell_{f,0}}{\lambda_0}+2\ell_{g,0}\right)
\right)\beta_k .
\end{equation}
\end{lemma}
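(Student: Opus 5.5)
The proof combines three things: a uniform bound on the outer step $\|x_{k+1}-x_k\|_2$, the minimizer-map stability estimates of Proposition~\ref{prop:local_consequences}(i)--(ii), and the multiplier-growth condition in the hypothesis.

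\textbf{Step 1: bound the outer step.} From Algorithm~\ref{alg:dikin_sbo}, $x_{k+1}-x_k=-\xi\alpha_k q_k^x$. Since $\nabla_x\psi_\mu=\nabla_x g$, we have
\[
q_k^x=\nabla_x f(x_k,y_{k+1})+\lambda_k\bigl(\nabla_x g(x_k,y_{k+1})-\nabla_x g(x_k,z_{k+1})\bigr).
\]
Assumption~\ref{ass:A4} bounds each of the three gradients, which gives
\[
\|q_k^x\|_2\le \ell_{f,0}+2\lambda_k\ell_{g,0}.
\]
Multiplying by $\xi\alpha_k$ and writing $\alpha_k\ell_{f,0}=\beta_k\,\ell_{f,0}/\lambda_k\le\beta_k\,\ell_{f,0}/\lambda_0$ (using $\lambda_k\ge\lambda_0$) yields
\[
\|x_{k+1}-x_k\|_2\le \xi\Bigl(\frac{\ell_{f,0}}{\lambda_0}+2\ell_{g,0}\Bigr)\beta_k .
\]
This bound is purely Euclidean and needs no tube information.

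\textbf{Step 2: exact drift.} Apply Proposition~\ref{prop:local_consequences}(ii) with $(x_1,x_2)=(x_k,x_{k+1})$. Inserting the Step~1 bound gives \eqref{eq:exact_drift_explicit} immediately.

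\textbf{Step 3: proxy drift.} Apply Proposition~\ref{prop:local_consequences}(i) with $(x_1,\lambda_1)=(x_k,\lambda_k)$ and $(x_2,\lambda_2)=(x_{k+1},\lambda_{k+1})$. The $x$-term becomes $\ell_{\lambda,0}^\eta\xi(\ell_{f,0}/\lambda_0+2\ell_{g,0})\beta_k$ by Step~1. For the multiplier term:
\begin{itemize}
\item Since $\delta_k\ge 0$, we have $\bigl|1/\lambda_{k+1}-1/\lambda_k\bigr|=\delta_k/(\lambda_k\lambda_{k+1})\le (\delta_k/\lambda_k)/\lambda_0$, using $\lambda_{k+1}\ge\lambda_0$.
\item The schedule hypothesis $\delta_k/\lambda_k\le T\rho_\psi^\eta\beta_k/16$ then gives
\[
\frac{2\ell_{f,0}^\eta}{\rho_\psi^\eta}\cdot\frac{\delta_k}{\lambda_k\lambda_{k+1}}
\le\frac{2\ell_{f,0}^\eta}{\rho_\psi^\eta\lambda_0}\cdot\frac{T\rho_\psi^\eta\beta_k}{16}
=\frac{T\ell_{f,0}^\eta}{8\lambda_0}\beta_k .
\]
\end{itemize}
Adding the two contributions gives \eqref{eq:proxy_drift_explicit}.

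\textbf{Main obstacle.} The arithmetic is routine. The delicate point is that Proposition~\ref{prop:local_consequences}(i), and through it (ii), was proved only for admissible pairs, namely when $y_{\lambda_{k+1},\mu}^\star(x_{k+1})$ lies in the buffer neighborhood $T^{\lambda_k}_{2\eta,\mu}(x_k)$ (and analogously for the exact centers).

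I would handle this by treating the lemma as conditional on the hypothesis ``the local consequences in Proposition~\ref{prop:local_consequences} hold,'' so the drift bounds can be invoked directly. I would then note that the tube-closure condition \eqref{eq:main_sched_s3}, together with $\beta_k\le 1/(4T\rho_\psi^\eta)$, makes both right-hand sides at most a small fraction of $\eta$. This closes a bootstrap: assuming the new center lies in the $2\eta$-buffer, the resulting bound places it well inside, so the admissibility assumption is self-consistent. A continuity argument along the segment from $x_k$ to $x_{k+1}$ (and from $\lambda_k$ to $\lambda_{k+1}$) would rule out a jump out of the buffer.
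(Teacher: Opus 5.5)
Your proof is correct and follows the paper's argument essentially line by line. The paper uses the same Euclidean outer-step bound $\|q_k^x\|_2\le \ell_{f,0}+2\lambda_k\ell_{g,0}\le\lambda_k(\ell_{f,0}/\lambda_0+2\ell_{g,0})$, then Proposition~\ref{prop:local_consequences}(ii) for the exact drift, and Proposition~\ref{prop:local_consequences}(i) with $\lambda_{k+1}\ge\lambda_0$ and the multiplier-growth condition for the proxy drift. Your admissibility/continuity remark goes beyond the paper, which, like your first resolution, simply takes the local consequences as a hypothesis of the lemma.
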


\begin{proof}
First, by the definition of \(q_k^x\), the identity \(\nabla_x\psi_\mu=\nabla_x g\), and the coarse
gradient bounds,
\[
\|q_k^x\|_2
\le
\ell_{f,0}+2\lambda_k\ell_{g,0}
\le
\lambda_k
\left(\frac{\ell_{f,0}}{\lambda_0}+2\ell_{g,0}\right).
\]
Since \(x_{k+1}=x_k-\xi\alpha_k q_k^x\), this gives
\[
\|x_{k+1}-x_k\|_2
\le
\xi\alpha_k\lambda_k
\left(\frac{\ell_{f,0}}{\lambda_0}+2\ell_{g,0}\right)
=
\xi\beta_k
\left(\frac{\ell_{f,0}}{\lambda_0}+2\ell_{g,0}\right).
\]
The exact-center bound follows from Proposition~\ref{prop:local_consequences}(ii):
\[
\|y_\mu^\star(x_{k+1})-y_\mu^\star(x_k)\|_{y_\mu^\star(x_k)}
\le
\ell_{*,0}^{\eta}\|x_{k+1}-x_k\|_2,
\]
which proves \eqref{eq:exact_drift_explicit}.

For the proxy center, Proposition~\ref{prop:local_consequences}(i), applied with
\((\lambda_1,x_1)=(\lambda_k,x_k)\) and
\((\lambda_2,x_2)=(\lambda_{k+1},x_{k+1})\), gives
\[
\|y_{\lambda_{k+1},\mu}^\star(x_{k+1})
-y_{\lambda_k,\mu}^\star(x_k)\|_{y_{\lambda_k,\mu}^\star(x_k)}
\le
\frac{2\ell_{f,0}^{\eta}}{\rho_\psi^\eta}
\frac{\delta_k}{\lambda_k\lambda_{k+1}}
+
\ell_{\lambda,0}^{\eta}\|x_{k+1}-x_k\|_2 .
\]
Using \(\lambda_{k+1}\ge\lambda_0\) and
\(\delta_k/\lambda_k\le T\rho_\psi^\eta\beta_k/16\), the first term is bounded by
\[
\frac{2\ell_{f,0}^{\eta}}{\rho_\psi^\eta}
\frac{\delta_k}{\lambda_k\lambda_{k+1}}
\le
\frac{2\ell_{f,0}^{\eta}}{\rho_\psi^\eta}
\frac{T\rho_\psi^\eta\beta_k}{16\lambda_0}
=
\frac{T\ell_{f,0}^{\eta}}{8\lambda_0}\beta_k .
\]
Combining this with the bound on \(\|x_{k+1}-x_k\|_2\) proves
\eqref{eq:proxy_drift_explicit}.
\end{proof}

\subsection{Closed-form Sufficient Conditions for the Drift Bounds}

\begin{corollary}[\textit{barrier-aware} schedules imply drift closure]
\label{cor:tube_drift_closure}
Suppose the schedule satisfies \(\beta_k\le\gamma_k\),
\(T\rho_\psi^\eta\gamma_k\le1\), \(T\rho_\psi^\eta\beta_k\le1\), and the multiplier-growth
condition in Definition~\ref{def:barrier_aware}. If
\begin{equation}
\label{eq:tube_closure_xi_conditions}
\frac{\xi}{T}
\le
\frac{\eta\rho_\psi^\eta}
{16\ell_{*,0}^{\eta}
\left(\ell_{f,0}/\lambda_0+2\ell_{g,0}\right)},
\qquad
\frac{\xi}{T}
\le
\frac{\eta\rho_\psi^\eta}
{64\ell_{\lambda,0}^{\eta}
\left(\ell_{f,0}/\lambda_0+2\ell_{g,0}\right)},
\end{equation}
and
\begin{equation}
\label{eq:tube_closure_lambda0_condition}
\lambda_0
\ge
\frac{8\ell_{f,0}^{\eta}}{\eta\rho_\psi^\eta},
\end{equation}
then the center drifts satisfy
\begin{equation}
\label{eq:tube_drift_dominance}
\|y_\mu^\star(x_{k+1})-y_\mu^\star(x_k)\|_{y_\mu^\star(x_k)}
\le
\frac{\eta}{8}\Gamma_{\psi,k},
\qquad
\|y_{\lambda_{k+1},\mu}^\star(x_{k+1})
-y_{\lambda_k,\mu}^\star(x_k)\|_{y_{\lambda_k,\mu}^\star(x_k)}
\le
\frac{\eta}{8}\Gamma_{L,k}.
\end{equation}
In particular, the first two terms in the tube-closure condition of
Definition~\ref{def:barrier_aware} \eqref{eq:main_sched_s3}, together with the lower bound on \(\lambda_0\), imply the
abstract drift conditions required for tube maintenance.
\end{corollary}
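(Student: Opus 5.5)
The plan is to combine the explicit drift bounds of Lemma~\ref{lem:tube_drift_explicit} with elementary lower bounds on the contraction gaps \(\Gamma_{\psi,k}\) and \(\Gamma_{L,k}\). Both drift bounds are already linear in \(\beta_k\), so it suffices to show that each gap is at least a fixed multiple of \(T\rho_\psi^\eta\beta_k\) and then compare coefficients. The multiplier-growth condition is exactly the hypothesis of Lemma~\ref{lem:tube_drift_explicit}, so that lemma applies directly.

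\emph{Step 1: lower bounds on the gaps.} For \(a\in[0,1]\) and \(Ta\le 1\), I use \((1-a)^T\le e^{-Ta}\). By convexity of \(s\mapsto e^{-s}\) on \([0,1]\),
\[
e^{-s}\le 1-(1-e^{-1})s\le 1-\tfrac12 s .
\]
Hence \(1-(1-a)^T\ge Ta/2\) whenever \(Ta\le1\).

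\begin{itemize}
\item For the exact gap, take \(a=\rho_\psi^\eta\gamma_k\), which is admissible since \(T\rho_\psi^\eta\gamma_k\le1\). Together with \(\beta_k\le\gamma_k\) this gives
\[
\Gamma_{\psi,k}\ge \tfrac12 T\rho_\psi^\eta\gamma_k\ge \tfrac12 T\rho_\psi^\eta\beta_k .
\]
\item For the proxy gap, take \(a=\tfrac78\rho_\psi^\eta\beta_k\), which is admissible since \(Ta\le T\rho_\psi^\eta\beta_k\le 1\). This gives
\[
\Gamma_{L,k}\ge \tfrac{7}{16}T\rho_\psi^\eta\beta_k .
\]
\end{itemize}
Both checks also need \(a\le 1\), which follows from \(T\ge1\).

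\emph{Step 2: exact drift.} Write \(C:=\ell_{f,0}/\lambda_0+2\ell_{g,0}\). By \eqref{eq:exact_drift_explicit} the exact drift is at most \(\xi\ell_{*,0}^\eta C\beta_k\). It is enough that this is at most \(\tfrac{\eta}{8}\cdot\tfrac12 T\rho_\psi^\eta\beta_k\). Rearranged, this reads \(\xi/T\le \eta\rho_\psi^\eta/(16\ell_{*,0}^\eta C)\), which is exactly the first condition in \eqref{eq:tube_closure_xi_conditions}.

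\emph{Step 3: proxy drift.} By \eqref{eq:proxy_drift_explicit} the proxy drift is at most \(\bigl(T\ell_{f,0}^\eta/(8\lambda_0)+\xi\ell_{\lambda,0}^\eta C\bigr)\beta_k\). The target is \(\tfrac{\eta}{8}\cdot\tfrac{7}{16}T\rho_\psi^\eta\beta_k=\tfrac{7}{128}\eta T\rho_\psi^\eta\beta_k\). I bound the two summands separately.
\begin{itemize}
\item The bias summand: by \eqref{eq:tube_closure_lambda0_condition}, \(T\ell_{f,0}^\eta/(8\lambda_0)\le T\eta\rho_\psi^\eta/64\).
\item The motion summand: by the second condition in \eqref{eq:tube_closure_xi_conditions}, \(\xi\ell_{\lambda,0}^\eta C\le T\eta\rho_\psi^\eta/64\).
\end{itemize}
Their sum is at most \(\tfrac{4}{128}\eta T\rho_\psi^\eta\), which is below the target \(\tfrac{7}{128}\eta T\rho_\psi^\eta\). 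This proves \eqref{eq:tube_drift_dominance}.

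The final sentence of the statement then follows by reading off that the first two entries of \eqref{eq:main_sched_s3} coincide with \eqref{eq:tube_closure_xi_conditions}, and that \eqref{eq:tube_closure_lambda0_condition} is part of \eqref{eq:main_sched_s1}.

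The main obstacle is Step 1. The hypotheses only give \(Ta\le1\), not the \(Ta\le 1/4\) used in Corollary~\ref{cor:z_fixed_anchor_scheduled}, so the sharper bound \(1-\tfrac34 Ta\) is unavailable. The exponential/convexity bound with constant \(\tfrac12\) has to be used instead, and the constants \(16\) and \(64\) in the schedule conditions must be checked to leave room for it. The proxy case also has to absorb the extra \(\lambda\)-bias term, and the arithmetic in Step 3 confirms that it does.
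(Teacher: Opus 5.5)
Your proof is correct and follows the paper's argument essentially step for step. You lower-bound the gaps via $1-(1-a)^T\ge Ta/2$ for $Ta\le 1$, then insert the explicit drift bounds of Lemma~\ref{lem:tube_drift_explicit} and split the proxy drift into the $\lambda_0$-bias piece and the motion piece, each at most $\eta T\rho_\psi^\eta/64$; the only differences are cosmetic (you justify the elementary gap bound via convexity of $e^{-s}$, keep the exact-center comparison in $\beta_k$ rather than $\gamma_k$, and use $7/16$ where the paper uses the weaker $1/4$ for $\Gamma_{L,k}$).
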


\begin{proof}
Since \(T\rho_\psi^\eta\gamma_k\le1\), the elementary bound
\(1-(1-a)^T\ge Ta/2\) for \(Ta\le1\) gives
\[
\Gamma_{\psi,k}
=
1-(1-\rho_\psi^\eta\gamma_k)^T
\ge
\frac{T\rho_\psi^\eta}{2}\gamma_k .
\]
Similarly, since \(T\rho_\psi^\eta\beta_k\le1\),
\[
\Gamma_{L,k}
=
1-\left(1-\frac{7\rho_\psi^\eta}{8}\beta_k\right)^T
\ge
\frac{7T\rho_\psi^\eta}{16}\beta_k
\ge
\frac{T\rho_\psi^\eta}{4}\beta_k .
\]

For the exact center, Lemma~\ref{lem:tube_drift_explicit} and \(\beta_k\le\gamma_k\) give
\[
\|y_\mu^\star(x_{k+1})-y_\mu^\star(x_k)\|_{y_\mu^\star(x_k)}
\le
\xi\ell_{*,0}^{\eta}
\left(\frac{\ell_{f,0}}{\lambda_0}+2\ell_{g,0}\right)\gamma_k .
\]
The first inequality in \eqref{eq:tube_closure_xi_conditions} implies
\[
\|y_\mu^\star(x_{k+1})-y_\mu^\star(x_k)\|_{y_\mu^\star(x_k)}
\le
\frac{\eta T\rho_\psi^\eta}{16}\gamma_k
\le
\frac{\eta}{8}\Gamma_{\psi,k}.
\]

For the proxy center, Lemma~\ref{lem:tube_drift_explicit} gives
\[
\|y_{\lambda_{k+1},\mu}^\star(x_{k+1})
-y_{\lambda_k,\mu}^\star(x_k)\|_{y_{\lambda_k,\mu}^\star(x_k)}
\le
\left(
\frac{T\ell_{f,0}^{\eta}}{8\lambda_0}
+
\xi\ell_{\lambda,0}^{\eta}
\left(\frac{\ell_{f,0}}{\lambda_0}+2\ell_{g,0}\right)
\right)\beta_k .
\]
The lower bound \eqref{eq:tube_closure_lambda0_condition} implies
\[
\frac{T\ell_{f,0}^{\eta}}{8\lambda_0}
\le
\frac{\eta T\rho_\psi^\eta}{64},
\]
and the second inequality in \eqref{eq:tube_closure_xi_conditions} implies
\[
\xi\ell_{\lambda,0}^{\eta}
\left(\frac{\ell_{f,0}}{\lambda_0}+2\ell_{g,0}\right)
\le
\frac{\eta T\rho_\psi^\eta}{64}.
\]
Therefore
\[
\|y_{\lambda_{k+1},\mu}^\star(x_{k+1})
-y_{\lambda_k,\mu}^\star(x_k)\|_{y_{\lambda_k,\mu}^\star(x_k)}
\le
\frac{\eta T\rho_\psi^\eta}{32}\beta_k
\le
\frac{\eta}{8}\Gamma_{L,k}.
\]
This proves \eqref{eq:tube_drift_dominance}.
\end{proof}

\begin{remark}
Corollary~\ref{cor:tube_drift_closure} is the explicit closure calculation behind the first two terms
of the tube-closure condition in Definition~\ref{def:barrier_aware} \eqref{eq:main_sched_s3}. The remaining term in
Definition~\ref{def:barrier_aware} \eqref{eq:main_sched_s3} controls the higher-order anchor-switch factors appearing in the
moving-anchor recursions of Appendix~\ref{app:tracker}. Thus the full \textit{barrier-aware} schedule
contains exactly the ingredients needed for both tube maintenance and the later Lyapunov descent.
\end{remark}

\subsection{Proof of Theorem~\ref{thm:tube_invariance}}
\label{app:proof_tube_invariance}

We first record a deterministic neighborhood-transition lemma.

\begin{lemma}[Contraction plus drift implies neighborhood transition]
\label{lem:contraction_plus_drift_tube}
Let \(\bar y,\bar y^+\in\operatorname{int}(Y)\), let \(\eta\in(0,1/2)\), and let
\(\Gamma\in[0,1]\). Suppose
\[
\|\bar y^+-\bar y\|_{\bar y}\le \frac{\eta}{8}\Gamma.
\]
If \(u\in\operatorname{int}(Y)\) satisfies
\[
\|u-\bar y\|_{\bar y}\le \eta\sqrt{1-\Gamma},
\]
then
\[
\|u-\bar y^+\|_{\bar y^+}\le \eta.
\]
\end{lemma}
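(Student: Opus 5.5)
The plan is to combine a triangle inequality in the old anchor norm at \(\bar y\) with a single anchor switch via Lemma~\ref{lem:anchor_switch}, and then check an elementary scalar inequality.

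\emph{Step 1 (anchor switch).} Set \(r:=\|\bar y^+-\bar y\|_{\bar y}\le \eta\Gamma/8\). Since \(\eta<1/2\) and \(\Gamma\le 1\), we have \(r\le 1/16<1\). Lemma~\ref{lem:anchor_switch} with \(y_1=\bar y\) and \(y_2=\bar y^+\) then gives \(\|v\|_{\bar y^+}\le (1-r)^{-1}\|v\|_{\bar y}\) for every \(v\). Applying this with \(v=u-\bar y^+\) yields
\[
\|u-\bar y^+\|_{\bar y^+}\le \frac{\|u-\bar y^+\|_{\bar y}}{1-r}.
\]

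\emph{Step 2 (triangle inequality in the old anchor).} Next,
\[
\|u-\bar y^+\|_{\bar y}\le \|u-\bar y\|_{\bar y}+\|\bar y-\bar y^+\|_{\bar y}\le \eta\sqrt{1-\Gamma}+r .
\]
Using the concavity bound \(\sqrt{1-\Gamma}\le 1-\Gamma/2\) for \(\Gamma\in[0,1]\), together with \(r\le\eta\Gamma/8\), the numerator is at most \(\eta-\tfrac{\eta\Gamma}{2}+\tfrac{\eta\Gamma}{8}=\eta\bigl(1-\tfrac{3\Gamma}{8}\bigr)\).

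\emph{Step 3 (scalar check).} It remains to verify
\[
\eta\Bigl(1-\tfrac{3\Gamma}{8}\Bigr)\le \eta(1-r).
\]
Since \(1-r\ge 1-\eta\Gamma/8\), it suffices to show \(1-\tfrac{3\Gamma}{8}\le 1-\tfrac{\eta\Gamma}{8}\). This is equivalent to \(\eta\Gamma\le 3\Gamma\), which holds because \(\eta<1/2\).

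Chaining Steps 1 through 3 gives \(\|u-\bar y^+\|_{\bar y^+}\le\eta\).

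The only delicate point is making sure the anchor-switch factor \((1-r)^{-1}\) does not consume the whole contraction gap. This is handled by the square-root bound \(\sqrt{1-\Gamma}\le 1-\Gamma/2\): it frees a margin of order \(\eta\Gamma/2\), while the drift costs only \(\eta\Gamma/8\) in the numerator and \(O(\eta^2\Gamma)\) through the denominator, so the margin absorbs both. No other earlier result is needed. Feasibility \(u\in\operatorname{int}(Y)\) is assumed in the statement, so it does not need to be checked.
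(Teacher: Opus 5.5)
Your proof is correct and matches the paper's argument essentially step for step. Both use a single anchor switch via Lemma~\ref{lem:anchor_switch} with factor \((1-d)^{-1}\), the triangle inequality in the old anchor norm, the bound \(\sqrt{1-\Gamma}\le 1-\Gamma/2\), and the scalar check \(1-3\Gamma/8\le 1-\eta\Gamma/8\). The only cosmetic difference is that you bound the numerator and denominator separately, whereas the paper bounds the combined fraction \(\eta(\sqrt{1-\Gamma}+\Gamma/8)/(1-\eta\Gamma/8)\).
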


\begin{proof}
Let \(d:=\|\bar y^+-\bar y\|_{\bar y}\). Since \(d\le \eta\Gamma/8<1\), the anchor-switch bound gives
\[
\|v\|_{\bar y^+}\le (1-d)^{-1}\|v\|_{\bar y}
\qquad \forall v.
\]
Therefore
\[
\|u-\bar y^+\|_{\bar y^+}
\le
(1-d)^{-1}\bigl(\|u-\bar y\|_{\bar y}+\|\bar y^+-\bar y\|_{\bar y}\bigr).
\]
Using the assumptions,
\[
\|u-\bar y^+\|_{\bar y^+}
\le
\eta\frac{\sqrt{1-\Gamma}+\Gamma/8}{1-\eta\Gamma/8}.
\]
Since \(\sqrt{1-\Gamma}\le 1-\Gamma/2\) for \(\Gamma\in[0,1]\) and \(\eta<1/2\),
\[
\sqrt{1-\Gamma}+\frac{\Gamma}{8}
\le
1-\frac{3\Gamma}{8}
\le
1-\frac{\eta\Gamma}{8}.
\]
Thus \(\|u-\bar y^+\|_{\bar y^+}\le\eta\).
\end{proof}

\paragraph{Proof of Theorem~\ref{thm:tube_invariance}}
\begin{proof}
The proof is by induction on the outer iteration \(k\). The base case follows from the assumed
initialization
\[
z_0\in T_{\eta,\mu}(x_0),
\qquad
y_0\in T_{\eta,\mu}^{\lambda_0}(x_0).
\]

Assume that, at the beginning of outer iteration \(k\),
\[
z_k\in T_{\eta,\mu}(x_k),
\qquad
y_k\in T_{\eta,\mu}^{\lambda_k}(x_k).
\]
We prove that all inner iterates remain in the corresponding target neighborhoods and that the next warm
starts lie in the target neighborhoods at \(x_{k+1}\) and \(\lambda_{k+1}\).

First consider the exact tracker. Since \(z_k^{(0)}=z_k\in T_{\eta,\mu}(x_k)\), and
\(T_{\eta,\mu}(x_k)\subseteq T_{2\eta,\mu}(x_k)\), the local Dikin regularity estimates are valid at
\(z_k^{(0)}\). The one-step contraction estimate from Lemma~\ref{lem:z_fixed_anchor_contraction} gives, whenever
\(z_k^{(t)}\in T_{\eta,\mu}(x_k)\),
\[
\|z_k^{(t+1)}-y_\mu^\star(x_k)\|_{y_\mu^\star(x_k)}^2
\le
(1-\rho_\psi^\eta\gamma_k)
\|z_k^{(t)}-y_\mu^\star(x_k)\|_{y_\mu^\star(x_k)}^2 .
\]
Thus, by induction over \(t=0,\ldots,T-1\), every exact inner iterate remains in
\(T_{\eta,\mu}(x_k)\). After \(T\) inner steps,
\[
\|z_{k+1}-y_\mu^\star(x_k)\|_{y_\mu^\star(x_k)}^2
\le
(1-\Gamma_{\psi,k})
\|z_k-y_\mu^\star(x_k)\|_{y_\mu^\star(x_k)}^2
\le
(1-\Gamma_{\psi,k})\eta^2 .
\]
By Corollary~\ref{cor:tube_drift_closure},
\[
\|y_\mu^\star(x_{k+1})-y_\mu^\star(x_k)\|_{y_\mu^\star(x_k)}
\le
\frac{\eta}{8}\Gamma_{\psi,k}.
\]
Applying Lemma~\ref{lem:contraction_plus_drift_tube} with
\[
\bar y=y_\mu^\star(x_k),
\qquad
\bar y^+=y_\mu^\star(x_{k+1}),
\qquad
u=z_{k+1},
\qquad
\Gamma=\Gamma_{\psi,k},
\]
we obtain
\[
z_{k+1}\in T_{\eta,\mu}(x_{k+1}).
\]

The proxy tracker is handled in the same way. Since
\(y_k^{(0)}=y_k\in T_{\eta,\mu}^{\lambda_k}(x_k)\), the local proxy neighborhood estimates are valid at the
initial proxy inner iterate. The one-step contraction estimate from Lemma~\ref{lem:y_fixed_anchor_contraction}
implies that whenever \(y_k^{(t)}\in T_{\eta,\mu}^{\lambda_k}(x_k)\),
\[
\|y_k^{(t+1)}-y_{\lambda_k,\mu}^\star(x_k)\|_{y_{\lambda_k,\mu}^\star(x_k)}^2
\le
\left(1-\frac{7\rho_\psi^\eta}{8}\beta_k\right)
\|y_k^{(t)}-y_{\lambda_k,\mu}^\star(x_k)\|_{y_{\lambda_k,\mu}^\star(x_k)}^2 .
\]
Hence all proxy inner iterates remain in \(T_{\eta,\mu}^{\lambda_k}(x_k)\), and after \(T\) steps,
\[
\|y_{k+1}-y_{\lambda_k,\mu}^\star(x_k)\|_{y_{\lambda_k,\mu}^\star(x_k)}^2
\le
(1-\Gamma_{L,k})
\|y_k-y_{\lambda_k,\mu}^\star(x_k)\|_{y_{\lambda_k,\mu}^\star(x_k)}^2
\le
(1-\Gamma_{L,k})\eta^2 .
\]
By Corollary~\ref{cor:tube_drift_closure},
\[
\|y_{\lambda_{k+1},\mu}^\star(x_{k+1})
-y_{\lambda_k,\mu}^\star(x_k)\|_{y_{\lambda_k,\mu}^\star(x_k)}
\le
\frac{\eta}{8}\Gamma_{L,k}.
\]
Applying Lemma~\ref{lem:contraction_plus_drift_tube} with
\[
\bar y=y_{\lambda_k,\mu}^\star(x_k),
\qquad
\bar y^+=y_{\lambda_{k+1},\mu}^\star(x_{k+1}),
\qquad
u=y_{k+1},
\qquad
\Gamma=\Gamma_{L,k},
\]
we obtain
\[
y_{k+1}\in T_{\eta,\mu}^{\lambda_{k+1}}(x_{k+1}).
\]

Therefore the induction hypothesis propagates from \(k\) to \(k+1\). By induction, for every outer
iteration \(k\), all exact inner iterates remain in \(T_{\eta,\mu}(x_k)\), all proxy inner iterates
remain in \(T_{\eta,\mu}^{\lambda_k}(x_k)\), and the next warm starts satisfy
\[
z_{k+1}\in T_{\eta,\mu}(x_{k+1}),
\qquad
y_{k+1}\in T_{\eta,\mu}^{\lambda_{k+1}}(x_{k+1}).
\]
This proves forward invariance of the exact and proxy target neighborhoods.
\end{proof}

\section{Proof of Theorem~\ref{thm:deterministic_poly}}
\label{app:deterministic}

\subsection{One-step Descent Decomposition}
\label{app:one_step_descent}

We now combine the two ingredients established in the previous appendices.
Appendix~\ref{app:local_consequences} shows that, inside the maintained Dikin neighborhoods, the barrier-smoothed
outer objective \(F_\mu\) is locally smooth and the proxy-gradient bias is \(O(1/\lambda)\).
Appendix~\ref{app:tracker} then quantifies how the two inner trackers evolve in anchored Dikin
norms, producing fixed-center contraction and moving-anchor recursions for the proxy and exact
tracking errors.

The purpose of this subsection is to merge these facts into a one-step descent estimate for the outer
objective, and then for a Lyapunov function coupling \(F_\mu\) with the two tracker errors. The first
result expresses the one-step decrease of \(F_\mu(x_k)\) in terms of the current-center proxy and
exact tracking errors, together with the \(O(1/\lambda_k)\) proxy bias. These current-center terms are
then handled by the tracker recursions from Appendix~\ref{app:tracker}.

\begin{proposition}[Outer descent under current-center tracker errors]
\label{prop:outer_descent}
Fix an outer iterate \(x_k\in X\) and suppose
\begin{equation}
\label{eq:outer_smooth_stepsize}
\xi L_F^\eta\alpha_k\le \frac12 .
\end{equation}
Then, for each \(k\),
\begin{align}
F_\mu(x_{k+1})-F_\mu(x_k)
&\le
-\frac{\xi\alpha_k}{4}
\Bigl(2\|\nabla F_\mu(x_k)\|_2^2+\|q_k^x\|_2^2\Bigr)
\notag\\
&\quad
+\frac{3\xi\alpha_k}{2}
\Bigl[
(\ell_{f,1}^\eta+\lambda_k\ell_{\psi,1}^\eta)^2
\big\|y_{k+1}-y_{\lambda_k,\mu}^\star(x_k)\big\|_{y_{\lambda_k,\mu}^\star(x_k)}^2
\notag\\
&\qquad\qquad
+(\lambda_k\ell_{\psi,1}^\eta)^2
\big\|z_{k+1}-y_\mu^\star(x_k)\big\|_{y_\mu^\star(x_k)}^2
+\frac{(c_x^\eta)^2}{\lambda_k^2}
\Bigr].
\label{eq:outer_descent_current_center}
\end{align}
Here \(c_x^\eta\) is the local Dikin constant from the proxy-gradient bias estimate in
Proposition~\ref{prop:local_consequences}\textup{(iv)}.

Moreover, if
\begin{equation}
\label{eq:outer_balance_stepsize}
\lambda_k\ge \lambda_0\ge \frac{2\ell_{f,1}^\eta}{\rho_\psi^\eta},
\qquad
\frac{\xi}{T}
\le
c_\xi\frac{\rho_\psi^\eta}{(\ell_{\psi,1}^\eta)^2},
\end{equation}
for a sufficiently small absolute constant \(c_\xi>0\), then
\begin{align}
F_\mu(x_{k+1})-F_\mu(x_k)
&\le
-\frac{\xi\alpha_k}{4}
\Bigl(2\|\nabla F_\mu(x_k)\|_2^2+\|q_k^x\|_2^2\Bigr)
\notag\\
&\quad
+
\frac{T\rho_\psi^\eta\alpha_k\lambda_k^2}{32}
\Bigl(
2\big\|y_{k+1}-y_{\lambda_k,\mu}^\star(x_k)\big\|_{y_{\lambda_k,\mu}^\star(x_k)}^2
+
\big\|z_{k+1}-y_\mu^\star(x_k)\big\|_{y_\mu^\star(x_k)}^2
\Bigr)
\notag\\
&\quad
+
\frac{3\xi\alpha_k}{2}\frac{(c_x^\eta)^2}{\lambda_k^2}.
\label{eq:outer_descent_balanced}
\end{align}
\end{proposition}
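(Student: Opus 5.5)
The proof is the standard smooth-descent argument with an inexact gradient. Start from the local $L_F^\eta$-smoothness of $F_\mu$ in Proposition~\ref{prop:local_consequences}(iii), applied to $x_{k+1}=x_k-\xi\alpha_k q_k^x$:
\[
F_\mu(x_{k+1})\le F_\mu(x_k)-\xi\alpha_k\langle\nabla F_\mu(x_k),q_k^x\rangle+\tfrac{L_F^\eta}{2}\xi^2\alpha_k^2\|q_k^x\|_2^2 .
\]
Under \eqref{eq:outer_smooth_stepsize} the last term is at most $\tfrac{\xi\alpha_k}{4}\|q_k^x\|_2^2$. Next apply the polarization identity $-\langle a,b\rangle=\tfrac12(\|a-b\|^2-\|a\|^2-\|b\|^2)$ with $a=\nabla F_\mu(x_k)$ and $b=q_k^x$. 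This yields
\[
-\tfrac{\xi\alpha_k}{2}\|\nabla F_\mu(x_k)\|_2^2-\tfrac{\xi\alpha_k}{4}\|q_k^x\|_2^2+\tfrac{\xi\alpha_k}{2}\|q_k^x-\nabla F_\mu(x_k)\|_2^2 .
\]
I will then rescale the constants to match the stated form $-\tfrac{\xi\alpha_k}{4}(2\|\nabla F_\mu\|^2+\|q\|^2)$; if needed, the $\tfrac14$ smoothness budget is tightened by absorbing constants.

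The main task is bounding the error $e_k:=q_k^x-\nabla F_\mu(x_k)$. I split it into three pieces through the envelope identity \eqref{eq:proxy_envelope_identity}, which gives
\[
\nabla C^\star_{\lambda_k,\mu}(x_k)=\nabla_x f(x_k,y^\star_{\lambda_k,\mu})+\lambda_k\bigl(\nabla_x g(x_k,y^\star_{\lambda_k,\mu})-\nabla_x g(x_k,y^\star_\mu)\bigr).
\]
The pieces are:
\begin{itemize}
\item[(a)] $\nabla_x f(x_k,y_{k+1})-\nabla_x f(x_k,y^\star_{\lambda_k,\mu})+\lambda_k\bigl(\nabla_x g(x_k,y_{k+1})-\nabla_x g(x_k,y^\star_{\lambda_k,\mu})\bigr)$;
\item[(b)] $-\lambda_k\bigl(\nabla_x g(x_k,z_{k+1})-\nabla_x g(x_k,y^\star_\mu)\bigr)$;
\item[(c)] $\nabla C^\star_{\lambda_k,\mu}(x_k)-\nabla F_\mu(x_k)$.
\end{itemize}
Piece (c) is at most $c_x^\eta/\lambda_k$ by Proposition~\ref{prop:local_consequences}(iv).

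For (a) and (b), write each difference as an integral of the mixed Hessian along the segment. The segment stays inside the buffer neighborhoods by convexity of the Dikin balls and Theorem~\ref{thm:tube_invariance}. Then use the duality bound \eqref{eq:local_smooth_cross_duality}, i.e., $\|\nabla^2_{xy}(\cdot)^\top v\|_2\le\ell\|v\|_c$, with the cross-regularity constants from \eqref{ex:E2} and \eqref{ex:E4}. This gives $\|(a)\|_2\le(\ell_{f,1}^\eta+\lambda_k\ell_{\psi,1}^\eta)\|y_{k+1}-y^\star_{\lambda_k,\mu}\|_{y^\star_{\lambda_k,\mu}}$ and $\|(b)\|_2\le\lambda_k\ell_{\psi,1}^\eta\|z_{k+1}-y^\star_\mu\|_{y^\star_\mu}$. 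Applying $\|a+b+c\|^2\le3(\|a\|^2+\|b\|^2+\|c\|^2)$ then produces exactly the bracket with prefactor $\tfrac{3\xi\alpha_k}{2}$, which is \eqref{eq:outer_descent_current_center}.

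For the balanced form \eqref{eq:outer_descent_balanced}, I bound the tracker coefficients. By $\lambda_k\ge 2\ell_{f,1}^\eta/\rho_\psi^\eta$ and $\rho_\psi^\eta\le\ell_{\psi,1}^\eta$, we have $\ell_{f,1}^\eta\le\lambda_k\ell_{\psi,1}^\eta/2$, so $(\ell_{f,1}^\eta+\lambda_k\ell_{\psi,1}^\eta)^2\le\tfrac94\lambda_k^2(\ell_{\psi,1}^\eta)^2$. Each tracker coefficient is therefore at most $\tfrac{27}{8}\xi\alpha_k\lambda_k^2(\ell_{\psi,1}^\eta)^2$. Substituting $\xi\le c_\xi T\rho_\psi^\eta/(\ell_{\psi,1}^\eta)^2$ turns this into $\tfrac{27}{8}c_\xi T\rho_\psi^\eta\alpha_k\lambda_k^2$. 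Choosing $c_\xi\le 1/108$ makes it at most $\tfrac{T\rho_\psi^\eta\alpha_k\lambda_k^2}{32}$, which is at or below both stated coefficients (the proxy coefficient is twice that). The bias term is unchanged.

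The main obstacle is the mixed-Hessian duality step. It requires the cross bound in the anchored norm at the correct anchor: the proxy anchor for (a) and the exact anchor for (b). It also requires the integration segment to lie in the region where \eqref{ex:E1}/\eqref{ex:E4} hold, which is why this step relies on tube invariance.
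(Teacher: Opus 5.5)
Your proposal is correct and follows the paper's proof essentially step for step: smoothness of $F_\mu$ with $\xi L_F^\eta\alpha_k\le\frac12$, the polarization identity, the three-way split of $q_k^x-\nabla F_\mu(x_k)$ through $\nabla C^\star_{\lambda_k,\mu}(x_k)$ using cross-derivative bounds at the proxy and exact anchors on the maintained neighborhoods, $(a+b+c)^2\le 3(a^2+b^2+c^2)$, and finally $\ell_{f,1}^\eta\le\frac12\lambda_k\ell_{\psi,1}^\eta$ with a small $c_\xi$ (your $c_\xi\le 1/108$ is a valid concrete choice). No rescaling is needed after the polarization step, because $-\frac{\xi\alpha_k}{2}\|\nabla F_\mu(x_k)\|_2^2-\frac{\xi\alpha_k}{4}\|q_k^x\|_2^2$ is already exactly $-\frac{\xi\alpha_k}{4}\bigl(2\|\nabla F_\mu(x_k)\|_2^2+\|q_k^x\|_2^2\bigr)$.
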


\begin{proof}
By \(L_F^\eta\)-smoothness of \(F_\mu\) on \(X\),
\[
F_\mu(x_{k+1})
\le
F_\mu(x_k)
+
\langle \nabla F_\mu(x_k),x_{k+1}-x_k\rangle
+
\frac{L_F^\eta}{2}\|x_{k+1}-x_k\|_2^2 .
\]
Substituting the outer update \(x_{k+1}-x_k=-\xi\alpha_k q_k^x\) and using
\eqref{eq:outer_smooth_stepsize}, we get
\[
F_\mu(x_{k+1})-F_\mu(x_k)
\le
-\xi\alpha_k\langle \nabla F_\mu(x_k),q_k^x\rangle
+
\frac{\xi\alpha_k}{4}\|q_k^x\|_2^2 .
\]
Using
\[
\langle a,b\rangle
=
\frac12\bigl(\|a\|_2^2+\|b\|_2^2-\|a-b\|_2^2\bigr),
\]
we obtain
\begin{equation}
\label{eq:outer_basic_descent}
F_\mu(x_{k+1})-F_\mu(x_k)
\le
-\frac{\xi\alpha_k}{2}\|\nabla F_\mu(x_k)\|_2^2
-\frac{\xi\alpha_k}{4}\|q_k^x\|_2^2
+
\frac{\xi\alpha_k}{2}\|\nabla F_\mu(x_k)-q_k^x\|_2^2 .
\end{equation}

We now bound the proxy-direction error. Add and subtract
\(\nabla C_{\lambda_k,\mu}^\star(x_k)\):
\[
\|\nabla F_\mu(x_k)-q_k^x\|_2
\le
\|q_k^x-\nabla C_{\lambda_k,\mu}^\star(x_k)\|_2
+
\|\nabla C_{\lambda_k,\mu}^\star(x_k)-\nabla F_\mu(x_k)\|_2 .
\]
By Proposition~\ref{prop:local_consequences}\textup{(iv)},
\begin{equation}
\label{eq:outer_proxy_bias}
\|\nabla C_{\lambda_k,\mu}^\star(x_k)-\nabla F_\mu(x_k)\|_2
\le
\frac{c_x^\eta}{\lambda_k}.
\end{equation}
For the first term, the envelope identity for \(C_{\lambda_k,\mu}^\star\) gives
\[
\nabla C_{\lambda_k,\mu}^\star(x_k)
=
\nabla_x f(x_k,y_{\lambda_k,\mu}^\star(x_k))
+
\lambda_k
\Bigl(
\nabla_x\psi_\mu(x_k,y_{\lambda_k,\mu}^\star(x_k))
-
\nabla_x\psi_\mu(x_k,y_\mu^\star(x_k))
\Bigr).
\]
Therefore,
\begin{align*}
q_k^x-\nabla C_{\lambda_k,\mu}^\star(x_k)
&=
\nabla_x f(x_k,y_{k+1})
-
\nabla_x f(x_k,y_{\lambda_k,\mu}^\star(x_k))
\\
&\quad
+
\lambda_k
\Bigl(
\nabla_x\psi_\mu(x_k,y_{k+1})
-
\nabla_x\psi_\mu(x_k,y_{\lambda_k,\mu}^\star(x_k))
\Bigr)
\\
&\quad
-
\lambda_k
\Bigl(
\nabla_x\psi_\mu(x_k,z_{k+1})
-
\nabla_x\psi_\mu(x_k,y_\mu^\star(x_k))
\Bigr).
\end{align*}
Using the local cross-derivative bounds for \(f\) and \(\psi_\mu\) on the maintained proxy and
exact neighborhoods,
\[
\|\nabla_x f(x_k,y_{k+1})
-
\nabla_x f(x_k,y_{\lambda_k,\mu}^\star(x_k))\|_2
\le
\ell_{f,1}^\eta
\big\|y_{k+1}-y_{\lambda_k,\mu}^\star(x_k)\big\|_{y_{\lambda_k,\mu}^\star(x_k)},
\]
\[
\|\nabla_x\psi_\mu(x_k,y_{k+1})
-
\nabla_x\psi_\mu(x_k,y_{\lambda_k,\mu}^\star(x_k))\|_2
\le
\ell_{\psi,1}^\eta
\big\|y_{k+1}-y_{\lambda_k,\mu}^\star(x_k)\big\|_{y_{\lambda_k,\mu}^\star(x_k)},
\]
and
\[
\|\nabla_x\psi_\mu(x_k,z_{k+1})
-
\nabla_x\psi_\mu(x_k,y_\mu^\star(x_k))\|_2
\le
\ell_{\psi,1}^\eta
\big\|z_{k+1}-y_\mu^\star(x_k)\big\|_{y_\mu^\star(x_k)}.
\]
Combining these bounds with \eqref{eq:outer_proxy_bias} gives
\begin{align*}
\|\nabla F_\mu(x_k)-q_k^x\|_2
&\le
(\ell_{f,1}^\eta+\lambda_k\ell_{\psi,1}^\eta)
\big\|y_{k+1}-y_{\lambda_k,\mu}^\star(x_k)\big\|_{y_{\lambda_k,\mu}^\star(x_k)}
\\
&\quad
+
\lambda_k\ell_{\psi,1}^\eta
\big\|z_{k+1}-y_\mu^\star(x_k)\big\|_{y_\mu^\star(x_k)}
+
\frac{c_x^\eta}{\lambda_k}.
\end{align*}
Squaring and using \((a+b+c)^2\le 3(a^2+b^2+c^2)\), we obtain
\begin{align}
\|\nabla F_\mu(x_k)-q_k^x\|_2^2
&\le
3\Bigl[
(\ell_{f,1}^\eta+\lambda_k\ell_{\psi,1}^\eta)^2
\big\|y_{k+1}-y_{\lambda_k,\mu}^\star(x_k)\big\|_{y_{\lambda_k,\mu}^\star(x_k)}^2
\notag\\
&\qquad
+
(\lambda_k\ell_{\psi,1}^\eta)^2
\big\|z_{k+1}-y_\mu^\star(x_k)\big\|_{y_\mu^\star(x_k)}^2
+
\frac{(c_x^\eta)^2}{\lambda_k^2}
\Bigr].
\label{eq:outer_error_square}
\end{align}
Plugging \eqref{eq:outer_error_square} into \eqref{eq:outer_basic_descent} proves
\eqref{eq:outer_descent_current_center}.

It remains to derive the simplified current-center form. Since
\(\lambda_k\ge \lambda_0\ge 2\ell_{f,1}^\eta/\rho_\psi^\eta\) and
\(\ell_{\psi,1}^\eta\ge \rho_\psi^\eta\), we have
\[
\ell_{f,1}^\eta
\le
\frac12\lambda_k\rho_\psi^\eta
\le
\frac12\lambda_k\ell_{\psi,1}^\eta,
\]
and hence
\[
\ell_{f,1}^\eta+\lambda_k\ell_{\psi,1}^\eta
\le
\frac32\lambda_k\ell_{\psi,1}^\eta .
\]
By choosing \(c_\xi\) in \eqref{eq:outer_balance_stepsize} sufficiently small, the two tracker
error coefficients in \eqref{eq:outer_descent_current_center} are bounded by
\[
\frac{T\rho_\psi^\eta\alpha_k\lambda_k^2}{16}
\quad\text{for the proxy error,}
\qquad
\frac{T\rho_\psi^\eta\alpha_k\lambda_k^2}{32}
\quad\text{for the exact error.}
\]
Therefore \eqref{eq:outer_descent_current_center} reduces to
\eqref{eq:outer_descent_balanced}.
\end{proof}

\subsection{Lyapunov Descent}
\label{app:lyapunov}

The outer descent estimate in Proposition~\ref{prop:outer_descent} contains positive terms involving
the current-center proxy and exact tracking errors. The moving-anchor recursions in
Appendix~\ref{app:tracker} show that these terms can be absorbed by contraction of the warm-start
errors \(I_k\) and \(J_k\), provided the schedule controls center drift and anchor-switch effects. We
therefore introduce a Lyapunov function coupling \(F_\mu(x_k)\) with the two anchored tracker
errors. The resulting inequality is the main deterministic descent step: it leaves a negative gradient
term, negative tracker-error terms, and only summable \(O(\alpha_k/\lambda_k^2)\) and
\(O(\delta_k/\lambda_k^2)\) remainders.

\begin{proposition}[Lyapunov descent]
\label{prop:lyapunov_descent}
Fix a neighborhood radius \(\eta\in(0,1/2)\) and an inner-loop length \(T\in\mathbb N\). Suppose the
\textit{barrier-aware} conditions in Definition~\ref{def:barrier_aware} hold, and the iterates remain in the
target neighborhoods guaranteed by Theorem~\ref{thm:tube_invariance}. Define the tracker errors
\begin{equation}
\label{eq:lyap_tracker_errors}
I_k
:=
\big\|y_k-y_{\lambda_k,\mu}^\star(x_k)\big\|_{y_{\lambda_k,\mu}^\star(x_k)}^2,
\qquad
J_k
:=
\big\|z_k-y_\mu^\star(x_k)\big\|_{y_\mu^\star(x_k)}^2,
\end{equation}
and the Lyapunov function
\begin{equation}
\label{eq:lyap_function}
V_k
:=
F_\mu(x_k)
+
\lambda_k\ell_{\psi,1}^\eta I_k
+
\frac{\lambda_k\ell_{\psi,1}^\eta}{2}J_k .
\end{equation}
Then, for each \(k\),
\begin{align}
V_{k+1}-V_k
&\le
-\frac{\xi\alpha_k}{2}\|\nabla F_\mu(x_k)\|_2^2
-\frac{\lambda_k\ell_{\psi,1}^\eta T\rho_\psi^\eta\beta_k}{8}\,
\big\|y_k-y_{\lambda_k,\mu}^\star(x_k)\big\|_{y_{\lambda_k,\mu}^\star(x_k)}^2
\nonumber\\
&\quad
-\frac{\lambda_k\ell_{\psi,1}^\eta T\rho_\psi^\eta\gamma_k}{16}\,
\big\|z_k-y_\mu^\star(x_k)\big\|_{y_\mu^\star(x_k)}^2
\nonumber\\
&\quad
+
O\!\left(\xi(c_x^\eta)^2\right)\frac{\alpha_k}{\lambda_k^2}
+
O\!\left(
\frac{\ell_{\psi,1}^\eta(\ell_{f,0}^\eta)^2}{(\rho_\psi^\eta)^3}
\right)
\frac{\delta_k}{\lambda_k^2}.
\label{eq:lyap_descent}
\end{align}
\end{proposition}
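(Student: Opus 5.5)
We write \(V_{k+1}-V_k\) as the sum of three pieces,
\[
V_{k+1}-V_k=\bigl(F_\mu(x_{k+1})-F_\mu(x_k)\bigr)+\ell_{\psi,1}^\eta\bigl(\lambda_{k+1}I_{k+1}-\lambda_kI_k\bigr)+\tfrac{\ell_{\psi,1}^\eta}{2}\bigl(\lambda_{k+1}J_{k+1}-\lambda_kJ_k\bigr),
\]
and bound each piece with an earlier result. For the first piece we use the balanced outer descent \eqref{eq:outer_descent_balanced} of Proposition~\ref{prop:outer_descent}. Its hypotheses are supplied by \eqref{eq:main_sched_s1}, namely \(\alpha_k\le1/(2\xi L_F^\eta)\) and the lower bound on \(\lambda_0\), together with the third term of \eqref{eq:main_sched_s3}. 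This gives a gradient decrease \(-\tfrac{\xi\alpha_k}{2}\|\nabla F_\mu(x_k)\|^2\) and a negative \(-\tfrac{\xi\alpha_k}{4}\|q_k^x\|^2\). It also produces positive current-center terms \(\tfrac{T\rho_\psi^\eta\alpha_k\lambda_k^2}{32}(2\tilde I_k+\tilde J_k)\), where \(\tilde I_k:=\|y_{k+1}-y^\star_{\lambda_k,\mu}(x_k)\|^2\) and \(\tilde J_k:=\|z_{k+1}-y_\mu^\star(x_k)\|^2\), and the bias term \(\tfrac{3\xi\alpha_k}{2}(c_x^\eta)^2/\lambda_k^2\).

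For the proxy piece we use Lemma~\ref{lem:y_moving_anchor_recursion} through Corollary~\ref{cor:y_moving_anchor_scheduled}, whose conditions follow from \eqref{eq:main_sched_s2} and the last term of \eqref{eq:main_sched_s3}. This bounds \(I_{k+1}\) by \(\kappa_{y,k}^2(1+\tfrac{3T\rho\beta_k}{8}+\delta_k/\lambda_k)\tilde I_k\), plus a \(q\)-term and a \(\delta_k/\lambda_k^3\) term. For the exact piece we use Corollary~\ref{cor:z_moving_anchor_scheduled}. We then control \(\kappa^2\): by Corollary~\ref{cor:tube_drift_closure} the drift is at most \(\tfrac{\eta}{8}\Gamma\), which is \(O(\eta T\rho\beta_k)\), so \(\kappa^2\le1+cT\rho\beta_k\) with a small constant (using \(\eta<1/2\)). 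Next we multiply by \(\lambda_{k+1}/\lambda_k\le1+\delta_k/\lambda_k\le1+T\rho\beta_k/16\). Finally we substitute the fixed-anchor contractions \(\tilde I_k\le(1-\tfrac34T\rho\beta_k)I_k\) from Corollary~\ref{cor:y_fixed_anchor_scheduled} and \(\tilde J_k\le(1-\tfrac34T\rho\gamma_k)J_k\) from Corollary~\ref{cor:z_fixed_anchor_scheduled}. The product of all the factors is then at most \(1-\tfrac{c'}{1}T\rho\beta_k\) (respectively \(T\rho\gamma_k\)), with \(c'\) roughly \(3/4-3/8-1/16-\ldots\), which still leaves a margin of about \(1/4\).

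\textbf{Absorbing the cross terms.} The Lyapunov weight \(\lambda_k\ell_{\psi,1}^\eta\) turns the surviving margin into a term \(-\lambda_k\ell_{\psi,1}T\rho\beta_k I_k/4\). Against this we must absorb the positive \(\tfrac{T\rho\alpha_k\lambda_k^2}{16}\tilde I_k\) from the outer descent, which is at most \(\tfrac{T\rho\beta_k\lambda_k}{16}I_k\). Since \(\ell_{\psi,1}\ge\rho\) and we may normalize \(\ell_{\psi,1}\ge1\), this is dominated, and the claimed \(-\tfrac18\) coefficient survives. The \(J\) side works the same way, using \(\beta_k\le\gamma_k\); its half weight is why the coefficient there is \(1/16\).

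The \(\|q_k^x\|^2\) coefficients from the two tracker recursions are \(\lambda_k\ell_{\psi,1}\cdot O\bigl(\xi^2(\ell_{\lambda,0})^2\alpha_k^2/(\rho T\beta_k)\bigr)=O\bigl(\xi^2\ell_{\psi,1}\ell_{\lambda,0}^2\alpha_k/(\rho T)\bigr)\). We need these to be at most \(\xi\alpha_k/4\), which amounts to \(\xi/T\le c\rho/(\ell_{\psi,1}\ell_{*,0}^2)\). This is exactly the first entry of the third term of \eqref{eq:main_sched_s3}, using \(\ell_{\lambda,0}\lesssim\ell_{*,0}\). The residual \(\lambda_{k+1}\ell_{\psi,1}(\ell_{f,0}^\eta)^2\delta_k/(\rho^2\lambda_k^3)\) becomes \(O(\ell_{\psi,1}\ell_{f,0}^2/\rho^2)\,\delta_k/\lambda_k^2\); the extra \(1/\rho\) in the statement comes from the bounded \(\kappa_{y,k}^2\) and constant slack. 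The bias term is already \(O(\xi c_x^2)\alpha_k/\lambda_k^2\).

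The main obstacle is the bookkeeping of multiplicative factors: \(\kappa^2\), \(\lambda_{k+1}/\lambda_k\), the expansion \(1+\tfrac{3}{8}T\rho\beta_k\), and the contraction \(1-\tfrac34T\rho\beta_k\). Together with the outer descent's positive current-center terms, these must leave a net negative fraction. I would first verify that the cross products \(O((T\rho\beta_k)^2)\) are negligible using \(T\rho\gamma_k\le1/4\), and shrink \(c_\xi\) and \(\eta\) if needed to secure the constants \(1/8\) and \(1/16\).
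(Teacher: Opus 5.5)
Your proposal is correct and follows essentially the same route as the paper. You split $V_{k+1}-V_k$ into the balanced outer-descent bound of Proposition~\ref{prop:outer_descent} plus the two weighted tracker blocks, and control those blocks by Corollaries~\ref{cor:y_moving_anchor_scheduled} and~\ref{cor:z_moving_anchor_scheduled} together with the fixed-anchor contractions. You then absorb $\kappa_{y,k}^2,\kappa_{z,k}^2$ through the tube-closure drift bounds and discard the $\|q_k^x\|_2^2$ terms via the last entry of \eqref{eq:main_sched_s3}; you are in fact more explicit than the paper about the factor $\lambda_{k+1}/\lambda_k$.

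One bookkeeping remark concerns the proxy drift. If you bound it through the stated conclusion $\frac{\eta}{8}\Gamma_{L,k}\le\frac{7\eta}{64}T\rho_\psi^\eta\beta_k$, then $\kappa_{y,k}^2$ costs up to $\frac{7\eta}{32}T\rho_\psi^\eta\beta_k$ at first order. After paying $\frac{1}{16}$ for the outer-descent term, the surviving coefficient is only about $\frac{3}{16}-\frac{7\eta}{32}$, which is below $\frac18$ for $\eta>\frac27$. So either restrict to $\eta\le\frac27$, as you anticipated, or use the sharper bound $\frac{\eta}{32}T\rho_\psi^\eta\beta_k$ established inside the proof of Corollary~\ref{cor:tube_drift_closure}, which is what the paper effectively does.
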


\begin{proof}
From the definition of \(V_k\),
\begin{align}
V_{k+1}-V_k
&=
F_\mu(x_{k+1})-F_\mu(x_k)
+
\ell_{\psi,1}^\eta
\bigl(\lambda_{k+1}I_{k+1}-\lambda_k I_k\bigr)
\nonumber\\
&\quad
+
\frac{\ell_{\psi,1}^\eta}{2}
\bigl(\lambda_{k+1}J_{k+1}-\lambda_k J_k\bigr).
\label{eq:lyap_difference_identity}
\end{align}
We use Proposition~\ref{prop:outer_descent} in its balanced current-center form. Namely, after the
outer descent estimate and the \textit{barrier-aware} step-size bounds, the current-center tracking errors enter
with budgets
\[
\frac{\lambda_k\ell_{\psi,1}^\eta T\rho_\psi^\eta\beta_k}{16}
\big\|y_{k+1}-y_{\lambda_k,\mu}^\star(x_k)\big\|_{y_{\lambda_k,\mu}^\star(x_k)}^2
\]
for the proxy tracker, and
\[
\frac{\lambda_k\ell_{\psi,1}^\eta T\rho_\psi^\eta\gamma_k}{64}
\big\|z_{k+1}-y_\mu^\star(x_k)\big\|_{y_\mu^\star(x_k)}^2
\]
for the exact tracker. The latter uses \(\beta_k\le\gamma_k\). Substituting this into
\eqref{eq:lyap_difference_identity} gives
\begin{align}
V_{k+1}-V_k
&\le
-\frac{\xi\alpha_k}{2}\|\nabla F_\mu(x_k)\|_2^2
-\frac{\xi\alpha_k}{4}\|q_k^x\|_2^2
+
\frac{3\xi\alpha_k}{2}\frac{(c_x^\eta)^2}{\lambda_k^2}
\nonumber\\
&\quad
+\ell_{\psi,1}^\eta
\underbrace{
\Bigl(
\lambda_{k+1}I_{k+1}
+
\frac{\lambda_kT\rho_\psi^\eta\beta_k}{16}
\big\|y_{k+1}-y_{\lambda_k,\mu}^\star(x_k)\big\|_{y_{\lambda_k,\mu}^\star(x_k)}^2
-\lambda_k I_k
\Bigr)
}_{(i)}
\nonumber\\
&\quad
+\frac{\ell_{\psi,1}^\eta}{2}
\underbrace{
\Bigl(
\lambda_{k+1}J_{k+1}
+
\frac{\lambda_kT\rho_\psi^\eta\gamma_k}{32}
\big\|z_{k+1}-y_\mu^\star(x_k)\big\|_{y_\mu^\star(x_k)}^2
-\lambda_k J_k
\Bigr)
}_{(ii)} .
\label{eq:lyap_decomposition}
\end{align}

We first control the proxy block \((i)\). By
Corollary~\ref{cor:y_moving_anchor_scheduled},
\begin{align}
(i)
&\le
\lambda_k\kappa_{y,k}^2
\left(
1+\frac{3T\rho_\psi^\eta\beta_k}{8}
+\frac{\delta_k}{\lambda_k}
\right)
\big\|y_{k+1}-y_{\lambda_k,\mu}^\star(x_k)\big\|_{y_{\lambda_k,\mu}^\star(x_k)}^2
-\lambda_k I_k
+
(iii),
\label{eq:lyap_proxy_block_raw}
\end{align}
where
\begin{equation}
\label{eq:lyap_proxy_remainder}
(iii)
:=
O\!\left(\xi^2(\ell_{*,0}^\eta)^2\right)
\frac{\lambda_k\alpha_k^2}{\rho_\psi^\eta T\beta_k}\|q_k^x\|_2^2
+
O\!\left(\frac{(\ell_{f,0}^\eta)^2}{(\rho_\psi^\eta)^3}\right)
\frac{\delta_k}{\lambda_k^2}.
\end{equation}
Here, as usual, \(\ell_{*,0}^\eta\) may be enlarged to dominate the proxy center-stability constant
\(\ell_{\lambda,0}^\eta\).

The multiplier-growth condition \(\delta_k/\lambda_k\le T\rho_\psi^\eta\beta_k/16\) implies
\[
1+\frac{3T\rho_\psi^\eta\beta_k}{8}+\frac{\delta_k}{\lambda_k}
\le
1+\frac{T\rho_\psi^\eta\beta_k}{2}.
\]
Therefore
\begin{align}
(i)
&\le
\lambda_k\kappa_{y,k}^2
\left(1+\frac{T\rho_\psi^\eta\beta_k}{2}\right)
\big\|y_{k+1}-y_{\lambda_k,\mu}^\star(x_k)\big\|_{y_{\lambda_k,\mu}^\star(x_k)}^2
-\lambda_k I_k
+
(iii).
\label{eq:lyap_proxy_block_simplified}
\end{align}
By Corollary~\ref{cor:y_fixed_anchor_scheduled},
\[
\big\|y_{k+1}-y_{\lambda_k,\mu}^\star(x_k)\big\|_{y_{\lambda_k,\mu}^\star(x_k)}^2
\le
\left(1-\frac{3T\rho_\psi^\eta\beta_k}{4}\right)I_k .
\]
Thus
\begin{align}
(i)
&\le
\lambda_k
\left[
\kappa_{y,k}^2
\left(1+\frac{T\rho_\psi^\eta\beta_k}{2}\right)
\left(1-\frac{3T\rho_\psi^\eta\beta_k}{4}\right)
-1
\right]I_k
+
(iii).
\label{eq:lyap_proxy_block_after_contraction}
\end{align}
The tube-closure bound gives
\[
\big\|y_{\lambda_{k+1},\mu}^\star(x_{k+1})
-
y_{\lambda_k,\mu}^\star(x_k)\big\|_{y_{\lambda_k,\mu}^\star(x_k)}
\le
\frac{T\rho_\psi^\eta\beta_k}{32},
\]
so
\[
\kappa_{y,k}^2
=
\left(
1-
\big\|y_{\lambda_{k+1},\mu}^\star(x_{k+1})
-
y_{\lambda_k,\mu}^\star(x_k)\big\|_{y_{\lambda_k,\mu}^\star(x_k)}
\right)^{-2}.
\]
Using \(T\rho_\psi^\eta\beta_k\le 1/4\), the elementary scalar inequality
\[
\frac{1}{(1-s)^2}\left(1+\frac{a}{2}\right)\left(1-\frac{3a}{4}\right)
\le
1-\frac{a}{8},
\qquad
0\le s\le \frac{a}{32},\quad 0\le a\le \frac14,
\]
with \(a=T\rho_\psi^\eta\beta_k\), yields
\begin{equation}
\label{eq:lyap_proxy_scalar_absorption}
\kappa_{y,k}^2
\left(1+\frac{T\rho_\psi^\eta\beta_k}{2}\right)
\left(1-\frac{3T\rho_\psi^\eta\beta_k}{4}\right)
\le
1-\frac{T\rho_\psi^\eta\beta_k}{8}.
\end{equation}
This replaces the long anchor-switch display and keeps the expression within the page margins.
Combining \eqref{eq:lyap_proxy_block_after_contraction} and
\eqref{eq:lyap_proxy_scalar_absorption}, we obtain
\begin{equation}
\label{eq:lyap_proxy_block_final}
(i)
\le
-\frac{\lambda_kT\rho_\psi^\eta\beta_k}{8}I_k
+
O\!\left(\xi^2(\ell_{*,0}^\eta)^2\right)
\frac{\alpha_k}{\rho_\psi^\eta T}\|q_k^x\|_2^2
+
O\!\left(\frac{(\ell_{f,0}^\eta)^2}{(\rho_\psi^\eta)^3}\right)
\frac{\delta_k}{\lambda_k^2}.
\end{equation}

We next control the exact block \((ii)\). By
Corollary~\ref{cor:z_moving_anchor_scheduled},
\begin{align}
(ii)
&\le
\lambda_k\kappa_{z,k}^2
\left(
1+\frac{\delta_k}{\lambda_k}
+\frac{3T\rho_\psi^\eta\gamma_k}{8}
+\frac{T\rho_\psi^\eta\beta_k}{32}
\right)
\big\|z_{k+1}-y_\mu^\star(x_k)\big\|_{y_\mu^\star(x_k)}^2
-\lambda_k J_k
+
(iv),
\label{eq:lyap_exact_block_raw}
\end{align}
where
\begin{equation}
\label{eq:lyap_exact_remainder}
(iv)
:=
O\!\left(\xi^2(\ell_{*,0}^\eta)^2\right)
\frac{\lambda_{k+1}\alpha_k^2}{T\rho_\psi^\eta\gamma_k}\|q_k^x\|_2^2 .
\end{equation}
Using \(\delta_k/\lambda_k\le T\rho_\psi^\eta\beta_k/16\) and \(\beta_k\le\gamma_k\), the parenthesis
in \eqref{eq:lyap_exact_block_raw} is bounded by \(1+T\rho_\psi^\eta\gamma_k/2\). Hence
\begin{align}
(ii)
&\le
\lambda_k\kappa_{z,k}^2
\left(1+\frac{T\rho_\psi^\eta\gamma_k}{2}\right)
\big\|z_{k+1}-y_\mu^\star(x_k)\big\|_{y_\mu^\star(x_k)}^2
-\lambda_kJ_k
+
(iv).
\label{eq:lyap_exact_block_simplified}
\end{align}
By Corollary~\ref{cor:z_fixed_anchor_scheduled},
\[
\big\|z_{k+1}-y_\mu^\star(x_k)\big\|_{y_\mu^\star(x_k)}^2
\le
\left(1-\frac{3T\rho_\psi^\eta\gamma_k}{4}\right)J_k .
\]
Therefore
\begin{align}
(ii)
&\le
\lambda_k
\left[
\kappa_{z,k}^2
\left(1+\frac{T\rho_\psi^\eta\gamma_k}{2}\right)
\left(1-\frac{3T\rho_\psi^\eta\gamma_k}{4}\right)
-1
\right]J_k
+
(iv).
\label{eq:lyap_exact_block_after_contraction}
\end{align}
The tube-closure bound gives
\[
\big\|y_\mu^\star(x_{k+1})-y_\mu^\star(x_k)\big\|_{y_\mu^\star(x_k)}
\le
\frac{T\rho_\psi^\eta\gamma_k}{32}.
\]
Using the same scalar inequality as above with \(a=T\rho_\psi^\eta\gamma_k\), we get
\begin{equation}
\label{eq:lyap_exact_scalar_absorption}
\kappa_{z,k}^2
\left(1+\frac{T\rho_\psi^\eta\gamma_k}{2}\right)
\left(1-\frac{3T\rho_\psi^\eta\gamma_k}{4}\right)
\le
1-\frac{T\rho_\psi^\eta\gamma_k}{8}.
\end{equation}
This is the compact replacement for the long expression that was previously overflowing the page.
Thus
\begin{equation}
\label{eq:lyap_exact_block_final}
(ii)
\le
-\frac{\lambda_kT\rho_\psi^\eta\gamma_k}{8}J_k
+
O\!\left(\xi^2(\ell_{*,0}^\eta)^2\right)
\frac{\alpha_k\beta_k}{T\rho_\psi^\eta\gamma_k}\|q_k^x\|_2^2,
\end{equation}
where we used \(\lambda_{k+1}\alpha_k^2=O(\alpha_k\beta_k)\), which follows from
\(\lambda_{k+1}=\lambda_k+\delta_k\), \(\delta_k/\lambda_k=O(\beta_k)\), and
\(\beta_k=\alpha_k\lambda_k\).

Substituting \eqref{eq:lyap_proxy_block_final} and \eqref{eq:lyap_exact_block_final} into
\eqref{eq:lyap_decomposition} gives
\begin{align}
V_{k+1}-V_k
&\le
-\frac{\xi\alpha_k}{2}\|\nabla F_\mu(x_k)\|_2^2
+
O\!\left(\xi(c_x^\eta)^2\right)\frac{\alpha_k}{\lambda_k^2}
\nonumber\\
&\quad
-\frac{\xi\alpha_k}{4}
\left[
1
-
O\!\left(
\frac{\xi\ell_{\psi,1}^\eta(\ell_{*,0}^\eta)^2}{\rho_\psi^\eta T}
\right)
-
O\!\left(
\frac{\xi\ell_{\psi,1}^\eta(\ell_{*,0}^\eta)^2\beta_k}
{\rho_\psi^\eta T\gamma_k}
\right)
\right]
\|q_k^x\|_2^2
\nonumber\\
&\quad
-\frac{\lambda_k\ell_{\psi,1}^\eta T\rho_\psi^\eta\beta_k}{8}I_k
-\frac{\lambda_k\ell_{\psi,1}^\eta T\rho_\psi^\eta\gamma_k}{16}J_k
\nonumber\\
&\quad
+
O\!\left(
\frac{\ell_{\psi,1}^\eta(\ell_{f,0}^\eta)^2}{(\rho_\psi^\eta)^3}
\right)
\frac{\delta_k}{\lambda_k^2}.
\label{eq:lyap_before_q_absorption}
\end{align}
The last term in the \textit{barrier-aware} schedule is chosen so that
\[
\frac{\xi\ell_{\psi,1}^\eta(\ell_{*,0}^\eta)^2}{\rho_\psi^\eta T}
\quad\text{and}\quad
\frac{\xi\ell_{\psi,1}^\eta(\ell_{*,0}^\eta)^2\beta_k}{\rho_\psi^\eta T\gamma_k}
\]
are sufficiently small. Hence the bracket multiplying \(\|q_k^x\|_2^2\) is nonnegative, and the
entire \(q_k^x\)-term can be discarded. We arrive at
\begin{align}
V_{k+1}-V_k
&\le
-\frac{\xi\alpha_k}{2}\|\nabla F_\mu(x_k)\|_2^2
-\frac{\lambda_k\ell_{\psi,1}^\eta T\rho_\psi^\eta\beta_k}{8}I_k
-\frac{\lambda_k\ell_{\psi,1}^\eta T\rho_\psi^\eta\gamma_k}{16}J_k
\nonumber\\
&\quad
+
O\!\left(\xi(c_x^\eta)^2\right)\frac{\alpha_k}{\lambda_k^2}
+
O\!\left(
\frac{\ell_{\psi,1}^\eta(\ell_{f,0}^\eta)^2}{(\rho_\psi^\eta)^3}
\right)
\frac{\delta_k}{\lambda_k^2}.
\label{eq:lyap_descent_final}
\end{align}
This is exactly \eqref{eq:lyap_descent}.
\end{proof}

\begin{remark}
We state Assumption~\ref{ass:euclidean} globally in the upper variable to avoid trajectory-dependent
regularity assumptions. The same proof of Proposition~\ref{prop:lyapunov_descent} can be localized to any compact outer region on which the
assumed derivative bounds hold. A sufficient condition for such a region is boundedness of the
relevant sublevel sets of \(F_\mu\): the Lyapunov descent keeps \(F_\mu(x_k)\) below the initial
Lyapunov value plus the finite schedule remainder, so the generated outer iterates remain in a
bounded sublevel region.
\end{remark}

\subsection{Proof of Theorem~\ref{thm:deterministic_poly}}

We now derive the deterministic rate from the Lyapunov descent inequality. Theorem~\ref{thm:tube_invariance}
ensures that the exact and proxy target neighborhoods are forward invariant under the \textit{barrier-aware} schedule,
so Proposition~\ref{prop:lyapunov_descent} applies at every outer iteration. After dropping the
negative tracker-error terms, the Lyapunov recursion contains only two summable residuals:
the proxy-gradient bias term \(\alpha_k/\lambda_k^2\) and the multiplier-growth term
\(\delta_k/\lambda_k^2\). The polynomial schedule in Theorem~\ref{thm:deterministic_poly} is chosen
so that these residuals are summable up to logarithmic factors while
\(\sum_{k<K}\alpha_k\) grows as \(K^{2/3}\), yielding the claimed
\(\widetilde O(K^{-2/3})\) stationarity rate.

\begin{proof}[Proof of Theorem~\ref{thm:deterministic_poly}]
By Theorem~\ref{thm:tube_invariance}, the exact and proxy target neighborhoods are forward invariant.
Hence Proposition~\ref{prop:lyapunov_descent} applies for every \(k\). Dropping the negative
tracker terms in that lemma gives
\[
V_{k+1}-V_k
\le
-\frac{\xi\alpha_k}{2}\|\nabla F_\mu(x_k)\|_2^2
+
O\!\left(\xi(c_x^\eta)^2\right)\frac{\alpha_k}{\lambda_k^2}
+
O\!\left(\frac{\ell_{\psi,1}^\eta(\ell_{f,0}^\eta)^2}{(\rho_\psi^\eta)^3}\right)
\frac{\delta_k}{\lambda_k^2}.
\]
Summing from \(k=0\) to \(K-1\), and using \(V_K\ge \inf_x F_\mu(x)\), gives
\[
\frac{\xi}{2}
\sum_{k=0}^{K-1}\alpha_k\|\nabla F_\mu(x_k)\|_2^2
\le
V_0-\inf_x F_\mu(x)
+
O\!\left(\xi(c_x^\eta)^2\right)
\sum_{k=0}^{K-1}\frac{\alpha_k}{\lambda_k^2}
+
O\!\left(\frac{\ell_{\psi,1}^\eta(\ell_{f,0}^\eta)^2}{(\rho_\psi^\eta)^3}\right)
\sum_{k=0}^{K-1}\frac{\delta_k}{\lambda_k^2}.
\]

For the polynomial schedule,
\[
\alpha_k=\frac{\alpha_0}{(k+k_0)^{1/3}},
\qquad
\lambda_k=\lambda_0\left(\frac{k+k_0}{k_0}\right)^{1/3},
\]
we have
\[
\frac{\alpha_k}{\lambda_k^2}
=
\frac{\alpha_0 k_0^{2/3}}{\lambda_0^2}\frac{1}{k+k_0},
\]
and therefore
\[
\sum_{k=0}^{K-1}\frac{\alpha_k}{\lambda_k^2}
=
O(\log K).
\]
Moreover, since \(\delta_k=\lambda_{k+1}-\lambda_k\) and the \textit{barrier-aware} schedule gives
\(\delta_k/\lambda_k\le 1/64\),
\[
\frac{\delta_k}{\lambda_k^2}
=
\left(\frac1{\lambda_k}-\frac1{\lambda_{k+1}}\right)
\left(1+\frac{\delta_k}{\lambda_k}\right)
\le
\left(1+\frac1{64}\right)
\left(\frac1{\lambda_k}-\frac1{\lambda_{k+1}}\right).
\]
Thus
\[
\sum_{k=0}^{K-1}\frac{\delta_k}{\lambda_k^2}
\le
\left(1+\frac1{64}\right)
\left(\frac1{\lambda_0}-\frac1{\lambda_K}\right)
=
O(1).
\]
Also,
\[
\sum_{k=0}^{K-1}\alpha_k
\ge
\alpha_{K-1}K
=
\frac{\alpha_0K}{(K+k_0)^{1/3}}
=
\Omega(K^{2/3}).
\]
Therefore,
\[
\min_{0\le k\le K-1}\|\nabla F_\mu(x_k)\|_2^2
\le
\frac{\sum_{k=0}^{K-1}\alpha_k\|\nabla F_\mu(x_k)\|_2^2}
{\sum_{k=0}^{K-1}\alpha_k}
=
\widetilde O(K^{-2/3}).
\]
Equivalently, to obtain
\(\min_{0\le k\le K-1}\|\nabla F_\mu(x_k)\|_2^2\le \epsilon\), it suffices to take
\(K=\widetilde O(\epsilon^{-3/2})\). This proves Theorem~\ref{thm:deterministic_poly}.
\end{proof}

\section{Stochastic Extension}
\label{app:stochastic_proofs}

For the upper-level-noise-only extension, we assume that only the \(f\)-gradients are stochastic.

\begin{assumption}[Upper-level stochastic first-order oracles]
\label{ass:stoch_oracle}
Let \(\mathcal F\) denote the sigma-field generated by the algorithmic history before an oracle query.
For every \((x,y)\) measurable with respect to \(\mathcal F\), we access stochastic \(f\)-gradients
\[
\widehat\nabla_y f(x,y;\zeta),
\qquad
\widehat\nabla_x f(x,y;\zeta),
\]
such that
\[
\mathbb E[\widehat\nabla_y f(x,y;\zeta)\mid\mathcal F]=\nabla_y f(x,y),
\qquad
\mathbb E[\widehat\nabla_x f(x,y;\zeta)\mid\mathcal F]=\nabla_x f(x,y),
\]
and
\[
\mathbb E[\|\widehat\nabla_y f(x,y;\zeta)-\nabla_y f(x,y)\|_2^2\mid\mathcal F]\le \sigma_{f,y}^2,
\qquad
\mathbb E[\|\widehat\nabla_x f(x,y;\zeta)-\nabla_x f(x,y)\|_2^2\mid\mathcal F]\le \sigma_{f,x}^2.
\]
The lower objective \(g\), the barrier \(\phi\), and hence \(\psi_\mu=g+\mu\phi\), are deterministic.
\end{assumption}

\begin{assumption}[Local almost-sure bounds on sampled upper-level gradients]
\label{ass:stoch_local_bounds}
There exist finite constants \(\bar\ell_{f,x,0}^{\eta}\) and
\(\bar\ell_{f,y,0}^{\eta}\) such that, on the maintained-tube event, for every visited
\(x\in X\) and every
\(y\in T_{2\eta,\mu}(x)\cup T_{2\eta,\mu}^{\lambda}(x)\),
\[
\|\widehat\nabla_x f(x,y;\zeta)\|_2\le \bar\ell_{f,x,0}^{\eta}
\quad\text{a.s.},
\]
and
\[
\|\widehat\nabla_y f(x,y;\zeta)\|_{*}\le \bar\ell_{f,y,0}^{\eta}
\quad\text{a.s.},
\]
where \(\|\cdot\|_*\) denotes the anchored Dikin dual norm associated with the relevant exact or
proxy center.
\end{assumption}

\paragraph{Stochastic tube-closure condition.}
The stochastic schedule is chosen so that the almost-sure oracle perturbations in
Assumption~\ref{ass:stoch_local_bounds} are dominated by the exact and proxy tracker contraction
budgets. Under this condition, the exact and proxy target neighborhoods remain forward invariant along the
stochastic trajectory.

\subsection{Tube Invariance under Upper-level Noise}
\label{app:stoch_tube_maintenance}

We first show that the tube invariance argument is stable under upper-level stochastic noise, provided
the sampled perturbations are smaller than the contraction margins of the two trackers. The result is
pathwise: on every realization satisfying the local almost-sure bounds in
Assumption~\ref{ass:stoch_local_bounds} and the stochastic closure inequalities below, the exact and
proxy target neighborhoods remain forward invariant.

For the stochastic algorithm, the exact tracker is unchanged, while the proxy tracker and outer
direction use
\[
\widehat\nabla_y L_{\lambda_k,\mu}(x_k,y)
:=
\widehat\nabla_y f(x_k,y;\zeta)
+
\lambda_k\nabla_y\psi_\mu(x_k,y),
\]
and
\[
\widehat q_k^x
:=
\widehat\nabla_x f(x_k,y_{k+1};\zeta_k^x)
+
\lambda_k
\bigl(
\nabla_x\psi_\mu(x_k,y_{k+1})
-
\nabla_x\psi_\mu(x_k,z_{k+1})
\bigr).
\]
Thus \(x_{k+1}=x_k-\xi\alpha_k\widehat q_k^x\). Let
\[
\Gamma_{\psi,k}
:=
1-(1-\rho_\psi^\eta\gamma_k)^T,
\qquad
\Gamma_{L,k}
:=
1-\left(1-\frac{7\rho_\psi^\eta}{8}\beta_k\right)^T,
\qquad
\beta_k:=\alpha_k\lambda_k .
\]
Define the pathwise upper-level noise budgets
\[
\bar\sigma_{y}^{\eta}
:=
\ell_{f,0}^{\eta}+\bar\ell_{f,y,0}^{\eta},
\qquad
\bar Q_k
:=
\bar\ell_{f,x,0}^{\eta}+2\lambda_k\ell_{g,0}.
\]
Here \(\bar\sigma_y^\eta\) bounds the sampled \(y\)-gradient perturbation in the relevant anchored
Dikin dual norm, and \(\bar Q_k\) bounds the sampled outer direction.

\begin{proposition}[Stochastic tube invariance]
\label{prop:stoch_tube_maintenance}
Suppose Assumption~\ref{ass:euclidean}, Assumption~\ref{ass:stoch_oracle}, and
Assumption~\ref{ass:stoch_local_bounds} hold. Suppose also that the deterministic
\textit{barrier-aware} conditions hold, and that for every \(k\),
\begin{align}
T\alpha_k\bar\sigma_y^\eta
&\le
\frac{\eta}{8}\Gamma_{L,k},
\label{eq:stoch_proxy_inner_closure}\\
\ell_{*,0}^{\eta}\xi\beta_k
\left(\frac{\bar\ell_{f,x,0}^{\eta}}{\lambda_0}+2\ell_{g,0}\right)
&\le
\frac{\eta}{8}\Gamma_{\psi,k},
\label{eq:stoch_exact_center_closure}\\
\left(
\frac{T\ell_{f,0}^{\eta}}{8\lambda_0}
+
\xi\ell_{\lambda,0}^{\eta}
\left(\frac{\bar\ell_{f,x,0}^{\eta}}{\lambda_0}+2\ell_{g,0}\right)
\right)\beta_k
&\le
\frac{\eta}{8}\Gamma_{L,k}.
\label{eq:stoch_proxy_center_closure}
\end{align}
If the initial warm starts satisfy
\(z_0\in T_{\eta,\mu}(x_0)\) and
\(y_0\in T_{\eta,\mu}^{\lambda_0}(x_0)\), then, on the pathwise event where the sampled-gradient
bounds in Assumption~\ref{ass:stoch_local_bounds} hold, for every outer iteration \(k\), all exact
inner iterates remain in \(T_{\eta,\mu}(x_k)\), all proxy inner iterates remain in
\(T_{\eta,\mu}^{\lambda_k}(x_k)\), and the next warm starts satisfy
\[
z_{k+1}\in T_{\eta,\mu}(x_{k+1}),
\qquad
y_{k+1}\in T_{\eta,\mu}^{\lambda_{k+1}}(x_{k+1}).
\]
Thus the exact and proxy target neighborhoods are forward invariant along the stochastic trajectory.
\end{proposition}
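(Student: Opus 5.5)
The proof is an induction on the outer iteration \(k\) that mirrors the deterministic proof of Theorem~\ref{thm:tube_invariance}. The base case is the assumed warm-start initialization. For the inductive step, assume \(z_k\in T_{\eta,\mu}(x_k)\) and \(y_k\in T_{\eta,\mu}^{\lambda_k}(x_k)\).

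The exact tracker uses no stochastic oracle, so its fixed-center analysis carries over verbatim. Lemma~\ref{lem:z_fixed_anchor_contraction}, applied step by step, keeps every \(z_k^{(t)}\) in \(T_{\eta,\mu}(x_k)\) and gives \(\|z_{k+1}-y_\mu^\star(x_k)\|^2_{y_\mu^\star(x_k)}\le(1-\Gamma_{\psi,k})\eta^2\). The only change is the outer step \(x_{k+1}-x_k=-\xi\alpha_k\widehat q_k^x\). By Assumption~\ref{ass:stoch_local_bounds} we have \(\|\widehat q_k^x\|_2\le\bar Q_k\le\lambda_k(\bar\ell^\eta_{f,x,0}/\lambda_0+2\ell_{g,0})\). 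Proposition~\ref{prop:local_consequences}(ii) then bounds the exact center drift by the left side of \eqref{eq:stoch_exact_center_closure}, which is at most \(\frac{\eta}{8}\Gamma_{\psi,k}\). Lemma~\ref{lem:contraction_plus_drift_tube} then yields \(z_{k+1}\in T_{\eta,\mu}(x_{k+1})\).

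For the proxy tracker, I split each stochastic inner step into the deterministic step plus a perturbation: \(y_k^{(t+1)}=\tilde y_k^{(t+1)}-\alpha_k\nabla^2\phi(y_k)^{-1}e_t\), where \(e_t:=\widehat\nabla_y f-\nabla_y f\). Its anchored dual norm satisfies \(\|e_t\|_*\le\bar\sigma_y^\eta\), and by Proposition~\ref{prop:frozen_metric_equivalence} the primal displacement is at most \(\alpha_k\bar\sigma_y^\eta\) up to an absorbed \((1-\eta)^{-2}\) factor. I track the unsquared error \(r_t:=\|y_k^{(t)}-y^\star_{\lambda_k,\mu}(x_k)\|_{y^\star_{\lambda_k,\mu}(x_k)}\). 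Lemma~\ref{lem:y_fixed_anchor_contraction} applied to the deterministic part gives \(r_{t+1}\le\sqrt{1-\tfrac78\rho_\psi^\eta\beta_k}\,r_t+\alpha_k\bar\sigma_y^\eta\). Unrolling over \(T\) steps and bounding the geometric sum crudely by \(T\) gives
\[
r_T\le\eta\sqrt{1-\Gamma_{L,k}}+T\alpha_k\bar\sigma_y^\eta\le\eta\sqrt{1-\Gamma_{L,k}}+\tfrac{\eta}{8}\Gamma_{L,k},
\]
using \eqref{eq:stoch_proxy_inner_closure}.

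This is the main obstacle. The intermediate iterates must stay in the region where the contraction estimate is valid, and a perturbation could push some \(r_t\) slightly above \(\eta\). I would handle this by running the induction on \(t\) against the buffer \(T_{2\eta,\mu}^{\lambda_k}(x_k)\), where Lemma~\ref{lem:proxy_objective_curvature} still holds. Each \(r_t\) is at most \(\eta+T\alpha_k\bar\sigma_y^\eta\le\eta+\eta/8<2\eta\), so all bounds stay valid. Tightness back to \(T_{\eta,\mu}^{\lambda_k}(x_k)\) is then recovered at the end from the sharper bound \(r_t\le\eta\sqrt{(1-\tfrac78\rho^\eta_\psi\beta_k)^t}+\tfrac{\eta}{8}\Gamma_{L,k}\le\eta\), for \(t\ge1\), once the algebra with the \(\eta\)-dependent frozen-metric factors is checked. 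The \(t=0\) case holds by hypothesis.

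To close the proxy step, Proposition~\ref{prop:local_consequences}(i), combined with the \(\delta_k\) bound from (S2) and the sampled bound on \(\|\widehat q_k^x\|\), controls the proxy center drift by the left side of \eqref{eq:stoch_proxy_center_closure}, which is at most \(\frac{\eta}{8}\Gamma_{L,k}\). I then need a variant of Lemma~\ref{lem:contraction_plus_drift_tube} with total slack \(\frac{\eta}{4}\Gamma\) (inner noise plus drift) in place of \(\frac{\eta}{8}\Gamma\). Its proof goes through unchanged: we need \(\sqrt{1-\Gamma}+\Gamma/4\le 1-\eta\Gamma/8\), and this holds because \(1-\Gamma/2+\Gamma/4=1-\Gamma/4\le1-\eta\Gamma/8\) for \(\eta<1/2\). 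This gives \(y_{k+1}\in T_{\eta,\mu}^{\lambda_{k+1}}(x_{k+1})\) and completes the induction pathwise on the event of Assumption~\ref{ass:stoch_local_bounds}.
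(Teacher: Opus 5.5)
Your overall architecture matches the paper's proof. The exact tracker is handled deterministically. The sampled outer step is bounded by $\xi\beta_k(\bar\ell_{f,x,0}^{\eta}/\lambda_0+2\ell_{g,0})$. The two center drifts are controlled by \eqref{eq:stoch_exact_center_closure} and \eqref{eq:stoch_proxy_center_closure}. The proxy warm start is closed by the anchor-switch computation with combined slack $\frac{\eta}{4}\Gamma_{L,k}$; the paper writes this as $(1-\frac38\Gamma_{L,k})\eta$ plus drift $\frac{\eta}{8}\Gamma_{L,k}$, which is the same inequality.

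The one step that fails is your recovery of $r_t\le\eta$ for the intermediate proxy iterates. The bound $r_t\le\eta(1-\frac78\rho_\psi^\eta\beta_k)^{t/2}+\frac{\eta}{8}\Gamma_{L,k}$ is valid. The claim that it is at most $\eta$ for every $t\ge1$ is false once $T\ge5$ and $\rho_\psi^\eta\beta_k$ is small, which is the regime the schedule enforces.
\begin{itemize}
\item At $t=1$ the contraction gain is only about $\frac{7}{16}\rho_\psi^\eta\beta_k\eta$.
\item The noise term is $\frac{\eta}{8}\Gamma_{L,k}\approx\frac{7}{64}T\rho_\psi^\eta\beta_k\eta$.
\item The cause is that you charge the whole $T$-step noise budget $T\alpha_k\bar\sigma_y^\eta$ against a single step of contraction.
\end{itemize}
This is a structural problem, not a matter of the absorbed frozen-metric constants. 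As written you only obtain $r_t\le\frac98\eta$. That suffices for validity via the $2\eta$-buffer and for the final warm start. It does not give the statement's claim that every proxy inner iterate lies in $T_{\eta,\mu}^{\lambda_k}(x_k)$.

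The fix, which is what the paper does, is to notice that the ``obstacle'' is not there. Divide \eqref{eq:stoch_proxy_inner_closure} by $T$ and use $\Gamma_{L,k}\le\frac78T\rho_\psi^\eta\beta_k$ (Bernoulli's inequality). This gives the per-step domination $\alpha_k\bar\sigma_y^\eta\le\frac{7\eta}{64}\rho_\psi^\eta\beta_k$. Then, by induction on $t$ and using $\sqrt{1-s}\le 1-s/2$, $r_t\le\eta$ implies
\[
r_{t+1}\le\bigl(1-\tfrac{7}{16}\rho_\psi^\eta\beta_k\bigr)\eta+\tfrac{7}{64}\rho_\psi^\eta\beta_k\eta\le\eta .
\]
So every inner iterate stays in the target neighborhood directly. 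The contraction estimate is never invoked outside $T_{\eta,\mu}^{\lambda_k}(x_k)$, and the buffer detour becomes unnecessary. Equivalently, you could keep $t\alpha_k\bar\sigma_y^\eta$ rather than $T\alpha_k\bar\sigma_y^\eta$ in your unrolled bound, which also restores $r_t\le\eta$ for all $t$.
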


\begin{proof}
The proof is by induction over the outer iteration \(k\). Assume that
\(z_k\in T_{\eta,\mu}(x_k)\) and
\(y_k\in T_{\eta,\mu}^{\lambda_k}(x_k)\). We prove that all inner iterates remain in the corresponding
target neighborhoods and that the next warm starts lie in the target neighborhoods at \(x_{k+1}\).

\proofstep{Exact tracker.}
The exact tracker does not use stochastic gradients, since \(g\), \(\phi\), and \(\psi_\mu\) are
deterministic. Therefore the deterministic fixed-center contraction applies:
\[
\|z_k^{(t+1)}-y_\mu^\star(x_k)\|_{y_\mu^\star(x_k)}^2
\le
(1-\rho_\psi^\eta\gamma_k)
\|z_k^{(t)}-y_\mu^\star(x_k)\|_{y_\mu^\star(x_k)}^2 .
\]
Since \(z_k^{(0)}=z_k\in T_{\eta,\mu}(x_k)\), all exact inner iterates remain in
\(T_{\eta,\mu}(x_k)\). Moreover,
\[
\|z_{k+1}-y_\mu^\star(x_k)\|_{y_\mu^\star(x_k)}
\le
\sqrt{1-\Gamma_{\psi,k}}\,
\|z_k-y_\mu^\star(x_k)\|_{y_\mu^\star(x_k)}
\le
\left(1-\frac{\Gamma_{\psi,k}}{2}\right)\eta .
\]

\proofstep{Proxy tracker under sampled \(y\)-gradient noise.}
Let \(y_k^{(t)}\) be a proxy inner iterate, and write
\[
\varepsilon_{k,t}^{y}
:=
\widehat\nabla_y f(x_k,y_k^{(t)};\zeta_{k,t}^y)
-
\nabla_y f(x_k,y_k^{(t)}).
\]
On the proxy neighborhood, Assumption~\ref{ass:stoch_local_bounds} and the deterministic
local \(f\)-bound give
\[
\|\varepsilon_{k,t}^{y}\|_{y_{\lambda_k,\mu}^\star(x_k),*}
\le
\bar\sigma_y^\eta .
\]
The proxy update equals the deterministic proxy update plus the perturbation
\(-\alpha_k \nabla^2\phi(y_k)^{-1}\varepsilon_{k,t}^y\). By the frozen-metric/center-anchor comparison in
Appendix~\ref{app:tracker}, after absorbing fixed \(\eta\)-dependent constants,
\[
\|\nabla^2\phi(y_k)^{-1}\varepsilon_{k,t}^y\|_{y_{\lambda_k,\mu}^\star(x_k)}
\le
\|\varepsilon_{k,t}^y\|_{y_{\lambda_k,\mu}^\star(x_k),*}
\le
\bar\sigma_y^\eta .
\]
Combining this with the deterministic one-step contraction gives
\begin{equation}
\label{eq:stoch_proxy_inner_step}
\|y_k^{(t+1)}-y_{\lambda_k,\mu}^\star(x_k)\|_{y_{\lambda_k,\mu}^\star(x_k)}
\le
\sqrt{1-\frac{7\rho_\psi^\eta}{8}\beta_k}\,
\|y_k^{(t)}-y_{\lambda_k,\mu}^\star(x_k)\|_{y_{\lambda_k,\mu}^\star(x_k)}
+
\alpha_k\bar\sigma_y^\eta .
\end{equation}
The condition \eqref{eq:stoch_proxy_inner_closure} implies, in particular,
\[
\alpha_k\bar\sigma_y^\eta
\le
\frac{\eta}{8}\cdot \frac{\rho_\psi^\eta}{2}\beta_k .
\]
Thus, if \(y_k^{(t)}\in T_{\eta,\mu}^{\lambda_k}(x_k)\), then
\[
\|y_k^{(t+1)}-y_{\lambda_k,\mu}^\star(x_k)\|_{y_{\lambda_k,\mu}^\star(x_k)}
\le
\left(1-\frac{7\rho_\psi^\eta}{16}\beta_k\right)\eta
+
\frac{\rho_\psi^\eta}{16}\beta_k\eta
\le
\eta .
\]
Hence all proxy inner iterates remain in \(T_{\eta,\mu}^{\lambda_k}(x_k)\). Iterating
\eqref{eq:stoch_proxy_inner_step} over \(T\) inner steps gives
\[
\|y_{k+1}-y_{\lambda_k,\mu}^\star(x_k)\|_{y_{\lambda_k,\mu}^\star(x_k)}
\le
\sqrt{1-\Gamma_{L,k}}\,
\|y_k-y_{\lambda_k,\mu}^\star(x_k)\|_{y_{\lambda_k,\mu}^\star(x_k)}
+
T\alpha_k\bar\sigma_y^\eta .
\]
Using \eqref{eq:stoch_proxy_inner_closure} and
\(\sqrt{1-\Gamma_{L,k}}\le 1-\Gamma_{L,k}/2\), we obtain
\[
\|y_{k+1}-y_{\lambda_k,\mu}^\star(x_k)\|_{y_{\lambda_k,\mu}^\star(x_k)}
\le
\left(1-\frac{3}{8}\Gamma_{L,k}\right)\eta .
\]

\proofstep{Sampled outer movement.}
By Assumption~\ref{ass:stoch_local_bounds} and the deterministic bound on
\(\nabla_x\psi_\mu=\nabla_x g\),
\[
\|\widehat q_k^x\|_2
\le
\bar\ell_{f,x,0}^{\eta}+2\lambda_k\ell_{g,0}.
\]
Therefore
\begin{equation}
\label{eq:stoch_outer_step_bound}
\|x_{k+1}-x_k\|_2
\le
\xi\alpha_k
\bigl(\bar\ell_{f,x,0}^{\eta}+2\lambda_k\ell_{g,0}\bigr)
\le
\xi\beta_k
\left(\frac{\bar\ell_{f,x,0}^{\eta}}{\lambda_0}+2\ell_{g,0}\right).
\end{equation}

\proofstep{Exact center shift.}
By local stability of the exact minimizer map and \eqref{eq:stoch_outer_step_bound},
\[
\|y_\mu^\star(x_{k+1})-y_\mu^\star(x_k)\|_{y_\mu^\star(x_k)}
\le
\ell_{*,0}^{\eta}
\xi\beta_k
\left(\frac{\bar\ell_{f,x,0}^{\eta}}{\lambda_0}+2\ell_{g,0}\right)
\le
\frac{\eta}{8}\Gamma_{\psi,k},
\]
where the last inequality is \eqref{eq:stoch_exact_center_closure}. Combining this drift bound with
the old-center contraction estimate for \(z_{k+1}\), and applying the self-concordant anchor-switch
inequality exactly as in the deterministic tube invariance proof, yields
\[
\|z_{k+1}-y_\mu^\star(x_{k+1})\|_{y_\mu^\star(x_{k+1})}\le \eta .
\]

\proofstep{Proxy center shift.}
By the proxy minimizer stability bound,
\[
\|y_{\lambda_{k+1},\mu}^\star(x_{k+1})
-
y_{\lambda_k,\mu}^\star(x_k)\|_{y_{\lambda_k,\mu}^\star(x_k)}
\le
\frac{2\delta_k\ell_{f,0}^{\eta}}{\lambda_k\lambda_{k+1}\rho_\psi^\eta}
+
\ell_{\lambda,0}^{\eta}\|x_{k+1}-x_k\|_2 .
\]
Using the deterministic multiplier-growth condition
\(\delta_k/\lambda_k\le T\rho_\psi^\eta\beta_k/16\), \(\lambda_{k+1}\ge\lambda_k\ge\lambda_0\), and
\eqref{eq:stoch_outer_step_bound}, we get
\[
\|y_{\lambda_{k+1},\mu}^\star(x_{k+1})
-
y_{\lambda_k,\mu}^\star(x_k)\|_{y_{\lambda_k,\mu}^\star(x_k)}
\le
\left(
\frac{T\ell_{f,0}^{\eta}}{8\lambda_0}
+
\xi\ell_{\lambda,0}^{\eta}
\left(\frac{\bar\ell_{f,x,0}^{\eta}}{\lambda_0}+2\ell_{g,0}\right)
\right)\beta_k .
\]
By \eqref{eq:stoch_proxy_center_closure}, this is at most
\(\eta\Gamma_{L,k}/8\). Combining this proxy-center drift bound with the old-center estimate
\[
\|y_{k+1}-y_{\lambda_k,\mu}^\star(x_k)\|_{y_{\lambda_k,\mu}^\star(x_k)}
\le
\left(1-\frac{3}{8}\Gamma_{L,k}\right)\eta
\]
and applying the same anchor-switch argument as above gives
\[
\|y_{k+1}-y_{\lambda_{k+1},\mu}^\star(x_{k+1})\|_{y_{\lambda_{k+1},\mu}^\star(x_{k+1})}
\le
\eta .
\]

Thus the next warm starts lie in the target neighborhoods. This closes the induction and proves the forward
invariance of the exact and proxy target neighborhoods along the stochastic trajectory.
\end{proof}

\subsection{Expected Lyapunov Descent}
\label{app:stoch_lyapunov}

We now derive the stochastic analogue of Proposition~\ref{prop:lyapunov_descent}. The proof follows
the deterministic Lyapunov argument, with two additional noise contributions. First, stochastic
\(\nabla_y f\)-queries in the proxy tracker add a variance term to the proxy fixed-center contraction.
Second, the stochastic outer direction perturbs the motion of \(x_k\), but its linear contribution has
zero conditional mean; only the quadratic smoothness and center-drift remainders contribute in
expectation.

For each outer iteration \(k\), let \(\mathcal F_k\) denote the sigma-field
generated by the algorithmic history at the beginning of iteration \(k\), and
for each inner step \(t\in\{0,1,\ldots,T\}\), let \(\mathcal F_{k,t}\) denote
the sigma-field generated by the history through the start of the \(t\)-th
proxy inner step, so that \(\mathcal F_{k,0}=\mathcal F_k\). After the inner
loops are completed, let \(\mathcal G_k\) denote the sigma-field generated by
the history immediately before sampling the outer stochastic gradient, so that
\(\mathcal F_{k,T}\subseteq \mathcal G_k\). Write
\[
\widehat q_k^x = q_k^x+\varepsilon_k^x,
\qquad
\mathbb E[\varepsilon_k^x\mid \mathcal G_k]=0,
\qquad
\mathbb E[\|\varepsilon_k^x\|_2^2\mid \mathcal G_k]\le \sigma_{f,x}^2,
\]
where
\[
q_k^x
:=
\nabla_x f(x_k,y_{k+1})
+\lambda_k\bigl(\nabla_x\psi_\mu(x_k,y_{k+1})-\nabla_x\psi_\mu(x_k,z_{k+1})\bigr)
\]
is the conditional mean proxy direction. The stochastic outer update is
\(x_{k+1}=x_k-\xi\alpha_k\widehat q_k^x\). For the stochastic proxy tracker, write
\[
\widehat \nabla_y L_{\lambda_k,\mu}(x_k,y)
=
\nabla_y L_{\lambda_k,\mu}(x_k,y)+\varepsilon_{k,t}^y,
\qquad
\mathbb E[\varepsilon_{k,t}^y\mid \mathcal F_{k,t}]=0.
\]
By the Euclidean--Dikin conversion on the maintained proxy neighborhood, the
variance assumption implies
\[
\mathbb E\!\left[\|\varepsilon_{k,t}^y\|_{y_{\lambda_k,\mu}^\star(x_k),*}^2\,\Big|\,\mathcal F_{k,t}\right]
\le
\sigma_{f,y,\eta}^2
\]
for a finite \(\eta\)-dependent constant \(\sigma_{f,y,\eta}^2\). Set
\[
\sigma_\eta^2 := \sigma_{f,x}^2+\sigma_{f,y,\eta}^2 .
\]
\begin{proposition}[Expected Lyapunov descent]
\label{prop:stoch_lyapunov_descent}
Suppose Assumption~\ref{ass:euclidean}, the stochastic oracle assumptions, and the stochastic
tube-maintenance conditions hold. Let
\[
I_k
:=
\big\|y_k-y_{\lambda_k,\mu}^\star(x_k)\big\|_{y_{\lambda_k,\mu}^\star(x_k)}^2,
\qquad
J_k
:=
\big\|z_k-y_\mu^\star(x_k)\big\|_{y_\mu^\star(x_k)}^2,
\]
and
\[
V_k
:=
F_\mu(x_k)
+
\lambda_k\ell_{\psi,1}^\eta I_k
+
\frac{\lambda_k\ell_{\psi,1}^\eta}{2}J_k .
\]
Then there exist finite constants \(C_1,C_2,C_3>0\), independent of \(k\) and \(K\), such that
\begin{align}
\mathbb E[V_{k+1}\mid \mathcal F_k]-V_k
&\le
-\frac{\xi\alpha_k}{4}\|\nabla F_\mu(x_k)\|_2^2
-\frac{\lambda_k\ell_{\psi,1}^\eta T\rho_\psi^\eta\beta_k}{16}I_k
-\frac{\lambda_k\ell_{\psi,1}^\eta T\rho_\psi^\eta\gamma_k}{32}J_k
\nonumber\\
&\quad
+
C_1\xi(c_x^\eta)^2\frac{\alpha_k}{\lambda_k^2}
+
C_2\frac{\delta_k}{\lambda_k^2}
+
C_3\sigma_\eta^2\alpha_k^2\lambda_k .
\label{eq:stoch_lyap_descent}
\end{align}
Consequently, after dropping the negative tracker terms,
\begin{equation}
\label{eq:stoch_lyap_descent_simplified}
\mathbb E[V_{k+1}\mid \mathcal F_k]-V_k
\le
-\frac{\xi\alpha_k}{4}\|\nabla F_\mu(x_k)\|_2^2
+
C_1\xi(c_x^\eta)^2\frac{\alpha_k}{\lambda_k^2}
+
C_2\frac{\delta_k}{\lambda_k^2}
+
C_3\sigma_\eta^2\alpha_k^2\lambda_k .
\end{equation}
\end{proposition}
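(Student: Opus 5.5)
The plan is to repeat the proof of Proposition~\ref{prop:lyapunov_descent} under conditional expectation. On the pathwise event of Proposition~\ref{prop:stoch_tube_maintenance}, all local Dikin estimates remain valid, so every deterministic inequality below may be invoked. We add only the two noise contributions.

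\emph{Step 1: outer descent in expectation.} Apply $L_F^\eta$-smoothness to $x_{k+1}=x_k-\xi\alpha_k(q_k^x+\varepsilon_k^x)$ and condition on $\mathcal G_k$. The linear noise term $\langle\nabla F_\mu(x_k),\varepsilon_k^x\rangle$ has zero mean. The quadratic term contributes $\frac{L_F^\eta}{2}\xi^2\alpha_k^2\bigl(\|q_k^x\|_2^2+\sigma_{f,x}^2\bigr)$. Since $\xi L_F^\eta\alpha_k\le 1/2$, the first piece is absorbed exactly as in Proposition~\ref{prop:outer_descent}, and the variance piece is $O(\xi\alpha_k\sigma_{f,x}^2)\le O(\sigma_\eta^2\alpha_k^2\lambda_k)$ after rescaling; this last inequality uses $\xi\alpha_k\lesssim\alpha_k\beta_k T$, which holds for the schedule with $\xi/T$ small. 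If that comparison fails, keep the term as $\xi^2\alpha_k^2 L_F^\eta\sigma^2$, which is in any case dominated by $\alpha_k^2\lambda_k$ up to constants. The bias decomposition $\|\nabla F_\mu(x_k)-q_k^x\|$ is unchanged, since $q_k^x$ is the conditional mean.

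\emph{Step 2: proxy tracker with noisy inner steps.} Redo the one-step expansion of Lemma~\ref{lem:y_fixed_anchor_contraction} with the perturbation $-\alpha_k\nabla^2\phi(y_k)^{-1}\varepsilon^y_{k,t}$. Conditioning on $\mathcal F_{k,t}$ kills the cross term. The extra squared term is bounded by $\alpha_k^2\sigma_{f,y,\eta}^2$ after the frozen/center comparison. This gives
\[
\mathbb E\bigl[\|y_k^{(t+1)}-c\|_c^2\bigm|\mathcal F_{k,t}\bigr]\le\Bigl(1-\tfrac{7\rho_\psi^\eta}{8}\beta_k\Bigr)\|y_k^{(t)}-c\|_c^2+\alpha_k^2\sigma_{f,y,\eta}^2,
\]
and summing over $T$ steps adds $T\alpha_k^2\sigma_{f,y,\eta}^2$. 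In the Lyapunov weight $\lambda_k\ell_{\psi,1}^\eta$ this becomes $O(\sigma_\eta^2\alpha_k^2\lambda_k)$, with $T$ and $\ell_{\psi,1}^\eta$ absorbed into $C_3$. The exact tracker is deterministic, so Corollary~\ref{cor:z_fixed_anchor_scheduled} is used verbatim.

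\emph{Step 3: moving anchors.} Use Corollaries~\ref{cor:y_moving_anchor_scheduled} and~\ref{cor:z_moving_anchor_scheduled} with $q_k^x$ replaced by $\widehat q_k^x$. Every appearance is through $\|\widehat q_k^x\|_2^2$, whose conditional expectation is $\|q_k^x\|_2^2+\sigma_{f,x}^2$. The $\|q_k^x\|_2^2$ parts are absorbed by $-\frac{\xi\alpha_k}{4}\|q_k^x\|_2^2$ exactly as before. The $\sigma_{f,x}^2$ parts carry coefficients $O(\xi^2\alpha_k^2/(T\beta_k))\cdot\lambda_k$ and $O(\xi^2\alpha_k\beta_k/(T\gamma_k))$, both $O(\alpha_k^2\lambda_k)$ up to constants because $\gamma_k\ge\beta_k$ and $\lambda_k\ge 1$.

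\emph{Main obstacle.} The cross terms in the moving-anchor recursion (Lemmas~\ref{lem:proxy_center_cross_bound}, \ref{lem:exact_center_cross_bound}) involve the center displacement, which depends on $\varepsilon_k^x$ through $x_{k+1}$. Unlike Step 1, that term is not exactly linear. My plan is to linearize the center map at $x_k$ via $\nabla y^\star$. The linear part then has zero conditional mean given $\mathcal G_k$, since $y_{k+1},z_{k+1}$ are $\mathcal G_k$-measurable. The curvature remainder is bounded by $\ell_{*,1}^\eta\xi^2\alpha_k^2\|\widehat q_k^x\|^2$, which is again $O(\alpha_k^2\lambda_k)$ in expectation.

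The anchor-switch factors $\kappa^2$ are controlled pathwise by the stochastic closure conditions, so the scalar absorption inequalities of Proposition~\ref{prop:lyapunov_descent} still hold. The rate constants are halved (to $1/4$, $1/16$, $1/32$) to leave slack for these extra terms.
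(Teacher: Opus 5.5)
Your overall route is the paper's: pathwise tube invariance, the $\mathcal F_{k,t}$-martingale argument for the noisy proxy inner steps (adding $T\alpha_k^2\sigma_{f,y,\eta}^2$), expected smoothness descent given $\mathcal G_k$, and the moving-anchor recursions. However, the noise bookkeeping in Step~3 is wrong, and taken literally it destroys the rate.

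If you substitute $\widehat q_k^x$ for $q_k^x$ in Corollaries~\ref{cor:y_moving_anchor_scheduled} and~\ref{cor:z_moving_anchor_scheduled}, the variance passes through the Young-inequality coefficients. After the Lyapunov weighting these are, up to local constants, $\lambda_k\xi^2\alpha_k^2/(T\rho_\psi^\eta\beta_k)=\xi^2\alpha_k/(T\rho_\psi^\eta)$ and $\xi^2\alpha_k\beta_k/(T\rho_\psi^\eta\gamma_k)$. Neither is $O(\alpha_k^2\lambda_k)=O(\alpha_k\beta_k)$. Since $\beta_k\to0$ and, under the stochastic schedule, $\beta_k/\gamma_k$ is a constant, both are $\Theta(\alpha_k)$. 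The relation $\beta_k\le\gamma_k$ only gives $\alpha_k\beta_k/\gamma_k\le\alpha_k$, which is the wrong direction.

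Unlike its $\|q_k^x\|_2^2$ companion, a residual $c\,\alpha_k\sigma_{f,x}^2$ cannot be absorbed by $-\tfrac{\xi\alpha_k}{4}\|q_k^x\|_2^2$. After telescoping it has the same order as $\sum_k\alpha_k$, so it leaves a non-vanishing floor of order $\xi\sigma_{f,x}^2/T$ instead of $\widetilde O(K^{-2/5})$. The first route in your Step~1 has the same defect: $\xi\lesssim T\beta_k$ fails because $\beta_k\to0$. Only your fallback $\tfrac{L_F^\eta}{2}\xi^2\alpha_k^2\sigma_{f,x}^2$ is valid there.

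The correct treatment is the one in your ``main obstacle'' paragraph. It is what the paper does in \eqref{eq:stoch_proxy_moving_recursion}--\eqref{eq:stoch_exact_moving_recursion}, and you must use it \emph{instead of} Step~3, not alongside it.
\begin{enumerate}
\item Write $x_{k+1}-x_k=-\xi\alpha_k q_k^x-\xi\alpha_k\varepsilon_k^x$ in the linearized cross term.
\item The $\varepsilon_k^x$-part has zero conditional mean given $\mathcal G_k$, because $v_k$ and the Jacobians $\nabla y_{\lambda_k,\mu}^\star(x_k)$, $\nabla y_\mu^\star(x_k)$ are $\mathcal G_k$-measurable.
\item Apply Young's inequality only to the conditional-mean direction $q_k^x$. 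Its contribution is then absorbed exactly as in the deterministic proof.
\item With this, $\sigma_{f,x}^2$ enters only through the squared center drift and the Taylor remainder. Each is $O(\xi^2\alpha_k^2)$, hence $O(\alpha_k^2\lambda_k)$ after weighting.
\end{enumerate}

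One further point, which the paper's proof also passes over: this cross term sits inside the factor $\kappa_{y,k}^2$ (resp.\ $\kappa_{z,k}^2$). That factor depends on $\varepsilon_k^x$ through $x_{k+1}$, so the zero-mean step is not immediate. First replace it by a deterministic pathwise upper bound, available from the almost-sure oracle bounds and the stochastic closure conditions. This is legitimate because it multiplies a nonnegative squared norm. Only then take $\mathbb E[\cdot\mid\mathcal G_k]$.
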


\begin{proof}
\proofstep{Stochastic proxy fixed-center contraction.}
Fix \(k\) and condition on the history before the \(t\)-th proxy inner step. The stochastic update is
the deterministic proxy step plus the perturbation
\(-\alpha_k \nabla^2\phi(y_k)^{-1}\varepsilon_{k,t}^y\). By the frozen-metric/center-anchor reduction from
Appendix~\ref{app:tracker}, 
and the same deterministic contraction estimate used in Lemma~\ref{lem:y_fixed_anchor_contraction},
the noiseless part of the proxy step satisfies the one-step bound with factor
\(1-\frac{7\rho_\psi^\eta}{8}\beta_k\). The martingale-difference property removes the linear
noise term, and the quadratic noise term is bounded by the conditional variance. Hence
\[
\mathbb E\!\left[
\|y_k^{(t+1)}-y_{\lambda_k,\mu}^\star(x_k)\|^2_{y_{\lambda_k,\mu}^\star(x_k)}
\mid \mathcal F_{k,t}
\right]
\le
\left(1-\frac{7\rho_\psi^\eta}{8}\beta_k\right)
\|y_k^{(t)}-y_{\lambda_k,\mu}^\star(x_k)\|^2_{y_{\lambda_k,\mu}^\star(x_k)}
+
C\alpha_k^2\sigma_{f,y,\eta}^2 .
\]
Repeating this for \(t=0,\ldots,T-1\), using
\(T\rho_\psi^\eta\beta_k\le 1/4\), and summing the geometric noise terms gives
\begin{equation}\label{eq:stoch_proxy_fixed_contraction}
\mathbb E\!\left[
\|y_{k+1}-y_{\lambda_k,\mu}^\star(x_k)\|^2_{y_{\lambda_k,\mu}^\star(x_k)}
\mid \mathcal F_k
\right]
\le
\left(1-\frac{3T\rho_\psi^\eta}{4}\beta_k\right)I_k
+
CT\alpha_k^2\sigma_{f,y,\eta}^2 .
\end{equation}

\proofstep{Expected outer descent.}
Condition on \(\mathcal G_k\). By \(L_F^\eta\)-smoothness of \(F_\mu\),
\[
F_\mu(x_{k+1})
\le
F_\mu(x_k)
-\xi\alpha_k\langle \nabla F_\mu(x_k),\widehat q_k^x\rangle
+
\frac{L_F^\eta\xi^2\alpha_k^2}{2}\|\widehat q_k^x\|_2^2 .
\]
Taking conditional expectation and using
\(\mathbb E[\varepsilon_k^x\mid \mathcal G_k]=0\) gives
\[
\mathbb E[F_\mu(x_{k+1})-F_\mu(x_k)\mid \mathcal G_k]
\le
-\xi\alpha_k\langle \nabla F_\mu(x_k),q_k^x\rangle
+
\frac{L_F^\eta\xi^2\alpha_k^2}{2}
\bigl(\|q_k^x\|_2^2+\sigma_{f,x}^2\bigr).
\]
Using \(\xi L_F^\eta\alpha_k\le 1/2\), the same algebra as in
Proposition~\ref{prop:outer_descent}, and the proxy-gradient bias bound from
Proposition~\ref{prop:local_consequences}, we obtain
\begin{align}
\mathbb E[F_\mu(x_{k+1})-F_\mu(x_k)\mid \mathcal G_k]
&\le
-\frac{\xi\alpha_k}{4}
\Bigl(2\|\nabla F_\mu(x_k)\|_2^2+\|q_k^x\|_2^2\Bigr)
\nonumber\\
&\quad
+
\frac{\lambda_k\ell_{\psi,1}^\eta T\rho_\psi^\eta\beta_k}{16}
\|y_{k+1}-y_{\lambda_k,\mu}^\star(x_k)\|_{y_{\lambda_k,\mu}^\star(x_k)}^2
\nonumber\\
&\quad
+
\frac{\lambda_k\ell_{\psi,1}^\eta T\rho_\psi^\eta\gamma_k}{64}
\|z_{k+1}-y_\mu^\star(x_k)\|_{y_\mu^\star(x_k)}^2
\nonumber\\
&\quad
+
C\xi(c_x^\eta)^2\frac{\alpha_k}{\lambda_k^2}
+
C\xi^2\alpha_k^2\sigma_{f,x}^2 .
\label{eq:stoch_outer_descent}
\end{align}

\proofstep{Expected moving-anchor recursions.}
The moving-anchor recursions from Appendix~\ref{app:tracker} are pathwise deterministic estimates
once the outer displacement is fixed. In the stochastic setting,
\[
x_{k+1}-x_k
=
-\xi\alpha_k(q_k^x+\varepsilon_k^x).
\]
In the Taylor expansions for the exact and proxy center shifts, the terms linear in
\(\varepsilon_k^x\) have conditional mean zero because the relevant test vectors are
\(\mathcal G_k\)-measurable. The quadratic remainders contribute only
\(O(\xi^2\alpha_k^2\sigma_{f,x}^2)\). Thus the scheduled moving-anchor recursions become
\begin{align}
\mathbb E[\lambda_{k+1}I_{k+1}\mid \mathcal G_k]
&\le
\lambda_k\kappa_{y,k}^2
\left(1+\frac{T\rho_\psi^\eta}{4}\beta_k\right)
\|y_{k+1}-y_{\lambda_k,\mu}^\star(x_k)\|_{y_{\lambda_k,\mu}^\star(x_k)}^2
\nonumber\\
&\quad
+
C\frac{\xi^2\alpha_k^2\lambda_k}{T\rho_\psi^\eta\beta_k}\|q_k^x\|_2^2
+
C\xi^2\alpha_k^2\lambda_k\sigma_{f,x}^2
+
C\frac{\delta_k}{\lambda_k^2},
\label{eq:stoch_proxy_moving_recursion}
\end{align}
and
\begin{align}
\mathbb E[\lambda_{k+1}J_{k+1}\mid \mathcal G_k]
&\le
\lambda_k\kappa_{z,k}^2
\left(1+\frac{3T\rho_\psi^\eta}{8}\gamma_k\right)
\|z_{k+1}-y_\mu^\star(x_k)\|_{y_\mu^\star(x_k)}^2
\nonumber\\
&\quad
+
C\frac{\xi^2\alpha_k^2\lambda_k}{T\rho_\psi^\eta\gamma_k}\|q_k^x\|_2^2
+
C\xi^2\alpha_k^2\lambda_k\sigma_{f,x}^2 .
\label{eq:stoch_exact_moving_recursion}
\end{align}
Here the constants absorb the local dikin stability constants. The factor \(\lambda_k\) in the
variance terms comes from the Lyapunov scaling and the fact that
\(\lambda_{k+1}=O(\lambda_k)\) under the multiplier-growth condition.

\proofstep{Combining with fixed-center contraction.}
Substitute \eqref{eq:stoch_proxy_fixed_contraction} into
\eqref{eq:stoch_proxy_moving_recursion}. The same scalar anchor-switch absorption used in
Proposition~\ref{prop:lyapunov_descent} gives
\begin{align}
\mathbb E[\lambda_{k+1}I_{k+1}\mid \mathcal F_k]
&\le
\left(1-\frac{T\rho_\psi^\eta}{8}\beta_k\right)\lambda_k I_k
+
C\frac{\xi^2\alpha_k^2\lambda_k}{T\rho_\psi^\eta\beta_k}
\mathbb E[\|q_k^x\|_2^2\mid\mathcal F_k]
\nonumber\\
&\quad
+
C\lambda_k T\alpha_k^2\sigma_{f,y,\eta}^2
+
C\xi^2\alpha_k^2\lambda_k\sigma_{f,x}^2
+
C\frac{\delta_k}{\lambda_k^2}.
\label{eq:stoch_proxy_weighted_final}
\end{align}
For the exact tracker, the fixed-center contraction is deterministic, since \(\psi_\mu\) is exact:
\[
\|z_{k+1}-y_\mu^\star(x_k)\|_{y_\mu^\star(x_k)}^2
\le
\left(1-\frac{3T\rho_\psi^\eta}{4}\gamma_k\right)J_k .
\]
Combining this with \eqref{eq:stoch_exact_moving_recursion} yields
\begin{align}
\mathbb E[\lambda_{k+1}J_{k+1}\mid \mathcal F_k]
&\le
\left(1-\frac{T\rho_\psi^\eta}{8}\gamma_k\right)\lambda_k J_k
+
C\frac{\xi^2\alpha_k^2\lambda_k}{T\rho_\psi^\eta\gamma_k}
\mathbb E[\|q_k^x\|_2^2\mid\mathcal F_k]
\nonumber\\
&\quad
+
C\xi^2\alpha_k^2\lambda_k\sigma_{f,x}^2 .
\label{eq:stoch_exact_weighted_final}
\end{align}

\proofstep{Lyapunov difference.}
Using the definition of \(V_k\), combine
\eqref{eq:stoch_outer_descent}, \eqref{eq:stoch_proxy_weighted_final}, and
\eqref{eq:stoch_exact_weighted_final}. The terms involving
\(\mathbb E[\|q_k^x\|_2^2\mid\mathcal F_k]\) are absorbed by the negative
\(-(\xi\alpha_k/4)\|q_k^x\|_2^2\) term exactly as in the deterministic proof, using the
\textit{barrier-aware} smallness condition on \(\xi/T\) and the relation \(\beta_k\le\gamma_k\). This gives
negative tracker terms, a proxy-bias remainder, a multiplier-growth remainder, and the accumulated
variance terms:
\[
C\xi^2\alpha_k^2\sigma_{f,x}^2
+
C\xi^2\alpha_k^2\lambda_k\sigma_{f,x}^2
+
C\lambda_kT\alpha_k^2\sigma_{f,y,\eta}^2 .
\]
Since \(\lambda_k\ge\lambda_0>0\) and \(T,\xi,\lambda_0\) are fixed constants, these are bounded by
\(C_3\sigma_\eta^2\alpha_k^2\lambda_k\). Therefore,
\begin{align*}
\mathbb E[V_{k+1}\mid \mathcal F_k]-V_k
&\le
-\frac{\xi\alpha_k}{4}\|\nabla F_\mu(x_k)\|_2^2
-\frac{\lambda_k\ell_{\psi,1}^\eta T\rho_\psi^\eta\beta_k}{16}I_k
-\frac{\lambda_k\ell_{\psi,1}^\eta T\rho_\psi^\eta\gamma_k}{32}J_k
\\
&\quad
+
C_1\xi(c_x^\eta)^2\frac{\alpha_k}{\lambda_k^2}
+
C_2\frac{\delta_k}{\lambda_k^2}
+
C_3\sigma_\eta^2\alpha_k^2\lambda_k .
\end{align*}
This proves \eqref{eq:stoch_lyap_descent}. Dropping the negative tracker terms gives
\eqref{eq:stoch_lyap_descent_simplified}.
\end{proof}

\subsection{Proof of Theorem~\ref{thm:stoch_upper}}
\label{app:proof_stochastic}

We now prove the stochastic convergence rate. The proof is the stochastic analogue of the
deterministic telescoping argument: tube maintenance ensures that the local Dikin estimates remain
valid, while the expected Lyapunov recursion contributes one additional summable variance term.

\begin{proof}
By Proposition~\ref{prop:stoch_tube_maintenance}, the exact and proxy target neighborhoods are forward
invariant along the stochastic trajectory under the stochastic tube-closure condition. Hence the local
Dikin estimates from Propositions~\ref{prop:derived_dikin_regularity}--\ref{prop:local_consequences}
and the expected Lyapunov descent in Proposition~\ref{prop:stoch_lyapunov_descent} apply at every
outer iteration.

From \eqref{eq:stoch_lyap_descent_simplified}, taking total expectation gives
\begin{equation}
\label{eq:stoch_telescoping_one_step}
\mathbb E[V_{k+1}]-\mathbb E[V_k]
\le
-\frac{\xi\alpha_k}{4}\mathbb E\|\nabla F_\mu(x_k)\|_2^2
+
C_1\xi(c_x^\eta)^2\frac{\alpha_k}{\lambda_k^2}
+
C_2\frac{\delta_k}{\lambda_k^2}
+
C_3\sigma_\eta^2\alpha_k^2\lambda_k .
\end{equation}
Summing \eqref{eq:stoch_telescoping_one_step} from \(k=0\) to \(K-1\), and using
\(V_K\ge \inf_x F_\mu(x)\), yields
\begin{align}
\frac{\xi}{4}
\sum_{k=0}^{K-1}
\alpha_k\mathbb E\|\nabla F_\mu(x_k)\|_2^2
&\le
V_0-\inf_x F_\mu(x)
+
C_1\xi(c_x^\eta)^2
\sum_{k=0}^{K-1}\frac{\alpha_k}{\lambda_k^2}
\nonumber\\
&\quad
+
C_2
\sum_{k=0}^{K-1}\frac{\delta_k}{\lambda_k^2}
+
C_3\sigma_\eta^2
\sum_{k=0}^{K-1}\alpha_k^2\lambda_k .
\label{eq:stoch_telescoping}
\end{align}

We now bound the three residual sums under the polynomial stochastic schedule
\[
\alpha_k=\frac{\alpha_0}{(k+k_0)^{3/5}},
\qquad
\gamma_k=\frac{\gamma_0}{(k+k_0)^{2/5}},
\qquad
\lambda_k=\lambda_0\left(\frac{k+k_0}{k_0}\right)^{1/5},
\qquad
\delta_k=\lambda_{k+1}-\lambda_k .
\]
First,
\[
\frac{\alpha_k}{\lambda_k^2}
=
\frac{\alpha_0 k_0^{2/5}}{\lambda_0^2}\cdot\frac{1}{k+k_0},
\]
and therefore
\begin{equation}
\label{eq:stoch_alpha_over_lambda_sum}
\sum_{k=0}^{K-1}\frac{\alpha_k}{\lambda_k^2}
=
O(\log K).
\end{equation}
Second,
\[
\alpha_k^2\lambda_k
=
\frac{\alpha_0^2\lambda_0}{k_0^{1/5}}\cdot\frac{1}{k+k_0},
\]
so
\begin{equation}
\label{eq:stoch_variance_sum}
\sum_{k=0}^{K-1}\alpha_k^2\lambda_k
=
O(\log K).
\end{equation}
Third, since \(\lambda_{k+1}=\lambda_k+\delta_k\),
\[
\frac{\delta_k}{\lambda_k^2}
=
\left(\frac{1}{\lambda_k}-\frac{1}{\lambda_{k+1}}\right)
\left(1+\frac{\delta_k}{\lambda_k}\right).
\]
The multiplier-growth condition gives \(\delta_k/\lambda_k\le c\) for a fixed numerical constant
\(c<1\). Hence
\begin{equation}
\label{eq:stoch_delta_sum}
\sum_{k=0}^{K-1}\frac{\delta_k}{\lambda_k^2}
\le
(1+c)
\sum_{k=0}^{K-1}
\left(\frac{1}{\lambda_k}-\frac{1}{\lambda_{k+1}}\right)
\le
\frac{1+c}{\lambda_0}
=
O(1).
\end{equation}
Finally,
\begin{equation}
\label{eq:stoch_alpha_sum}
\sum_{k=0}^{K-1}\alpha_k
=
\alpha_0\sum_{k=0}^{K-1}(k+k_0)^{-3/5}
=
\Theta(K^{2/5}).
\end{equation}

Combining \eqref{eq:stoch_telescoping}--\eqref{eq:stoch_alpha_sum}, we obtain
\[
\frac{
\sum_{k=0}^{K-1}
\alpha_k\mathbb E\|\nabla F_\mu(x_k)\|_2^2
}{
\sum_{k=0}^{K-1}\alpha_k
}
\le
\widetilde O(K^{-2/5}).
\]
Since the minimum is bounded by the weighted average,
\[
\min_{0\le k\le K-1}
\mathbb E\|\nabla F_\mu(x_k)\|_2^2
\le
\frac{
\sum_{k=0}^{K-1}
\alpha_k\mathbb E\|\nabla F_\mu(x_k)\|_2^2
}{
\sum_{k=0}^{K-1}\alpha_k
}
=
\widetilde O(K^{-2/5}).
\]
This proves the claimed stochastic stationarity rate. Equivalently, choosing
\(K=\widetilde O(\epsilon^{-5/2})\) gives an \(\epsilon\)-stationary point of the
barrier-smoothed objective \(F_\mu\), in the sense
\(\min_{0\le k<K}\mathbb E\|\nabla F_\mu(x_k)\|_2^2\le \epsilon\).
\end{proof}

\newpage

\section{Experimental Details}
\label{app:experiments}

In this section, we provide details for the numerical experiments in
Section~\ref{sec:experiments}, together with some additional results. All experiments were run on a 16-inch MacBook Pro equipped with an Apple M4 Pro
chip and 48 GB of unified memory.

\subsection{Motivating Example}
\label{app:motivating_example}

\paragraph{Motivating example: boundary stability}
We first describe the two-dimensional example used to visualize Dikin tube maintenance. The feasible
region is a fixed irregular hexagonal polytope \(Y=\{y\in\mathbb R^2:Ay\le b\}\). Its halfspace
representation is constructed from the polygon vertices, with outward normals normalized to unit
length. We equip \(Y\) with the logarithmic barrier
\(
\phi(y)=-\sum_{i=1}^m\log s_i(y),
s_i(y):=b_i-a_i^\top y .
\) We set \(\mu=5\times 10^{-4}\) and use the quadratic lower objective
\[
g(x,y)
=
\frac12 (y-c(x))^\top Q(y-c(x)),
\qquad
Q=
\begin{pmatrix}
1.2 & 0.15\\
0.15 & 0.8
\end{pmatrix}.
\]
The center path is linear,
\(
c(x)=(1-x)c_{\mathrm{start}}+x c_{\mathrm{end}},
\) where \(c_{\mathrm{start}}\) lies inside the polytope and \(c_{\mathrm{end}}\) lies beyond the right
slanted face. Thus, as \(x\) increases, the exact barrierized minimizer
\[
y_\mu^\star(x)
=
\arg\min_{y\in\operatorname{int}(Y)}
\psi_\mu(x,y),
\qquad
\psi_\mu(x,y):=g(x,y)+\mu\phi(y),
\]
moves toward an active face while remaining strictly feasible. We discretize \(x\in[0,1]\) into \(K=2000\) outer grid points \(x_k\). For each \(x_k\), the exact
center \(y_\mu^\star(x_k)\) is computed numerically by minimizing over a softmax barycentric
parametrization of the polygon interior, warm-started from the previous solution. At each outer index
\(k\), both trackers are initialized from their previous state and run for \(T=30\) inner steps. The Euclidean baseline uses the plain exact-tracker update
\(
z_k^{(t+1)}
=
z_k^{(t)}
-
\gamma_{\mathrm{E}}\nabla_y\psi_\mu(x_k,z_k^{(t)}),
\)
with fixed step size
\(
\gamma_{\mathrm{E}}
=
0.7\,\gamma_{\mathrm{crit}}(0), \quad
\gamma_{\mathrm{crit}}(k)
=
\frac{2}{\lambda_{\max}(\nabla_{yy}^2\psi_\mu(x_k,y_\mu^\star(x_k)))} .
\)
The quantity \(\gamma_{\mathrm{crit}}(k)\) is used only as a reference local Euclidean stability scale.
The Euclidean step size is chosen from the initial, well-conditioned part of the path and then held
fixed as the exact center moves toward the boundary. The barrier-metric tracker uses the frozen Dikin preconditioner from Algorithm~\ref{alg:dikin_sbo}.
At outer index \(k\), update
\(
z_k^{(t+1)}
=
z_k^{(t)}
-
\gamma_k \nabla^2\phi(z_k)^{-1}\nabla_y\psi_\mu(x_k,z_k^{(t)}).
\)
The scalar \(\gamma_k\) is chosen conservatively, following the \textit{barrier-aware} schedule principle.
The exact centers \(y_\mu^\star(x_k)\) are not used to choose the implementable steps; they are
computed only for visualization and post-hoc verification.

After generating the trajectories, we evaluate the anchored Dikin error
\[
\|z_k-y_\mu^\star(x_k)\|_{y_\mu^\star(x_k)}
=
\Bigl(
(z_k-y_\mu^\star(x_k))^\top
\nabla^2\phi(y_\mu^\star(x_k))
(z_k-y_\mu^\star(x_k))
\Bigr)^{1/2}
\]
and check whether it remains below the neighborhood radius \(\eta=0.25\), namely whether
\[
z_k\in
T_{\eta,\mu}(x_k)
:=
\left\{
z\in\operatorname{int}(Y):
\|z-y_\mu^\star(x_k)\|_{y_\mu^\star(x_k)}\le\eta
\right\}.
\]
This neighborhood condition provides a post-hoc certificate of whether the generated trajectory remains in
the local Dikin region. The comparison shows that the fixed Euclidean tracker loses tube control near
the active face, whereas the barrier-metric tracker remains consistent with the moving local Dikin
geometry.

Figure~\ref{fig:motivating_example} reports four quantities. Panel~(a) plots the exact centers
\(y_\mu^\star(x_k)\), the Euclidean tracker, the barrier-metric tracker, and the first Euclidean tube
exit. Panel~(b) plots the auxiliary outer value \(f(x_k,z_k)\), together with the exact-center value
\(f(x_k,y_\mu^\star(x_k))\). Panel~(c) plots the anchored Dikin error and marks the tube radius
\(\eta\). Panel~(d) plots the exact lower objective gap
\(\psi_\mu(x_k,z_k)-\psi_\mu(x_k,y_\mu^\star(x_k))\).
The Euclidean tracker is shown until it leaves the maintained tube near the active face; the
barrier-metric tracker remains feasible and tube-controlled throughout the path.

\begin{figure}[h]
    \centering
    \includegraphics[width=\linewidth]{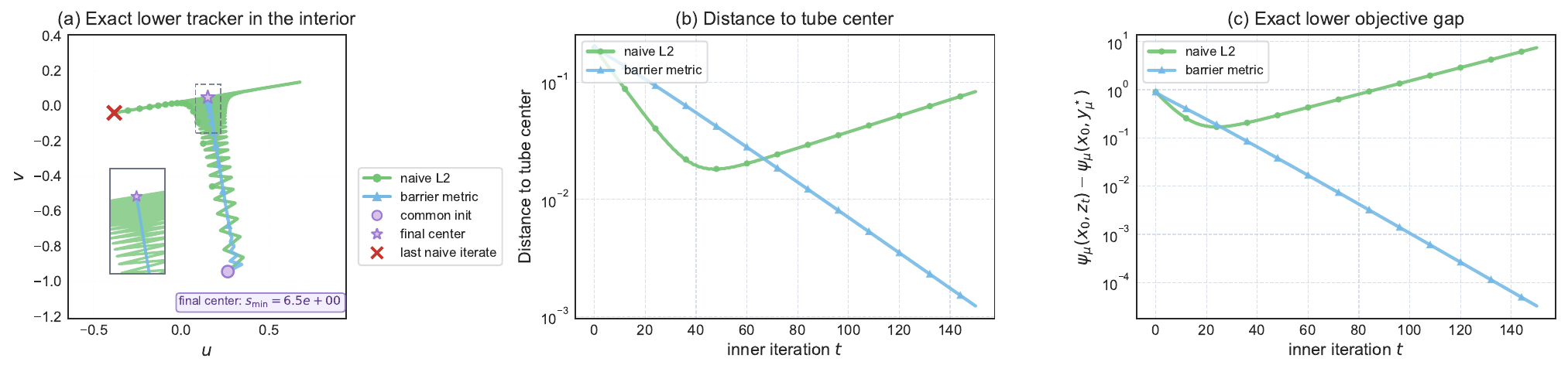}
    \caption{
Interior stability experiment for the exact lower tracker.
(a) The Euclidean tracker oscillates and drifts under a step size above the local Euclidean stability
threshold, while the barrier-metric tracker remains stable.
(b) The auxiliary objective decreases for the barrier-metric tracker but eventually increases for the
Euclidean tracker.
(c) The anchored Dikin error contracts under the barrier-metric and grows under the Euclidean step.
(d) The exact lower objective gap decays for the barrier-metric tracker, while the Euclidean gap
increases after a transient phase.
}
\label{fig:motivating_example_interior}
\end{figure}

\paragraph{Additional experiment: interior stability.}
We also include an auxiliary experiment in which the exact barrierized minimizer is far from the
boundary. In contrast to the active-face example above, here the outer parameter is fixed at
\(x_0=0.5\). This removes center drift and isolates the stability of the lower tracker under different
local geometries. We use the same two-dimensional polyhedral setup as in the main motivating example. Let
\(Y=\{y\in\mathbb R^2:Ay\le b\}\), with logarithmic barrier \(
\phi(y)=-\sum_{i=1}^m\log s_i(y),
s_i(y):=b_i-a_i^\top y .\)
For the fixed outer parameter \(x_0\), consider the exact barrierized lower objective
\[
\psi_\mu(x_0,y)
=
\frac12 (y-c)^\top Q(y-c)+\mu\phi(y),
\]
where \(Q\succ0\) is anisotropic and \(\mu>0\) is small. The center \(c\) and the polytope are chosen
so that
\[
y_\mu^\star(x_0)
=
\arg\min_{y\in\operatorname{int}(Y)}\psi_\mu(x_0,y)
\]
lies well inside \(Y\). We set the slack:
\(
\min_i s_i(y_\mu^\star(x_0))\approx 6.51,
\)
so the observed behavior is not caused by proximity to an active face. Starting from a common interior point \(z_0\) at Euclidean distance about \(1\) from
\(y_\mu^\star(x_0)\), we compare two exact lower trackers for the same objective
\(\psi_\mu(x_0,\cdot)\). The Euclidean tracker uses
\(
z_{t+1}
=
z_t-\gamma_{\mathrm{E}}\nabla_y\psi_\mu(x_0,z_t),
\)
where \(\gamma_{\mathrm{E}}\) is chosen slightly above the local Euclidean stability threshold
\(
\gamma_{\mathrm{crit}}
=
\frac{2}{\lambda_{\max}(\nabla_{yy}^2\psi_\mu(x_0,y_\mu^\star(x_0)))}.
\)
The barrier-metric tracker uses
\(
z_{t+1}
=
z_t-\gamma_{\mathrm{B}}\nabla^2\phi(z_t)^{-1}\nabla_y\psi_\mu(x_0,z_t).
\)

Figure~\ref{fig:motivating_example_interior} reports the trajectory, the auxiliary objective value
\(f(x_0,z_t)\), the anchored Dikin error
\[
\|z_t-y_\mu^\star(x_0)\|_{y_\mu^\star(x_0)}
=
\Big(
(z_t-y_\mu^\star(x_0))^\top
\nabla^2\phi(y_\mu^\star(x_0))
(z_t-y_\mu^\star(x_0))
\Big)^{1/2},
\]
and the exact lower objective gap \(
\psi_\mu(x_0,z_t)-\psi_\mu(x_0,y_\mu^\star(x_0)).
\) Although all iterates remain feasible and the minimizer is strictly interior, the Euclidean tracker
becomes unstable once its step size exceeds the local Euclidean stability threshold. In contrast, the
barrier-metric tracker steadily reduces both the Dikin error and the lower objective gap. This example
shows that the barrier-metric is useful not only for preventing boundary exits, but also for stabilizing
lower-level tracking under anisotropic local curvature.

\subsection{Congestion Toll Design}
\label{app:toll_design}

\paragraph{Main experiment.}
We provide the details of the synthetic congestion-toll benchmark used in Section~\ref{sec:experiments}.
For each pair \((n,\mathrm{seed})\), we generate an independent toll-design instance using
\texttt{numpy.random.default\_rng(seed)}. The number of shared bottlenecks is
\(
m_b=\max\{5,\lfloor n/10\rfloor\}.
\)
The incidence matrix \(C\in\{0,1\}^{m_b\times n}\) is generated by assigning each corridor to
between one and three uniformly sampled bottlenecks, followed by a correction step ensuring that
every bottleneck is used by at least one corridor. Corridor capacities are sampled as \(u_i\sim\mathrm{Unif}[0.8,1.2]\), and the total demand cap is set
to
\(
D=0.6\sum_i u_i .
\)
We initialize the lower variable at the strictly interior point \(y_{\mathrm{int}}=0.15u\), rescaling if
necessary so that \(\mathbf 1^\top y_{\mathrm{int}}<0.5D\). The bottleneck capacities are then chosen
to make this point strictly feasible while preserving random tightness. Specifically, with
\(r:=Cu\) and \(c:=Cy_{\mathrm{int}}\), we first sample
\(\widetilde d_r\sim\mathrm{Unif}[0.45,0.65]\,r_r\) and set
\(
d_r
=
\max\left\{
\widetilde d_r,\,
\frac{1.35\,c_r+10^{-3}}{\tau}
\right\}.
\)
This guarantees \(Cy_{\mathrm{int}}<\tau d\) with a fixed margin for every generated instance. The baseline travel costs are sampled as \(\ell_i\sim\mathrm{Unif}[0.5,2.0]\). The congestion matrix is
\(
Q=\operatorname{diag}(q)+VV^\top/n,
\)
where \(q_i\sim\mathrm{Unif}[0.1,0.5]\) and \(V\in\mathbb R^{n\times 3}\) has entries
\(V_{ij}\sim N(0,0.25^2)\). We use
\(
\kappa=1,
\beta=1,
\rho_{\mathrm{rev}}=10^{-2},
\rho_x=10^{-3}.
\)
All methods are initialized with
\(
x_0=(0.5,\ldots,0.5)^\top,
\)
and the tolls are constrained by \(0\le x\le 10\). The target revenue is set to
\(
R_{\mathrm{tar}}=0.25D\,\operatorname{mean}(x_0).
\)

In the main experiment, we use bottleneck tightness \(\tau=0.20\), barrier parameter
\(\mu=10^{-3}\), and dimensions
\(
n\in\{50,100,200,400,600,800,1000,1200\}.\) Each method receives a \(2\)-second optimization wall-clock budget per
instance. Exact objective evaluations used only for reporting are not counted against this optimization
budget. We compare five methods. BMFO is our proposed barrier-metric first-order method.
Exact-HG-Barrier solves the barrier-smoothed lower problem accurately and applies the exact implicit
hypergradient. CVXPY-HG solves the original lower quadratic program using CVXPY and
differentiates the active-set KKT system (\cite{agrawal2019differentiable}). Khanduri-DSIGD is a smoothed implicit-gradient baseline
inspired by \citet{khanduri2023linearly}. Kornowski-F2CBA is a first-order linearly constrained
bilevel baseline inspired by \citet{kornowski2024first}, implemented with a warm-started first-order
lower oracle.  For BMFO, the barrier-metric directions require solving linear systems with the
lower-level log-barrier Hessian. In the reported experiments, we use a dense
Cholesky solve for smaller instances and switch to a preconditioned
conjugate-gradient (PCG) solve for larger instances. For the normalized gap in Figure~\ref{fig:toll_design}(b), we distinguish the barrier-smoothed objective from the original
constrained bilevel objective. Given a toll vector \(x\), we define
\[
F_{\mathrm{orig}}(x) := f(x,y^\star(x)),\qquad
y^\star(x)\in\arg\min_{y\in Y(\tau)} g(x,y),
\]
where \(Y(\tau)\) is the original polyhedral feasible set without the log-barrier
term. We evaluate \(F_{\mathrm{orig}}\) by solving this lower-level convex QP to
high accuracy and then plugging the resulting response \(y^\star(x)\) into the
upper objective \(f\). Thus \(F_{\mathrm{orig}}\) measures performance on the
original constrained traffic-response problem. For each instance \((n,\mathrm{seed},\tau)\), the reference value
\(F_{\mathrm{ref}}\) is the best \(F_{\mathrm{orig}}\) value found by the offline
reference pool. Panel (b) reports
\[
\frac{F_{\mathrm{orig}}(x_{\mathrm{final}})-F_{\mathrm{ref}}}
{\max\{10^{-4},10^{-3}|F_{\mathrm{ref}}|\}}.
\]
Values below one meet the target tolerance, and values above \(1.2\) are clipped
only for visualization.

\paragraph{Additional results.}
Figures~\ref{fig:toll_design_tau03}--\ref{fig:toll_design_budget4} report additional congestion-toll
experiments varying the bottleneck tightness and wall-clock budget. Figure~\ref{fig:toll_design_tau03}
uses a slacker feasible region, \(\tau=0.30\), under the same 2-second optimization budget as the main
experiment. Figure~\ref{fig:toll_design_budget4} compares \(\tau=0.20\) and \(\tau=0.30\) under a
larger 4-second budget. Across these variants, BMFO retains low per-update cost as the lower
dimension grows and remains competitive in normalized original-objective gap, while exact or
implicit-gradient baselines become increasingly expensive at larger dimensions.

\begin{figure}[h]
    \centering
    \includegraphics[width=.75\linewidth]{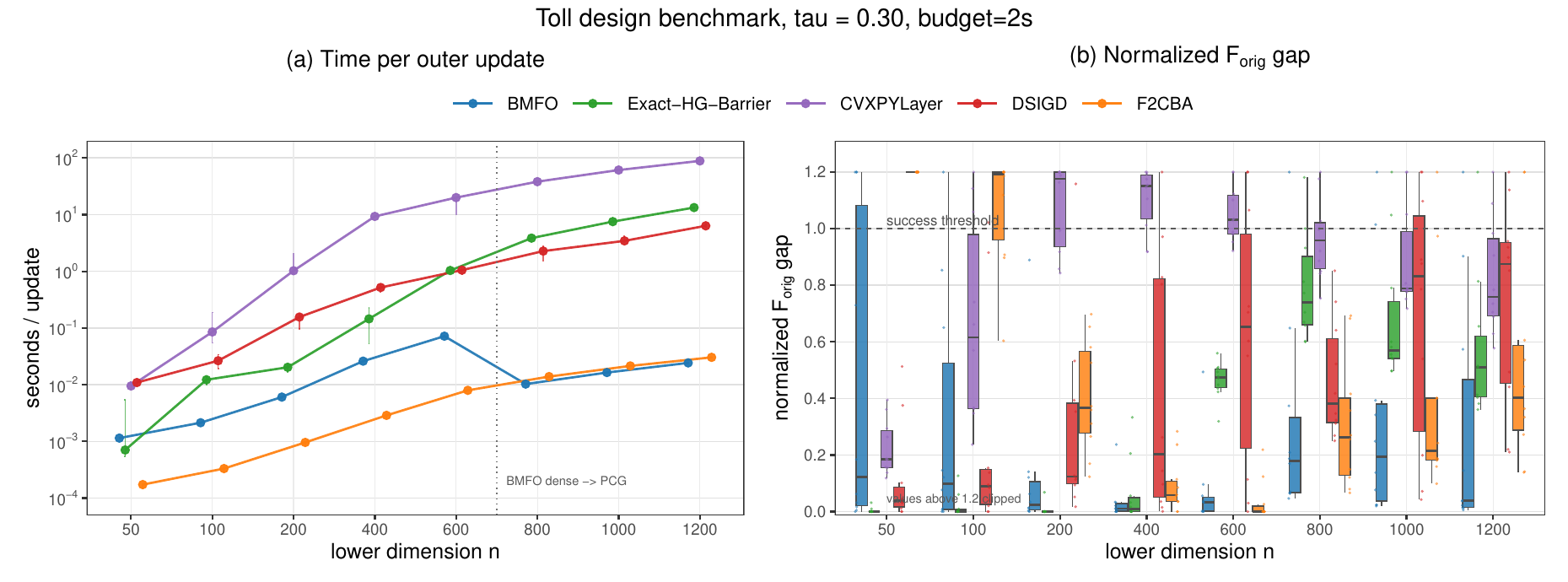}
\caption{
Congestion-toll benchmark with bottleneck tightness \(\tau=0.30\) and a \(2\)-second optimization
wall-clock budget per method.}
\label{fig:toll_design_tau03}
\end{figure}

\begin{figure}[h]
    \centering
    \includegraphics[width=.75\linewidth]{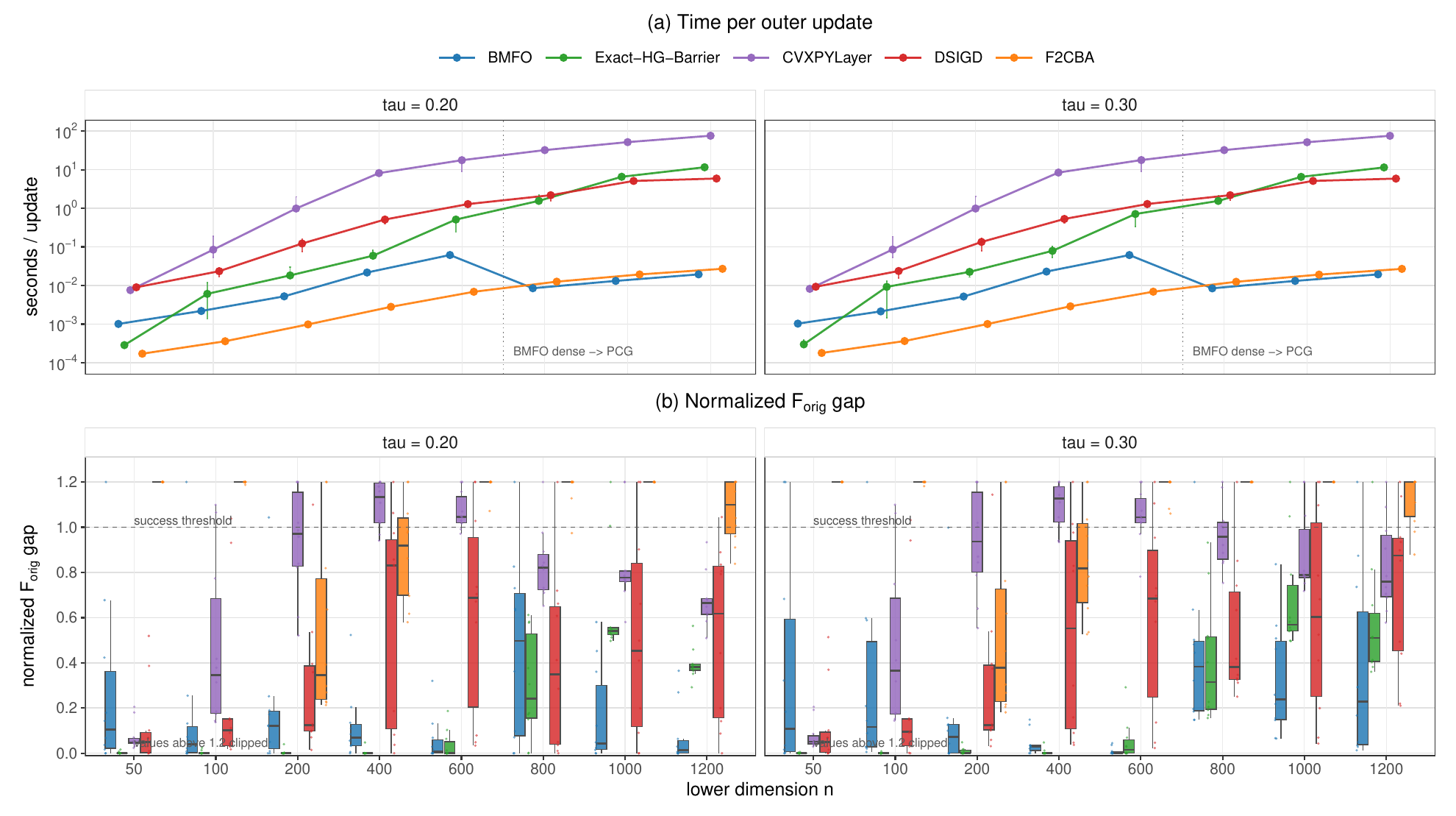}
\caption{
Congestion-toll benchmark with bottleneck tightness \(\tau=0.20, 0.30\) and a \(4\)-second optimization
wall-clock budget per method. 
}
\label{fig:toll_design_budget4}
\end{figure}

\newpage

\subsection{Constrained MNIST Hypercleaning}

\paragraph{Main experiment.}
We use a constrained MNIST hypercleaning benchmark. For each random seed
\(s\in\{0,1,2,3,4\}\), we load the standard MNIST training and test splits using
\texttt{torchvision}, normalize pixel intensities to \([0,1]\), and vectorize each image as
\(\xi\in\mathbb R^{784}\). We apply a seed-dependent random permutation to the \(60{,}000\)
training examples: the first \(n=19{,}000\) examples form the noisy training set, and the next
\(m=1{,}000\) examples form the clean validation set. We also keep \(5{,}000\) randomly permuted
test examples for evaluation. Training labels are corrupted independently with probability \(p\). If an example with label
\(y\in\{0,\ldots,9\}\) is selected for corruption, its label is replaced by
\((y+u)\bmod 10\), where \(u\sim{\rm Unif}\{1,\ldots,9\}\), so the corrupted label is always
different from the original one. Validation and test labels are kept clean. For each seed, BMFO and
F2CBA use the same train/validation/test split and the same corrupted training labels. Let
\( \mathcal D_{\rm train}=\{(\tilde\xi_i,\tilde y_i)\}_{i=1}^n,
\mathcal D_{\rm val}=\{(\xi_j,y_j)\}_{j=1}^m .
\) The upper variable \(\lambda\in\mathbb R^n\) assigns the training weight
\(\sigma(\lambda_i)\in(0,1)\) to example \(i\). The lower variable
\(w\in\mathbb R^{dK_{\rm cls}}\) is the vectorized parameter of a
\(K_{\rm cls}=10\)-class softmax-regression model, with \(d=784\). The lower problem is
\[
w^\star(\lambda)\in
\arg\min_{w\in\mathcal W_\tau}
\left\{
\frac1n\sum_{i=1}^n
\sigma(\lambda_i)\ell_{\rm CE}(\tilde\xi_i,\tilde y_i;w)
+
\frac{\rho}{2}\|w\|_2^2
\right\},
\]
and the upper objective is the validation cross-entropy
\[
F(\lambda)
=
\frac1m\sum_{j=1}^m
\ell_{\rm CE}(\xi_j,y_j;w^\star(\lambda)).
\]
The lower feasible set combines coordinatewise box constraints with patchwise signed-mass
constraints. We divide the \(28\times28\) image grid into \(4\times4\) non-overlapping
\(7\times7\) patches \(\{\mathcal P_r\}_r\), and impose
\[
\mathcal W_\tau
:=
\left\{
w:
|w_{q,c}|\le R\ \forall q,c,\qquad
\left|\sum_{q\in\mathcal P_r}w_{q,c}\right|
\le \tau b_{r,c}\ \forall r,c
\right\}.
\]
Here \(R>0\) is the coordinatewise box radius, \(b_{r,c}>0\) is the patchwise mass scale, and
\(\tau\in(0,1]\) controls constraint tightness. In our implementation, we use
\(
R=2, b_{r,c}=|\mathcal P_r|=7\cdot7=49,\tau=0.2,
\) so the patchwise signed-mass threshold is
\(
\tau b_{r,c}=0.2\cdot49=9.8 \)
for every patch \(r\) and class \(c\). Equivalently, the code uses
\texttt{patch\_size=7} and patch threshold \(9.8\). The initialization \(w=0\) is strictly feasible
for both the box and patchwise constraints. Figure~\ref{fig:mnist_hypercleaning} uses \(p=0.2\);
Figure~\ref{fig:mnist_hypercleaning_p04} reports the higher-corruption case \(p=0.4\).

\paragraph{Additional results.}
Figure~\ref{fig:mnist_hypercleaning_p04} reports the same constrained MNIST hypercleaning
experiment with a higher label corruption probability \(p=0.4\). The qualitative behavior is consistent
with the \(p=0.2\) setting: F2CBA decreases the validation loss quickly at the beginning but plateaus,
whereas BMFO continues to improve and reaches a lower final validation loss. The same trend holds
when performance is measured by the number of gradient evaluations, indicating that the improvement
is not only an iteration-count effect.
\begin{figure}[h]
    \centering
    \includegraphics[width=0.75\linewidth]{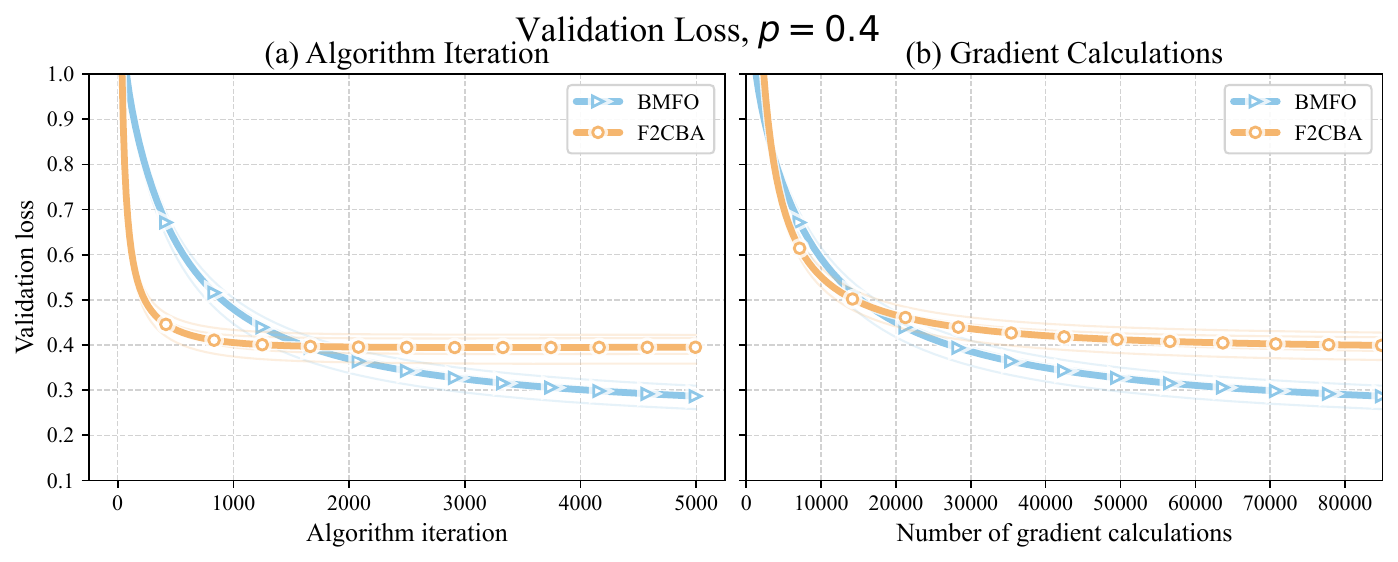}
\caption{
Constrained MNIST hypercleaning with label corruption probability \(p=0.4\).
Panel~(a) reports validation cross-entropy versus algorithm iterations, and panel~(b) reports
validation cross-entropy versus the number of gradient evaluations. Under the higher corruption
level, BMFO continues to reduce validation loss after F2CBA plateaus, and attains a lower final loss
both per iteration and per gradient evaluation.
}
\label{fig:mnist_hypercleaning_p04}
\end{figure}

\section{Other Supporting Lemmas}
\label{app:supporting_lemmas}

This appendix collects auxiliary estimates used in the proxy-bias and tracker-recursion arguments.
Lemma~\ref{lem:proxy_direction_linearization} gives a local linearization of the proxy direction around
the exact barrierized center, which is the main algebraic tool behind the \(O(1/\lambda)\)
proxy-gradient bias. Lemma~\ref{lem:proxy_jacobian_lipschitz} records the local smoothness of
the proxy minimizer map \(x\mapsto y_{\lambda,\mu}^\star(x)\). The final two lemmas,
Lemmas~\ref{lem:exact_center_cross_bound} and~\ref{lem:proxy_center_cross_bound}, turn exact and
proxy center drift into inner-product bounds against arbitrary tracker-error vectors; these are used to
control the cross terms in the moving-anchor recursions.

\subsection{Proxy-gradient Linearization}

\begin{lemma}[Local proxy-gradient linearization]
\label{lem:proxy_direction_linearization}
Fix \(\lambda>0\) and \(x\in X\). For any \(y\in T_{\eta,\mu}(x)\), we have
\begin{align}
&\Big\|
\nabla F_\mu(x)
-\nabla_x L_{\lambda,\mu}(x,y)
+\nabla^2_{xy}\psi_\mu(x,y_\mu^\star(x))^\top
\big(\nabla^2_{yy}\psi_\mu(x,y_\mu^\star(x))\big)^{-1}
\nabla_y L_{\lambda,\mu}(x,y)
\Big\|_2
\nonumber\\
&\qquad\le
2\Big(\frac{\ell_{\psi,1}^\eta}{\rho_\psi^\eta}\Big)
\|y-y_\mu^\star(x)\|_{y_\mu^\star(x)}
\Big(
\ell_{f,1}^\eta
+
\lambda\min\{
2\ell_{\psi,1}^\eta,\,
\ell_{\psi,2}^\eta\|y-y_\mu^\star(x)\|_{y_\mu^\star(x)}
\}
\Big).
\label{eq:proxy_linearization_bound}
\end{align}
\end{lemma}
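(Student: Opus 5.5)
The plan is to compare the quantity inside the norm with the exact hypergradient formula from Theorem~\ref{thm:barrier_smoothing}(iii). Write \(y^\star:=y_\mu^\star(x)\), \(e:=y-y^\star\), \(r:=\|e\|_{y^\star}\le\eta\), \(H:=\nabla_{yy}^2\psi_\mu(x,y^\star)\), and \(G:=\nabla_{xy}^2\psi_\mu(x,y^\star)\). Then
\[
\nabla F_\mu(x)=\nabla_xf(x,y^\star)-G^\top H^{-1}\nabla_yf(x,y^\star).
\]
Because \(\nabla_x\psi_\mu=\nabla_xg\) and \(\nabla_y\psi_\mu(x,y^\star)=0\), we have \(\nabla_xL_{\lambda,\mu}(x,y)=\nabla_xf(x,y)+\lambda(\nabla_x\psi_\mu(x,y)-\nabla_x\psi_\mu(x,y^\star))\), with the constant \(-\lambda\psi_\mu^\star(x)\) dropping out of the \(y\)-gradient, and \(\nabla_yL_{\lambda,\mu}(x,y)=\nabla_yf(x,y)+\lambda(\nabla_y\psi_\mu(x,y)-\nabla_y\psi_\mu(x,y^\star))\). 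Substituting these, the vector inside the norm splits as \(A+\lambda B\), where
\[
A:=\bigl[\nabla_xf(x,y^\star)-\nabla_xf(x,y)\bigr]-G^\top H^{-1}\bigl[\nabla_yf(x,y^\star)-\nabla_yf(x,y)\bigr],
\]
\[
B:=-\bigl[\nabla_x\psi_\mu(x,y)-\nabla_x\psi_\mu(x,y^\star)-G e\bigr]+G^\top H^{-1}\bigl[\nabla_y\psi_\mu(x,y)-\nabla_y\psi_\mu(x,y^\star)-He\bigr].
\]
The bracket in \(B\) comes from adding and subtracting \(Ge\) and \(G^\top H^{-1}He=G^\top e\). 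Here \(G\) is understood as the \(x\)-to-\(y\) mixed block, so that \(G e\) pairs correctly with \(\nabla_x\). This exhibits \(B\) as a difference of first-order Taylor remainders.

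For \(A\), I would use the mean-value form along the segment \([y^\star,y]\subset T_{\eta,\mu}(x)\) together with the local bounds (E2). The first piece is at most \(\ell_{f,1}^\eta r\), using \(\|\nabla_{xy}^2f\|_{2\to y^\star,*}\le\ell_{f,1}^\eta\) and duality. The second piece goes through the duality bound \eqref{eq:local_smooth_cross_duality} and \(\|H^{-1}w\|_{y^\star}\le\|w\|_{y^\star,*}/\rho_\psi^\eta\), which follows from \(H\succeq\rho_\psi^\eta H_\mu^\star(x)\). Combined with \(\|\nabla_{yy}^2f\|_{y^\star\to y^\star,*}\le\ell_{f,1}^\eta\), this gives at most \((\ell_{\psi,1}^\eta/\rho_\psi^\eta)\ell_{f,1}^\eta r\). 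Since \(\ell_{\psi,1}^\eta/\rho_\psi^\eta\ge1\), \(\|A\|_2\le 2(\ell_{\psi,1}^\eta/\rho_\psi^\eta)\ell_{f,1}^\eta r\).

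For \(B\) I would bound each Taylor remainder in two ways and keep the minimum. In the \emph{crude} bound, each remainder is at most the sum of the two linear terms. By (E1), \(\|\nabla_x\psi_\mu(x,y)-\nabla_x\psi_\mu(x,y^\star)\|_2\le\ell_{\psi,1}^\eta r\) and \(\|Ge\|_2\le\ell_{\psi,1}^\eta r\). Likewise \(\|\nabla_y\psi_\mu(x,y)-\nabla_y\psi_\mu(x,y^\star)\|_{y^\star,*}\) and \(\|He\|_{y^\star,*}\) are each at most \(\ell_{\psi,1}^\eta r\). Pushing the second bracket through \(G^\top H^{-1}\) costs a factor \(\ell_{\psi,1}^\eta/\rho_\psi^\eta\). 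In the \emph{fine} bound, I would write each remainder as \(\int_0^1[\nabla^2\psi_\mu(x,y^\star+se)-\nabla^2\psi_\mu(x,y^\star)]e\,ds\) and apply the Lipschitz-Hessian bounds of (E1). This gives \(\tfrac12\ell_{\psi,2}^\eta r^2\) for each, and the second is again multiplied by \(\ell_{\psi,1}^\eta/\rho_\psi^\eta\). Taking the worse prefactor \(\ell_{\psi,1}^\eta/\rho_\psi^\eta\ge1\) yields
\[
\|B\|_2\le 2\frac{\ell_{\psi,1}^\eta}{\rho_\psi^\eta}\,r\,\min\{2\ell_{\psi,1}^\eta,\ \ell_{\psi,2}^\eta r\}.
\]
Adding the bounds for \(A\) and \(\lambda B\) gives \eqref{eq:proxy_linearization_bound}.

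The main obstacle is bookkeeping of constants rather than a conceptual step. The crude branch must produce exactly \(2\ell_{\psi,1}^\eta\) inside the min with a common prefactor \(2\ell_{\psi,1}^\eta/\rho_\psi^\eta\). I expect this to require the inequality \(1\le\ell_{\psi,1}^\eta/\rho_\psi^\eta\) and the convention that constants may be enlarged. The other care point is that all norms must be anchored at \(y^\star\) while intermediate points \(y^\star+se\) lie in \(T_{\eta,\mu}(x)\). This is exactly the setting in which (E1)–(E2) are stated, so no anchor switch is needed.
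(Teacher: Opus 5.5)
Your proposal is correct and follows the paper's proof essentially step for step. Your $A$ is the paper's $T_1+T_2$, your $B$ is the paper's $-R_x+G^\top H^{-1}\widetilde R_y$ after the same add-and-subtract cancellation of the linear terms, and both branches of the minimum are obtained as in the paper (crude mixed-Hessian and curvature bounds versus integral Taylor remainders with the Lipschitz-Hessian constant), combined via $\ell_{\psi,1}^\eta/\rho_\psi^\eta\ge1$. The only blemish is notational: with $G=\nabla^2_{xy}\psi_\mu(x,y^\star)$ of size $d_y\times d_x$ as in the paper, the term you write as $Ge$ must be read as $G^\top e$, and that is exactly what makes it cancel against $G^\top H^{-1}He$.
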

\begin{proof}
Fix \(x\in X\). Since
\(\nabla_y\psi_\mu(x,y_\mu^\star(x))=0\) and
\(\nabla^2_{yy}\psi_\mu(x,y_\mu^\star(x))\succ0\) on the maintained tube, the implicit function
theorem gives
\begin{equation}
\label{eq:support_Dy_mu}
Dy_\mu^\star(x)
=
-\big(\nabla^2_{yy}\psi_\mu(x,y_\mu^\star(x))\big)^{-1}
\nabla^2_{xy}\psi_\mu(x,y_\mu^\star(x)).
\end{equation}
Hence, by the chain rule,
\begin{align}
\nabla F_\mu(x)
&=
\nabla_x f(x,y_\mu^\star(x))
+
\big(Dy_\mu^\star(x)\big)^\top
\nabla_y f(x,y_\mu^\star(x))
\nonumber\\
&=
\nabla_x f(x,y_\mu^\star(x))
-
\nabla^2_{xy}\psi_\mu(x,y_\mu^\star(x))^\top
\big(\nabla^2_{yy}\psi_\mu(x,y_\mu^\star(x))\big)^{-1}
\nabla_y f(x,y_\mu^\star(x)).
\label{eq:support_gradF_mu}
\end{align}

Next, using
\(L_{\lambda,\mu}(x,y)=f(x,y)+\lambda(\psi_\mu(x,y)-\psi_\mu^\star(x))\) and
\(\nabla_x\psi_\mu^\star(x)=\nabla_x\psi_\mu(x,y_\mu^\star(x))\), we have
\begin{align}
\nabla_x L_{\lambda,\mu}(x,y)
&=
\nabla_x f(x,y)
+
\lambda\big(
\nabla_x\psi_\mu(x,y)-\nabla_x\psi_\mu(x,y_\mu^\star(x))
\big),
\nonumber\\
\nabla_y L_{\lambda,\mu}(x,y)
&=
\nabla_y f(x,y)+\lambda\nabla_y\psi_\mu(x,y).
\label{eq:support_proxy_derivatives}
\end{align}

Combining \eqref{eq:support_gradF_mu}--\eqref{eq:support_proxy_derivatives} gives
\begin{align}
&\nabla F_\mu(x)
-\nabla_x L_{\lambda,\mu}(x,y)
+
\nabla^2_{xy}\psi_\mu(x,y_\mu^\star(x))^\top
\big(\nabla^2_{yy}\psi_\mu(x,y_\mu^\star(x))\big)^{-1}
\nabla_y L_{\lambda,\mu}(x,y)
\nonumber\\
&=
\underbrace{
\nabla_x f(x,y_\mu^\star(x))-\nabla_x f(x,y)
}_{T_1}
\nonumber\\
&\quad+
\underbrace{
\nabla^2_{xy}\psi_\mu(x,y_\mu^\star(x))^\top
\big(\nabla^2_{yy}\psi_\mu(x,y_\mu^\star(x))\big)^{-1}
\big(
\nabla_y f(x,y)-\nabla_y f(x,y_\mu^\star(x))
\big)
}_{T_2}
\nonumber\\
&\quad
-\lambda
\underbrace{
\Big(
\nabla_x\psi_\mu(x,y)-\nabla_x\psi_\mu(x,y_\mu^\star(x))
-\nabla^2_{xy}\psi_\mu(x,y_\mu^\star(x))^\top
(y-y_\mu^\star(x))
\Big)
}_{R_x}
\nonumber\\
&\quad
+\lambda
\underbrace{
\nabla^2_{xy}\psi_\mu(x,y_\mu^\star(x))^\top
\big(\nabla^2_{yy}\psi_\mu(x,y_\mu^\star(x))\big)^{-1}
}_{\text{linear correction}}
\nonumber\\
&\qquad\qquad\qquad\cdot
\underbrace{
\Big(
\nabla_y\psi_\mu(x,y)-\nabla_y\psi_\mu(x,y_\mu^\star(x))
-\nabla^2_{yy}\psi_\mu(x,y_\mu^\star(x))(y-y_\mu^\star(x))
\Big)
}_{\widetilde R_y}.
\label{eq:support_proxy_decomposition}
\end{align}
The linear \(\lambda\nabla^2_{xy}\psi_\mu(x,y_\mu^\star(x))^\top(y-y_\mu^\star(x))\) terms cancel.

\proofstep{Operator-norm in the anchored Dikin geometry:}
We will use the induced norms
\[
\|M\|_{2\to y_\mu^\star(x),*}
:=
\sup_{\|u\|_2=1}\|Mu\|_{y_\mu^\star(x),*},
\qquad
\|M\|_{y_\mu^\star(x)\to 2}
:=
\sup_{\|v\|_{y_\mu^\star(x)}=1}\|Mv\|_2.
\]
By duality,
\[
\|M^\top\|_{y_\mu^\star(x)\to 2}
=
\|M\|_{2\to y_\mu^\star(x),*}.
\]
Proposition~\ref{prop:derived_dikin_regularity} implies
\begin{equation}
\label{eq:support_cross_operator_bound}
\|\nabla^2_{xy}\psi_\mu(x,y_\mu^\star(x))\|_{2\to y_\mu^\star(x),*}
\le
\ell_{\psi,1}^\eta,
\end{equation}
and therefore
\[
\|\nabla^2_{xy}\psi_\mu(x,y_\mu^\star(x))^\top\|_{y_\mu^\star(x)\to 2}
\le
\ell_{\psi,1}^\eta .
\]
Also, since
\[
\nabla^2_{yy}\psi_\mu(x,y_\mu^\star(x))
\succeq
\rho_\psi^\eta H_\mu^\star(x),
\]
we have
\begin{equation}
\label{eq:support_hinv_operator_bound}
\big\|
\big(\nabla^2_{yy}\psi_\mu(x,y_\mu^\star(x))\big)^{-1}w
\big\|_{y_\mu^\star(x)}
\le
\frac{1}{\rho_\psi^\eta}\|w\|_{y_\mu^\star(x),*}.
\end{equation}
Combining the preceding two displays yields
\begin{equation}
\label{eq:support_mixed_operator_bound}
\Big\|
\nabla^2_{xy}\psi_\mu(x,y_\mu^\star(x))^\top
\big(\nabla^2_{yy}\psi_\mu(x,y_\mu^\star(x))\big)^{-1}
\Big\|_{y_\mu^\star(x),*\to 2}
\le
\frac{\ell_{\psi,1}^\eta}{\rho_\psi^\eta}.
\end{equation}

\proofstep{Bound \(T_1\) and \(T_2\).}
By the local bound for \(f\), with Euclidean--Dikin conversion factors absorbed into
\(\ell_{f,1}^\eta\),
\begin{equation}
\label{eq:support_T1_bound}
\|T_1\|_2
\le
\ell_{f,1}^\eta\|y-y_\mu^\star(x)\|_{y_\mu^\star(x)}.
\end{equation}
For \(T_2\), combine \eqref{eq:support_mixed_operator_bound} with the local Lipschitz bound for
\(\nabla_y f\):
\[
\|T_2\|_2
\le
\frac{\ell_{\psi,1}^\eta}{\rho_\psi^\eta}
\|\nabla_y f(x,y)-\nabla_y f(x,y_\mu^\star(x))\|_{y_\mu^\star(x),*}
\le
\frac{\ell_{\psi,1}^\eta}{\rho_\psi^\eta}
\ell_{f,1}^\eta
\|y-y_\mu^\star(x)\|_{y_\mu^\star(x)} .
\]
Since \(\ell_{\psi,1}^\eta/\rho_\psi^\eta\ge1\), \eqref{eq:support_T1_bound} gives
\begin{equation}
\label{eq:support_f_terms_bound}
\|T_1\|_2+\|T_2\|_2
\le
2\Big(\frac{\ell_{\psi,1}^\eta}{\rho_\psi^\eta}\Big)
\ell_{f,1}^\eta
\|y-y_\mu^\star(x)\|_{y_\mu^\star(x)} .
\end{equation}

\proofstep{Bound \(R_x\) and \(\widetilde R_y\).}
For \(R_x\), the integral remainder form gives
\begin{align}
R_x
&=
\int_0^1
\Big(
\nabla^2_{xy}\psi_\mu(x,y_\mu^\star(x)+t(y-y_\mu^\star(x)))
-
\nabla^2_{xy}\psi_\mu(x,y_\mu^\star(x))
\Big)^\top
\nonumber\\
&\hspace{6.5cm}\cdot
(y-y_\mu^\star(x))\,dt .
\label{eq:support_Rx_integral}
\end{align}
By duality, for any matrix \(M\),
\[
\|M^\top(y-y_\mu^\star(x))\|_2
\le
\|M\|_{2\to y_\mu^\star(x),*}
\|y-y_\mu^\star(x)\|_{y_\mu^\star(x)}.
\]
Thus
\begin{align}
\|R_x\|_2
&\le
\|y-y_\mu^\star(x)\|_{y_\mu^\star(x)}
\int_0^1
\Big\|
\nabla^2_{xy}\psi_\mu(x,y_\mu^\star(x)+t(y-y_\mu^\star(x)))
\nonumber\\
&\hspace{4.5cm}
-
\nabla^2_{xy}\psi_\mu(x,y_\mu^\star(x))
\Big\|_{2\to y_\mu^\star(x),*}\,dt .
\label{eq:support_Rx_prebound}
\end{align}
On the tube, either use the crude bound
\(\|\nabla^2_{xy}\psi_\mu(x,\cdot)\|_{2\to y_\mu^\star(x),*}\le\ell_{\psi,1}^\eta\) twice, or use the
Hessian-Lipschitz-in-\(y\) bound with constant \(\ell_{\psi,2}^\eta\). Hence
\begin{equation}
\label{eq:support_Rx_bound}
\|R_x\|_2
\le
\|y-y_\mu^\star(x)\|_{y_\mu^\star(x)}
\min\Big\{
2\ell_{\psi,1}^\eta,\,
\ell_{\psi,2}^\eta\|y-y_\mu^\star(x)\|_{y_\mu^\star(x)}
\Big\}.
\end{equation}

Similarly, since \(\nabla_y\psi_\mu(x,y_\mu^\star(x))=0\),
\begin{align}
\widetilde R_y
&=
\nabla_y\psi_\mu(x,y)
-
\nabla_y\psi_\mu(x,y_\mu^\star(x))
-
\nabla^2_{yy}\psi_\mu(x,y_\mu^\star(x))(y-y_\mu^\star(x))
\nonumber\\
&=
\int_0^1
\Big(
\nabla^2_{yy}\psi_\mu(x,y_\mu^\star(x)+t(y-y_\mu^\star(x)))
-
\nabla^2_{yy}\psi_\mu(x,y_\mu^\star(x))
\Big)
\nonumber\\
&\hspace{6.5cm}\cdot
(y-y_\mu^\star(x))\,dt .
\label{eq:support_Ry_integral}
\end{align}
Using the corresponding operator norm
\(\|\cdot\|_{y_\mu^\star(x)\to y_\mu^\star(x),*}\), we get
\begin{equation}
\label{eq:support_Ry_inner_bound}
\|\widetilde R_y\|_{y_\mu^\star(x),*}
\le
\|y-y_\mu^\star(x)\|_{y_\mu^\star(x)}
\min\Big\{
2\ell_{\psi,1}^\eta,\,
\ell_{\psi,2}^\eta\|y-y_\mu^\star(x)\|_{y_\mu^\star(x)}
\Big\}.
\end{equation}
Finally, applying \eqref{eq:support_mixed_operator_bound} to the linear correction multiplying
\(\widetilde R_y\), we obtain
\begin{equation}
\label{eq:support_Ry_bound}
\Big\|
\nabla^2_{xy}\psi_\mu(x,y_\mu^\star(x))^\top
\big(\nabla^2_{yy}\psi_\mu(x,y_\mu^\star(x))\big)^{-1}
\widetilde R_y
\Big\|_2
\le
\frac{\ell_{\psi,1}^\eta}{\rho_\psi^\eta}
\|\widetilde R_y\|_{y_\mu^\star(x),*}.
\end{equation}
Since \(\ell_{\psi,1}^\eta/\rho_\psi^\eta\ge1\), combining
\eqref{eq:support_Rx_bound}--\eqref{eq:support_Ry_bound} gives
\begin{align}
&\lambda
\left(
\|R_x\|_2
+
\Big\|
\nabla^2_{xy}\psi_\mu(x,y_\mu^\star(x))^\top
\big(\nabla^2_{yy}\psi_\mu(x,y_\mu^\star(x))\big)^{-1}
\widetilde R_y
\Big\|_2
\right)
\nonumber\\
&\qquad\le
2\Big(\frac{\ell_{\psi,1}^\eta}{\rho_\psi^\eta}\Big)
\lambda
\|y-y_\mu^\star(x)\|_{y_\mu^\star(x)}
\min\Big\{
2\ell_{\psi,1}^\eta,\,
\ell_{\psi,2}^\eta\|y-y_\mu^\star(x)\|_{y_\mu^\star(x)}
\Big\}.
\label{eq:support_remainder_bound}
\end{align}

Taking Euclidean norms in \eqref{eq:support_proxy_decomposition}, applying the triangle inequality,
and using \eqref{eq:support_f_terms_bound} and \eqref{eq:support_remainder_bound}, we obtain
\eqref{eq:proxy_linearization_bound}.
\end{proof}

\subsection{Local Smoothness}

\begin{lemma}[Local smoothness of \(x\mapsto y_{\lambda,\mu}^\star(x)\)]
\label{lem:proxy_jacobian_lipschitz}
Fix \(\lambda\ge \lambda_0\). For each \(x\in X\), let
\(y_{\lambda,\mu}^\star(x)\) denote the unique minimizer of
\(y\mapsto L_{\lambda,\mu}(x,y)\), and assume the maintained tube condition
\(y_{\lambda,\mu}^\star(x)\in T_{\eta,\mu}^{\lambda}(x)\) for all
\(x\in X\).

Then the map \(x\mapsto y_{\lambda,\mu}^\star(x)\) is differentiable on
\(X\), and its Jacobian satisfies
\begin{equation}
\label{eq:proxy_jacobian_formula}
\nabla y_{\lambda,\mu}^\star(x)
=
-\Big(\nabla^2_{yy}L_{\lambda,\mu}
(x,y_{\lambda,\mu}^\star(x))\Big)^{-1}
\nabla^2_{xy}L_{\lambda,\mu}
(x,y_{\lambda,\mu}^\star(x)).
\end{equation}
Moreover, there exists a finite local constant \(\ell_{\lambda,1}^\eta<\infty\) such that,
for all \(x_1,x_2\in X\),
\begin{equation}
\label{eq:proxy_jacobian_lipschitz}
\big\|
\nabla y_{\lambda,\mu}^\star(x_2)
-
\nabla y_{\lambda,\mu}^\star(x_1)
\big\|_{2\to y_{\lambda,\mu}^\star(x_1)}
\le
\ell_{\lambda,1}^\eta\|x_2-x_1\|_2 .
\end{equation}
One admissible choice is
\begin{equation}
\label{eq:proxy_jacobian_constant}
\ell_{\lambda,1}^\eta
:=
32\left(
\frac{\ell_{f,2}^\eta}{\lambda}
+
\ell_{\psi,2}^\eta
\right)
\frac{(\ell_{\psi,1}^\eta)^2}{(\rho_\psi^\eta)^3}.
\end{equation}
Uniformly over \(\lambda\ge\lambda_0\), one may replace
\(\ell_{f,2}^\eta/\lambda\) by \(\ell_{f,2}^\eta/\lambda_0\).
\end{lemma}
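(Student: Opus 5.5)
The plan has two stages: the Jacobian formula \eqref{eq:proxy_jacobian_formula} from the implicit function theorem, and then the Lipschitz bound \eqref{eq:proxy_jacobian_lipschitz}, proved first for nearby pairs and then extended to arbitrary \(x_1,x_2\in X\) by chaining along the segment \([x_1,x_2]\). To keep the chaining free of accumulating anchor-switch factors, I will show that all proxy anchors are uniformly comparable to the Euclidean metric.

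\emph{Jacobian.} The optimality condition is \(\nabla_y L_{\lambda,\mu}(x,y_{\lambda,\mu}^\star(x))=0\). The map \(\nabla_y L_{\lambda,\mu}\) is \(C^1\) on \(X\times\operatorname{int}(Y)\). By Lemma~\ref{lem:proxy_objective_curvature}, since \(\lambda\ge\lambda_0\ge 2\ell_{f,1}^\eta/\rho_\psi^\eta\), we have \(\nabla_{yy}^2L_{\lambda,\mu}\succeq \tfrac{\lambda\rho_\psi^\eta}{2}H_{\lambda,\mu}^\star(x)\succ0\) at the maintained proxy center. The implicit function theorem, exactly as in Theorem~\ref{thm:barrier_smoothing}(ii), then gives differentiability and \eqref{eq:proxy_jacobian_formula}. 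Write \(G(x):=\nabla_{yy}^2L_{\lambda,\mu}(x,y_{\lambda,\mu}^\star(x))\) and \(N(x):=\nabla_{xy}^2L_{\lambda,\mu}(x,y_{\lambda,\mu}^\star(x))\), so that \(\nabla y_{\lambda,\mu}^\star=-G^{-1}N\). Two bounds follow from the curvature lower bound and from \(\|N\|_{2\to *}\le \ell_{f,1}^\eta+\lambda\ell_{\psi,1}^\eta\le \tfrac32\lambda\ell_{\psi,1}^\eta\) (Proposition~\ref{prop:derived_dikin_regularity}):
\[
\|G^{-1}w\|\le \tfrac{2}{\lambda\rho_\psi^\eta}\|w\|_*,
\qquad
\|\nabla y_{\lambda,\mu}^\star\|_{2\to\cdot}\le 3\ell_{\psi,1}^\eta/\rho_\psi^\eta,
\]
both in the anchor at \(y_{\lambda,\mu}^\star(x)\).

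\emph{Local estimate.} Take \(x_1,x_2\) close enough that \(y_{\lambda,\mu}^\star(x_2)\in T_{2\eta,\mu}^{\lambda}(x_1)\). Proposition~\ref{prop:local_consequences}(i) guarantees this once \(\|x_2-x_1\|_2\le r:=\eta/\ell_{\lambda,0}^\eta\). I will use the resolvent split, mirroring the \((IIa)\)--\((IIc)\) argument in the proof of local smoothness of \(F_\mu\):
\[
G_2^{-1}N_2-G_1^{-1}N_1=G_2^{-1}(N_2-N_1)+G_2^{-1}(G_1-G_2)G_1^{-1}N_1 .
\]
Then I bound each factor in the proxy anchor at \(x_1\):
\begin{itemize}
\item \(\|N_2-N_1\|\) and \(\|G_2-G_1\|\) are at most \((\ell_{f,2}^\eta+\lambda\ell_{\psi,2}^\eta)\bigl(\|x_2-x_1\|+\|y_2^\star-y_1^\star\|\bigr)\), by the Lipschitz-Hessian bounds of Proposition~\ref{prop:derived_dikin_regularity}(iii);
\item the center displacement \(\|y_2^\star-y_1^\star\|\) is at most \(\ell_{\lambda,0}^\eta\|x_2-x_1\|\);
\item the factors \(G^{-1}\) contribute \(2/(\lambda\rho_\psi^\eta)\) each, after Lemma~\ref{lem:anchor_switch} moves \(G_2\) to the \(x_1\) anchor at a factor \((1-2\eta)^{-2}\le 4\) (using \(2\eta<1\)).
\end{itemize}
The factor \(\lambda\) cancels. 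Collecting terms and using \(\ell_{\lambda,0}^\eta\le 3\ell_{\psi,1}^\eta/\rho_\psi^\eta\), the bound takes the form \(C(\ell_{f,2}^\eta/\lambda+\ell_{\psi,2}^\eta)(\ell_{\psi,1}^\eta)^2/(\rho_\psi^\eta)^3\), and \(C\) should be absorbable into the stated constant \(32\) of \eqref{eq:proxy_jacobian_constant}.

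\emph{Globalization, the main obstacle.} Chaining along the segment naively multiplies anchor-switch factors across the pieces, which is not acceptable. I will first try to prove that the proxy centers have slacks bounded below uniformly, \(s_i(y_{\lambda,\mu}^\star(x))\ge s_{\min}>0\) for all \(x\in X\) and \(\lambda\ge\lambda_0\). The argument uses the stationarity identity \(\mu\nabla\phi=-\nabla_y g-\lambda^{-1}\nabla_y f\). The right-hand side is bounded, since \(Y\) is compact and Assumption~\ref{ass:euclidean} holds. A bounded barrier gradient on the compact polytope \(Y\) forces every slack away from zero; the existing local estimates already rely on \(\mu\)-dependent constants of this type. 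This bound gives \(H_{\lambda,\mu}^\star(x)\preceq (\sum_i\|a_i\|^2/s_{\min}^2) I\). Combined with Lemma~\ref{lem:euc_dikin}, every proxy anchor norm is then equivalent to \(\|\cdot\|_2\) with constants independent of \(x\).

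With this equivalence, I subdivide \([x_1,x_2]\) into points \(x_1=u_0,\dots,u_N=x_2\) with \(\|u_{j+1}-u_j\|\le r\). The Jacobian differences telescope, each piece is measured in its own anchor by the local estimate, and each piece is converted to the \(x_1\) anchor through the Euclidean norm at a fixed multiplicative cost. Summing the piece lengths then gives \eqref{eq:proxy_jacobian_lipschitz} for all \(x_1,x_2\in X\). The equivalence cost is absorbed into \(\ell_{\lambda,1}^\eta\), consistent with the paper's convention of enlarging local constants and suppressing their \(\mu\)-dependence; the explicit \(32\) is then valid only up to this absorbed factor. Finally, replacing \(\lambda\) by \(\lambda_0\) is immediate from monotonicity in \(\lambda\).
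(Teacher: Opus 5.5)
Your globalization step does not go through. The uniform slack bound $s_i(y^\star_{\lambda,\mu}(x))\ge s_{\min}>0$ requires $\nabla_y g$ to be bounded on $\mathbb{R}^{d_x}\times Y$, and Assumption~\ref{ass:euclidean} does not give this. (A4) bounds only $\nabla_x g$, $\nabla_x f$ and $\nabla_y f$, while joint smoothness only yields $\|\nabla_y g(x,y)\|_2\le\|\nabla_y g(x_0,y)\|_2+\ell_{g,1}\|x-x_0\|_2$.

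The bound can genuinely fail. Take $d_x=d_y=1$, $Y=[-1,1]$, $s:=\sqrt{1+x^2}$, $u:=y/\sqrt{s}$, and $g(x,y)=\tfrac12y^2+\epsilon\,s\arctan u$ with $0<\epsilon\le\tfrac12$. Then $\partial_xg=\epsilon\,s'(x)\bigl(\arctan u-\tfrac{u}{2(1+u^2)}\bigr)$ is bounded, $\partial_{yy}g\ge\tfrac12$, and all second and third derivatives are bounded, so (A1)--(A4) hold for all $(x,y)$. Yet $\partial_yg=y+\epsilon\sqrt{s}/(1+u^2)\ge\tfrac{\epsilon}{2}\sqrt{s}-1$ on $Y$. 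Stationarity then forces $\mu/(1+y)\ge\tfrac{\epsilon}{2}\sqrt{s}-1-\ell_{f,0}/\lambda_0$ at both the exact and the proxy center, so their slack at the face $y=-1$ decays like $|x|^{-1/2}$.

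Consequently the proxy anchors are not uniformly equivalent to $\|\cdot\|_2$. Each Euclidean-to-anchor conversion in your chain costs a factor of order $1/\min_i s_i(y^\star_{\lambda,\mu}(x_1))$, so the constant you obtain depends on $x_1$. This is on top of losing the explicit $32(\cdot)$, which you already concede. The paper's constants ($\rho^\eta_\psi=\mu(1-2\eta)^2$, $\ell^\eta_{\psi,1}$, and $\ell^\eta_{\psi,2}$ via self-concordance) are uniform in $x$ precisely because they never use a slack lower bound.

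Everything before that step is essentially the paper's proof. The paper multiplies the Jacobian difference by $\nabla^2_{yy}L_{\lambda,\mu}(x_2,y^\star_{\lambda,\mu}(x_2))$ and adds and subtracts the cross term, which is your resolvent split. It uses the same inputs: the Lipschitz-Hessian bounds, $\ell^\eta_{\lambda,0}$-stability of the proxy center, and $\ell^\eta_{\lambda,0}\le 3\ell^\eta_{\psi,1}/\rho^\eta_\psi$. The $32$ comes from $(1+\ell^\eta_{\lambda,0})^2\le 16(\ell^\eta_{\psi,1}/\rho^\eta_\psi)^2$, and the switch between the anchors at $y^\star_{\lambda,\mu}(x_2)$ and $y^\star_{\lambda,\mu}(x_1)$ is absorbed as a fixed $\eta$-dependent factor.

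The paper does not globalize at all. It applies the buffer-neighborhood bounds directly, so its argument effectively covers pairs whose centers lie in a common Dikin buffer. That is the only case needed in Lemma~\ref{lem:proxy_center_cross_bound} for consecutive iterates. The sound repair is therefore one of two options.
\begin{itemize}
\item State your local estimate for pairs with $y^\star_{\lambda,\mu}(x_2)\in T^{\lambda}_{2\eta,\mu}(x_1)$. Reach this inclusion from $\|x_2-x_1\|_2\le\eta/\ell^\eta_{\lambda,0}$ by a continuity argument along the segment. Citing Proposition~\ref{prop:local_consequences}\textup{(i)} directly is circular, since it is itself proved only for pairs already in the buffer.
\item Add a hypothesis bounding $\nabla_y g$ on $X\times Y$, e.g.\ by confining $x$ to a compact region as in the remark after Proposition~\ref{prop:lyapunov_descent}. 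Then your slack argument and chaining are valid, with an $s_{\min}$-dependent constant.
\end{itemize}
A minor slip: with center displacement at most $\eta$ the anchor factor is $(1-\eta)^{-2}<4$, whereas your $(1-2\eta)^{-2}\le4$ requires $\eta\le\tfrac14$.
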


\begin{proof}
Fix \(x_1,x_2\in X\). By optimality, for each visited \(x\),
\[
\nabla_y L_{\lambda,\mu}(x,y_{\lambda,\mu}^\star(x))=0.
\]
By Proposition~\ref{prop:derived_dikin_regularity} and
Lemma~\ref{lem:proxy_objective_curvature}, the Hessian
\(\nabla^2_{yy}L_{\lambda,\mu}(x,y)\) is positive definite on
\(T_{\eta,\mu}^{\lambda}(x)\). Hence the implicit function theorem implies differentiability of
\(x\mapsto y_{\lambda,\mu}^\star(x)\) and yields
\eqref{eq:proxy_jacobian_formula}.

Using \eqref{eq:proxy_jacobian_formula} at \(x_1\) and \(x_2\), we have
\begin{align}
&\nabla y_{\lambda,\mu}^\star(x_2)
-
\nabla y_{\lambda,\mu}^\star(x_1)
\nonumber\\
&=
-\Big(\nabla^2_{yy}L_{\lambda,\mu}
(x_2,y_{\lambda,\mu}^\star(x_2))\Big)^{-1}
\nabla^2_{xy}L_{\lambda,\mu}
(x_2,y_{\lambda,\mu}^\star(x_2))
\nonumber\\
&\quad
+
\Big(\nabla^2_{yy}L_{\lambda,\mu}
(x_1,y_{\lambda,\mu}^\star(x_1))\Big)^{-1}
\nabla^2_{xy}L_{\lambda,\mu}
(x_1,y_{\lambda,\mu}^\star(x_1)).
\label{eq:proxy_jacobian_difference}
\end{align}
To compare the two terms, multiply the difference by
\(\nabla^2_{yy}L_{\lambda,\mu}(x_2,y_{\lambda,\mu}^\star(x_2))\) and add/subtract the usual
cross term. Using the lower curvature bound
\[
\nabla^2_{yy}L_{\lambda,\mu}(x_2,y)
\succeq
\frac{\lambda\rho_\psi^\eta}{2}H_{\lambda,\mu}^\star(x_2),
\]
we obtain
\begin{align}
&\frac{\lambda\rho_\psi^\eta}{2}
\big\|
\nabla y_{\lambda,\mu}^\star(x_2)
-
\nabla y_{\lambda,\mu}^\star(x_1)
\big\|_{2\to y_{\lambda,\mu}^\star(x_2)}
\nonumber\\
&\le
\Big\|
\nabla^2_{xy}L_{\lambda,\mu}
(x_2,y_{\lambda,\mu}^\star(x_2))
-
\nabla^2_{xy}L_{\lambda,\mu}
(x_1,y_{\lambda,\mu}^\star(x_1))
\Big\|_{2\to y_{\lambda,\mu}^\star(x_2),*}
\nonumber\\
&\quad
+
\Big\|
\nabla^2_{yy}L_{\lambda,\mu}
(x_2,y_{\lambda,\mu}^\star(x_2))
-
\nabla^2_{yy}L_{\lambda,\mu}
(x_1,y_{\lambda,\mu}^\star(x_1))
\Big\|_{y_{\lambda,\mu}^\star(x_2)\to y_{\lambda,\mu}^\star(x_2),*}
\nonumber\\
&\qquad\qquad\cdot
\big\|
\nabla y_{\lambda,\mu}^\star(x_1)
\big\|_{2\to y_{\lambda,\mu}^\star(x_2)} .
\label{eq:proxy_jacobian_key_ineq}
\end{align}
The local Lipschitz-Hessian bounds for \(L_{\lambda,\mu}\) give
\[
\Big\|
\nabla^2_{xy}L_{\lambda,\mu}
(x_2,y_{\lambda,\mu}^\star(x_2))
-
\nabla^2_{xy}L_{\lambda,\mu}
(x_1,y_{\lambda,\mu}^\star(x_1))
\Big\|_{2\to y_{\lambda,\mu}^\star(x_2),*}
\]
\[
\le
(\ell_{f,2}^\eta+\lambda\ell_{\psi,2}^\eta)
\Big(
\|x_2-x_1\|_2
+
\|y_{\lambda,\mu}^\star(x_2)-y_{\lambda,\mu}^\star(x_1)\|_{y_{\lambda,\mu}^\star(x_2)}
\Big),
\]
and the same bound holds for the Hessian difference
\[
\nabla^2_{yy}L_{\lambda,\mu}
(x_2,y_{\lambda,\mu}^\star(x_2))
-
\nabla^2_{yy}L_{\lambda,\mu}
(x_1,y_{\lambda,\mu}^\star(x_1))
\]
in the corresponding \(y_{\lambda,\mu}^\star(x_2)\to y_{\lambda,\mu}^\star(x_2),*\) operator norm.
By Proposition~\ref{prop:local_consequences}\textup{(i)} with fixed \(\lambda\),
\[
\|y_{\lambda,\mu}^\star(x_2)-y_{\lambda,\mu}^\star(x_1)\|_{y_{\lambda,\mu}^\star(x_2)}
\le
\ell_{\lambda,0}^\eta\|x_2-x_1\|_2,
\]
after absorbing the fixed anchor-switch constant. Also,
\[
\big\|\nabla y_{\lambda,\mu}^\star(x)\big\|_{2\to y_{\lambda,\mu}^\star(x)}
\le
\ell_{\lambda,0}^\eta ,
\]
again by the local stability of the proxy minimizer map. Therefore
\begin{align}
&\frac{\lambda\rho_\psi^\eta}{2}
\big\|
\nabla y_{\lambda,\mu}^\star(x_2)
-
\nabla y_{\lambda,\mu}^\star(x_1)
\big\|_{2\to y_{\lambda,\mu}^\star(x_2)}
\nonumber\\
&\le
(\ell_{f,2}^\eta+\lambda\ell_{\psi,2}^\eta)
(1+\ell_{\lambda,0}^\eta)^2
\|x_2-x_1\|_2 .
\label{eq:proxy_jacobian_pre_final}
\end{align}
Rearranging gives
\[
\big\|
\nabla y_{\lambda,\mu}^\star(x_2)
-
\nabla y_{\lambda,\mu}^\star(x_1)
\big\|_{2\to y_{\lambda,\mu}^\star(x_2)}
\le
\frac{2(\ell_{f,2}^\eta+\lambda\ell_{\psi,2}^\eta)}
{\lambda\rho_\psi^\eta}
(1+\ell_{\lambda,0}^\eta)^2
\|x_2-x_1\|_2.
\]
Using the proxy stability bound
\[
\ell_{\lambda,0}^\eta
\le
\frac{3\ell_{\psi,1}^\eta}{\rho_\psi^\eta},
\]
and increasing the absolute constant if necessary, we obtain the bound:
\begin{equation}
\label{eq:proxy_jacobian_final}
\big\|
\nabla y_{\lambda,\mu}^\star(x_2)
-
\nabla y_{\lambda,\mu}^\star(x_1)
\big\|_{2\to y_{\lambda,\mu}^\star(x_2)}
\le
32\left(
\frac{\ell_{f,2}^\eta}{\lambda}
+
\ell_{\psi,2}^\eta
\right)
\frac{(\ell_{\psi,1}^\eta)^2}{(\rho_\psi^\eta)^3}
\|x_2-x_1\|_2 .
\end{equation}
Finally, applying the self-concordant anchor-switch comparison between
\(y_{\lambda,\mu}^\star(x_2)\) and \(y_{\lambda,\mu}^\star(x_1)\) only changes the constant by a fixed
\(\eta\)-dependent factor, which is absorbed into
\(\ell_{\lambda,1}^\eta\). This gives \eqref{eq:proxy_jacobian_lipschitz}.
\end{proof}

\paragraph{Convention on anchor-switch constants.}
The constants \(\ell_{*,1}^{\eta}\)  in Definition~\ref{def:barrier_aware} \eqref{eq:main_sched_s3} and \(\ell_{\lambda,1}^{\eta}\) in Lemma~\ref{lem:proxy_jacobian_lipschitz} are only required to be finite
upper bounds in the exact and proxy center-shift estimates. Hence, without loss of generality, we
enlarge \(\ell_{*,1}^{\eta}\) so that \(\ell_{*,1}^{\eta}\ge \max\{1,\ell_{\lambda,1}^{\eta}\}.\) With this convention, the single constant \(\ell_{*,1}^{\eta}\) appearing in the \textit{barrier-aware} schedule
controls both the exact and proxy anchor-switch remainders.

\subsection{Cross Bound}

\begin{lemma}[Cross bound for exact-center drift]
\label{lem:exact_center_cross_bound}
Fix an outer iteration \(k\), and let \(v_k\in\mathbb R^{d_y}\) be any vector chosen before updating
\(x_k\). We use the following local bounds for the exact minimizer map
\(x\mapsto y_\mu^\star(x)\).

\begin{enumerate}
\item[\textup{(J0)}] \textup{(Lipschitz continuity).}
There exists a finite constant \(\ell_{*,0}^\eta<\infty\) such that, for all
\(x\in X\) and all \(u\in\mathbb R^{d_x}\),
\begin{equation}
\label{eq:exact_map_jacobian_bound}
\|\nabla y_\mu^\star(x)u\|_{y_\mu^\star(x)}
\le
\ell_{*,0}^\eta\|u\|_2 .
\end{equation}

\item[\textup{(J1)}] \textup{(Smoothness).}
There exists a finite constant \(\ell_{*,1}^\eta<\infty\) such that
\begin{equation}
\label{eq:exact_map_taylor_remainder}
\Big\|
y_\mu^\star(x_{k+1})-y_\mu^\star(x_k)
-\nabla y_\mu^\star(x_k)(x_{k+1}-x_k)
\Big\|_{y_\mu^\star(x_k)}
\le
\frac{\ell_{*,1}^\eta}{2}\|x_{k+1}-x_k\|_2^2 .
\end{equation}
\end{enumerate}

Then, for any \(j_k>0\),
\begin{align}
\label{eq:exact_center_cross_bound}
&\big\langle v_k,\,
y_\mu^\star(x_{k+1})-y_\mu^\star(x_k)
\big\rangle
\nonumber\\
&\quad\le
\Bigl(
\xi\alpha_k j_k
+
\frac{(\ell_{*,1}^\eta)^2}{2}\xi^2
\bigl(\ell_{f,0}^2\alpha_k^2+4\ell_{g,0}^2\beta_k^2\bigr)
\Bigr)
\|v_k\|_{y_\mu^\star(x_k),*}^2
\nonumber\\
&\qquad
+
\Bigl(
\frac{\xi\alpha_k}{4j_k}(\ell_{*,0}^\eta)^2
+
\frac{\xi^2}{4}\alpha_k^2
\Bigr)
\|q_k^x\|_2^2 .
\end{align}
\end{lemma}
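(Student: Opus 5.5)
The plan is to split the center increment into its first-order part and a Taylor remainder, using the outer update \(x_{k+1}-x_k=-\xi\alpha_k q_k^x\):
\[
y_\mu^\star(x_{k+1})-y_\mu^\star(x_k)
=
-\xi\alpha_k\,\nabla y_\mu^\star(x_k)q_k^x+r_k,
\]
where \(r_k\) is the remainder controlled by (J1). By (J1), \(\|r_k\|_{y_\mu^\star(x_k)}\le \tfrac{\ell_{*,1}^\eta}{2}\xi^2\alpha_k^2\|q_k^x\|_2^2\). The inner product \(\langle v_k,\cdot\rangle\) then splits into a linear term and a remainder term. We bound each by the Dikin primal/dual Cauchy--Schwarz inequality \(\langle v,u\rangle\le \|v\|_{y_\mu^\star(x_k),*}\|u\|_{y_\mu^\star(x_k)}\), followed by Young's inequality. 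Since \(v_k\) is chosen before \(x_k\) is updated, it is a fixed vector, and no measurability issue arises in the deterministic setting.

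For the linear term, (J0) gives \(\xi\alpha_k\|v_k\|_{*}\,\ell_{*,0}^\eta\|q_k^x\|_2\). Young's inequality with parameter \(j_k\), namely \(ab\le j_k a^2+b^2/(4j_k)\), yields
\[
\xi\alpha_k j_k\|v_k\|_*^2+\frac{\xi\alpha_k}{4j_k}(\ell_{*,0}^\eta)^2\|q_k^x\|_2^2,
\]
which is exactly the first pair of terms in \eqref{eq:exact_center_cross_bound}.

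The remainder term is the main obstacle. The target bound is quadratic in \(\|q_k^x\|_2\) with coefficient \(\xi^2\alpha_k^2/4\), whereas the raw remainder is of order \(\|v_k\|_*\,\xi^2\alpha_k^2\|q_k^x\|_2^2\), i.e., cubic overall. To remove one factor of \(\|q_k^x\|_2\), we use the coarse a priori bound from the proof of Lemma~\ref{lem:tube_drift_explicit}: \(\|q_k^x\|_2\le \ell_{f,0}+2\lambda_k\ell_{g,0}\). This gives
\[
\xi\alpha_k\|q_k^x\|_2\le \xi(\ell_{f,0}\alpha_k+2\ell_{g,0}\beta_k),
\]
and hence \(\xi^2\alpha_k^2\|q_k^x\|_2^2\le \xi\alpha_k\|q_k^x\|_2\cdot\xi(\ell_{f,0}\alpha_k+2\ell_{g,0}\beta_k)\). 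We then apply Young's inequality in the form \(ab\le a^2/2+b^2/2\) with
\[
a=\ell_{*,1}^\eta\,\xi(\ell_{f,0}\alpha_k+2\ell_{g,0}\beta_k)\,\|v_k\|_*,\qquad b=\tfrac12\xi\alpha_k\|q_k^x\|_2 .
\]
Finally, \((\ell_{f,0}\alpha_k+2\ell_{g,0}\beta_k)^2\le 2(\ell_{f,0}^2\alpha_k^2+4\ell_{g,0}^2\beta_k^2)\). The resulting coefficient is \((\ell_{*,1}^\eta)^2\xi^2(\ell_{f,0}^2\alpha_k^2+4\ell_{g,0}^2\beta_k^2)\) in front of \(\|v_k\|_*^2\) and \(\tfrac18\xi^2\alpha_k^2\) in front of \(\|q_k^x\|_2^2\). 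This matches the stated form up to a factor of \(2\) in the first coefficient, which we absorb either by splitting the Young weights differently or by the convention that \(\ell_{*,1}^\eta\) may be enlarged.

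Summing the two contributions gives \eqref{eq:exact_center_cross_bound}.
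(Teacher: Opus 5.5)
Your proposal is correct and is essentially the paper's proof. It uses the same split into the linear Jacobian term (anchored primal/dual Cauchy--Schwarz, (J0), Young with parameter \(j_k\)) and the (J1) remainder, where one factor \(\|x_{k+1}-x_k\|_2=\xi\alpha_k\|q_k^x\|_2\) is traded against the coarse bound \(\xi(\ell_{f,0}\alpha_k+2\ell_{g,0}\beta_k)\) before a second Young step; the paper leaves this split implicit in ``applying Young's inequality to the remaining product,'' and you make it explicit.

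Your factor-of-two mismatch disappears if you use \(ab\le\tfrac14a^2+b^2\) with the same \(a,b\). This yields exactly \(\tfrac{(\ell_{*,1}^\eta)^2}{2}\xi^2(\ell_{f,0}^2\alpha_k^2+4\ell_{g,0}^2\beta_k^2)\|v_k\|_{y_\mu^\star(x_k),*}^2+\tfrac{\xi^2}{4}\alpha_k^2\|q_k^x\|_2^2\), so you do not need to enlarge \(\ell_{*,1}^\eta\).
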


\begin{proof}
We decompose the exact-center displacement into its linear part and Taylor remainder:
\begin{align}
&\big\langle v_k,\,
y_\mu^\star(x_{k+1})-y_\mu^\star(x_k)
\big\rangle
\nonumber\\
&\quad=
\underbrace{
\big\langle v_k,\,
\nabla y_\mu^\star(x_k)(x_{k+1}-x_k)
\big\rangle
}_{(I)}
\nonumber\\
&\qquad+
\underbrace{
\Big\langle v_k,\,
y_\mu^\star(x_{k+1})-y_\mu^\star(x_k)
-\nabla y_\mu^\star(x_k)(x_{k+1}-x_k)
\Big\rangle
}_{(II)} .
\label{eq:exact_center_split}
\end{align}

Since \(x_{k+1}-x_k=-\xi\alpha_k q_k^x\), we have
\[
(I)
=
-\xi\alpha_k
\big\langle v_k,\nabla y_\mu^\star(x_k)q_k^x\big\rangle .
\]
By Cauchy--Schwarz in the anchored Dikin primal/dual pair and
\eqref{eq:exact_map_jacobian_bound},
\[
|(I)|
\le
\xi\alpha_k
\|v_k\|_{y_\mu^\star(x_k),*}
\|\nabla y_\mu^\star(x_k)q_k^x\|_{y_\mu^\star(x_k)}
\le
\xi\alpha_k
\ell_{*,0}^\eta
\|v_k\|_{y_\mu^\star(x_k),*}
\|q_k^x\|_2 .
\]
Applying Young's inequality with parameter \(j_k>0\) gives
\begin{equation}
\label{eq:exact_linear_part_bound}
(I)
\le
\xi\alpha_k j_k\|v_k\|_{y_\mu^\star(x_k),*}^2
+
\frac{\xi\alpha_k}{4j_k}(\ell_{*,0}^\eta)^2\|q_k^x\|_2^2 .
\end{equation}

By \eqref{eq:exact_map_taylor_remainder} and Cauchy--Schwarz,
\begin{align}
(II)
&\le
\|v_k\|_{y_\mu^\star(x_k),*}
\Big\|
y_\mu^\star(x_{k+1})-y_\mu^\star(x_k)
-\nabla y_\mu^\star(x_k)(x_{k+1}-x_k)
\Big\|_{y_\mu^\star(x_k)}
\nonumber\\
&\le
\frac{\ell_{*,1}^\eta}{2}
\|v_k\|_{y_\mu^\star(x_k),*}
\|x_{k+1}-x_k\|_2^2 .
\label{eq:exact_remainder_prebound}
\end{align}

Using \(x_{k+1}-x_k=-\xi\alpha_k q_k^x\) and the bound
\[
\|q_k^x\|_2
\le
\ell_{f,0}+2\lambda_k\ell_{g,0},
\]
we have
\begin{align}
\|x_{k+1}-x_k\|_2^2
&=
\xi^2\alpha_k^2\|q_k^x\|_2^2
\nonumber\\
&\le
\xi^2\alpha_k^2
\bigl(\ell_{f,0}+2\lambda_k\ell_{g,0}\bigr)^2
\nonumber\\
&\le
2\xi^2
\bigl(\ell_{f,0}^2\alpha_k^2+4\ell_{g,0}^2\beta_k^2\bigr).
\label{eq:outer_step_norm_bound_exact}
\end{align}
Combining \eqref{eq:exact_remainder_prebound} and
\eqref{eq:outer_step_norm_bound_exact}, and applying Young's inequality to the remaining product,
yields
\begin{equation}
\label{eq:exact_remainder_bound}
(II)
\le
\frac{(\ell_{*,1}^\eta)^2}{2}\xi^2
\bigl(\ell_{f,0}^2\alpha_k^2+4\ell_{g,0}^2\beta_k^2\bigr)
\|v_k\|_{y_\mu^\star(x_k),*}^2
+
\frac{\xi^2}{4}\alpha_k^2\|q_k^x\|_2^2 .
\end{equation}

Combining \eqref{eq:exact_linear_part_bound} and \eqref{eq:exact_remainder_bound} in
\eqref{eq:exact_center_split} proves \eqref{eq:exact_center_cross_bound}.
\end{proof}

\begin{lemma}[Cross bound for proxy-center drift]
\label{lem:proxy_center_cross_bound}
Fix an outer iteration \(k\), and let \(\delta_k:=\lambda_{k+1}-\lambda_k\ge0\). Let
\(v_k\in\mathbb R^{d_y}\) be any vector chosen before updating \(x_k\). Assume
\(\lambda_k,\lambda_{k+1}\ge\lambda_0\), so that Proposition~\ref{prop:local_consequences} applies.
For the fixed-\(\lambda_k\) proxy map \(x\mapsto y_{\lambda_k,\mu}^\star(x)\), we use the following
local bounds.

\begin{enumerate}
\item[\textup{(J0)}] \textup{(Lipschitz continuity).}
There exists a finite constant \(\ell_{\lambda,0}^\eta<\infty\) such that, for all
\(x\in X\) and all \(u\in\mathbb R^{d_x}\),
\begin{equation}
\label{eq:proxy_map_jacobian_bound}
\|\nabla y_{\lambda_k,\mu}^\star(x)u\|_{y_{\lambda_k,\mu}^\star(x)}
\le
\ell_{\lambda,0}^\eta\|u\|_2 .
\end{equation}

\item[\textup{(J1)}] \textup{(Smoothness).}
There exists a finite constant \(\ell_{\lambda,1}^\eta<\infty\) such that
\begin{equation}
\label{eq:proxy_map_taylor_remainder}
\Big\|
y_{\lambda_k,\mu}^\star(x_{k+1})-y_{\lambda_k,\mu}^\star(x_k)
-\nabla y_{\lambda_k,\mu}^\star(x_k)(x_{k+1}-x_k)
\Big\|_{y_{\lambda_k,\mu}^\star(x_k)}
\le
\frac{\ell_{\lambda,1}^\eta}{2}\|x_{k+1}-x_k\|_2^2 .
\end{equation}
\end{enumerate}

Then, for any \(j_k>0\),
\begin{align}
\label{eq:proxy_center_cross_bound}
&\big\langle v_k,\,
y_{\lambda_{k+1},\mu}^\star(x_{k+1})
-y_{\lambda_k,\mu}^\star(x_k)
\big\rangle
\nonumber\\
&\quad\le
\Bigl(
\frac{\delta_k}{\lambda_k}
+
\xi\alpha_k j_k
+
\frac{(\ell_{\lambda,1}^\eta)^2}{2}\xi^2
\bigl(\ell_{f,0}^2\alpha_k^2+4\ell_{g,0}^2\beta_k^2\bigr)
\Bigr)
\|v_k\|_{y_{\lambda_k,\mu}^\star(x_k),*}^2
\nonumber\\
&\qquad
+
\Bigl(
\frac{\xi\alpha_k}{4j_k}(\ell_{\lambda,0}^\eta)^2
+
\frac{\xi^2}{4}\alpha_k^2
\Bigr)\|q_k^x\|_2^2
+
\frac{\kappa_{y,k}^2(\ell_{f,0}^\eta)^2}{(\rho_\psi^\eta)^2}
\frac{\delta_k}{\lambda_k^3}.
\end{align}
\end{lemma}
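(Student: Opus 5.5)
We split the proxy-center displacement into a multiplier-change part at fixed \(x_{k+1}\) and an \(x\)-motion part at fixed multiplier \(\lambda_k\):
\[
y_{\lambda_{k+1},\mu}^\star(x_{k+1})-y_{\lambda_k,\mu}^\star(x_k)
=
\underbrace{\bigl(y_{\lambda_{k+1},\mu}^\star(x_{k+1})-y_{\lambda_k,\mu}^\star(x_{k+1})\bigr)}_{(A)}
+
\underbrace{\bigl(y_{\lambda_k,\mu}^\star(x_{k+1})-y_{\lambda_k,\mu}^\star(x_k)\bigr)}_{(B)} .
\]
The term \((B)\) involves only the fixed-\(\lambda_k\) map \(x\mapsto y_{\lambda_k,\mu}^\star(x)\). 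We treat it exactly as in the proof of Lemma~\ref{lem:exact_center_cross_bound}, with \((J0)\)--\((J1)\) replaced by \eqref{eq:proxy_map_jacobian_bound}--\eqref{eq:proxy_map_taylor_remainder}. First we split \((B)\) into the linear piece \(\nabla y_{\lambda_k,\mu}^\star(x_k)(x_{k+1}-x_k)=-\xi\alpha_k\nabla y_{\lambda_k,\mu}^\star(x_k)q_k^x\) and the Taylor remainder. Cauchy--Schwarz in the primal/dual pair anchored at \(y_{\lambda_k,\mu}^\star(x_k)\), followed by Young's inequality with parameter \(j_k\), gives the terms \(\xi\alpha_kj_k\|v_k\|_{*}^2\) and \(\frac{\xi\alpha_k}{4j_k}(\ell_{\lambda,0}^\eta)^2\|q_k^x\|_2^2\). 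For the remainder we combine the Taylor bound with \(\|x_{k+1}-x_k\|_2^2\le 2\xi^2(\ell_{f,0}^2\alpha_k^2+4\ell_{g,0}^2\beta_k^2)\), which holds by the coarse bound \(\|q_k^x\|_2\le\ell_{f,0}+2\lambda_k\ell_{g,0}\). One factor \(\|x_{k+1}-x_k\|_2\) is kept as \(\xi\alpha_k\|q_k^x\|_2\) and Young's inequality is applied. This produces \(\frac{(\ell_{\lambda,1}^\eta)^2}{2}\xi^2(\cdots)\|v_k\|_*^2+\frac{\xi^2}{4}\alpha_k^2\|q_k^x\|_2^2\). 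All of this is verbatim the exact-center computation.

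For the multiplier term \((A)\), both points sit at the same \(x_{k+1}\). We would use Proposition~\ref{prop:local_consequences}(i) with \(x_1=x_2=x_{k+1}\), \(\lambda_1=\lambda_k\), \(\lambda_2=\lambda_{k+1}\). This bounds \(\|(A)\|\) in the anchor norm at \(y_{\lambda_k,\mu}^\star(x_{k+1})\) by \(\frac{2\ell_{f,0}^\eta}{\rho_\psi^\eta}\frac{\delta_k}{\lambda_k\lambda_{k+1}}\le \frac{2\ell_{f,0}^\eta\delta_k}{\rho_\psi^\eta\lambda_k^2}\). We then need this in the old anchor \(y_{\lambda_k,\mu}^\star(x_k)\). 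We would pass through the anchor-switch Lemma~\ref{lem:anchor_switch}, charging one factor of \(\kappa_{y,k}\); the anchors \(y_{\lambda_k,\mu}^\star(x_k)\) and \(y_{\lambda_k,\mu}^\star(x_{k+1})\) are within the tube-controlled drift, so this factor is dominated by \(\kappa_{y,k}\) after absorbing constants. Pairing with \(v_k\) and applying the weighted Young inequality \(ab\le \frac{\delta_k}{\lambda_k}\frac{a^2}{1}\cdot\frac{1}{1}+\frac{\lambda_k}{4\delta_k}b^2\) gives
\[
\langle v_k,(A)\rangle\le \frac{\delta_k}{\lambda_k}\|v_k\|_*^2+\frac{\lambda_k}{4\delta_k}\cdot\kappa_{y,k}^2\frac{4(\ell_{f,0}^\eta)^2\delta_k^2}{(\rho_\psi^\eta)^2\lambda_k^4}
=\frac{\delta_k}{\lambda_k}\|v_k\|_*^2+\frac{\kappa_{y,k}^2(\ell_{f,0}^\eta)^2}{(\rho_\psi^\eta)^2}\frac{\delta_k}{\lambda_k^3}.
\]
This is exactly the first coefficient and the last term of \eqref{eq:proxy_center_cross_bound}. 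If \(\delta_k=0\), the term \((A)\) vanishes, so that case is trivial.

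Summing the contributions of \((A)\) and \((B)\) yields the claim. The main obstacle is the anchor bookkeeping in \((A)\): Proposition~\ref{prop:local_consequences}(i) measures \((A)\) at the anchor \(y_{\lambda_k,\mu}^\star(x_{k+1})\), not at \(y_{\lambda_k,\mu}^\star(x_k)\), so the switch must be justified. The justification we would use is the tube-closure drift bound from Corollary~\ref{cor:tube_drift_closure}, which makes the relevant displacement at most \(\eta/8\), together with absorbing the resulting fixed factor into \(\kappa_{y,k}^2\). If that route proves awkward, a fallback is to apply Proposition~\ref{prop:local_consequences}(i) directly with \((\lambda_1,x_1)=(\lambda_k,x_k)\) and \((\lambda_2,x_2)=(\lambda_{k+1},x_{k+1})\), and treat the splitting differently. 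However, the above decomposition cleanly matches the stated form, so we would commit to it.
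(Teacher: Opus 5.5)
Your proposal follows the paper's proof essentially step for step: the same split through \(y_{\lambda_k,\mu}^\star(x_{k+1})\), Proposition~\ref{prop:local_consequences}(i) at fixed \(x_{k+1}\) plus one anchor-switch factor and Young's inequality with weight \(\delta_k/\lambda_k\) for the multiplier part, and the exact-center Taylor/Young computation (with the same \(\|x_{k+1}-x_k\|_2^2\) bound) for the fixed-\(\lambda_k\) part. The paper also glosses over the anchor switch you flag: it simply asserts that the switch costs \(\kappa_{y,k}\). However, the relevant displacement is the fixed-\(\lambda_k\) drift \(\|y_{\lambda_k,\mu}^\star(x_{k+1})-y_{\lambda_k,\mu}^\star(x_k)\|_{y_{\lambda_k,\mu}^\star(x_k)}\le\ell_{\lambda,0}^\eta\|x_{k+1}-x_k\|_2\), not the full drift of Corollary~\ref{cor:tube_drift_closure}, so your absorption gives the last term only up to a constant factor. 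To get exactly the stated form, rerun the strong-monotonicity argument of Proposition~\ref{prop:local_consequences}(i) for \(L_{\lambda_k,\mu}(x_{k+1},\cdot)\) directly in the metric at \(y_{\lambda_k,\mu}^\star(x_k)\). This is valid because the barrier curvature is \(x\)-independent and both centers at \(x_{k+1}\) lie in its \(2\eta\)-ball under the tube conditions, and it gives that term with factor \(1\le\kappa_{y,k}^2\).
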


\begin{proof}
If \(\delta_k=0\), the pure multiplier-drift term below vanishes. We therefore assume
\(\delta_k>0\); the case \(\delta_k=0\) follows by the same argument after omitting that term.

\proofstep{Decomposition:}
Add and subtract \(y_{\lambda_k,\mu}^\star(x_{k+1})\):
\begin{align}
&\big\langle v_k,\,
y_{\lambda_{k+1},\mu}^\star(x_{k+1})
-y_{\lambda_k,\mu}^\star(x_k)
\big\rangle
\nonumber\\
&\quad=
\underbrace{
\big\langle v_k,\,
y_{\lambda_{k+1},\mu}^\star(x_{k+1})
-y_{\lambda_k,\mu}^\star(x_{k+1})
\big\rangle
}_{(I)}
\nonumber\\
&\qquad+
\underbrace{
\big\langle v_k,\,
y_{\lambda_k,\mu}^\star(x_{k+1})
-y_{\lambda_k,\mu}^\star(x_k)
\big\rangle
}_{(II)} .
\label{eq:proxy_center_split}
\end{align}

\proofstep{Multiplier drift:}
By Cauchy--Schwarz in the anchored Dikin primal/dual pair,
\begin{align}
(I)
&\le
\|v_k\|_{y_{\lambda_k,\mu}^\star(x_k),*}
\big\|
y_{\lambda_{k+1},\mu}^\star(x_{k+1})
-y_{\lambda_k,\mu}^\star(x_{k+1})
\big\|_{y_{\lambda_k,\mu}^\star(x_k)} .
\label{eq:proxy_lambda_drift_cs}
\end{align}
Using the anchor-switch comparison between
\(y_{\lambda_k,\mu}^\star(x_k)\) and \(y_{\lambda_k,\mu}^\star(x_{k+1})\),
\[
\big\|u\big\|_{y_{\lambda_k,\mu}^\star(x_k)}
\le
\kappa_{y,k}\big\|u\big\|_{y_{\lambda_k,\mu}^\star(x_{k+1})},
\]
and applying Proposition~\ref{prop:local_consequences}\textup{(i)} with
\((\lambda_1,x_1)=(\lambda_k,x_{k+1})\) and
\((\lambda_2,x_2)=(\lambda_{k+1},x_{k+1})\), we get
\[
\big\|
y_{\lambda_{k+1},\mu}^\star(x_{k+1})
-y_{\lambda_k,\mu}^\star(x_{k+1})
\big\|_{y_{\lambda_k,\mu}^\star(x_{k+1})}
\le
\frac{2\delta_k\ell_{f,0}^\eta}
{\lambda_k\lambda_{k+1}\rho_\psi^\eta}
\le
\frac{2\delta_k\ell_{f,0}^\eta}
{\lambda_k^2\rho_\psi^\eta}.
\]
Hence
\[
(I)
\le
\|v_k\|_{y_{\lambda_k,\mu}^\star(x_k),*}
\cdot
\kappa_{y,k}
\frac{2\delta_k\ell_{f,0}^\eta}{\lambda_k^2\rho_\psi^\eta}.
\]
Young's inequality with
\[
a:=\|v_k\|_{y_{\lambda_k,\mu}^\star(x_k),*},
\qquad
b:=\kappa_{y,k}\frac{2\delta_k\ell_{f,0}^\eta}{\lambda_k^2\rho_\psi^\eta},
\qquad
c:=\frac{\delta_k}{\lambda_k},
\]
gives
\begin{equation}
\label{eq:proxy_lambda_drift_bound}
(I)
\le
\frac{\delta_k}{\lambda_k}
\|v_k\|_{y_{\lambda_k,\mu}^\star(x_k),*}^2
+
\frac{\kappa_{y,k}^2(\ell_{f,0}^\eta)^2}{(\rho_\psi^\eta)^2}
\frac{\delta_k}{\lambda_k^3}.
\end{equation}

\proofstep{Fixed-\(\lambda_k\) motion:}
Decompose \((II)\) by Taylor expansion of the fixed-\(\lambda_k\) proxy map at \(x_k\):
\begin{align}
(II)
&=
\underbrace{
\big\langle v_k,\,
\nabla y_{\lambda_k,\mu}^\star(x_k)(x_{k+1}-x_k)
\big\rangle
}_{(IIa)}
\nonumber\\
&\quad+
\underbrace{
\Big\langle v_k,\,
y_{\lambda_k,\mu}^\star(x_{k+1})
-y_{\lambda_k,\mu}^\star(x_k)
-\nabla y_{\lambda_k,\mu}^\star(x_k)(x_{k+1}-x_k)
\Big\rangle
}_{(IIb)} .
\label{eq:proxy_x_drift_split}
\end{align}

Since \(x_{k+1}-x_k=-\xi\alpha_kq_k^x\), the linear term satisfies
\[
(IIa)
=
-\xi\alpha_k
\big\langle v_k,\nabla y_{\lambda_k,\mu}^\star(x_k)q_k^x\big\rangle .
\]
By Cauchy--Schwarz and \eqref{eq:proxy_map_jacobian_bound},
\[
|(IIa)|
\le
\xi\alpha_k
\|v_k\|_{y_{\lambda_k,\mu}^\star(x_k),*}
\ell_{\lambda,0}^\eta\|q_k^x\|_2 .
\]
Young's inequality with parameter \(j_k>0\) yields
\begin{equation}
\label{eq:proxy_linear_drift_bound}
(IIa)
\le
\xi\alpha_kj_k
\|v_k\|_{y_{\lambda_k,\mu}^\star(x_k),*}^2
+
\frac{\xi\alpha_k}{4j_k}
(\ell_{\lambda,0}^\eta)^2
\|q_k^x\|_2^2 .
\end{equation}

For the Taylor remainder, \eqref{eq:proxy_map_taylor_remainder} and Cauchy--Schwarz give
\begin{align}
(IIb)
&\le
\|v_k\|_{y_{\lambda_k,\mu}^\star(x_k),*}
\Big\|
y_{\lambda_k,\mu}^\star(x_{k+1})
-y_{\lambda_k,\mu}^\star(x_k)
-\nabla y_{\lambda_k,\mu}^\star(x_k)(x_{k+1}-x_k)
\Big\|_{y_{\lambda_k,\mu}^\star(x_k)}
\nonumber\\
&\le
\frac{\ell_{\lambda,1}^\eta}{2}
\|v_k\|_{y_{\lambda_k,\mu}^\star(x_k),*}
\|x_{k+1}-x_k\|_2^2 .
\label{eq:proxy_remainder_prebound}
\end{align}
As before,
\begin{align}
\|x_{k+1}-x_k\|_2^2
&=
\xi^2\alpha_k^2\|q_k^x\|_2^2
\nonumber\\
&\le
\xi^2\alpha_k^2
\bigl(\ell_{f,0}+2\lambda_k\ell_{g,0}\bigr)^2
\nonumber\\
&\le
2\xi^2
\bigl(\ell_{f,0}^2\alpha_k^2+4\ell_{g,0}^2\beta_k^2\bigr).
\label{eq:outer_step_norm_bound_proxy}
\end{align}
Combining \eqref{eq:proxy_remainder_prebound} and \eqref{eq:outer_step_norm_bound_proxy}, and
absorbing the remaining product by Young's inequality, yields
\begin{equation}
\label{eq:proxy_remainder_bound}
(IIb)
\le
\frac{(\ell_{\lambda,1}^\eta)^2}{2}\xi^2
\bigl(\ell_{f,0}^2\alpha_k^2+4\ell_{g,0}^2\beta_k^2\bigr)
\|v_k\|_{y_{\lambda_k,\mu}^\star(x_k),*}^2
+
\frac{\xi^2}{4}\alpha_k^2\|q_k^x\|_2^2 .
\end{equation}

Combining \eqref{eq:proxy_lambda_drift_bound}, \eqref{eq:proxy_linear_drift_bound}, and
\eqref{eq:proxy_remainder_bound} in \eqref{eq:proxy_center_split} proves
\eqref{eq:proxy_center_cross_bound}.
\end{proof}

%%%%%%%%%%%%%%%%%%%%%%%%%%%%%%%%%%%%%%%%%%%%%%%%%%%%%%%%%%%%

% \newpage
% \input{checklist.tex}

\end{document}